\theoremstyle{definition} \newtheorem{deff}{Definition} [section]
\theoremstyle{definition} \newtheorem{prop} [deff]{Proposition}
\theoremstyle{definition} \newtheorem{thm} [deff]{Theorem}
\theoremstyle{definition} \newtheorem{lm} [deff]{Lemma}
\theoremstyle{remark} 
\theoremstyle{remark} \newtheorem{rmk}[deff]{Remark} 
\theoremstyle{plain} \newtheorem{cor}[deff]{Corollary} 
\newtheoremstyle{claim}% Nome dello stile
  {}% Spazio sopra
  {}% Spazio sotto
  {\itshape}% Font del corpo dell'enunciato
  {}% Rientro del corpo dell'enunciato
  {}% Font del titolo dell'enunciato
  {.}% Punteggiatura dopo il titolo dell'enunciato
  { }% Spazio dopo il titolo dell'enunciato
  {}% Specifica dell'enunciato
\theoremstyle{claim}% Imposta lo stile
\newcommand{\concto}{\xrightarrow{conc}}
\newcommand{\slope}[3][\rm{Ent}]{\left|\partial^-#1_{#2}\right|(#3)}
\newcommand{\hgf}[2][t]{\mathcal{H}_{#1}(#2)}
\renewenvironment{rmk}{\begin{oldrmk}}{\hfill$\blacksquare$\end{oldrmk}\\ \par}
\newcommand{\N}{\mathbb{N}}
\newcommand{\R}{\mathbb{R}}
\newcommand{\Z}{\mathbb{Z}}
\newcommand{\aalpha}{{\mbox{\boldmath$\alpha$}}}
\newcommand{\ppi}{{\mbox{\boldmath$\pi$}}}
\newcommand{\saalpha}{{\mbox{\scriptsize\boldmath$\alpha$}}}
\newcommand{\sppi}{{\mbox{\scriptsize\boldmath$\pi$}}}
\newcommand{\restr}[1]{\lower3pt\hbox{$|_{#1}$}}
\newcommand{\Mp}{\mathscr M^+}                  % MISURE DI RADON POSITIVE FINITE SUI LIMITATI
\newcommand{\pr}{\mathscr P}
\newcommand{\X}{{\rm X}}
\newcommand{\Y}{{\rm Y}}
\renewcommand{\Z}{{\rm Z}}
\newcommand{\sfd}{{\sf d}}
\newcommand{\mm}{{\mathfrak m}}
\newcommand{\ric}{{\rm Ric}}								% Curvatura Ricci
\newcommand{\vol}{{\rm vol}}								% Misura di volume
\renewcommand{\L}{{\rm L}} %\renewcommand{\L}{{\sf L}}
\newcommand{\CD}{{\sf CD}}
\newcommand{\RCD}{{\sf RCD}}
\newcommand{\ch}{{\sf Ch}}
\newcommand{\ent}{{\rm Ent}_\mm}
\newcommand{\entm}[1]{{\rm Ent}_{#1}}
\newcommand{\adm}{{\sf Adm}}
\newcommand{\W}{{\rm W}}
\renewcommand{\d}{{\rm d}}
\newcommand{\D}{{\rm D}}		
\renewcommand{\div}{{\rm div}}
\newcommand{\Lip}{{\rm Lip}}
\newcommand{\lims}{\varlimsup}
\newcommand{\limi}{\varliminf}
\renewcommand{\b}{{\rm b}}
\newcommand{\weakto}{\rightharpoonup}
\newcommand{\nchi}{{\raise.3ex\hbox{$\chi$}}}
\newcommand{\supp}{{\rm supp}}
\newcommand{\eps}{\varepsilon}
\newcommand{\B}{\mathcal B}                            % Boreliani
\newcommand{\Lu}{\mathcal L_1}  %Lebesgue su [0,1]
\numberwithin{equation}{section}
\newcommand{\proj}{{\rm proj}}
\newcommand{\M}{{\mathscr M}}
\title{Stability of the heat flow under convergence in concentration and  consequences}
\author{Nicola Gigli and Simone Vincini}
\date{\today}
\begin{document}
	\maketitle
	\begin{abstract}
		We extend to the framework of convergence in concentration virtually all the results concerning stability of Sobolev functions and differential operators known to be in place under the stronger measured-Gromov-Hausdorff convergence. These include, in particular:
		\begin{itemize}
		\item[i)] A general $\Gamma$--$\lims$ inequality for the Cheeger energy,
		\item[ii)] Convergence of the heat flow under a uniform $\CD(K,\infty)$ condition on the spaces.
		\end{itemize}
		As we will show, building on ideas developed for mGH-convergence, out of this latter result we can obtain clean stability statements for differential, flows of vector fields, Hessian, eigenvalues of the Laplacian and related objects in presence of uniform lower Ricci bounds. At the technical level, one of the tools we develop to establish these results is the notion  of convergence of maps between metric measure spaces converging in concentration.
		
		Previous results in the field concerned the stability of the $\CD$ (\cite{FuSh13}) and $\RCD$ (\cite{OzYo19}) conditions. The latter was obtained proving $\Gamma$-convergence  for the Cheeger energies. This  is not sufficient to pass to the limit in the heat flow; one of the byproducts of our work is the improvement of this to Mosco-convergence.
	\end{abstract}
	\tableofcontents
	\section{Introduction}
	The need for compactness when addressing problems related to curvature has been the main motivation leading to the study of nonsmooth metric (measure) spaces, following the influential ideas of Gromov (see \cite{Gr81} or \cite{Gr99} for the english translation).
	In order to produce new results with applications in the classic setting, one of the major goals of the new theory has been to generalize relevant quantities and properties and showing that the new definitions are stable (i.e. continuous) in the enlarged compactification. 
	The convergence introduced by Gromov, namely the Gromov--Hausdorff convergence, has proven to be useful in presence of a control on the sectional curvature and the dimension, yielding a limit to sequences of smooth manifolds satisfying these bound in the wider setting of Alexandrov spaces. When dealing with bounds on the dimension and the Ricci curvature, though, new phenomena appear: in order to preserve some generalized bound on the curvature, it has been proven necessary to study the asymptotic behaviour of the measures (\cite{fukaya}), leading to the notion of measured Gromov--Hausdorff convergence (the definition of which can vary a little between different authors, for some details see \cite[Section 2.4]{Gi23}).
	
	The study of Ricci curvature conditions on nonsmooth spaces has begun with the study of Ricci--limit spaces, i.e.\ measured Gromov--Hausdorff limits of smooth manifolds with uniformly bounded Ricci curvature and dimension, initiated by Cheeger and Colding (see  \cite{Ch99}, \cite{ChCo97}, \cite{ChCo00-2}, \cite{ChCo00-3}). More recently, synthetic bounds on the curvature have been introduced independently by Lott--Villani (\cite{LoVi09}) and Sturm (\cite{St06-1}, \cite{St06-2}) and have subsequently proved to be the right class in which to consider sequences of manifolds with bounds on Ricci curvature and dimension, akin to Alexandrov spaces when dealing with bounds on sectional curvature. The classes $\CD(K, N)$ (indicating $\ric\geq K$ and $\dim\leq N$) introduced by Lott, Sturm and Villani:
	\begin{itemize}
		\item contain all (complete and separable) manifolds with $\ric\geq K$ and $\dim\leq N$ when endowed with their volume measure, as well as weighted manifolds, i.e. manifolds $M^n$ with measure $e^{-V}\,\d\vol$ with $K\geq \widetilde{\ric_N}=\ric+\rm{Hess}V$ and $n\leq N$;
		\item are compact (and thus stable) with respect to mGH (measured Gromov--Hausdorff) convergence, thus containing Ricci limits.
	%	\item  behave well under many natural geometric operations, such as taking quotients and building spherical or conical suspensions.
	\end{itemize} 
	They include, though, even Finsler manifolds and Finslerian--like geometries: to deal with this problem, the first author introduced in \cite{Gi15} the so called \emph{infinitesimal Hilbertianity} property, to distinguish between genuine linear (Riemannian) and nonlinear (Finslerian) tangent structure (see also \cite{AmbrosioGigliSavare11-2}). Infinitesimally hilbertian spaces satisfying the $\CD(K, N)$ condition are called $\RCD(K, N)$ spaces.
	
	Bounds on the dimension are crucial in order to obtain compactness with respect to mGH convergence. When instead the dimension grows unbounded, different kinds of phenomena appear. What is probably the most prominent is the so called ``concentration of measure'': the fact that in some circumstances regular functions (for instance 1-Lipschitz) tend to be close to their median with high probability when the dimension is sufficiently high. This phenomenon was first described by P. Lévy (\cite{Le51}, \cite{Mi88}) and has very peculiar and at times paradoxical consequences.	A possible way to study asymptotic properties of sequences of manifolds with unbounded dimensions (which can be important e.g. when studying dimension free estimates) was sketched again by Gromov: in order to study the appearance of these ``interesting patterns'', he introduced  in \cite[Chapter $3\frac{1}{2}$]{Gr99} the notion of observable distance, strictly related the concentration of measure. In this setting, compactness is (in a somewhat generalized sense) always present (see \cite{Sh16}), while the nature of the limit spaces is not yet completely clear.
	
	In the setting of mGH convergence, the stability of many geometric and functional analytic properties of the spaces is well known and studied. A fundamental result is, as we recalled earlier, the stability of the $\CD(K, N)$ condition. This was proved in the very first papers in which this condition appeared (\cite{LoVi09}, \cite{St06-1}, \cite{St06-2}) and has its roots in the following fact, that we cite quite informally:
	\begin{thm}
		$\X_n\to \X$ with respect to mGH convergence iff $\entm{\mm_n}\xrightarrow{\Gamma} \ent$.
	\end{thm}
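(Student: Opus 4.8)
The plan is to put all the functionals on a common footing and then split the $\Gamma$-convergence into its two defining inequalities. Since mGH convergence of $\X_n=(X_n,\sfd_n,\mm_n)$ to $\X=(X,\sfd,\mm)$ can be realised through isometric embeddings $\iota_n\colon X_n\hookrightarrow Z$ and $\iota\colon X\hookrightarrow Z$ into a common complete and separable metric space $Z$ along which $(\iota_n)_*\mm_n\to\iota_*\mm$ in duality with $C_b(Z)$, I would first push every entropy forward and read $\entm{\mm_n}\xrightarrow{\Gamma}\ent$ as a statement about functionals on $\pr(Z)$ equipped with weak convergence (equivalently, on $\prd(Z)$ with the Wasserstein distance, which is the natural home for the geodesic-convexity content of $\CD$). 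This reduces the theorem to comparing $\mu\mapsto\entm{\mm_n}(\mu)$ with $\mu\mapsto\ent(\mu)$ on one fixed space.

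For the forward implication the $\Gamma$--$\limi$ inequality is the soft half. I would use the Donsker--Varadhan-type dual representation, writing (after normalisation) $\entm{\mm}(\mu)=\sup_{\phi\in C_b}\big\{\int\phi\,\d\mu-\log\int e^{\phi}\,\d\mm\big\}$ as a supremum of functionals that are jointly continuous in $(\mu,\mm)$ for weak convergence; a supremum of continuous maps being lower semicontinuous, the bound $\liminf_n\entm{\mm_n}(\mu_n)\ge\ent(\mu)$ is immediate from $\mu_n\to\mu$ and $\mm_n\to\mm$. The $\Gamma$--$\lims$ inequality, i.e.\ the existence of a recovery sequence, is the constructive part: given $\mu=\rho\,\mm$ of finite entropy I would reduce, by truncation and mollification, to the case of a bounded continuous density — along which the entropy is continuous rather than merely lower semicontinuous — and then transplant $\rho$ to $X_n$ through the $\eps_n$-isometries supplied by mGH convergence, renormalise to a probability density $\rho_n$ and set $\mu_n=\rho_n\,\mm_n$. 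Weak convergence of $\mm_n$ then yields both $\mu_n\to\mu$ and $\entm{\mm_n}(\mu_n)\to\ent(\mu)$, and a diagonal argument removes the auxiliary approximation.

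The converse implication is where the real difficulty sits: from the mere $\Gamma$-convergence of the entropies one must recover the mGH convergence of the underlying spaces. The point of leverage is that $\ent$ remembers its reference measure — the proper domain of $\entm{\mm}$ and its values there determine $\mm$ — while the ambient Wasserstein structure of $\prd$ records the metric $\sfd$, so that $\entm{\mm}$, read together with $\prd$, essentially encodes $(X,\sfd,\mm)$. Concretely I would extract from equi-tightness a subsequential mGH limit $\X_\infty$, apply the already proven forward implication to get $\entm{\mm_n}\xrightarrow{\Gamma}\entm{\mm_\infty}$ along that subsequence, and then invoke uniqueness of $\Gamma$-limits to conclude $\entm{\mm_\infty}=\ent$, whence $\X_\infty\cong\X$; a standard Urysohn argument upgrades the subsequential statement to convergence of the whole sequence. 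The hard step is precisely this last identification — that two spaces sharing the same entropy functional over a common Wasserstein geometry must coincide — which is exactly the juncture at which the equivalence is only informally true and genuinely demands the metric-measure input, rather than the purely variational estimates that suffice for the direct implication.
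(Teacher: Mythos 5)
First, a remark on the comparison you asked for: the paper contains \emph{no proof} of this statement. It is quoted ``quite informally'' in the introduction as background material, attributed to \cite{LoVi09}, \cite{St06-1}, \cite{St06-2}, so your attempt can only be measured against the standard arguments of that literature. Your forward implication is exactly that standard argument and is sound: realize mGH convergence extrinsically via isometric embeddings into a common space $\Z$ with $(\iota_n)_\#\mm_n\weakto\iota_\#\mm$, obtain the $\Gamma$--$\limi$ from the dual representation of relative entropy (a supremum of functionals jointly weakly continuous in $(\mu,\mm)$, hence jointly lower semicontinuous), and build recovery sequences by transplanting bounded continuous densities and renormalizing --- the same device the paper itself uses in Lemma \ref{lemma: approssimazione}.

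The backward implication, however, contains a genuine gap: the step ``extract from equi-tightness a subsequential mGH limit $\X_\infty$'' does not work. Read intrinsically, equi-tightness is not even meaningful (the $\mm_n$ live on different spaces), and mGH precompactness of an arbitrary sequence of n.m.m.s.\ is simply false: it requires Gromov's criterion of uniform covering bounds, which you are not given. The round spheres $S^n$ with normalized volume --- the motivating example of this very paper --- have no mGH-convergent subsequence (an $\eps$-ball has normalized volume tending to $0$, so covering numbers blow up), yet they form a perfectly good sequence of n.m.m.s. Read extrinsically (all spaces embedded in a common $\Z$, $\Gamma$-convergence w.r.t.\ weak topology on $\pr(\Z)$), tightness of $\{\mm_n\}$ is again not a hypothesis but something to be derived, and you also concede that the final identification step is left unproven. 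The fix is far more elementary than the ``hard step'' you describe, and needs no compactness at all: apply the recovery-sequence half of the hypothesis at the single point $\mu=\mm$, the unique minimizer of $\ent$. This gives $\mu_n\weakto\mm$ with $\entm{\mm_n}(\mu_n)\to\ent(\mm)=0$, and the Csisz\'ar--Kullback--Pinsker inequality
\begin{equation}
	\|\mu_n-\mm_n\|_{\rm TV}^2\leq 2\,\entm{\mm_n}(\mu_n)\to 0
\end{equation}
then forces $\mm_n\weakto\mm$ in $\Z$, which together with the isometric embeddings is precisely mGH convergence in the measure-theoretic (Gromov-weak) reading --- the only reading under which the ``iff'' can hold anyway, since the entropies see nothing of the supports beyond the measures; this is one reason the paper brands the statement as informal. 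No subsequence extraction, no uniqueness-of-$\Gamma$-limits argument, and no identification of abstract limit spaces is needed.
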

	Here, $\rm{Ent}$ is the so called Boltzmann entropy and the $\Gamma$--convergence has to be intended with respect to a suitable definition of weak convergence of measures. This fact was not explicitly stated in the beginning, but was at the heart of the proofs and has successively proven fruitful in the study of other stability properties. The second key result, proven in \cite{Gi10}, tells that, in presence of a uniform $\CD(K, \infty)$ bound, not only $\rm{Ent}$ $\Gamma$--converges, but also its $W_2$-gradient flow, that according to the seminal paper \cite{JKO98} can be interpreted as heat flow,  does. This convergence, coupled with the fact that the heat flow can also be seen as $L^2$-gradient flow of the Cheeger energy even in this nonsmooth setting (\cite{Gigli-Kuwada-Ohta10}, \cite{AmGiSa14}) allows to prove the convergence of Cheeger energy and in particular the stability of the $\RCD$ condition (\cite{AmGiSa14},\cite{Gi15},\cite{GiMoSa15}). Finally, under appropriate conditions (e.g. some uniform log--Sobolev inequalities), even the spectrum of the Laplacian has been proven to be stable. Building on the convergence of the Cheeger energy, one can also define a notion of convergence for vector and even tensor fields and prove the stability of flows of vector fields (\cite{Honda11-2}, \cite{Gigli14}, \cite{AH16}).
	
	In the more general setting of convergence in concentration, only partial results were present, coming from the work of the Japanese school. The stability of the $\CD(K, \infty)$ condition was proved in \cite{FuSh13}:
	\begin{thm}[\cite{FuSh13}]
		Let $(\X_n, \sfd_n, \mm_n), (\X, \sfd, \mm)$ be n.m.m.s. with $\X_n\to \X$ in concentration. If $\X_n$ is $\CD(K, \infty)$ for all $n\geq 1$ and some $K\in\mathbb R$, then $\X$ is $\CD(K, \infty)$.
	\end{thm}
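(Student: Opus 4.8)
The plan is to verify the defining property of $\CD(K,\infty)$ for the limit space directly: given $\mu_0,\mu_1\in\prd(\X)$ with finite entropy, I want to produce a $\W_2$-geodesic along which $\entm{\mm}$ is $K$-convex. I would first reduce to the \emph{midpoint} formulation, namely that it suffices to find, for every such pair, a $\W_2$-midpoint $\mu_{1/2}$ with
\[
\entm{\mm}(\mu_{1/2})\le \tfrac12\entm{\mm}(\mu_0)+\tfrac12\entm{\mm}(\mu_1)-\tfrac{K}{8}\W_2^2(\mu_0,\mu_1),
\]
since iterating the midpoint construction on a dyadically dense set and using completeness of $(\prd(\X),\W_2)$ reconstructs a full geodesic satisfying the $K$-convexity inequality. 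Next I would realize the convergence in concentration through parametrizations: choose measure-preserving maps $\z_n\colon(\Omega,\mathbb P)\to(\X_n,\mm_n)$ and $\zi\colon(\Omega,\mathbb P)\to(\X,\mm)$ from a common probability space, with $\z_n\to\zi$ in the sense provided by the observable distance. Writing $\mu_i=\rho_i\,\mm$ and pulling the densities back to $\tilde\mu_i:=(\rho_i\circ\zi)\,\mathbb P$ on $\Omega$, I set $\mu_i^n:=(\z_n)_*\tilde\mu_i$; by conditional Jensen the pushforward can only decrease entropy, so $\limsup_n\entm{\mm_n}(\mu_i^n)\le\entm{\mm}(\mu_i)$, which is precisely the direction needed to control the right-hand side.

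On each $\X_n$ the $\CD(K,\infty)$ hypothesis supplies a midpoint $\mu_{1/2}^n=h_n\,\mm_n$ of $\mu_0^n,\mu_1^n$ satisfying the corresponding midpoint inequality on $\X_n$. Since $\mm_n$ is a probability measure, $\entm{\mm_n}\ge0$, so that inequality bounds $\entm{\mm_n}(\mu_{1/2}^n)$ uniformly in $n$. The function $h_n\circ\z_n$ is a density on $(\Omega,\mathbb P)$ whose entropy with respect to $\mathbb P$ equals $\entm{\mm_n}(\mu_{1/2}^n)$, so the uniform bound yields, via Dunford--Pettis, uniform integrability and hence a weakly-$L^1(\Omega,\mathbb P)$ convergent subsequence $h_n\circ\z_n\weakto\rho_{1/2}$. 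Setting $\mu_{1/2}:=(\zi)_*(\rho_{1/2}\,\mathbb P)$, the weak lower semicontinuity of $t\mapsto t\log t$ together with Jensen's inequality gives $\entm{\mm}(\mu_{1/2})\le\liminf_n\entm{\mm_n}(\mu_{1/2}^n)$. Combining this with the endpoint bound $\limsup_n\entm{\mm_n}(\mu_i^n)\le\entm{\mm}(\mu_i)$ and with convergence of the transport term discussed below would pass the midpoint inequality to $\X$.

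The main obstacle is the weakness of convergence in concentration: it controls the spaces only through the distributions of $1$-Lipschitz functions, so, unlike under box- or mGH-convergence, there is no in-measure convergence of the pulled-back distances to lean on. Two points must be secured. First, one must control the limiting behaviour of $\W_2(\mu_0^n,\mu_1^n)$; the clean conclusion requires $\W_2^2(\mu_0^n,\mu_1^n)\to\W_2^2(\mu_0,\mu_1)$, and the delicate direction is dictated by the sign of $K$ (for $K\ge0$ one needs $\liminf_n\W_2(\mu_0^n,\mu_1^n)\ge\W_2(\mu_0,\mu_1)$, i.e.\ that the transport cost does not collapse, which is exactly the direction that the observable distance does not grant for free, since concentration collapses distances as seen by $1$-Lipschitz functions). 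Second, one must check that the limit $\mu_{1/2}$ is a genuine $\W_2$-midpoint of $\mu_0,\mu_1$ in $\X$, so that the limiting inequality is the true $\CD$ midpoint inequality; this again rests on propagating the metric information surviving concentration. I expect the decisive work to lie in establishing this transport-cost convergence and the midpoint property through a careful analysis of the parametrizations $\z_n$ and the associated optimal plans, after which the lower-semicontinuity argument above closes the proof.
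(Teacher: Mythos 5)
Your overall scheme --- reduction to the midpoint inequality, endpoint recovery by pulling densities back through parametrizations (with conditional Jensen giving $\lims_n\entm{\mm_n}(\mu_i^n)\leq\entm{\mm}(\mu_i)$), entropy bounds on the midpoints giving weak $\L^1(\Omega,\mathbb{P})$ compactness, and lower semicontinuity of the entropy in the limit --- is the standard and correct skeleton, and it matches how \cite{FuSh13} (and this paper's machinery) organize the argument. The genuine gap is that the two points you flag as ``to be secured'' are not side conditions: they \emph{are} the theorem, and your construction does not address them. Since $\Omega$ carries no metric, nothing relates $\sfd_n(\z_n(\omega),\z_n(\omega'))$ to $\sfd(\zi(\omega),\zi(\omega'))$, so for your lifts $\mu_i^n=(\z_n)_\#\big((\rho_i\circ\zi)\,\mathbb{P}\big)$ there is no upper control whatsoever on $\W_2(\mu_0^n,\mu_1^n)$: the fibers of the projections can be metrically enormous (think of $S^n$ collapsing to a point), so two lifts, even lifts built through the same parametrization, can have transport cost far exceeding $\W_2(\mu_0,\mu_1)$. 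The missing ingredient is precisely the content of \cite[Lemmas 3.13 and 3.15]{FuSh13}, recalled here as Lemma \ref{lm: approssimazione} and refined into Lemma \ref{lemma: trasporto} and Theorem \ref{teo: pullback guidato}: the two lifts must be constructed \emph{jointly}, using Kantorovich duality for a truncated $\W_1$-cost together with the Hausdorff closeness of $\Lip_1(\X_n)$ and $p_n^*\Lip_1(\X)$ from Proposition \ref{prop: caratterizzazione}, then localizing the optimal plan for $(\mu_0,\mu_1)$ on rectangles of small diameter and performing a surgery, small in total variation, that removes the part of the lifted plan moving mass too far. Without this, neither $\W_2(\mu_0^n,\mu_1^n)\to\W_2(\mu_0,\mu_1)$ nor the midpoint property of your limit $\mu_{1/2}$ can be established, and the proof does not close.

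Moreover, you misidentify which inequality is delicate. The liminf inequality $\limi_n\W_2(\mu_0^n,\mu_1^n)\geq\W_2(\mu_0,\mu_1)$ is the \emph{free} direction: it is \eqref{eq: semicont W2} (from \cite{OzYo19}), valid for any equi-integrable weakly converging lifts, because the projections $p_n$ are almost $1$-Lipschitz on sets of almost full measure, so pushing couplings down to $\X$ can only decrease the cost. Concentration collapsing ``distances as seen by $1$-Lipschitz functions'' is exactly what makes distances downstairs \emph{smaller}, hence the liminf automatic and the limsup hard --- your diagnosis has it backwards. Note also that the limsup direction is needed regardless of the sign of $K$: semicontinuity gives $\W_2(\mu_i,\mu_{1/2})\leq\tfrac12\limi_n\W_2(\mu_0^n,\mu_1^n)$, and only when combined with $\lims_n\W_2(\mu_0^n,\mu_1^n)\leq\W_2(\mu_0,\mu_1)$ does the triangle inequality force $\mu_{1/2}$ to be a genuine midpoint. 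Finally, a smaller issue: identifying your weak $\L^1(\Omega,\mathbb{P})$-limit $\rho_{1/2}$, obtained through $\z_n$, with a weak limit of $(\mu_{1/2}^n)$ in the sense required to invoke \eqref{eq: semicont W2} and to compare with $\mu_0,\mu_1$ requires the compatibility of the parametrizations with the projections, i.e.\ the kind of uniqueness-of-limits statement proved in Section \ref{section: technical}, which you did not address.
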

	Notice that this result is not weaker than the corresponding one in the setting of mGH convergence: if the dimension is uniformly bounded, the sequence is mGH compact and thus converges to $\X$ with respect to mGH topology. In \cite{OzYo19}, Ozawa and Yokota proved a stability result for Cheeger energy, obtaining the stability of $\RCD(K, \infty)$ condition:
	\begin{thm}[\cite{OzYo19}]
		Let $(\X_n, \sfd_n, \mm_n), (\X, \sfd,\mm)$ be n.m.m.s. with $\X_n\to \X$ in concentration and with $\X_n$ satisfying $\CD(K, \infty)$ for all $n\geq 1$. Then $\ch_n \xrightarrow{\Gamma} \ch$ with respect to strong $\L^2$ convergence. In particular, if $\X_n$ is $\RCD(K, \infty)$ for all $n\geq 1$ then $\X$ is as well.
	\end{thm}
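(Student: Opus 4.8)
The plan is to transfer the problem from the Cheeger energies to the descending slope of the Boltzmann entropy, where the uniform $\CD(K,\infty)$ bound is decisive. First I would invoke the already stated stability of $\CD(K,\infty)$ under concentration (\cite{FuSh13}) to know that the limit $(\X,\sfd,\mm)$ is itself $\CD(K,\infty)$; thus the entropy is $K$-geodesically convex on every space involved and the Fisher-information identity $\sl{\ent}^2(\rho\mm)=\int |\D\rho|_w^2/\rho\,\d\mm$ holds, which up to the normalising constant is exactly $\ch(\sqrt\rho)$. Since any $f\in\L^2$ can be reduced — by replacing $f$ with $|f|$ (locality gives $\ch(|f|)=\ch(f)$), normalising, and using the chain rule for the minimal weak upper gradient — to square roots of probability densities, it suffices to prove the two $\Gamma$-inequalities for $\sl{\ent}$ along plans $\mu_n=\rho_n\mm_n$, the parametrizations turning $\L^2$-convergence $f_n\to f$ into the convergence of $\mu_n$ to $\mu$ needed below.

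For the $\Gamma$-$\limi$ inequality I would use the chordal representation of the slope valid under $K$-convexity,
\[ \sl{\ent}(\mu)=\sup_{\nu\neq\mu}\left(\frac{\ent(\mu)-\ent(\nu)}{\W_2(\mu,\nu)}+\frac{K}{2}\,\W_2(\mu,\nu)\right)^{+}, \]
which rewrites the slope as a supremum of functionals built only from entropy values and $\W_2$-distances. Fixing a competitor $\nu$ and a recovery sequence $\nu_n\to\nu$ with $\entm{\mm_n}(\nu_n)\to\ent(\nu)$ (available from the $\Gamma$-convergence of the entropies encoded in the concentration framework, cf.\ \cite{FuSh13}), while lower semicontinuity gives $\limi_n\entm{\mm_n}(\mu_n)\ge\ent(\mu)$ and the parametrizations yield $\W_2(\mu_n,\nu_n)\to\W_2(\mu,\nu)$, each chord passes to the limit from below; taking the supremum over $\nu$ then gives $\limi_n\sl{\entm{\mm_n}}(\mu_n)\ge\sl{\ent}(\mu)$, hence $\limi_n\ch_n(f_n)\ge\ch(f)$.

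The recovery ($\Gamma$-$\lims$) direction is where I expect the real difficulty. I would establish it first on a dense, regular class — bounded Lipschitz functions, or heat-flow mollifications $\hgf{\mu}$ of a given density — and then pass to a general $f$ by a diagonal argument exploiting the lower semicontinuity of $\ch$ and the contractivity and regularising properties of the heat flow under $\CD(K,\infty)$. The construction of $f_n$ on $\X_n$ would go through the parametrizations: pull $f$ back along the limit parametrization $\Phi$ and transplant it to $\X_n$ via conditional expectation with respect to the $\sigma$-algebra generated by $\Phi_n$, or via a McShane-type extension, arranging strong $\L^2$-convergence $f_n\to f$ while keeping the asymptotic energy under control. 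The genuine obstacle is that concentration is a weak convergence: the parametrizations need not preserve distances even approximately on large sets, there is no uniform bound on dimension or diameter, and mass can concentrate. Making the transplantation quantitative enough that $\lims_n\ch_n(f_n)\le\ch(f)$ survives is the crux, and it is precisely here that one must rely on the heat-flow regularisation afforded by $\CD(K,\infty)$ rather than on naive distance comparison.

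Finally, the $\RCD$ statement is a soft consequence. The limit is $\CD(K,\infty)$ by \cite{FuSh13}, so it remains only to see that infinitesimal Hilbertianity passes to the $\Gamma$-limit. If every $\ch_n$ obeys the parallelogram identity, then choosing recovery sequences for $f+g$ and for $f-g$, recombining them linearly, and invoking the $\Gamma$-$\limi$ inequality twice yields both inequalities of the parallelogram law for $\ch$; hence $\ch$ is a quadratic form and $\X$ is $\RCD(K,\infty)$.
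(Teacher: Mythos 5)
Your overall architecture (transfer to the slope of the entropy via $\slope{\mm}{f^2\mm}=8\ch(f)$, chordal representation of the slope under $K$-convexity, $\Gamma$-convergence of the entropies) matches the paper's route, and your closing parallelogram-law argument for the $\RCD$ conclusion is fine. But there is a genuine gap at the decisive step of the $\Gamma$-$\limi$ inequality: you assert that, having fixed a competitor $\nu$ and a recovery sequence $\nu_n$ with $\entm{\mm_n}(\nu_n)\to\ent(\nu)$, ``the parametrizations yield $\W_2(\mu_n,\nu_n)\to \W_2(\mu,\nu)$''. This is false in general. Under convergence in concentration, weak convergence of two equi-integrable sequences gives only the \emph{lower} semicontinuity $\W_2(\mu,\nu)\leq \limi_n \W_2(\mu_n,\nu_n)$; the matching upper bound can fail badly, because the projections $p_n$ are not even approximately isometric and mass on the fibers can be transported arbitrarily far apart. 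The approximation lemma available in the literature (the paper's Lemma \ref{lm: approssimazione}, from \cite{OzYo19}) constructs the \emph{pair} $(\mu_n,\nu_n)$ jointly, so it cannot be used here, where $\mu_n$ is the prescribed sequence along which the $\limi$ must be estimated. Producing, for a \emph{given} equi-integrable $\mu_n\rightharpoonup\mu$ and a given $\nu$, a sequence $\nu_n$ with $\lims_n\entm{\mm_n}(\nu_n)\leq\ent(\nu)$, $\lims_n\W_2(\mu_n,\nu_n)\leq \W_2(\mu,\nu)$, and a density bound making the entropy semicontinuity usable, is exactly the content of the paper's ``guided pullback'' (Theorem \ref{teo: pullback guidato}, built on the transverse transport Lemma \ref{lemma: trasporto}); it occupies all of Subsection \ref{subsection: pullback guidato} and is the technical core of the whole argument. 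Your proposal leaves this as an unproved assertion.

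Relatedly, you locate the ``real difficulty'' in the wrong place. The recovery ($\Gamma$-$\lims$) direction is comparatively soft: the paper proves it with \emph{no} curvature assumption at all (Proposition \ref{proposition: limsup}), by composing recovery sequences on mGH-approximating spaces with the $1$-Lipschitz projections supplied by Shioya's \cite[Proposition 6.12]{Sh16}, and the recovery sequence at the level of the slope then comes from the convergence of the gradient flow of the entropy by a diagonal argument (Theorem \ref{teo: Gamma slope}). Neither step needs the heat-flow transplantation machinery you sketch. So the proposal is not merely incomplete in a technical detail: the step it treats as routine is the one requiring the new construction, and the step it treats as the crux is the one that is essentially known.
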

	%The $\Gamma$--convergence of Cheeger energies with respect to strong $\L^2$ convergence is not enough to obtain convergence of the heat flows, so all the other results proven for mGH in \cite{GiMoSa15} were up to now not known in this context.	\newline

	In this paper we prove that, as in the mGH case, the slope of the entropy $|\partial^- \rm{Ent}|$ $\Gamma$--converges as the underlying spaces converge in concentration and in presence of a uniform bound on the Ricci curvature. 
	While the structure of the proofs is heavily based on \cite{Gi10}, there are important technical differences, among which the impossibility of an ``extrinsic'' realization of convergence in concentration.
	
	 In Section \ref{section: preliminari} we recall some basic definitions and results which will be of use, among which the definition and main properties of convergence in concentration. 
	 
	 In Section \ref{section: conv} we propose a systematization of the notions of convergence of measures and functions defined on a sequence of spaces which converge in concentration. The definitions of weak convergence of measures and (weak and strong) $\L^2$ convergence were firstly given in \cite{OzYo19}, inspired by \cite{KuSh03} and \cite{GiMoSa15}. In this section we introduce a natural notion of equi--integrability for measures and extend the definitions of $\L^p$ convergences \cite{OzYo19} to the general exponent $p\in [1, \infty)$. Moreover, we introduce a notion of convergence in measure, which allows to characterize $\L^p$ convergence in a ``dominated--convergence'' fashion and to exploit ideas from the theory of Young measures, akin to what has been done in \cite{GiMoSa15}. The geometric nature of this notions is proved in Section \ref{section: technical}, where it is shown that different realizations of the spaces in question yield the same limit structure to sequences of measures and functions, up to suitable automorphisms.

	 In the following Section \ref{section: limsup} we show that, as in the mGH setting, the $\Gamma-\lims$ inequality for the slope of the entropy always holds, regardless of curvature bounds.
	 
	 Section \ref{section: gamma conv slope} contains the core of the paper, the $\Gamma-\limi$ of the slope:
	 \begin{thm}\label{teo: liminf slope}
	 	Let $\X_n$, $n\geq 1$, and $\X$ be n.m.m.s. such that $\X_n\to \X$ in concentration. Assume that $\X_n$ satisfy $\CD(K, \infty)$ for some $K\in\mathbb R$ (and thus $\X$). Let $\mu_n\in \pr(\X_n)$ be equi-integrable and such that $\mu_n\to\mu$ for some $\mu\in \pr(\X)$. Then 
	 	\begin{equation}
	 		\slope{\mm}{\mu}\leq \limi_n \slope{\mm_n}{\mu_n}.
	 	\end{equation}
	 \end{thm}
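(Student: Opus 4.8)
The plan is to replace the descending slope by its variational description, which is available precisely because the $\CD(K,\infty)$ assumption makes $\ent$ (and each $\entm{\mm_n}$) $K$-geodesically convex on the respective $L^2$-Wasserstein space. For a $K$-geodesically convex functional one has the standard representation
\[
\slope{\mm}{\mu} = \sup_{\nu \neq \mu} \left( \frac{\ent(\mu) - \ent(\nu)}{\W_2(\mu,\nu)} + \frac{K}{2}\,\W_2(\mu,\nu) \right)^{\!+},
\]
where the supremum runs over $\nu$ with $\W_2(\mu,\nu)<\infty$, and the analogous formula holds on each $\X_n$. We may assume $\limi_n \slope{\mm_n}{\mu_n} < \infty$ and pass to a subsequence attaining the liminf. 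It then suffices to fix a single competitor $\nu \in \pr(\X)$ with $\ent(\nu) < \infty$ and $0 < \W_2(\mu,\nu) < \infty$ (other $\nu$ contribute a nonpositive term), to produce $\nu_n \in \pr(\X_n)$ with $\nu_n \to \nu$ realising that term in the limit, and to take the supremum over $\nu$ only at the very end.

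I would build $\nu_n$ through the parametrisation picture underlying convergence in concentration. Fix an essentially injective, measure preserving parametrisation $\Phi \colon I \to \X$ of the model space $I = [0,1)$, $\Phi_*\mathcal L^1 = \mm$, read the limit density $\tilde\sigma := \tfrac{\d\nu}{\d\mm}\circ\Phi$ as a function on $I$, and transport this fixed density profile to $\X_n$ via the corresponding parametrisations $\Phi_n$, $(\Phi_n)_*\mathcal L^1 = \mm_n$, setting $\nu_n := (\Phi_n)_*(\tilde\sigma\,\mathcal L^1)$. Since the entropy of such a pushforward depends only on $\tilde\sigma$ as a function on $I$ (with Jensen's inequality accounting for the non-injective case), this yields $\limsup_n \entm{\mm_n}(\nu_n) \le \ent(\nu)$; moreover the $\nu_n$ share a single density profile on $I$, hence are equi-integrable, and converge to $\nu$ in the sense of Section \ref{section: conv} as a consequence of the convergence of the maps $\Phi_n \to \Phi$. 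Combining this with the lower semicontinuity of the entropy along the given sequence, $\ent(\mu) \le \limi_n \entm{\mm_n}(\mu_n)$, controls the numerator: $\limi_n\big[\entm{\mm_n}(\mu_n) - \entm{\mm_n}(\nu_n)\big] \ge \ent(\mu) - \ent(\nu)$.

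The delicate ingredient, and the step I expect to be the main obstacle, is the convergence of the Wasserstein distances $\W_2(\mu_n,\nu_n) \to \W_2(\mu,\nu)$. The lower bound $\W_2(\mu,\nu) \le \limi_n \W_2(\mu_n,\nu_n)$ should follow from Kantorovich duality together with the control that convergence in concentration exerts on the laws of $1$-Lipschitz functions. For the matching upper bound I would read an optimal plan for $(\mu,\nu)$ as a plan $\tilde\pi$ on $I \times I$ through $\Phi$ and push it forward by $\Phi_n \times \Phi_n$, obtaining an admissible plan between $\nu_n$ and the canonical model $(\Phi_n)_*(\tilde\rho\,\mathcal L^1)$ of $\mu_n$ built from $\tilde\rho := \tfrac{\d\mu}{\d\mm}\circ\Phi$, whose cost $\int \sfd_n^2(\Phi_n(s),\Phi_n(t))\,\d\tilde\pi(s,t)$ converges to $\int \sfd^2(\Phi(s),\Phi(t))\,\d\tilde\pi = \W_2^2(\mu,\nu)$. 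This convergence of pulled-back squared distances is exactly what the notion of convergence of maps developed for this setting is designed to deliver, and it is here that the absence of a common extrinsic realisation bites: one cannot copy $\nu$ into an ambient space, so the comparison must be carried out intrinsically through the parametrisations, with the given $\mu_n$ reconciled with its canonical model by an additional $\W_2$-smallness estimate.

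Granting both convergences the conclusion is routine. Since $x \mapsto x^+$ is continuous and nondecreasing and the denominators converge to $\W_2(\mu,\nu) > 0$, the lower bound coming from the $\X_n$-representation gives
\[
\left( \frac{\ent(\mu) - \ent(\nu)}{\W_2(\mu,\nu)} + \frac{K}{2}\,\W_2(\mu,\nu) \right)^{\!+} \le \limi_n \slope{\mm_n}{\mu_n}.
\]
Taking the supremum over all admissible $\nu$ and invoking the variational representation of $\slope{\mm}{\mu}$ yields the claim.
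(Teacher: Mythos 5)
Your skeleton --- the variational formula for the slope under $K$-geodesic convexity, reduction to a single finite-entropy competitor $\nu$, and the need for a sequence $\nu_n$ with $\lims_n\entm{\mm_n}(\nu_n)\le\ent(\nu)$ and $\W_2(\mu_n,\nu_n)\to\W_2(\mu,\nu)$ --- is the same as the paper's, your Jensen argument for the entropy of the parametrised pushforward is correct, and the lower bound for the distances is indeed \eqref{eq: semicont W2}. But the step you yourself flag as delicate contains a genuine gap, and the mechanism you propose cannot be repaired. Under convergence in concentration the fibers of the parameters (or projections) are only \emph{observably} small, not metrically small: for $\X_n=S^n$, a sequence of uniformly $\CD(0,\infty)$ spaces converging in concentration to a point, the distance $\sfd_n(\Phi_n(s),\Phi_n(t))$ between parametrised points concentrates near $\pi/2$, while $\sfd(\Phi(s),\Phi(t))\equiv 0$. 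Hence the lifted cost $\int\sfd_n^2(\Phi_n(s),\Phi_n(t))\,\d\tilde\pi(s,t)$ does \emph{not} converge to $\int\sfd^2(\Phi(s),\Phi(t))\,\d\tilde\pi(s,t)$: any lift of an optimal plan through the parameters pays the internal diameter of the fibers on its off-diagonal part. Already for $\X_n=[0,1]\times S^n$ converging to $[0,1]$, a lift moving mass between two distinct fibers couples two essentially independent copies of the sphere measure and picks up an extra cost of order $\pi^2/4$ per unit of transported mass; avoiding this would require coupling distinct fibers along near-isometries (a ``parallel transport'' of fibers), a structure that convergence in concentration does not provide. The same obstruction defeats your proposed reconciliation of $\mu_n$ with its ``canonical model'': weak convergence plus equi-integrability controls the laws of $1$-Lipschitz observables, not quadratic transport cost, so no $\W_2$-smallness estimate between the given $\mu_n$ and any model built from $\nu$ and the parameters alone is available --- any correct construction of $\nu_n$ must use the actual sequence $\mu_n$.

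This missing ingredient is precisely what the paper's Theorem \ref{teo: pullback guidato} (guided pullback), through Lemma \ref{lemma: trasporto} (transverse transport), supplies, and by a route orthogonal to yours. There one starts from a recovery sequence $\mu_{2,n}$ for the competitor with uniformly bounded densities (Lemma \ref{lemma: approssimazione}), takes an optimal plan between the \emph{given} $\mu_n$ and $\mu_{2,n}$ for a \emph{truncated} cost, shows via Kantorovich duality combined with the defining property of concentration (every $1$-Lipschitz function on $\X_n$ is $\L^0$-close to the pullback of a $1$-Lipschitz function on $\X$) that its cost is asymptotically bounded by the diameter of the limit supports, and then removes, in total variation, the small amount of mass that the plan moves too far, replacing it by the identity coupling. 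The entropy of the resulting $\nu_n$ is then controlled not by Jensen but by the uniform TV-upper-semicontinuity of the entropy at bounded densities (Lemma \ref{lemma: semicontinuità uniforme}); this is also why the paper must first reduce to $\mu_n\le M\mm_n$ and remove that assumption at the end by truncation together with the identity $\slope{\mm}{f^2\mm}=8\ch(f)$ of Theorem \ref{teo: Slope=Cheeger} --- a closing step absent from your outline. Without this (or an equivalent) transverse-transport mechanism, your proof does not go through.
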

 	 The idea of the proof is simple: the uniform $K$--geodesical convexity of the entropies $\entm{\mm_n}$ allows to regard the slope as the supremum of a difference quotient (up to a correction): then, in order to prove the $\limi$ inequality, it is sufficient to be able to approximate the difference quotient itself. To do so means being able, given a converging sequence $\pr(\X_n)\ni\mu_n\rightharpoonup \mu\in\pr (\X)$ and a competitor $\nu\in \pr(\X)$ in the incremental ratio, to build another sequence $\nu_n\in \pr(\X_n)$ such that $\entm{\mm_n}(\nu_n)\to \ent(\nu)$ and $\W_2(\mu_n, \nu_n)\to \W_2(\mu, \nu)$. The careful construction of such a sequence is again topic of Section \ref{section: technical}, in Subsection \ref{subsection: pullback guidato}. In the rest of Section \ref{section: gamma conv slope} we exploit this result to obtain a direct proof of the full $\Gamma$--convergence of $|\partial^-\rm{Ent}|$, the convergence of the gradient flow of $\rm{Ent}$ and a Rellich--type theorem which, in turn, leads to the Mosco convergence of Cheeger energies and the convergence of the heat flow.
 	 
 	 Finally, in Section \ref{section: app} we show some of the possible applications of the results of the previous sections. In particular we show how it is possible, as in the mGH case:
 	 \begin{itemize}
 	 	\item to recover the stability of the eigenvalues of the laplacian;
 	 	\item to define a notion of $W^{1, 2}$ convergence for functions on varying spaces and a notion of $\L^2$ convergence of vector and tensor fields;
 	 	\item to recover the stability of the solutions to the continuity equations.
 	 \end{itemize}
	
	%  Finally, in the Appendix we show how, if convergence in concentration is realized through a suitable choice of projections (cfr. Section \ref{section: preliminari}), the $\Gamma-\lims$ for the slope of the entropy always holds, regardless of curvature bounds. The imposition in the choice of projections seems technical and we believe it can be dropped.
	
	\section{Preliminaries} \label{section: preliminari}
	\subsection{Some notation}
	The main objects of study in this paper are normalized metric measure spaces, whose properties come from the distance structure, the probability measure and their interaction. Here we recall some definitions and fix the notation for the rest of the work.
	
	Given a complete and separable metric space $(\X,\sfd)$, we denote by $C^0_b(\X)$ the set of continuous and bounded functions on $\X$, by $\B(\X)$ the family of Borel sets on $\X$ and by $\M(\X)$ the set of finite Borel measures on $\X$. On $\M(\X)$ we consider the weak convergence defined in duality with bounded continuous functions, i.e. given a sequence $(\mu_n)_{n\geq 1}$ and a limit measure $\mu$ in $\M(\X)$, we say that $\mu_n$ converges weakly to $\mu$ (or $\mu_n\rightharpoonup \mu$) if 
	\begin{equation}
		\int_\X \varphi\, \d\mu_n\to \int_\X \varphi\, \d\mu
	\end{equation}
	for all $\varphi\in C^0_b(\X)$.
	The weak topology coincides with what is usually called \emph{narrow topology} when restricted to $\pr(\X)$, the set of Borel probability measures. On this set, the weak topology can be metrized by either the Prohorov distance or the $p$-Wasserstein distance, $p\in[1,\infty)$, induced by the truncated metric $\sfd\wedge 1$.
%	$\W_p$:
%	\begin{deff}
%		Let $(\X, d)$ be a metric space. The \emph{Prohorov distance} between two probability measures $\mu, \nu\in \pr(\X)$ is defined as follows:
%		\begin{equation}
%			d_{P}(\mu, \nu)\coloneqq\inf\{\varepsilon>0:\,\nu(B)\leq \mu(B_\varepsilon)+\varepsilon \quad \forall B\in \mathcal{B}(\X)\},
%		\end{equation}
%		where $B_\varepsilon=\{x\in \X:\, d(x, B)\leq \varepsilon\}$.
%	\end{deff}
We shall frequently use the Prohorov precompactness criterion:
	\begin{thm}[Prohorov]
		Let $\mathcal K\subset \pr(\X)$. Then $\mathcal K$ is precompact with respect to weak convergence iff it is tight, i.e.\ for all $\varepsilon>0$ there exists a compact set $K\subset \X$ such that $\mu(\X\setminus K)<\varepsilon$ for all $\mu \in \mathcal K$.
	\end{thm}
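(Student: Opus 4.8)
The plan is to prove the two implications separately, using throughout that, since $(\X,\sfd)$ is separable, the weak topology on $\pr(\X)$ is metrizable (by either the Prohorov or a Wasserstein distance). Hence (pre)compactness and sequential (pre)compactness coincide, and in both directions it suffices to argue with sequences.

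For the implication ``tight $\Rightarrow$ precompact'', given a sequence $(\mu_n)\subset\mathcal K$ I would extract a weakly convergent subsequence as follows. First I would embed $\X$ homeomorphically into the Hilbert cube $Q:=[0,1]^{\N}$, which is compact and metrizable, and regard each $\mu_n$ as an element of $\pr(Q)$. Since $Q$ is compact, $\pr(Q)$ is weakly compact (the constant $1$ belongs to $C^0_b(Q)$, so weak-$*$ limits of probabilities remain probabilities), and I can pass to a subsequence with $\mu_n\weakto\nu$ in $\pr(Q)$. The role of tightness is then to guarantee that $\nu$ actually lives on $\X$: choosing compact $K_m\subset\X$ with $\mu(\X\setminus K_m)<1/m$ for every $\mu\in\mathcal K$, each $K_m$ is closed in $Q$, so the Portmanteau inequality gives $\nu(K_m)\ge\lims_n\mu_n(K_m)\ge 1-1/m$; letting $m\to\infty$ shows $\nu\big(\bigcup_m K_m\big)=1$ with $\bigcup_m K_m\subset\X$. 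Finally I would check that weak convergence in $\pr(Q)$ descends to weak convergence in $\pr(\X)$: given $\varphi\in C^0_b(\X)$ and $\eps>0$, on the compact set $K_m$ the function $\varphi$ extends by Tietze to some $\tilde\varphi\in C^0_b(Q)$, and the contributions outside $K_m$ are controlled uniformly by tightness and by $\nu(\X\setminus K_m)\le 1/m$.

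For the converse ``precompact $\Rightarrow$ tight'' I would argue by contradiction, exploiting completeness and separability. Fix $\eps>0$ and, using separability, for each $m\ge 1$ a countable family of balls $\{B(x_i,1/m)\}_i$ covering $\X$. The key claim is that for every $m$ there is a finite $N_m$ with $\mu\big(\bigcup_{i\le N_m}B(x_i,1/m)\big)>1-\eps 2^{-m}$ for all $\mu\in\mathcal K$ simultaneously. If this failed for some $m$, I could pick $\mu_N\in\mathcal K$ violating it with $N_m=N$; by precompactness a subsequence converges, $\mu_{N'}\weakto\mu$, and since the open sets $G_N:=\bigcup_{i\le N}B(x_i,1/m)$ increase to $\X$, the lower-semicontinuity part of Portmanteau would yield $\mu(G_{N_0})\le\limi_{N'}\mu_{N'}(G_{N_0})\le 1-\eps 2^{-m}$ for every fixed $N_0$, contradicting $\mu(G_{N_0})\uparrow\mu(\X)=1$. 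Granting the claim, the set $K:=\bigcap_m\overline{\bigcup_{i\le N_m}B(x_i,1/m)}$ is closed and totally bounded, hence compact by completeness, and $\mu(\X\setminus K)\le\sum_m \eps 2^{-m}=\eps$ uniformly over $\mathcal K$.

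I expect the main obstacle to be the sufficiency direction, and specifically the last step of \emph{transferring} the limit from the compactification $Q$ back to $\X$: the subspace $\X\subset Q$ need not be closed, a generic $\varphi\in C^0_b(\X)$ need not extend continuously to $Q$, and weak-$*$ limits in $C^0_b(Q)^{*}$ are a priori only finitely additive. Tightness is precisely the ingredient that repairs all three issues at once — identifying the limit as a genuine Borel probability measure on $\X$ and forcing the test-function convergence — so the crux is to deploy it quantitatively (through the uniform tail bounds $\mu(\X\setminus K_m)<1/m$) rather than as a qualitative afterthought.
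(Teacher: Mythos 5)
Your proof is correct, but note that the paper does not prove this statement at all: Prohorov's theorem is recalled in the preliminaries as a classical result, stated without proof, so there is no ``paper proof'' to compare against. What you wrote is essentially the standard textbook argument (as in Billingsley): for sufficiency, embed $\X$ in the Hilbert cube $Q$, use compactness of $\pr(Q)$ to extract a weak limit $\nu$, use the upper Portmanteau inequality on the compact (hence closed in $Q$) sets $K_m$ to see that $\nu$ concentrates on the $\sigma$-compact set $\bigcup_m K_m\subset\X$, and transfer test functions via Tietze extension with uniformly controlled tails; for necessity, the contradiction argument with the open sets $G_N=\bigcup_{i\le N}B(x_i,1/m)$ and the lower Portmanteau inequality, followed by the totally-bounded-plus-closed construction of $K$, is exactly the classical route and correctly uses completeness of $\X$ where it is needed. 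One small inaccuracy in your closing commentary: since $Q$ is compact, $C^0_b(Q)^*=C^0(Q)^*$ consists of genuine countably additive signed Radon measures by Riesz--Markov, so finite additivity is never an issue once you have passed to the compactification --- that concern would only arise if one worked directly with $C^0_b(\X)^*$ for non-compact $\X$; this does not affect the validity of your proof, since tightness is still exactly what identifies the limit as a measure living on $\X$ rather than on $Q\setminus\X$.
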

		\begin{deff}
		A \emph{metric measure space} (short \emph{m.m.s.}) is a triple $(\X, \sfd, \mm)$ where $(\X, \sfd)$ is a complete and separable metric space and $\mm$ is a nonnegative Borel measure which is finite on bounded sets. When $\mm\in\pr(\X)$, we say that the m.m.s. is normalized (or a n.m.m.s.).
	\end{deff}
		We say that two m.m.s.\ are isomorphic if there exists a measure preserving isometry of the supports of the measures. Notice in particular that $(\X,\sfd,\mm)$ is always isomorphic to $(\supp \mm,\sfd,\mm)$.	With a small abuse in notation, we will often identify isomorphic metric measure spaces.

%	In the following we will write $d^H_{\L^0}$ for the Hausdorff distance induced by $d_{\L^0}$ on the closed subsets of $\L^0$, i.e. given $A, B\subset \L^0(\X)$ closed, we write
%	\begin{equation}
%		d^H_{\L^0}(A, B)\coloneqq \inf \{\varepsilon>0: A\subset B+B(0, \varepsilon), B\subset A+B(0, \varepsilon)\},
%	\end{equation}
%	where the balls are those coming from the $\L^0$--distance.
	
%	In order to study the convergence of (n.)m.m.s. it is useful to introduce some well known distances: the \emph{Prohorov distance}, metrizing the weak convergence of probability measures and the $\L^0$--\emph{distance} (or \emph{Ky--Fan distance}), metrizing the convergence in measure.	

	\subsection{Convergence in concentration of metric measure spaces}

	Given a normalized metric measure space $(\X,\sfd,\mm)$, we denote by $\L^0(\X)$ the set of $\mm$--a.e.\ equivalence classes of real valued functions. This space is equipped with the complete and separable distance $\sfd_{\L^0}$ defined as
		\begin{equation}
			\sfd_{\L^0}(f, g)\coloneqq \inf\{\varepsilon>0:\, \mm(\{|f-g|>\varepsilon\})<\varepsilon\}.
		\end{equation}
	It is well known that $f_n\to f$  in $\L^0(\X)$, also called in measure or probability, if and only if from each subsequence  one can extract a further subsequence which converges to $f$ pointwise $\mm$--a.e.. We shall also write   $\sfd^{\sf H}_{\L^0}$ for the Hausdorff distance induced by $\sfd_{\L^0}$ on the subsets of $\L^0$, i.e. given $A, B\subset \L^0(\X)$, we write
	\begin{equation}
		\sfd^{\sf H}_{\L^0}(A, B)\coloneqq \inf \big\{\varepsilon>0: A\subset B^\eps,\ B\subset A^\eps \big\},
	\end{equation}
	where the $\eps$-neighbourhoods  $A^\eps,B^\eps$ of the sets $A,B$  are defined via the $\L^0$--distance.

We shall denote by $I$ the interval $[0,1]\subset\R$, equipped with its standard distance and measure. A {\bf parameter} for the n.m.m.s.\ $(\X, \sfd, \mm)$ is a Borel map $\varphi:I\to \X$ such that  $\varphi_\# \Lu=\mm$. It is easy to see that parameters exist for any n.m.m.s. (this is also a consequence of the more powerful Borel isomorphism theorem, see for instance \cite[Theorem 17.41]{Ke95}).

Given a metric space $(\X,\sfd)$, we shall denote by   $\Lip_1(\X)$ the space of real valued 1--Lipschitz functions on $\X$.

	\begin{deff} \label{def: convergenza in concentrazione}
		Let $(\X, \sfd_\X, \mm_\X), (\Y, \sfd_\Y, \mm_\Y)$ be n.m.m.s.. The \emph{observable distance} between $\X$ and $\Y$ is defined as follows:
		\begin{equation}
			\sfd_{\sf conc}(\X, \Y)\coloneqq\inf\Big\{\sfd_{\L^0}^{\sf H}(\varphi^*\Lip_1{(\X)}, \psi^*\Lip_1{(\Y)}): \ \varphi, \psi\text{ parameters for $\X,\Y$ respectively} \Big\}.
		\end{equation}
		If $\sfd\sf_{conc}(\X_n, \X)\to 0$ we say that $\X_n\xrightarrow{conc}\X$ or that $\X_n$ converges to $\X$ \emph{in concentration}.
	\end{deff}
	In particular, if the limit space $\X$ is a point space, we say that the sequence $(\X_n)$ is a Lévy family. 
	
	An  equivalent definition of distance in concentration can be given using couplings of the measures of the two spaces, so that the projections take the place of parameters. More precisely, given $\ppi\in \adm(\mm_\X, \mm_\Y)$ - the set of probability measures on $\X\times\Y$ having $\mm_\X,\mm_\Y$ as marginals - we can easily estimate from above the observable distance with
	\begin{equation}
		\sfd_{\sf conc}^\sppi(\X, \Y)\coloneqq \sfd_{\L^0}^{\sf H}\big(\operatorname{proj}_1^*\Lip_1{(\X)}, \operatorname{proj}_2^*\Lip_1{(\Y)}\big),
	\end{equation}
	where the $\L^0$ distance on $\X\times \Y$ is computed with respect to the measure $\ppi$. This estimate can be made sharp by choosing the appropriate coupling:
	\begin{prop}[\cite{Na22}] We have
		\begin{equation}
			\sfd_{\sf conc}(\X, \Y)=\min_{\ppi\in \adm(\mm_\X, \mm_\Y)} \sfd_{\sf conc}^\sppi(\X, \Y).
		\end{equation}
	\end{prop}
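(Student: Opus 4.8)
The plan is to prove the two inequalities separately and then upgrade the infimum to a minimum by a compactness argument. The estimate $\sfd_{\sf conc}(\X,\Y)\le \sfd_{\sf conc}^\sppi(\X,\Y)$ for every $\ppi\in\adm(\mm_\X,\mm_\Y)$ is the easy bound already recorded just above the statement, so the real content is that this bound is sharp and attained. The one tool underlying everything is the elementary remark that if $\theta\colon I\to \X\times\Y$ is any Borel map with $\theta_\#\Lu=\ppi$, then the pullback $u\mapsto u\circ\theta$ is an isometry from $(\L^0(\X\times\Y,\ppi),\sfd_{\L^0})$ onto its image in $\L^0(I,\Lu)$: indeed $\{|u\circ\theta-v\circ\theta|>\eps\}=\theta^{-1}(\{|u-v|>\eps\})$ has $\Lu$-measure $\ppi(\{|u-v|>\eps\})$, so $\sfd_{\L^0}(u\circ\theta,v\circ\theta)=\sfd_{\L^0}(u,v)$. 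Since the Hausdorff distance between two sets depends only on the mutual distances of their points, such an isometry preserves $\sfd_{\L^0}^{\sf H}$ exactly.

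For the inequality $\inf_\ppi \sfd_{\sf conc}^\sppi(\X,\Y)\le \sfd_{\sf conc}(\X,\Y)$ I would start from an almost optimal pair of parameters $\varphi,\psi$ for $\X,\Y$ and set $\ppi\coloneqq(\varphi,\psi)_\#\Lu$, which lies in $\adm(\mm_\X,\mm_\Y)$ since its marginals are $\varphi_\#\Lu=\mm_\X$ and $\psi_\#\Lu=\mm_\Y$. Writing $\theta=(\varphi,\psi)$ we have $\theta_\#\Lu=\ppi$, $\operatorname{proj}_1\circ\theta=\varphi$ and $\operatorname{proj}_2\circ\theta=\psi$, hence $\theta^*$ carries $\operatorname{proj}_1^*\Lip_1(\X)$ onto $\varphi^*\Lip_1(\X)$ and $\operatorname{proj}_2^*\Lip_1(\Y)$ onto $\psi^*\Lip_1(\Y)$. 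By the isometry remark this identifies $\sfd_{\sf conc}^\sppi(\X,\Y)$ with $\sfd_{\L^0}^{\sf H}(\varphi^*\Lip_1(\X),\psi^*\Lip_1(\Y))$, which is arbitrarily close to $\sfd_{\sf conc}(\X,\Y)$. (Conversely, choosing a parameter $\theta$ of the n.m.m.s.\ $(\X\times\Y,\ppi)$ and setting $\varphi=\operatorname{proj}_1\circ\theta$, $\psi=\operatorname{proj}_2\circ\theta$ recovers the easy inequality, so the two infima in fact coincide term by term.) Combined with the easy estimate this gives $\inf_\ppi \sfd_{\sf conc}^\sppi(\X,\Y)=\sfd_{\sf conc}(\X,\Y)$.

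It then remains to show the infimum is attained. The set $\adm(\mm_\X,\mm_\Y)$ is weakly compact: its elements have the fixed tight marginals $\mm_\X,\mm_\Y$, so the family is tight and Prohorov's theorem applies, while weak limits of couplings are again couplings because marginals depend weakly continuously on $\ppi$. Hence it suffices to prove that $\ppi\mapsto\sfd_{\sf conc}^\sppi(\X,\Y)$ is lower semicontinuous for weak convergence, and then a minimizing sequence $\ppi_k\weakto\ppi$ produces a minimizer. To prove lower semicontinuity, fix $\eps$ strictly above $\limi_k\sfd_{\sf conc}^{\sppi_k}(\X,\Y)$ and, along a subsequence realizing the $\limi$, unfold the defining condition of the $\L^0$-Hausdorff distance: for each $f\in\Lip_1(\X)$ there is $g_k\in\Lip_1(\Y)$ with $\ppi_k(\{|f\circ\operatorname{proj}_1-g_k\circ\operatorname{proj}_2|>\eps'\})\le\eps'$ for a suitable $\eps'<\eps$ (up to routine bookkeeping of the thresholds).

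The final step is then to pass to the limit in $k$. I would argue that the $g_k$ are equibounded on compacta — here tightness of the $\ppi_k$ pins $g_k$ to the fixed $f$ on a set of large measure, and $1$-Lipschitz continuity propagates the bound — so that Arzel\`a--Ascoli extracts $g_k\to g$ locally uniformly with $g\in\Lip_1(\Y)$. Local uniform convergence of $h_k\coloneqq|f\circ\operatorname{proj}_1-g_k\circ\operatorname{proj}_2|$ to $h\coloneqq|f\circ\operatorname{proj}_1-g\circ\operatorname{proj}_2|$, together with $\ppi_k\weakto\ppi$ and tightness, yields $(h_k)_\#\ppi_k\weakto h_\#\ppi$ by the generalized continuous mapping theorem; the portmanteau inequality on the open half-line $(\eps,\infty)$ then gives $\ppi(\{h>\eps\})\le\limi_k\ppi_k(\{h_k>\eps'\})\le\eps'\le\eps$, i.e.\ $g$ is an $\eps$-competitor for $f$ with respect to $\ppi$. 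By symmetry one gets the reverse family of competitors, whence $\sfd_{\sf conc}^\sppi(\X,\Y)\le\eps$, and letting $\eps\downarrow\limi_k\sfd_{\sf conc}^{\sppi_k}$ proves lower semicontinuity. I expect the genuine difficulty to be exactly this passage to the limit: controlling the competitors $g_k$ uniformly enough (equiboundedness and equicontinuity under only weak convergence of the $\ppi_k$) to carry the $\L^0$-Hausdorff condition to the limit, since both the $\L^0$-distance and its Hausdorff version depend on $\ppi$ in a genuinely nonlinear way. A preliminary truncation reducing to uniformly bounded $f,g$ is likely needed to make the Arzel\`a--Ascoli extraction rigorous.
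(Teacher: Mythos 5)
The paper itself contains no proof of this proposition: it is imported directly from \cite{Na22}, so there is no internal argument to compare yours against; I can only assess your proof on its own terms, and it is correct. Your two pillars are exactly the right ones. First, the observation that any Borel $\theta$ with $\theta_\#\Lu=\ppi$ induces a $\sfd_{\L^0}$-preserving embedding $u\mapsto u\circ\theta$ does more than give the two inequalities separately: it shows that the map $(\varphi,\psi)\mapsto(\varphi,\psi)_\#\Lu$ from pairs of parameters to couplings, with ``inverse'' given by parametrizing $(\X\times\Y,\ppi)$, matches the value $\sfd^{\sf H}_{\L^0}(\varphi^*\Lip_1(\X),\psi^*\Lip_1(\Y))$ with $\sfd^{\sppi}_{\sf conc}(\X,\Y)$ term by term, so the two infima coincide as sets of attained values and not merely as numbers. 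Second, your attainment argument (weak compactness of $\adm(\mm_\X,\mm_\Y)$ plus lower semicontinuity of $\ppi\mapsto\sfd^\sppi_{\sf conc}(\X,\Y)$) goes through, and the two difficulties you flag at the end are in fact harmless: (a) since all quantities here are bounded by $1$ (the measures are probabilities, so $\sfd_{\L^0}\le 1$ and hence $\sfd^\sppi_{\sf conc}\le 1$), lower semicontinuity is trivial unless $\limi_k\sfd^{\sppi_k}_{\sf conc}<1$, which guarantees the thresholds $\eps'$ can be taken $<1$ and your ``pinning'' set $(K_\X\times K_\Y)\setminus\{h_k>\eps'\}$ has positive measure; (b) no truncation is needed for the extraction, because a $1$-Lipschitz family bounded at one point of $K_\Y$ (which the pinning provides) is pointwise bounded everywhere, so a diagonal extraction over a countable dense subset of $\Y$ gives pointwise and hence locally uniform convergence --- local compactness of $\Y$ is never required. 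With those two remarks your sketch closes into a complete proof, following what is in substance the route of \cite{Na22}: the parameter--coupling correspondence for the identity of infima, and compactness plus semicontinuity for the existence of a minimizer.
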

	
%	Convergence in concentration is a weaker notion than mGH convergence: indeed, it holds that $d_{conc}\leq \square$, so it allows for more converging sequences. It is also possible to give another characterization based directly on $\square$: given a n.m.m.s. $\X$ we denote $\mathcal P_\X\coloneqq\{\Y\prec \X: \Y\text{ is a n.m.m.s.}\}$, with $\Y\prec \X$ meaning that a measure--preserving 1-Lipschitz map $\phi:\X\to \Y$ exists. 
%	\begin{prop}[{\cite[Proposition 6.12]{Sh16}}]\label{proposition: piramidi}
%		$\X_n\xrightarrow{conc}\X$ iff $\mathcal P_{\X_n}\to \mathcal P_\X$ weakly Hausdorff, i.e.
%		\begin{itemize}
%			\item For each $\Y\in \mathcal P_\X$ there exists $\Y_n\in\mathcal P_{\X_n}$, $n\geq 1$ s.t. $\square(\Y_n, \Y)\to 0$.
%			\item For each $\Y\notin \mathcal P_\X$ and sequence $(\Y_n)_{n\geq 1}$ with $\Y_n\in\mathcal P_{\X_n}$, it holds $\limi_n \square(\Y_n, \Y)>0$.
%		\end{itemize}
%	\end{prop}
%	\begin{rmk}
%		In \cite{Sh16} it is also proven that the identification of $\X$ with $\mathcal P_\X$ is a topological embedding onto a suitable pre-compact (with respect to weak Hausdorff convergence) subset of the parts of the space of n.m.m.s..
%		Recently, a similar results has been obtained in the context of Gromov--Hausdorff convergence by Nakajima and Shioya in \cite{NaSh21}, where this characterization has been used in concert with the notion of ultralimit to provide a compactification of the space of compact metric spaces.
%	\end{rmk}
	A very useful characterization of sequences of spaces converging in concentration can be given in terms of ``projection'' maps on the limit space. This procedure allows to regard the converging spaces as sort of fibrations on the limit.
	\begin{prop}[\cite{FuSh13}, \cite{OzYo19}]\label{prop: caratterizzazione}
		Let $(\X_n, \sfd_n, \mm_n)$, $n\geq 1$, $(\X, \sfd, \mm)$ be n.m.m.s.\ such that $\X_n\concto \X$. Then there exist $p_n:\X_n\to \X$ Borel maps, $K_n\subset \X_n$ compact subsets and a sequence $\varepsilon_n\downarrow 0$ such that:
		\begin{enumerate}[i)]
			\item\label{it:pnconc} $\sfd_{\L^0}^{\sf H}(\Lip_1{(\X_n)}, p_n^*\Lip_1{(\X)})\leq \varepsilon_n$;
			\item\label{it:convpn} $\left(p_n\right)_{\#} \mm_n \rightharpoonup \mm$;
			\item\label{it:appr1lip} $\sfd(p_n(x), p_n(x'))\leq \sfd_n(x, x')+\varepsilon_n$ for all $x, x'\in K_n$;
			\item\label{it:misurakn} $\mm_n(K_n)\geq 1-\varepsilon_n$;
			\item\label{it:bounded} $\lims_n \sup_{x\in \X_n\setminus K_n} \sfd(p_n(x), y)<\infty$ for every  $y\in \X$.
		\end{enumerate}
	\end{prop}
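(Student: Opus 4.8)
The plan is to realize the convergence in concentration by \emph{couplings}, to turn each coupling into a Borel map by a disintegration--and--selection argument, and finally to extract the compact sets by an Egorov-type argument fed by a finite net of distance functions. Since $\X_n\concto \X$, the coupling characterization of $\sfd_{\sf conc}$ (the Proposition of Nakajima quoted above) provides optimal couplings $\pi_n\in\adm(\mm_n,\mm)$ with $\sfd_{\L^0}^{\sf H}(\proj_1^*\Lip_1(\X_n),\proj_2^*\Lip_1(\X))=\sfd_{\sf conc}(\X_n,\X)=:\delta_n\to 0$, the $\L^0$-distance being computed with respect to $\pi_n$ on $\X_n\times\X$. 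I disintegrate $\pi_n=\int_{\X_n}\delta_x\otimes\kappa_n^x\,\d\mm_n(x)$ with Borel kernel $x\mapsto\kappa_n^x\in\pr(\X)$. Since the second marginal of $\pi_n$ is the fixed (hence tight) measure $\mm$, for each level there is a \emph{common} compact $Q\subset\X$ with $\int\kappa_n^x(\X\setminus Q)\,\d\mm_n\le\eta$, so $\kappa_n^x$ puts mass $\ge 1-\sqrt\eta$ in $Q$ for all $x$ outside a set of $\mm_n$-measure $\le\sqrt\eta$, uniformly in $n$. Fixing a finite $\varepsilon_n$-net $z_1,\dots,z_{M_n}$ of $Q$ and the $1$-Lipschitz functions $d_j:=\sfd(\cdot,z_j)$, the closeness of the pullback families yields, for each $j$, some $f\in\Lip_1(\X_n)$ with $d_j(y)\approx f(x)$ $\pi_n$-a.e.; hence $y\mapsto d_j(y)$ is nearly $\kappa_n^x$-constant for $\mm_n$-most $x$. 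Combining the finitely many $j$ with the net property forces $\kappa_n^x$ to concentrate within $\varepsilon_n$ of a single point of $Q$ for all $x$ in a set $G_n$ with $\mm_n(G_n)\ge 1-\varepsilon_n$, and I define $p_n$ on $G_n$ as a Borel selection of such a concentration centre (by a measurable selection theorem).

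To prove (i) and (ii), for $g\in\Lip_1(\X)$ I take its $\pi_n$-partner $f\in\Lip_1(\X_n)$, so $\pi_n(\{|f(x)-g(y)|>\delta_n\})\le\delta_n$. Disintegrating and applying Markov's inequality, the set of $x$ where $\kappa_n^x(\{|f(x)-g(y)|>\delta_n\})>\sqrt{\delta_n}$ has $\mm_n$-measure $\le\sqrt{\delta_n}$; on the complement of this set inside $G_n$ one can pick $y$ with both $|f(x)-g(y)|\le\delta_n$ and $\sfd(y,p_n(x))\le\varepsilon_n$, whence $|f(x)-g(p_n(x))|\le\delta_n+\varepsilon_n$ since $g$ is $1$-Lipschitz. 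Thus $p_n^*g=g\circ p_n$ is $\L^0(\X_n)$-close to $f\in\Lip_1(\X_n)$ with an error $\theta_n\to 0$ that is \emph{uniform in} $g$; the symmetric argument gives the reverse inclusion, proving (i). For (ii), the same estimate shows that the graph coupling $(\Id,p_n)_\#\mm_n$ is $\W_1$-close (in the truncated metric) to $\pi_n$, whose second marginal is $\mm$, so $(p_n)_\#\mm_n\rightharpoonup\mm$.

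For (iii)--(iv) I reuse $Q$ and the net. On $Q$ the metric is recovered by $\sfd(a,b)\le\max_{j\le M_n}\big(d_j(a)-d_j(b)\big)+2\varepsilon_n$. Each $p_n^*d_j$ is $\theta_n$-close in $\L^0(\X_n)$ to some $f_{n,j}\in\Lip_1(\X_n)$, with $\theta_n\to 0$ independent of $j$ by (i). Applying Egorov to these \emph{finitely many} pairs yields a compact $K_n\subseteq G_n$ with $\mm_n(K_n)\ge 1-\varepsilon_n$ on which $|p_n^*d_j-f_{n,j}|\le\theta_n$ for every $j\le M_n$; chaining the net inequality, the Egorov bound and the $1$-Lipschitz bound on $f_{n,j}$ gives, for $x,x'\in K_n$,
\[
\sfd(p_n(x),p_n(x'))\le \max_{j\le M_n}\big(f_{n,j}(x)-f_{n,j}(x')\big)+2\theta_n+2\varepsilon_n\le \sfd_n(x,x')+2\theta_n+2\varepsilon_n,
\]
which is (iii), while (iv) is the measure bound. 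Finally I redefine $p_n\equiv\bar y$ on $\X_n\setminus K_n$ for a fixed $\bar y\in\X$: this alters $p_n$ only on a set of measure $\le\varepsilon_n$ (harmless for (i) and (ii)) and makes $\sfd(p_n(x),y)=\sfd(\bar y,y)$ constant outside $K_n$, so (v) holds trivially.

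The main obstacle is (iii). Because the closeness of the Lipschitz families is only \emph{in measure}, one can control at most finitely many distance functions simultaneously on a set of large measure, yet the number $M_n$ of net points needed grows as $Q$ is exhausted and the net refined. The argument therefore hinges on a diagonal schedule of the tightness level $\eta_n$, the net fineness $\varepsilon_n$ and the (uniform) rate $\theta_n$ so that $M_n\,\theta_n\to 0$: for each fixed compactum and fineness $M$ is finite while $\theta_n\to 0$, so letting $\eta_n,\varepsilon_n\downarrow 0$ slowly enough suffices. The remaining technical points are the Borel measurability of the centre selection and a sufficiently quantitative statement of the kernel concentration to feed the Fubini step in the proof of (i).
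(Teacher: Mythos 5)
Your proposal concerns a statement the paper itself never proves: Proposition \ref{prop: caratterizzazione} is imported from \cite{FuSh13} and \cite{OzYo19}, so there is no internal proof to compare against, and your argument must be judged on its own. On those terms it is sound, and it amounts to a coupling-based repackaging of the construction in the cited references: there the transfer of $1$-Lipschitz functions is organized through parameters over $I$ (the coupling formulation of $\sfd_{\sf conc}$, due to Nakajima and quoted in the paper, postdates those works), whereas you invoke the optimal coupling $\ppi_n\in\adm(\mm_n,\mm)$, disintegrate it into kernels $\kappa_n^x$, and take $p_n(x)$ to be a concentration centre of $\kappa_n^x$; the driving mechanism --- reduce to finitely many $1$-Lipschitz coordinates $d_j=\sfd(\cdot,z_j)$ attached to an $\varepsilon_n$-net of a compact set carrying most of $\mm$, transfer them to $\X_n$ through the Hausdorff closeness of the Lipschitz families, and approximately invert --- is the same. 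Your formulation buys two simplifications: tightness of the second marginal is automatic (it equals $\mm$ for every $n$), and the measurable-selection issue you defer to ``a measurable selection theorem'' can be dissolved entirely by selecting the centre among the finitely many net points, e.g.\ $p_n(x):=z_{j(x)}$ with $j(x)$ the least index such that $\kappa_n^x\big(B(z_j,3\varepsilon_n+4\delta_n)\big)\ge\tfrac12$, which is Borel because $x\mapsto\kappa_n^x(B)$ is Borel for each fixed ball. The two quantitative points you flag are indeed the only delicate ones and both close as you indicate: the per-coordinate Markov loss is $O(\sqrt{\delta_n})$ uniformly in $j$, so a diagonal choice of $\eta_n,\varepsilon_n\downarrow 0$ slow enough that $M_n\sqrt{\delta_n}\to 0$ handles the union over the net; and once $\kappa_n^x$ charges a set of diameter $O(\varepsilon_n+\delta_n)$ with mass close to $1$ for all $x$ in a set $G_n$ of measure close to $1$, items (i)--(iv) follow from exactly the intersection and chaining estimates you write (note that producing the compact $K_n\subset G_n$ is inner regularity of $\mm_n$ on a Polish space, not really Egorov, since only one $n$ at a time is involved), while item (v) is trivial after setting $p_n\equiv\bar y$ off $K_n$ --- precisely the normalization the paper itself records in Remark \ref{oss: equi}.
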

	\begin{rmk}\label{oss: equi}
		Notice that the \emph{non-exceptional domains} $K_n$ can be shrunk arbitrarily as long as $\mm_n(K_n)\to 1$ still holds. Also,  the value of $p_n$ outside $K_n$ is not really relevant as long as the requirement in item \eqref{it:bounded} is satisfied; one could, for instance, define them to be identically $\bar y$ outside $K_n$, where $\bar y$ is some fixed point in $\X$. 
	\end{rmk}
	This last result allows a more hands--on kind of approach to the study of convergence in concentration, which is the one that we will adopt in the rest of this work.
\begin{rmk}[Comments about the ``projection'' maps $p_n$]
One can roughly describe the content of Proposition \ref{prop: caratterizzazione} as follows:
\begin{itemize}
\item[a)] The spaces $\X_n$ are divided into a `relevant part' $K_n$ (whose measure is approaching 1 by item \eqref{it:misurakn}) and an `irrelevant part' $\X_n\setminus K_n$ that we don't really care about.
\item[b)] There are maps $p_n:K_n\to \X$ that are approximately 1-Lipschitz (by item \eqref{it:appr1lip})  and that almost send $\mm_n$ to $\mm$  (by item \eqref{it:convpn}).
\item[c)] The combination of the above and item \eqref{it:pnconc} tell that on the fibers of the $p_n$'s we basically observe a concentration of measure phenomenon. 
\end{itemize}
This last point is made precise by the following result, proved in \cite{Sh16}. To state it we recall that given $\alpha\leq\mm(\X)$ the \emph{partial diameter} $\operatorname{Diam}(\X; \alpha)$ of $\X$ with parameter $\alpha$ is defined as 
		\begin{equation}
			\operatorname{Diam}(\X; \alpha)\coloneqq \inf \{ \operatorname{diam}(A)\ :\  A\subset\X\text{ is Borel with } \mm(A)\geq \alpha\}.
		\end{equation}
Then the \emph{observable diameter} $\operatorname{ObsDiam}(\X, \alpha)$ of $\X$ with parameter $\alpha$ is defined as
		\begin{equation}
			\operatorname{ObsDiam}(\X, \alpha)\coloneqq\sup\{ \operatorname{diam}(f_\#\mm; \alpha)\ :\  f\in\Lip_1{(\X)}\},
		\end{equation}
		where by $\operatorname{diam}(f_\#\mm; \alpha)$ we intend the partial diameter of $\R$ equipped with the measure $f_\#\mm$.
It is clear that $\operatorname{ObsDiam}(\mm, \alpha)\leq \operatorname{Diam}(\mm; \alpha)$, but less so that the inequality can be strict: the key example to have in mind is that of the spheres $S^n$ equipped with their standard round metric and the normalized volume measure. One can check - see \cite[Section 3$\tfrac12$.20]{Gr99} -  that in this case we have 
\[
\text{$\operatorname{Diam}(S^n; \alpha)\to\tfrac\pi2\quad$ and $\quad\operatorname{ObsDiam}(S^n; \alpha)\to 0\quad$ as $n\uparrow\infty$ for any $\alpha\in(0,1)$.}
\]

More generally, we have $\operatorname{ObsDiam}(\X_n; \alpha)\to 0$ for any $\alpha\in(0,1)$ if and only if the sequence $(\X_n)$ converges in concentration to the one point space (see \cite{Sh16}).

Now, the rigorous claim behind item $(c)$ above is: for any Borel set $A\subset \X$ and any $\alpha\in(0,\mm(A))$ we have
\[
\lims_n \operatorname{ObsDiam}(p_n^{-1}(A),\alpha)\leq \operatorname{diam}(A),
\]
where it is intended that $p_n^{-1}(A)\subset\X_n$ is equipped with the restriction of the distance and the measure. Notice that this inequality goes, in some sense, in the opposite direction of the almost 1-Lipschitz property encoded in item \eqref{it:appr1lip}.
\end{rmk}
		
\begin{rmk}\label{oss: equivalenza}
Let the maps $p_n$ be as in Proposition \ref{prop: caratterizzazione} and let $q_n:\X_n\to\X$ be such that 
\begin{equation}
\label{eq:eqpnqn}
\lims_n\sfd_{\L^0}(p_n,q_n)= 0,
\end{equation}
where 
\begin{equation}\label{eq: dist L0}
	\sfd_{\L^0}(p_n, q_n)\coloneqq \inf\{\varepsilon>0:\, \mm_n(\{\sfd(p_n, q_n)>\varepsilon\})<\varepsilon\}.
\end{equation}
Then it is easy to prove that the $q_n$'s also satisfy the properties in the statement (possibly with different $K_n$'s and $\eps_n$'s).

Whenever \eqref{eq:eqpnqn} holds we say that the $p_n$'s and the $q_n$'s are \emph{equivalent}.
\end{rmk}

	\section{Convergence of measures and functions}\label{section: conv}
	In order to study the stability of quantities defined on n.m.m.s. with respect to convergence in concentration it is fundamental to be able to speak of convergence of measures and functions defined along converging sequences of spaces. These concepts are well known in the case of mGH convergence (see \cite{Gi23} and references therein) and were partly sketched in \cite{OzYo19} in the setting of convergence in concentration. Since we will need to rely on somewhat finer properties of the convergences, we propose here a more systematic treatment of functional limits in this setting, extending the definitions already present in the mGH case.
	
	In the rest of Section \ref{section: conv} we will consider a sequence of n.m.m.s. $(\X_n, \sfd_n, \mm_n)$, a \emph{base} n.m.m.s. space $(\X, \sfd, \mm)$ and a sequence of Borel \emph{projections} $p_n:\X_n\to \X$ such that $\left(p_n\right)_\#\mm_n\rightharpoonup \mm$. For ease of notation, we set $\tilde{\mm}_n=\left(p_n\right)_\#\mm_n$. Notice that, while we are interested in the situation in which the spaces $\X_n$ do converge in concentration, the structure of the convergences depends only on this more general form of the spaces. We want to stress the fact that all of the convergences that will be studied depend on the choice of the maps $p_n$. We will later prove in Section \ref{section: technical} that if the spaces $\X_n$ are converging in concentration and the projections $p_n$ are as in Proposition \ref{prop: caratterizzazione}, then the notion of convergence is somehow canonical.
	
	In order to keep the notation as simple as possible, anyway, whenever the maps - and thus the convergence structure - will be clear from the context, we will omit the explicit reference to the $p_n$'s.

	\subsection{Convergence of measures}
	We start by defining what we mean by convergence of measures.
	\begin{deff}[Weak convergence of measures]
		Let $\mu_n$ be finite Borel measures on $\X_n$, $n\geq 1$, and let $\mu$ be a finite Borel measure on $\X$. We say that $\mu_n\rightharpoonup\mu$ whenever $\left(p_n\right)_\#\mu_n \rightharpoonup\mu$.
	\end{deff}
	Remark \ref{oss: equi} suggests that the notion of weak convergence of measures depends way too much on the particular structure of the projections $p_n$ even when the spaces converge in concentration. In order to tackle this problem is to require that the sequence of measures does not weigh too much sets of vanishing measure. The correct way to do this, as Theorem \ref{teo: unicità limite} will show, is to ask for equi--integrability.
	\begin{deff}[Equi--integrability]\label{def: equi--int}
		We say that the family $\mathcal F\subset \bigcup_n \M(\X_n)$ is \emph{equi--integrable} if 
		\begin{equation}
			\lim_n \sup_{\mu\in\mathcal{F}\cap \mathscr M(\X_n)} |\mu|(E_n)=0
		\end{equation}
		for each sequence of Borel sets $E_n\subset \X_n$ such that $\mm_n(E_n)\to 0$.
	\end{deff}
% {\color{red}	
% qui il punto e' vedere se riusciamo, col trucco dei `molti bump' a trovare l'isomorfismo che non dipende dalla successione di misure
% \begin{rmk}
% 		Assume $\X_n\xrightarrow{conc} \X$ and the convergence is realized (in the sense of Proposition \ref{prop: caratterizzazione}) by two sequences of projections $(p_n)_{n\geq 1}$ and $(q_n)_{n\geq 1}$. Then if $(\mu_n)_{n\geq 1}$ is an equi--integrable sequence converging to $\mu_p$ with respect to the structure induced by $(p_n)_{n\geq 1}$ and to $\mu_q$ with respect to $(q_n)_{n\geq 1}$, it is easy to see that there exists an isometry $\phi: \supp \mu_p\to \supp \mu_q$ such that $\phi_{\#} \mu_p=\mu_q$. Indeed equi--integrability implies that $(\X_n, d_n, \mu_n)$ converges in concentration to both $(\X, \sfd, \mu_p)$ and $(\X, d, \mu_q)$, which must then be isomorphic.
% 		Notice however that the isomorphism $\phi$ depends a priori on the sequence $(\mu_n)_{n\geq 1}$, while it would be nice to have it depending only on the sequences of projections.
% 	\end{rmk}}
	For later use, we prove a couple of easy lemmata. In the following we will say that a sequence of measures $(\mu_n)_{n\geq 1}$ has a weakly convergent subsequence if there exists a subsequence $(\mu_{n_j})_{j\geq 1}$ which converges weakly with respect to the projections $p_{n_j}$. It is easy to check that a sequence converges to a measure if and only if all of its subsequences have a subsequence converging to that measure.
	\begin{lm}\label{lemma: compattezza equi-int}
		Any equi--integrable sequence of probability measures $n\mapsto\mu_n\in \pr(\X_n)$ has a weakly convergent subsequence.
	\end{lm}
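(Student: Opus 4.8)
The plan is to reduce the statement to a tightness question for the pushforward measures $\nu_n\coloneqq (p_n)_\#\mu_n\in\pr(\X)$ and then invoke Prohorov's theorem. Indeed, by the very definition of weak convergence along the $p_n$'s, a subsequence $\mu_{n_j}$ converges weakly if and only if the pushforwards $\nu_{n_j}$ converge narrowly in $\pr(\X)$. Hence it suffices to show that the family $\{\nu_n:n\geq 1\}\subset\pr(\X)$ is tight: Prohorov then yields a narrowly convergent subsequence $\nu_{n_j}\rightharpoonup\mu$, with $\mu\in\pr(\X)$ since testing against the constant $1\in C^0_b(\X)$ preserves the total mass, and therefore $\mu_{n_j}\rightharpoonup\mu$.

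The crucial preliminary step will be to recast equi--integrability in a uniform ``absolute continuity'' form: for every $\eps>0$ there exist $\delta>0$ and $N\in\N$ such that $\mm_n(E)<\delta$ implies $\mu_n(E)<\eps$, for every Borel set $E\subset\X_n$ and every $n\geq N$. I would establish this by contradiction. If it failed, there would be $\eps_0>0$ such that for each $k$ one could pick an index $n_k\geq k$ and a Borel set $E_{n_k}\subset\X_{n_k}$ with $\mm_{n_k}(E_{n_k})<1/k$ but $\mu_{n_k}(E_{n_k})\geq\eps_0$. Declaring these to be the relevant terms of a sequence $n\mapsto E_n\subset\X_n$ (and $E_n\coloneqq\emptyset$ for the remaining indices) produces a sequence with $\mm_n(E_n)\to 0$ along which $\sup_{\mu\in\mathcal F\cap\M(\X_n)}|\mu|(E_n)$ does not vanish, contradicting Definition \ref{def: equi--int}. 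The one point demanding care is that the violating indices escape to infinity, which is exactly what forcing $n_k\geq k$ guarantees; this is precisely where the asymptotic formulation of equi--integrability is used with full strength.

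With the uniform form available, tightness of $\{\nu_n\}$ follows. Fix $\eps>0$ and take $\delta,N$ as above. Since $\tilde{\mm}_n=(p_n)_\#\mm_n\rightharpoonup\mm$, the sequence $\{\tilde{\mm}_n\}$ is narrowly precompact, hence tight by Prohorov, so there is a compact $K\subset\X$ with $\tilde{\mm}_n(\X\setminus K)<\delta$ for all $n$. For $n\geq N$, taking $E\coloneqq p_n^{-1}(\X\setminus K)$ gives $\mm_n(E)=\tilde{\mm}_n(\X\setminus K)<\delta$, whence $\nu_n(\X\setminus K)=\mu_n(E)<\eps$. The finitely many indices $n<N$ are dealt with by enlarging $K$ with a compact set absorbing all but $\eps$ of each of the individually tight measures $\nu_n$, $n<N$; the resulting finite union is compact and works uniformly in $n$, giving tightness.

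The main obstacle is really the reformulation in the second paragraph: passing from the asymptotic statement of Definition \ref{def: equi--int} (which, read at a single fixed space, carries no information) to a genuine uniform $\eps$--$\delta$ bound valid for all large $n$. Once this bridge is in place, the transfer of tightness from $\{\tilde{\mm}_n\}$ to $\{\nu_n\}$ through the approximate identity $\nu_n(\X\setminus K)=\mu_n\big(p_n^{-1}(\X\setminus K)\big)$, and the concluding application of Prohorov, are entirely routine.
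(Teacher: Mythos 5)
Your proof is correct and follows essentially the same route as the paper, whose entire proof is the remark that ``it is immediate to check that $\left((p_n)_\#\mu_n\right)_{n\geq 1}$ is tight'': you have simply carried out that check in detail, deriving the uniform $\eps$--$\delta$ form of equi--integrability (the paper's Lemma \ref{lemma: equi-int cont}) and transferring the tightness of $(p_n)_\#\mm_n$ to $(p_n)_\#\mu_n$ before applying Prohorov. The only minor point to polish is in your contradiction argument: to guarantee $\mm_n(E_n)\to 0$ you should choose the violating indices $n_k$ strictly increasing (possible since the negation allows $n_k\geq N$ for arbitrary $N$), so that a large index cannot carry a set coming from a small $k$.
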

	\begin{proof}
		It is immediate to check that $\left(\left(p_n\right)_\#\mu_n \right)_{n\geq 1}$ is tight.
	\end{proof}
	\begin{rmk}
		The same holds if the measures are not required to be probability but at least $\sup_n \|\mu_n\|_{\rm{TV}}<\infty$.
	\end{rmk}
	Another useful result is a this sort of continuity of any equi--integrable sequence.
	\begin{lm}\label{lemma: equi-int cont}
		Let $\mu_n\in \Mp(\X_n)$ be an equi--integrable sequence and let $\varepsilon>0$. Then there exist $\delta>0$ such that whenever $E_n\subset \X_n$ is a sequence of Borel sets with $\lims_n \mm(E_n)\leq \delta$, then $\lims_n \mu(E_n)\leq \varepsilon$.
	\end{lm}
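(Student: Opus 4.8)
The plan is to argue by contradiction, reading the statement as the quantitative $\varepsilon$–$\delta$ reformulation of the ``limit $=0$'' form of equi--integrability (with the understanding that the sets live on $\X_n$, so the relevant quantities are $\mm_n(E_n)$ and $\mu_n(E_n)$). Suppose the conclusion fails for some $\varepsilon>0$: then no choice of $\delta$ works, so in particular, taking $\delta=\tfrac1k$ for each $k\in\N$, I obtain a sequence of Borel sets $(E^k_n)_{n\geq 1}$, with $E^k_n\subset\X_n$, satisfying
\begin{equation}
\lims_n \mm_n(E^k_n)\leq \tfrac1k \qquad\text{but}\qquad \lims_n \mu_n(E^k_n)> \varepsilon .
\end{equation}
The whole point will then be to splice these countably many ``bad'' sequences into a \emph{single} sequence $(F_n)$ for which $\mm_n(F_n)\to 0$ while $\lims_n\mu_n(F_n)\geq\varepsilon$, which directly contradicts the equi--integrability hypothesis.

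To perform the splicing I would first extract, for each fixed $k$, the information contained in the two inequalities above. The left one provides a threshold $N_k$ such that $\mm_n(E^k_n)\leq \tfrac2k$ for all $n\geq N_k$, while the right one guarantees that there are infinitely many indices $n$ with $\mu_n(E^k_n)>\varepsilon$. Combining the two, I can select a strictly increasing sequence $(n_k)_{k\geq 1}$ with $n_k\geq N_k$ and
\begin{equation}
\mm_{n_k}(E^k_{n_k})\leq \tfrac2k \qquad\text{and}\qquad \mu_{n_k}(E^k_{n_k})>\varepsilon .
\end{equation}

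With this subsequence in hand I set $F_n:=E^k_n$ whenever $n=n_k$ for some $k$, and $F_n:=\emptyset$ otherwise. Off the subsequence $\mm_n(F_n)=0$, while along it $\mm_{n_k}(F_{n_k})\leq \tfrac2k\to 0$, so that $\mm_n(F_n)\to 0$ over all of $\N$; on the other hand $\mu_{n_k}(F_{n_k})>\varepsilon$ for every $k$, whence $\lims_n\mu_n(F_n)\geq\varepsilon$. Since the $\mu_n$ are nonnegative one has $|\mu_n|=\mu_n$, so equi--integrability applied to $(F_n)$ forces $\mu_n(F_n)\to 0$, contradicting $\lims_n\mu_n(F_n)\geq\varepsilon>0$. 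The only delicate point — and thus the main obstacle — is precisely this diagonal bookkeeping: one must pick, for each $k$, a single index $n_k$ at which the reference measure is already below $\tfrac2k$ \emph{and} the $\mu_n$--mass exceeds $\varepsilon$, and then check that padding the leftover indices with $\emptyset$ makes $\mm_n(F_n)$ tend to $0$ along the whole of $\N$ rather than merely along the chosen subsequence. Everything else is a routine unravelling of the definitions.
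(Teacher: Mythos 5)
Your proof is correct, and it is precisely the ``standard contradiction argument'' that the paper invokes without spelling out: negate the $\varepsilon$--$\delta$ statement, extract for each $\delta=\tfrac1k$ a bad sequence, and diagonalize into a single sequence $(F_n)$ violating Definition \ref{def: equi--int}. The diagonal bookkeeping (choosing $n_k\geq N_k$ strictly increasing and padding with $\emptyset$) is handled correctly, so nothing is missing.
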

	\begin{proof}
		The proof is a standard contradiction argument.
	\end{proof}
	\begin{lm}\label{lemma: troncamenti}
		Let $\mu_n=f_n \mm_n\in \mathcal \pr(\X_n)$ be an equi--integrable sequence. Then for every $M>0$
		\begin{equation}
			\limi_n\int_{\X_n} \left(f_n\wedge M\right)\,\d\mm_n>0.
		\end{equation}
		Moreover
		\begin{equation}\label{eq: limite troncamenti}
			\lim_M \limi_n \int_{\X_n}\left(f_n\wedge M\right)\,\d\mm_n=1
		\end{equation}
	\end{lm}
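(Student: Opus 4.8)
The plan is to reduce the statement to the elementary truncation identity together with the defining properties of equi-integrability. The starting point is that, since each $\mu_n=f_n\mm_n$ is a probability measure, $\int_{\X_n} f_n\,\d\mm_n=1$, and splitting the domain according to whether $f_n\le M$ or $f_n>M$ gives
\begin{equation}
	\int_{\X_n}(f_n\wedge M)\,\d\mm_n=\int_{\{f_n\le M\}} f_n\,\d\mm_n+M\,\mm_n(\{f_n>M\}),
\end{equation}
so that, subtracting from $1$,
\begin{equation}
	1-\int_{\X_n}(f_n\wedge M)\,\d\mm_n=\int_{\{f_n>M\}}(f_n-M)\,\d\mm_n\leq \mu_n(\{f_n>M\}).
\end{equation}
The two nonnegative summands in the first identity are what drive the lower bound of the first assertion, while the second relation controls the defect from $1$ needed for \eqref{eq: limite troncamenti}.

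For the first assertion I would argue by contradiction. If $\limi_n\int_{\X_n}(f_n\wedge M)\,\d\mm_n=0$ for some $M>0$, then along a (non-relabelled) subsequence the integrals tend to $0$, and since both summands in the first identity are nonnegative we obtain simultaneously $\int_{\{f_n\le M\}} f_n\,\d\mm_n\to 0$ and $\mm_n(\{f_n>M\})\to 0$. Feeding the Borel sets $E_n=\{f_n>M\}$ into Definition \ref{def: equi--int} gives $\mu_n(\{f_n>M\})=\int_{\{f_n>M\}} f_n\,\d\mm_n\to 0$. But then $1=\int_{\X_n} f_n\,\d\mm_n=\int_{\{f_n\le M\}} f_n\,\d\mm_n+\int_{\{f_n>M\}} f_n\,\d\mm_n\to 0$, a contradiction.

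For the second assertion I would combine Chebyshev's inequality with the continuity furnished by Lemma \ref{lemma: equi-int cont}. Chebyshev gives $\mm_n(\{f_n>M\})\le \tfrac1M\int_{\{f_n>M\}} f_n\,\d\mm_n\le \tfrac1M$ for every $n$. Fixing $\eps>0$ and taking $\delta>0$ as in Lemma \ref{lemma: equi-int cont}, for any $M>1/\delta$ the sets $E_n=\{f_n>M\}$ satisfy $\lims_n\mm_n(E_n)\le 1/M<\delta$, whence $\lims_n\mu_n(\{f_n>M\})\le\eps$. By the second displayed relation this yields $\limi_n\int_{\X_n}(f_n\wedge M)\,\d\mm_n\ge 1-\eps$ for all $M>1/\delta$. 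Since $f_n\wedge M$ is nondecreasing in $M$ the map $M\mapsto\limi_n\int_{\X_n}(f_n\wedge M)\,\d\mm_n$ is nondecreasing (and bounded above by $1$, as $f_n\wedge M\le f_n$), so its limit as $M\to\infty$ exists and is $\ge 1-\eps$; since $\eps$ was arbitrary and the limit is $\le 1$, it equals $1$, which is \eqref{eq: limite troncamenti}.

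I do not expect a serious obstacle, the whole argument being purely measure-theoretic. The only point requiring care is the first assertion: one must use the nonnegativity of \emph{both} summands in the truncation identity, so that a vanishing truncated integral forces simultaneously $\mm_n(\{f_n>M\})\to0$ and $\int_{\{f_n\le M\}} f_n\,\d\mm_n\to0$; it is precisely the conjunction of these two facts, passed through equi-integrability, that clashes with the normalization $\int f_n\,\d\mm_n=1$.
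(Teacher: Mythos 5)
Your proof is correct and follows essentially the same route as the paper's: the same truncation identity $\int_{\X_n}(f_n\wedge M)\,\d\mm_n=\int_{\{f_n\le M\}}f_n\,\d\mm_n+M\,\mm_n(\{f_n>M\})$, the same contradiction via equi-integrability applied to $E_n=\{f_n>M\}$ for the first assertion, and the same combination of Markov's inequality with Lemma \ref{lemma: equi-int cont} for \eqref{eq: limite troncamenti}. The only difference is that you spell out details the paper leaves implicit (the Chebyshev bound, the monotonicity in $M$ guaranteeing existence of the limit), which is fine.
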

	\begin{proof}
		By contradiction, assume the thesis is false. Since 
		\begin{equation}
			\int_{\X_n} \left(f_n\wedge M\right)\,\d\mm_n= \mu_n(\{f_n<M\})+M\mm_n(\{f_n\geq M\}),
		\end{equation}
		we must have 
		\begin{equation}
			\lims_n \mm_n(\{f_n\geq M\})=\lims_n \mu_n(\{f_n<M\})= 0.
		\end{equation} 
		By definition of equi--integrability, then $\lims\mu_n(\{f_n\geq M\})= 0$, which is absurd since $\mu_n(\X)\equiv 1$.
		In order to prove \eqref{eq: limite troncamenti}, it is enough to notice that, by the same computations as above,
		\begin{equation}
			0\leq 1-\lim_M \limi_n \int_{\X_n} \left(f_n\wedge M\right)\,\d\mm_n\leq \lim_M\lims_n \mu_n(\{f_n\geq M\}). 
		\end{equation}
		The vanishing of the last term easily follows from Markov inequality and Lemma \ref{lemma: equi-int cont}.
	\end{proof}

	For later use it is important to notice the following semicontinuity result: if $\X_n\concto\X$, $(p_n)_{n\geq 1}$ is a sequence of projections as in Proposition \ref{prop: caratterizzazione}, $n\to\mu_n\in \pr(\X_n)$, $n\to\nu_n\in \pr(\X_n)$ are equi--integrable sequences such that $\mu_n\rightharpoonup\mu$ and $\nu_n\rightharpoonup\nu$, then
	\begin{equation}\label{eq: semicont W2}
		W_2(\mu, \nu)\leq \limi_n W_2(\mu_n, \nu_n).
	\end{equation}
	This follows from \cite[Lemma 35]{OzYo19} up to a minor modification of the proof.
	
	\subsection{Convergence of functions}
	Building on the definition of convergence of measures, we can introduce functions and their convergences. The first notion that we present is convergence in measure. The definition we chose is inspired by Young measures and mimics the one used in \cite{AmStTr17}.
	\begin{deff}[Convergence in measure]\label{def: conv L0}
		Let $(\Z, \sfd_\Z)$ be a complete and separable metric space. Let $T_n:\X_n\to \Z$, $n\geq 1$ and $T:\X\to \Z$ be Borel maps. We say that $T_n$ converges in measure to $T$, or $T_n\xrightarrow{\L^0}T$ if 
		\begin{equation}
			\Phi(T_n)\,\mm_n\rightharpoonup \Phi(T)\,\mm
		\end{equation}
		for all $\Phi\in C^0_\b(\Z)$.
	\end{deff}
%	\begin{lm}\label{lemma: cs conv misura}
%		Let $(\Z, d_\Z)$ be a complete separable metric space. Let $T_n: \X_n\to \Z$ be Borel maps and let $T_\X\to \Z$ Borel. Then, if $(T_n)_\# \mu_n\to T_\#\mu$ for all sequences $\mu_n\rightharpoonup\mu$ with $\mu_n\leq C m_n$, it follows that $T_n\to T$ in $\L^0$.
%	\end{lm}
%	\begin{proof}
%		Notice that testing the hypothesis with $\mu_n\coloneqq (\varphi\circ p_n) m_n$ and $Phi\in \mathcal C^0_b(\Z)$ one can directly recover the definition.
%	\end{proof}
	In order to work with convergence in measure (and to show why the naming choice is natural), it is convenient to introduce an useful characterization, directly inspired from \cite{Gi23} and based on optimal transport. The idea is to chose suitable couplings of $\mm_n$ and $\mm$ in order to directly compare functions on the converging space and functions on the limit space.
	\begin{deff}
		Let $\aalpha_n\in \adm(\mm_n, \mm)$ for $n\geq 1$. We say that $(\aalpha_n)_n$ are \emph{good couplings} if
		\begin{equation}
			\int_{\X\times \X} \left(\sfd_\X\circ (p_n, {\rm id})\wedge 1\right)\,\d\aalpha_n\to 0
		\end{equation} 
		or, equivalently, $\int_{\X\times \X} \left(\sfd_\X\wedge 1\right)\,\d\tilde{\aalpha}_n\to 0$, where $(p_n, id)_\#\aalpha_n=\tilde{\aalpha}_n$. 
	\end{deff}
	 The choice of good couplings allows to reduce the $\L^0$ convergence to the vanishing of a sequence of ``distances''.
	\begin{prop}\label{prop: L0}
		Let $(\Z, \sfd_\Z)$ be a complete and separable metric space. Let $T_n: \X_n\to \Z$ be Borel maps, $n\geq 1$, $(\aalpha_n)_n$ good couplings and $T:\X\to \Z$ a Borel map. Then $T_n\xrightarrow{\L^0}{}T$ iff 
		\begin{equation}\label{eq: L0 conv}
			\lim_n\aalpha_n\left(\left\{\sfd_\Z\left(T_n\circ\operatorname{proj}_1, T\circ\operatorname{proj}_2\right)>\varepsilon\right\}\right)=0
		\end{equation}
		for all $\varepsilon>0$.
	\end{prop}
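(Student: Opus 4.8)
The plan is to read both conditions as assertions about the joint behaviour of $T_n(x)$ and $T(y)$ under the couplings $\aalpha_n\in\adm(\mm_n,\mm)$, and to move between them through the joint laws
\[
\Xi_n\coloneqq\big(\operatorname{proj}_2,\, T_n\circ\operatorname{proj}_1\big)_\#\aalpha_n\in\pr(\X\times\Z),
\]
i.e.\ the law of $(y,T_n(x))$ under $\aalpha_n$, whose candidate limit is the ``graph measure'' $\Xi_\infty\coloneqq(\mathrm{id},T)_\#\mm$. The recurring device is that the good--coupling hypothesis $\int(\sfd_\X(p_n(x),y)\wedge1)\,\d\aalpha_n\to0$ means precisely that $\sfd_\X(p_n(x),y)\to0$ in $\aalpha_n$--probability; since the $y$--marginal of $\aalpha_n$ is the \emph{fixed} (hence tight) measure $\mm$, choosing a compact $K\subset\X$ carrying most of $\mm$ and using that any $\psi\in C^0_b(\X)$ is uniformly continuous near $K$ yields $\int|\psi(p_n(x))-\psi(y)|\,\d\aalpha_n\to0$. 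This lets me freely replace $\psi\circ p_n\circ\operatorname{proj}_1$ by $\psi\circ\operatorname{proj}_2$ up to a vanishing error, which is the technical glue of the whole argument.

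For the direction $\eqref{eq: L0 conv}\Rightarrow T_n\xrightarrow{\L^0}T$, I would rewrite the two integrals appearing in Definition \ref{def: conv L0} against $\aalpha_n$, using that $\mm_n$ and $\mm$ are its marginals, reducing the claim to $\int[\psi(p_n(x))\Phi(T_n(x))-\psi(y)\Phi(T(y))]\,\d\aalpha_n\to0$ for all $\psi\in C^0_b(\X)$, $\Phi\in C^0_b(\Z)$. I then split the integrand as $[\psi(p_n(x))-\psi(y)]\Phi(T_n(x))+\psi(y)[\Phi(T_n(x))-\Phi(T(y))]$. The first summand is bounded by $\|\Phi\|_\infty\int|\psi(p_n(x))-\psi(y)|\,\d\aalpha_n\to0$ by the device above. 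For the second, $|\psi(y)|\le\|\psi\|_\infty$ reduces matters to $\int|\Phi(T_n(x))-\Phi(T(y))|\,\d\aalpha_n\to0$, which follows from the very same uniform--continuity--near--a--compact argument, now applied to $\Phi$ on $\Z$: the values $T(y)$ live in a compact set of $\Z$ with high probability (tightness of the fixed measure $T_\#\mm$), and $\sfd_\Z(T_n(x),T(y))\to0$ in $\aalpha_n$--probability is exactly hypothesis \eqref{eq: L0 conv}.

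For the converse, assume $T_n\xrightarrow{\L^0}T$. Combining the definition with the good--coupling swap gives, for all $\psi\in C^0_b(\X),\ \Phi\in C^0_b(\Z)$,
\[
\int_{\X\times\Z}\psi\otimes\Phi\,\d\Xi_n\ \longrightarrow\ \int_\X\psi(y)\Phi(T(y))\,\d\mm=\int_{\X\times\Z}\psi\otimes\Phi\,\d\Xi_\infty.
\]
Testing with $\psi\equiv1$ gives $(T_n)_\#\mm_n\rightharpoonup T_\#\mm$, so the $\Z$--marginals of $(\Xi_n)$ are uniformly tight; the $\X$--marginal of each $\Xi_n$ is $\mm$; hence $(\Xi_n)$ is tight. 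As product functions are measure--determining on $\X\times\Z$, every weak subsequential limit must coincide with $\Xi_\infty$, so $\Xi_n\rightharpoonup\Xi_\infty$. Since \eqref{eq: L0 conv} is literally $\Xi_n(\{(y,z):\sfd_\Z(z,T(y))>\varepsilon\})\to0$, it remains to deduce this from the weak convergence to the graph measure.

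This last step is the main obstacle, because $T$ is merely Borel, so $\{\sfd_\Z(z,T(y))\ge\varepsilon\}$ is neither open nor closed and Portmanteau does not apply directly. I would resolve it with Lusin's theorem: given $\delta>0$, pick a compact $C\subset\X$ with $\mm(\X\setminus C)<\delta$ on which $T$ is continuous. Then $(y,z)\mapsto(1-\sfd_\Z(z,T(y))/\varepsilon)^+$ is continuous on the closed set $C\times\Z$, and by Tietze extension it admits an extension $g\in C^0_b(\X\times\Z;[0,1])$ with $g=1$ on the graph of $T|_C$ and $g=0$ on $\{y\in C,\ \sfd_\Z(z,T(y))\ge\varepsilon\}$. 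Crucially, because the $\X$--marginal of $\Xi_n$ is $\mm$ for \emph{every} $n$, the mass of $\{y\notin C\}$ is uniformly below $\delta$; combining $\Xi_n(g)\to\Xi_\infty(g)\ge\mm(C)\ge1-\delta$ with the pointwise bound $g\le\mathbf 1_{\{\sfd_\Z(z,T(y))<\varepsilon\}}+\mathbf 1_{\{y\notin C\}}$ gives $\liminf_n\Xi_n(\{\sfd_\Z(z,T(y))<\varepsilon\})\ge1-2\delta$, and letting $\delta\downarrow0$ yields \eqref{eq: L0 conv}. I expect the Lusin reduction together with the uniform control of $\{y\notin C\}$ afforded by the fixed marginal $\mm$ to be the crux; by contrast the good--coupling swap and the tightness/product--determining arguments are routine once $\mm$ and $T_\#\mm$ are used as fixed tight references.
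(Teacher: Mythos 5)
Your proposal is correct. The implication \eqref{eq: L0 conv} $\Rightarrow T_n\xrightarrow{\L^0}T$ is essentially the paper's own argument: rewrite both integrals against $\aalpha_n$ using its marginals, dispose of the $|\psi(p_n(x))-\psi(y)|$ error via the good couplings, and control $\int|\Phi(T_n(x))-\Phi(T(y))|\,\d\aalpha_n$ by tightness of $T_\#\mm$ plus uniform continuity of $\Phi$ near a compact set of $\Z$. The converse is where you genuinely diverge. The paper argues by contraposition: assuming \eqref{eq: L0 conv} fails along a subsequence, it uses tightness of $T_\#\mm$ to find a point $z\in\Z$ near which a definite fraction $\delta$ of the bad mass sits, then builds an explicit bump $\Phi$ around $z$ and a Lipschitz $\varphi$ that $\L^1(\mm)$-approximates $\mathbbm{1}_{\{\sfd_\Z(T,z)<\varepsilon/4\}}$, and shows the tested integrals stay at distance $\geq\delta/2$, contradicting Definition \ref{def: conv L0}. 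You instead argue directly: you upgrade the hypothesis to weak convergence of the joint laws $\Xi_n=(\operatorname{proj}_2,T_n\circ\operatorname{proj}_1)_\#\aalpha_n$ to the graph measure $({\rm id},T)_\#\mm$ (tightness from the fixed marginal $\mm$ together with $(T_n)_\#\mm_n\weakto T_\#\mm$, identification via product test functions), and then extract \eqref{eq: L0 conv} from this by a Lusin--Tietze regularization of the merely Borel $T$, with the fixed $\X$-marginal $\mm$ giving the uniform control of $\{y\notin C\}$. This is in effect a coupling version of the Young-measure characterization that the paper proves \emph{afterwards} (Proposition \ref{prop: L^0 cs}) as a consequence of Proposition \ref{prop: L0}; since you establish it independently, there is no circularity. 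As for what each route buys: the paper's contrapositive needs nothing beyond Lipschitz $\L^1$-approximation of an indicator and stays entirely at the level of test functions, while yours invokes Lusin and Tietze but is more conceptual, yields $\Xi_n\weakto({\rm id},T)_\#\mm$ as a byproduct, and makes Proposition \ref{prop: L^0 cs} essentially immediate.
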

	\begin{proof}
		Suppose that \eqref{eq: L0 conv} holds and fix $\Phi\in C^0_\b(\Z)$. Notice that by boundedness of $\Phi$ it is enough to test the weak convergence on Lipschitz functions, so let us fix $\varphi\in \Lip(\X)$. It holds that
		\begin{align}
			\bigg|\int_\X \varphi(x)\, \d\left(p_n\right)_\#&(\Phi(T_n)\mm_n)-\int_\X \Phi(T)\varphi\,\d\mm \bigg|\\
			&=\left|\int_{\X_n\times \X}(\Phi(T_n(x))\varphi(p_n(x))-\Phi(T(y))\varphi(y))\, \d\aalpha_n(x, y)\right|\\
			&\leq \|\varphi\|_{\infty}\int_{\X_n\times \X}|\Phi(T_n(x))-\Phi(T(y))|\,\d\aalpha_n(x, y)\\
			&\quad \quad +\|\Phi\|_{\infty}(2\|\varphi\|_{\infty}\vee\Lip(\varphi)))\int_{\X\times \X}1\wedge \sfd_\X(x, y)\,d\tilde{\aalpha}_n(x, y).
		\end{align}
		The latter term vanishes in the limit, due to the choice of $\aalpha_n$, so it suffices to estimate the former. 
		Fix $\varepsilon>0$. By tightness of $T_\# \mm$, there exist $K\subset \Z$ compact such that $\mm(\{T(x)\notin K\})<\varepsilon$. By compactness of $K$ and continuity of $\Phi$, there exists $0< \delta\leq \varepsilon$ such that $|\Phi(z_1)-\Phi(z_2)|\leq \varepsilon$ whenever $z_1\in K$ and $\sfd_\Z(z_1, z_2)\leq \delta$. Now set $E_n\coloneqq\{T(x)\in K, \sfd_\Z(T_n(x), T(y))\leq \delta\}$. In particular, $\limsup_n \aalpha_n(\X_n\times \X\setminus E_n)\leq \varepsilon$.
		We are now able to estimate the first integral as follows:
		\begin{align}
			\int_{\X_n\times \X}&|\Phi(T_n(x))-\Phi(T(y))|\,\d\aalpha_n(x, y)\leq  2\|\Phi\|_\infty \aalpha_n(\X_n\times \X\setminus E_n)\\
			&+\int_{E_n} |\Phi(T_n(x))-\Phi(T(y))|\,\d\aalpha_n(x, y)\\
			&\leq 2\|\Phi\|_\infty \aalpha_n(\X_n\times \X\setminus E_n)+\varepsilon.
		\end{align}
		Sending $\varepsilon$ to 0, also this term vanishes.
		
		Assume now that \eqref{eq: L0 conv} is not verified and let us prove that $T_n$ does not converge in measure to $T$.
		By assumption there exist $\varepsilon, \delta>0$ such that
		\begin{equation}
			\aalpha_n\left(\left\{\sfd_\Z\left(T_n\circ\operatorname{proj}_1, T\circ\operatorname{proj}_2\right)>\varepsilon\right\}\right)>\delta
		\end{equation}
		frequently in $n$. Up to taking a smaller $\delta$, by the tightness of $T_\# m$ we may assume that there exist $z\in \Z$ and $E_n\subset\{\sfd_\Z(T_n\circ\operatorname{proj}_1, T\circ\operatorname{proj}_2)>\varepsilon\} \cap \proj_1^{-1}\{\sfd_\Z(z, T)\leq \frac{\varepsilon}{4}\}$ and such that $\aalpha_n(E_n)>\delta$ frequently.
		Moreover, let $0\leq\Phi\leq 1$ be such that $\Phi(x)= 1$ when $\sfd_\Z(x, z)<\frac{\varepsilon}{4}$, $\Phi(x)=0$ when $\sfd_\Z(x, z)>\frac{\varepsilon}{2}$.
		In order to conclude, let us prove that $\Phi(T_n)\mm_n$ does not converge to $\Phi(T)\mm$. In order to do so, we choose an appropriate $\varphi$ test for which the coupling does not converge.
		Let $\varphi\in \Lip(\X)$ be such that $0\leq \varphi\leq 1$ and $\|\varphi-\mathbbm{1}_{\{\sfd_\Z(T, z)<\frac{\varepsilon}{4}\}}\|_{\L^1(m)}\leq \frac{\delta}{4}$.
		We are to estimate the following:
		\begin{align}
			\bigg|\int_\X \Phi(T)\varphi\,\d\mm-\int_{\X_n}&\Phi(T_n)\varphi\circ p_n\,\d\mm_n\bigg|\\
			&=\left|\int_{\X_n\times \X} (\Phi(T(y))\varphi(y)-\Phi(T_n(x))\varphi(p_n(x)))\,\d\aalpha_n\right|.
		\end{align}
		Notice that the second term is equal, up to a negligible rest to
		\begin{equation}\label{eq: conv mis}
			\left|\int_{\X_n\times \X} \left[\Phi(T(y))-\Phi(T_n(x))\right]\varphi(y)\,\d\aalpha_n\right|:
		\end{equation}
		indeed, the difference can be estimated as
		\begin{equation}
			\int_{\X_n\times \X} \Phi(T_n(x)) \left|\varphi(p_n(x))-\varphi(y)\right|\,\d\aalpha_n\leq(2\|\varphi\|_\infty\vee\Lip(\varphi))\int_{\X\times \X} d\wedge 1\,\d\tilde{\aalpha}_n\to 0.
		\end{equation}
		Now \eqref{eq: conv mis} can be bounded as follows:
		\begin{align}
			\left|\int_{\X_n\times \X} \left[\Phi(T(y))-\Phi(T_n(x))\right]\varphi(y)\,\d\aalpha_n\right|&\geq \left|\int_{\{\sfd_\Z(T, z)\leq\frac{\varepsilon}{4}\}} \left[\Phi(T(y))-\Phi(T_n(x))\right]\,\d\aalpha_n\right|\\
			&\qquad-2\|\varphi-\mathbbm{1}_{\{\sfd_\Z(T, z)<\frac{\varepsilon}{4}\}}\|_{\L^1(\mm)}\\
			&\geq \int_{E_n} \left( 1-\Phi(T_n)\right)\,\d\aalpha_n -\frac{\delta}{2}\\
			&\geq \frac{\delta}{2}.
		\end{align} 
		It follows that, by choosing the appropriate $\varphi$, the convergence does not hold, which contradicts the hypothesis.
	\end{proof}
	The same statement of Proposition \ref{prop: L0} can be restated in term of something closer to a real distance between maps with different domains:
	\begin{deff}
		Let $(\Z, \sfd_\Z)$ be a complete and separable metric space and $(\aalpha_n)_{n\geq 1}$ be good couplings. For $T_n\in \L^0(\X_n; \Z), T\in\L^0(\X;\Z)$ we write
		\begin{align}
			\sfd_{\L^0}^{\saalpha_n}(T_n, T)&\coloneqq \inf\{\varepsilon>0:\, \aalpha_n\left(\{\sfd_\Z(T_n\circ\proj_1, T\circ\proj_2)>\varepsilon\}<\varepsilon\right)\}\\
			&=\sfd_{\L^0(\saalpha_n)}(T_n\circ\proj_1, T\circ\proj_2).
		\end{align}
	\end{deff}
	\begin{cor}
		Let $(\Z, \sfd_\Z)$ be a complete and separable metric space. Let $T_n: \X_n\to \Z$ be Borel maps, $n\geq 1$, $(\aalpha_n)_n$ good couplings and $T:\X\to \Z$ a Borel map. Then $T_n\xrightarrow{\L^0}{}T$ iff
		\begin{equation}
			\sfd_{\L^0}^{\saalpha_n}(T_n, T)\to 0.
		\end{equation}
	\end{cor}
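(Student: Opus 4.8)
The plan is to reduce this statement entirely to Proposition \ref{prop: L0}, which we have just established. Indeed, by the second line of the defining formula, $\sfd_{\L^0}^{\saalpha_n}(T_n,T)$ is nothing but the ordinary $\L^0$--distance $\sfd_{\L^0(\saalpha_n)}(T_n\circ\proj_1,T\circ\proj_2)$ between the two $\Z$--valued maps $T_n\circ\proj_1$ and $T\circ\proj_2$, both regarded as elements of $\L^0(\X_n\times\X;\Z)$ with respect to the measure $\aalpha_n$. Proposition \ref{prop: L0} already tells us that $T_n\xrightarrow{\L^0}T$ holds precisely when
\begin{equation}
\lim_n A_n(\varepsilon)=0\ \text{ for every }\varepsilon>0,\quad\text{where }A_n(\varepsilon):=\aalpha_n\big(\{\sfd_\Z(T_n\circ\proj_1,T\circ\proj_2)>\varepsilon\}\big).
\end{equation}
Hence it suffices to verify the purely elementary fact that this family of conditions, indexed by $\varepsilon>0$, is equivalent to the single convergence $\sfd_{\L^0}^{\saalpha_n}(T_n,T)\to0$.

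To prove this equivalence I would exploit that, for each fixed $n$, the map $\varepsilon\mapsto A_n(\varepsilon)$ is non-increasing and that, by definition, $\sfd_{\L^0}^{\saalpha_n}(T_n,T)=\inf\{\varepsilon>0:A_n(\varepsilon)<\varepsilon\}$. For the implication from the family of conditions to the convergence, I fix $\eta>0$: since $A_n(\eta)\to0$, for $n$ large we have $A_n(\eta)<\eta$, so $\eta$ lies in the defining set of the infimum and thus $\sfd_{\L^0}^{\saalpha_n}(T_n,T)\leq\eta$; as $\eta$ is arbitrary, the distance tends to $0$. Conversely, assuming $\sfd_{\L^0}^{\saalpha_n}(T_n,T)\to0$, I fix $\varepsilon>0$ and $\delta>0$. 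For $n$ large enough $\sfd_{\L^0}^{\saalpha_n}(T_n,T)<\min\{\varepsilon,\delta\}$, so by definition of the infimum there is some $\varepsilon'<\min\{\varepsilon,\delta\}$ with $A_n(\varepsilon')<\varepsilon'<\delta$; monotonicity then yields $A_n(\varepsilon)\leq A_n(\varepsilon')<\delta$. Since $\delta>0$ is arbitrary, $A_n(\varepsilon)\to0$, as required.

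The argument is essentially routine: the corollary is a reformulation of Proposition \ref{prop: L0} in terms of the Ky Fan--type distance $\sfd_{\L^0}^{\saalpha_n}$, and there is no serious obstacle. The only mild point of care is that the underlying measures $\aalpha_n$ vary with $n$, so one cannot directly invoke the textbook fact that on a \emph{fixed} probability space the $\L^0$--metric metrizes convergence in measure; the two-sided monotonicity estimate above, however, makes no use of a fixed ambient space and handles this uniformity directly.
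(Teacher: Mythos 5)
Your proof is correct and follows exactly the route the paper intends: the corollary is stated without proof precisely because it is the reformulation of Proposition \ref{prop: L0} in terms of the Ky Fan--type quantity $\sfd_{\L^0}^{\saalpha_n}$, and your two-sided monotonicity argument supplies the elementary equivalence the paper leaves implicit. The remark about the measures $\aalpha_n$ varying with $n$ is well taken, and your argument handles it correctly.
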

	\begin{rmk}
		A sort of triangular inequality is in place for this notion of distance. Indeed, for all $S_n, T_n\in \L^0(\X_n; \Z)$, $S, T\in \L^0(\X; \Z)$ it holds
		\begin{equation}
			\sfd_{\L^0}^{\saalpha_n}(S_n, S)\leq \sfd_{\L^0({\scriptstyle \mm_n})}(S_n, T_n)+\sfd_{\L^0}^{\saalpha_n}(T_n, T)+ \sfd_{\L^0({\scriptstyle \mm})}(T, S).
		\end{equation}
	\end{rmk}
	
%	\begin{rmk}
%		It is not difficult to see that the same definition and characterization can be given for maps $f_n:\X_n\to \Y$ where $(\Y, d_\Y)$ is a given complete and separable metric space \add{(Sembra necessario per la uniforme continuità di $\Phi$...)}: the only differences are that one needs to consider $Phi\in \mathcal C^0_b(\Y)$ and the term that must vanish in order to have $\L^0$ convergence is $\aalpha_n(d_y(f_n, f))$.
%	\end{rmk}
	With this notion, we can loosely interpret convergence in concentration as the stability and compactness with respect to Hausdorff distance of 1-Lipschitz functions, where the metric underlying Hausdorff convergence is substituted by the use of good couplings. In particular we have this useful approximation and compactness results:
	\begin{cor}\label{prop: approssimazione e comp L0}
		Suppose $\X_n$ converge in concentration to $\X$ and $(p_n)_{n\geq 1}$ are as in Proposition \ref{prop: caratterizzazione}. Then the following two hold:
		\begin{enumerate}[(i)]
			\item  given $f\in \Lip_1(\X)$ there exist $f_n\in \Lip_1(\X_n)$ such that $f_n\to f$ in $\L^0$;
			\item given $f_n\in \Lip_1(\X_n)$ for $n\geq 1$ with $0\leq f_n\leq 1$, there exists $f\in \Lip_1(\X)$ with $0\leq f\leq 1$ such that, up to subsequences, $f_n\to f$ in $\L^0$.
		\end{enumerate}
	\end{cor}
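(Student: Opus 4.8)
My plan is to reduce both statements to the vanishing of the distance $\sfd_{\L^0}^{\saalpha_n}$ along a fixed sequence of good couplings $\aalpha_n\in\adm(\mm_n,\mm)$. Such couplings exist: since the weak topology on $\pr(\X)$ is induced by the Wasserstein distance built on the truncated metric $\sfd_\X\wedge 1$ and $(p_n)_\#\mm_n\rightharpoonup\mm$, I may pick $\beta_n\in\adm((p_n)_\#\mm_n,\mm)$ with $\int\sfd_\X\wedge 1\,\d\beta_n\to 0$ and glue it with $({\rm id},p_n)_\#\mm_n$ along the common marginal $(p_n)_\#\mm_n$. By the corollary following Proposition \ref{prop: L0}, proving $f_n\xrightarrow{\L^0}f$ is then equivalent to $\sfd_{\L^0}^{\saalpha_n}(f_n,f)\to 0$. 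The one computation I will use twice is that for \emph{any} sequence $g_n\in\Lip_1(\X)$ one has $\sfd_{\L^0}^{\saalpha_n}(g_n\circ p_n,g_n)\to 0$: the uniform bound $|g_n(p_n(x))-g_n(y)|\le\sfd_\X(p_n(x),y)$ shows that the set $\{|g_n\circ p_n\circ\proj_1-g_n\circ\proj_2|>\varepsilon\}$ is contained in $\{\sfd_\X(p_n\circ\proj_1,\proj_2)>\varepsilon\}$, whose $\aalpha_n$--measure vanishes by the good--coupling property.

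For statement (i), I pull $f$ back to $f\circ p_n=p_n^*f\in p_n^*\Lip_1(\X)$. By item \ref{it:pnconc} of Proposition \ref{prop: caratterizzazione}, $p_n^*\Lip_1(\X)$ lies in the $\varepsilon_n$--neighbourhood of $\Lip_1(\X_n)$ for the Hausdorff distance $\sfd_{\L^0}^{\sf H}$, so for each $n$ I can select $f_n\in\Lip_1(\X_n)$ with $\sfd_{\L^0(\mm_n)}(f_n,f\circ p_n)\le 2\varepsilon_n\to 0$. The triangle inequality recorded in the remark after that corollary, applied with intermediate functions $f\circ p_n$ on $\X_n$ and $f$ on $\X$, gives
\begin{equation}
\sfd_{\L^0}^{\saalpha_n}(f_n,f)\le\sfd_{\L^0(\mm_n)}(f_n,f\circ p_n)+\sfd_{\L^0}^{\saalpha_n}(f\circ p_n,f)+\sfd_{\L^0(\mm)}(f,f),
\end{equation}
and the three summands vanish (the last identically, the middle by the computation above), proving $f_n\xrightarrow{\L^0}f$.

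For statement (ii), the reverse inclusion in item \ref{it:pnconc} provides, for each $n$, a function $h_n\in\Lip_1(\X)$ with $\sfd_{\L^0(\mm_n)}(f_n,h_n\circ p_n)\le 2\varepsilon_n$. Since $0\le f_n\le 1$, replacing $h_n$ by the truncation $g_n\coloneqq(h_n\vee 0)\wedge 1\in\Lip_1(\X)$ does not increase its pointwise distance to $f_n$, so I may assume $0\le g_n\le 1$ while keeping $\sfd_{\L^0(\mm_n)}(f_n,g_n\circ p_n)\to 0$. Now $(g_n)$ is a sequence of $1$--Lipschitz, $[0,1]$--valued functions on the \emph{fixed} space $\X$. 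Using the tightness of $\mm$ I fix compact sets $H_j\uparrow$ with $\mm(\X\setminus H_j)\to 0$, apply Arzelà--Ascoli on each $H_j$ and diagonalise to extract a subsequence $(g_{n_k})$ converging uniformly on every $H_j$; its limit is $1$--Lipschitz and $[0,1]$--valued on $\bigcup_j H_j$, and I extend it (by a McShane extension followed by truncation) to some $f\in\Lip_1(\X)$ with $0\le f\le 1$. Uniform convergence on the $H_j$'s together with $\mm(\X\setminus H_j)\to 0$ yields $\sfd_{\L^0(\mm)}(g_{n_k},f)\to 0$. Finally, the same triangle inequality as in (i), now with intermediate functions $g_{n_k}\circ p_{n_k}$ and $g_{n_k}$, gives
\begin{equation}
\sfd_{\L^0}^{\saalpha_{n_k}}(f_{n_k},f)\le\sfd_{\L^0(\mm_{n_k})}(f_{n_k},g_{n_k}\circ p_{n_k})+\sfd_{\L^0}^{\saalpha_{n_k}}(g_{n_k}\circ p_{n_k},g_{n_k})+\sfd_{\L^0(\mm)}(g_{n_k},f),
\end{equation}
whose three terms all tend to $0$ along the subsequence, so $f_{n_k}\xrightarrow{\L^0}f$.

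The step I expect to be delicate is the compactness in (ii): because $\X$ is only complete and separable (not necessarily proper), Arzelà--Ascoli cannot be invoked globally and must be localised on the exhausting compacts coming from the tightness of $\mm$, with the limit first built on a full--measure $\sigma$--compact set and then extended $1$--Lipschitzly to all of $\X$. The second point to handle with care is that this limit lives on the base space $\X$, so its transfer into genuine $\sfd_{\L^0}^{\saalpha_n}$--convergence across the varying spaces $\X_n$ rests entirely on the good--coupling term $\sfd_{\L^0}^{\saalpha_n}(g_n\circ p_n,g_n)\to 0$ established at the outset.
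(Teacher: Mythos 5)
Your proof is correct and follows essentially the same route as the paper's: transfer $1$-Lipschitz functions across spaces via item \eqref{it:pnconc} of Proposition \ref{prop: caratterizzazione}, absorb the discrepancy between $p_n$-pullbacks and limit functions using the good-coupling property together with the uniform $1$-Lipschitz bound, and conclude (ii) by truncation plus Arzel\`a--Ascoli. The only differences are presentational: you make explicit two points the paper leaves implicit, namely the existence of good couplings (via gluing along $(p_n)_\#\mm_n$) and the localization of Arzel\`a--Ascoli on exhausting compacts with a McShane extension, which is the right way to handle a non-proper base space $\X$.
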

	\begin{proof}
		Let  $(\aalpha_n)_n$ be good couplings. If $f\in\Lip_1(\X)$, by definition of convergence in concentration, there exist $f_n\in\Lip_1(\X_n)$ such that $\sfd_{\L^0}(f_n, p_n^*f)\to 0$. Now, 
		\begin{align}
			\aalpha_n\left(\{|f_n(x')-f(y)|>\varepsilon\}\right)&\leq \left(\text{proj}_1\right)_{\#}\aalpha_n\left(\{|f_n-f\circ p_n|>\frac{\varepsilon}{2}\}\right)+\\
			&\qquad\aalpha_n\left(\{|f\circ p_n(x')-f(y)|>\frac{\varepsilon}{2}\}\right)\\
			&=\mm_n\left(\{|f_n-f\circ p_n|>\frac{\varepsilon}{2}\}\right)\\
			&\qquad+\aalpha_n\left(\{|f\circ p_n(x')-f(y)|>\frac{\varepsilon}{2}\}\right).
		\end{align}
		The first term converges to 0 by the vanishing of the $\L^0$ distance; the second can be treated as in the proof of Proposition \ref{prop: L0}. By Proposition \ref{prop: L0}, it follows that $f_n\to f$ in $\L^0$.

		Let now $(f_n)_{n\geq 1}$ be as in (ii). By Proposition \ref{prop: caratterizzazione}, there exists a sequence of 1-Lipschitz maps $g_n\in \Lip_1(\X)$ such that $\sfd^{\saalpha_n}_{\L^0}(f_n, g_n)\to 0$, which can be assumed without loss of generality to satisfy $0\leq g_n\leq 1$. By the Ascoli--Arzelà theorem, up to subsequences $g_n\to g$ in $\L^0$ for some $g\in \Lip_1(\X)$. Reasoning as before, it follows that, up to subsequences, $f_n\to f$ in $\L^0$.
	\end{proof}
	% \begin{lm}\label{lemma: comp L0}
	% 	Let $\X_n$ be a sequence of n.m.m.s. spaces converging in concentration to $\X$ and let $p_n:\X_n\to \X$ be a sequence of projection maps inducing the convergence. Let $f_n\in \Lip_1(\X_n)$ for $n\geq 1$ with $0\leq f_n\leq 1$. Then there exists $f\in \Lip_1(\X)$ with $0\leq f\leq 1$ such that, up to subsequences, $f_n\to f$ in $\L^0$.
	% \end{lm}
	% \begin{proof}
	% 	Let $(\aalpha_n)_{n\geq 1}$ be good couplings.
	% 	By Proposition \ref{prop: caratterizzazione}, there exists a sequence of 1-Lipschitz maps $g_n\in \Lip_1(\X)$ such that $\sfd^{\saalpha_n}_{\L^0}(f_n, g_n)\to 0$, which can be assumed without loss of generality to satisfy $0\leq g_n\leq 1$. By the Ascoli--Arzelà theorem, up to subsequences $g_n\to g$ in $\L^0$ for some $g\in \Lip_1(\X)$. Reasoning as in Corollary \ref{cor: approssimazione L0}, it follows that, up to subsequences, $f_n\to f$ in $\L^0$.
	% \end{proof}
	Finally, this characterization of convergence in measure easily allows to recover most of the properties that are known when the domain is fixed.
	\begin{cor} \label{cor: cont}
		Let $f^j_n$ be Borel functions in $\X_n$, $n\geq 1, j=1, \ldots, k$, and $f^j$ Borel functions on $\X$, $j=1, \ldots, k$ be such that $f_n^j\xrightarrow{\L^0}{}f^j$. Then
		\begin{equation}
			G(f^1_n, \ldots, f^k_n)\xrightarrow{\L^0}{}G(f^1, \ldots, f^k)
		\end{equation}
		for all continuous $G$.
	\end{cor}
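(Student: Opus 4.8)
The plan is to reduce everything to the optimal-transport characterization of convergence in measure provided by Proposition \ref{prop: L0} and then exploit tightness to deal with the fact that $G$ is merely continuous. Fix good couplings $\aalpha_n\in\adm(\mm_n,\mm)$ and write $\vec f_n\coloneqq(f^1_n,\dots,f^k_n):\X_n\to\R^k$ and $\vec f\coloneqq(f^1,\dots,f^k):\X\to\R^k$. The first step is to upgrade the componentwise hypotheses to a \emph{joint} convergence $\vec f_n\xrightarrow{\L^0}\vec f$ of the vector-valued maps. This is immediate from Proposition \ref{prop: L0}: for any $\delta>0$ the event $\{|\vec f_n\circ\proj_1-\vec f\circ\proj_2|>\delta\}$ is contained in $\bigcup_{j=1}^k\{|f^j_n\circ\proj_1-f^j\circ\proj_2|>\delta/\sqrt k\}$, so by subadditivity $\aalpha_n(\{|\vec f_n\circ\proj_1-\vec f\circ\proj_2|>\delta\})\to0$ for every $\delta>0$, using the assumption $f^j_n\xrightarrow{\L^0}f^j$ one coordinate at a time.

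It then remains to show that composition with the continuous map $G:\R^k\to\Z$ preserves $\L^0$ convergence. Again by Proposition \ref{prop: L0} (applied to the $\Z$-valued maps $G\circ\vec f_n$ and $G\circ\vec f$), it suffices to prove that
\[
\aalpha_n\big(\{\sfd_\Z(G(\vec f_n\circ\proj_1),G(\vec f\circ\proj_2))>\varepsilon\}\big)\to0\qquad\text{for every }\varepsilon>0.
\]
The hard part is that $G$ need not be uniformly continuous, and I would circumvent it exactly as in the proof of Proposition \ref{prop: L0}. Fix $\varepsilon,\eta>0$. Since $\vec f_\#\mm$ is a Borel probability measure on $\R^k$ it is tight, so there is a compact $K\subset\R^k$ with $\mm(\{\vec f\notin K\})<\eta$. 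On the compact set $K$ the map $G$ is uniformly continuous, so there exists $\delta>0$ such that $\sfd_\Z(G(z_1),G(z_2))\le\varepsilon$ whenever $z_1\in K$ and $|z_1-z_2|\le\delta$.

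Consequently the bad set is contained in $\{\vec f\circ\proj_2\notin K\}\cup\{|\vec f_n\circ\proj_1-\vec f\circ\proj_2|>\delta\}$. Since $\mm$ is the second marginal of $\aalpha_n$, the first set has $\aalpha_n$-measure equal to $\mm(\{\vec f\notin K\})<\eta$, uniformly in $n$; the second set has vanishing $\aalpha_n$-measure by the joint convergence established in the first step. Hence $\limsup_n\aalpha_n(\{\sfd_\Z(G(\vec f_n\circ\proj_1),G(\vec f\circ\proj_2))>\varepsilon\})\le\eta$, and letting $\eta\downarrow0$ gives the claim. The only genuine obstacle is the loss of uniform continuity of $G$, which is overcome by the tightness of $\vec f_\#\mm$ precisely as in the direct implication of Proposition \ref{prop: L0}; the rest is routine bookkeeping with the good couplings.
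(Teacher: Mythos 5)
Your proof is correct and is exactly the argument the paper intends: the corollary is stated as an immediate consequence of Proposition \ref{prop: L0}, and your reduction to good couplings, the componentwise-to-joint upgrade, and the tightness-plus-local-uniform-continuity step all mirror the direct implication in the paper's proof of that proposition. Nothing is missing; this is the same route, just written out in full.
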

	We conclude our presentation of $\L^0$ convergence with a couple results related to Young measures.
	\begin{prop}\label{prop: L^0 cs}
		Let $\Z$ be a complete and separable metric space, $T_n:\X_n\to \Z$ be Borel maps, $n\geq 1$, and $T:\X\to \Z$ a Borel map. Then $T_n\to T$ in $\L^0$ if and only if
		\begin{equation}\label{eq: L0 young}
			\left(p_n, T_n\right)_\#\mm_n\rightharpoonup (id, T)_\#m.
		\end{equation}
	\end{prop}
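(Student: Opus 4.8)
The plan is to read the statement as the assertion that weak convergence of the joint laws $\nu_n\coloneqq (p_n,T_n)_\#\mm_n$ on the product $\X\times\Z$ is already forced by their convergence against the much smaller class of \emph{product} test functions. First I would unwind the definitions: combining Definition \ref{def: conv L0} with the definition of weak convergence of measures, the relation $T_n\xrightarrow{\L^0}T$ says exactly that
\[
\int_{\X\times\Z}\varphi\otimes\Phi\,\d\nu_n=\int_{\X_n}\varphi(p_n)\,\Phi(T_n)\,\d\mm_n\longrightarrow \int_\X \varphi\,\Phi(T)\,\d\mm=\int_{\X\times\Z}\varphi\otimes\Phi\,\d\big((\mathrm{id},T)_\#\mm\big)
\]
for every $\varphi\in C^0_\b(\X)$ and $\Phi\in C^0_\b(\Z)$. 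With this rewriting the implication from \eqref{eq: L0 young} to $T_n\xrightarrow{\L^0}T$ is immediate, since it only amounts to testing the weak convergence of $\nu_n$ against the particular bounded continuous functions $\varphi\otimes\Phi$.

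For the converse I would first secure tightness of $(\nu_n)$, in order to invoke the Prohorov precompactness criterion. The first marginal of $\nu_n$ is $\tilde{\mm}_n=(p_n)_\#\mm_n$, which converges weakly to $\mm$ by the standing assumption and is therefore tight; the second marginal is $(T_n)_\#\mm_n$, and specializing the displayed convergence to $\varphi\equiv 1$ gives $(T_n)_\#\mm_n\rightharpoonup T_\#\mm$ on $\Z$, so this sequence is tight as well. Tightness of both marginals yields tightness of the joint measures: for $\eps>0$ one picks compacts $K_1\subset\X$ and $K_2\subset\Z$ absorbing all but $\eps/2$ of the respective marginal masses uniformly in $n$, and then $K_1\times K_2$ is compact with $\nu_n\big((\X\times\Z)\setminus(K_1\times K_2)\big)<\eps$ for all $n$.

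Granted tightness, Prohorov's criterion makes $(\nu_n)$ precompact, so from any subsequence I can extract a further one with $\nu_{n_k}\rightharpoonup\nu$ for some $\nu\in\pr(\X\times\Z)$. Passing to the limit along this subsequence in the displayed identity pins down $\int\varphi\otimes\Phi\,\d\nu=\int\varphi\otimes\Phi\,\d\big((\mathrm{id},T)_\#\mm\big)$ for all admissible $\varphi,\Phi$. Since finite linear combinations of the products $\varphi\otimes\Phi$ form a subalgebra of $C^0_\b(\X\times\Z)$ that contains the constants and separates the points of the Polish space $\X\times\Z$, a Stone--Weierstrass argument---localized to compact sets via the tightness of the two measures---shows that such products are a measure-determining class, whence $\nu=(\mathrm{id},T)_\#\mm$. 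As the limit is independent of the extracted subsequence, the whole sequence converges and \eqref{eq: L0 young} follows.

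The one genuinely delicate passage is this last one, namely upgrading convergence against products to full weak convergence on the non-compact product $\X\times\Z$. This is exactly where tightness is indispensable: agreement on products determines a measure only after the uniform concentration of mass on compacta lets one transfer the Stone--Weierstrass approximation, available on each compact set, to the general functions in $C^0_\b(\X\times\Z)$ one actually needs to test.
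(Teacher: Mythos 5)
Your proof is correct and has the same skeleton as the paper's: the forward implication by testing the weak convergence of $\left(p_n,T_n\right)_\#\mm_n$ against product functions $\varphi\otimes\Phi$, and the converse via tightness, Prohorov, and the fact that such products determine a measure on $\X\times\Z$. Two differences are worth recording. First, for tightness of the joint laws the paper invokes Proposition \ref{prop: L0} (the good-coupling characterization of $\L^0$ convergence), whereas you observe that taking $\varphi\equiv 1$ already gives $(T_n)_\#\mm_n\weakto T_\#\mm$, so both marginals are weakly convergent sequences of probability measures on Polish spaces, hence tight (a convergent sequence together with its limit is compact, so the stated Prohorov criterion applies), and tightness of the marginals gives tightness of the joint laws; this is more elementary and bypasses the coupling machinery entirely. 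Second, on identifying subsequential limits, the paper asserts that linear combinations of products $\varphi\otimes\Phi$ are dense in the uniform norm in $C^0_\b(\X\times\Z)$; taken literally this is false when $\X\times\Z$ is non-compact (Stone--Weierstrass requires compactness), and the correct statement is the one you give: such combinations are measure-determining for tight measures, obtained by localizing the Stone--Weierstrass approximation on compact sets carrying all but $\eps$ of the mass. So your write-up is, if anything, more careful than the paper's at the one point where care is genuinely needed; alternatively, one can avoid Stone--Weierstrass altogether by a $\pi$-system/Dynkin argument on measurable rectangles, which requires no tightness for the identification step.
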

	\begin{proof}
		If \eqref{eq: L0 young} holds, then the convergence follows by testing with functions of the form $\Psi(x, z)=\varphi(x)\phi(z)$ with $\varphi\in C^0_\b(\X)$ and $\phi\in C^0_\b(\Z)$.
		The viceversa follows by noticing that Proposition \ref{prop: L0} implies that the measures $\left(p_n, T_n\right)_\#\mm_n$ are tight and observing that linear combinations of functions $\Psi$ as above are dense in the uniform norm in $C^0_\b(\X\times\Z)$.
	\end{proof}
	\begin{lm}\label{lemma: Young}
		Let $1 <p<\infty$, $f_n\in \L^0(\mm_n)$, $f\in \L^0(\mm)$ be an equi--integrable sequence such that $f_n \mm_n\rightharpoonup f\mm$. Suppose moreover that there exists an interval $J$ such that $f_n(\X_n)\subset J$ and a function $\Theta:J\to \mathbb R$ lsc., strictly convex and bounded from below such that $\int \Theta(f_n)\,\d\mm_n\to\int\Theta(f)\,\d\mm$. Then $f_n\to f$ in $\L^0$.
	\end{lm}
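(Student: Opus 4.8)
The plan is to use the Young-measure characterization of $\L^0$-convergence provided by Proposition \ref{prop: L^0 cs}: it suffices to show that the probability measures $\sigma_n \coloneqq (p_n, f_n)_\# \mm_n \in \pr(\X \times \R)$ converge weakly to $(\mathrm{id}, f)_\# \mm$. The strategy is the classical one by which strict convexity upgrades weak convergence to convergence in measure, via the observation that equality in Jensen's inequality forces the disintegrated Young measures to be Dirac masses.

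First I would establish tightness of $(\sigma_n)_n$ on $\X \times \R$. The first marginals are $\tilde\mm_n = (p_n)_\# \mm_n \rightharpoonup \mm$, hence tight; for the fibre direction I would use that equi-integrability of $(f_n \mm_n)$ is equivalent, for a sequence of bounded total mass (and $\int f_n\,\d\mm_n \to \int f\,\d\mm < \infty$), to uniform integrability $\lim_M \sup_n \int_{\{|f_n| > M\}} |f_n| \, \d\mm_n = 0$; this follows from Lemma \ref{lemma: troncamenti}, or directly from Markov's inequality together with Lemma \ref{lemma: equi-int cont}. Uniform integrability yields both tightness in the $t$-variable (through $\mm_n(\{|f_n| > M\}) \le M^{-1}\int_{\{|f_n|>M\}}|f_n|\,\d\mm_n$) and a uniform bound on $\int |t| \, \d\sigma_n = \int |f_n| \, \d\mm_n$. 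By Prohorov, any subsequence of $(\sigma_n)$ admits a further subsequence converging weakly to some $\sigma \in \pr(\X\times \R)$; I claim every such limit equals $(\mathrm{id}, f)_\# \mm$, which by the subsequence principle gives the full weak convergence.

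Fix such a limit $\sigma$ and disintegrate it as $\sigma = \int_\X \sigma_x \, \d\mm(x)$ with respect to its first marginal, which is $\mm$ because $\tilde\mm_n \rightharpoonup \mm$. Two identifications then pin down $\sigma$. For the barycenter, testing with $(x,t)\mapsto \varphi(x)\,t$, $\varphi\in C^0_\b(\X)$, and passing to the limit --- which is licit precisely because uniform integrability lets me truncate $t$ and control the tails --- I obtain from $f_n\mm_n \rightharpoonup f\mm$ that $\int t\,\d\sigma_x(t) = f(x)$ for $\mm$-a.e.\ $x$. For the convex functional, since $(x,t)\mapsto \Theta(t)$ is lower semicontinuous and bounded below, weak convergence gives $\int \Theta(t)\,\d\sigma \le \liminf_n \int \Theta(f_n)\,\d\mm_n = \int \Theta(f)\,\d\mm$ by hypothesis.

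Finally I would combine these with Jensen's inequality applied fibrewise: $\Theta(f(x)) = \Theta\big(\int t \,\d\sigma_x\big) \le \int \Theta(t)\,\d\sigma_x$ for $\mm$-a.e.\ $x$, and integrating in $\mm$ forces the chain $\int_\X \Theta(f)\,\d\mm \le \int \Theta(t)\,\d\sigma \le \int_\X \Theta(f)\,\d\mm$. Hence equality holds in the fibrewise Jensen inequality for $\mm$-a.e.\ $x$, and strict convexity of $\Theta$ then forces $\sigma_x = \delta_{f(x)}$, i.e.\ $\sigma = (\mathrm{id}, f)_\# \mm$, as desired. The main obstacle is the barycenter passage to the limit: the natural test function $\varphi(x)\,t$ is unbounded in $t$, so weak convergence alone is insufficient, and the whole argument hinges on turning the abstract equi-integrability of Definition \ref{def: equi--int} into genuine uniform integrability of the densities, so that no mass escapes to infinity along the fibres. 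The equality-case analysis of Jensen, by contrast, is standard once this tightness-with-moment-control is in place.
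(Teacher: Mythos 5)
Your proof is correct and takes essentially the same route as the paper's: compactness of the Young measures $(p_n,f_n)_\#\mm_n$ via Proposition \ref{prop: L^0 cs}, disintegration of the limit, identification of the barycenter with $f$ using equi--integrability to handle the linear growth of the test function, and the equality case in the fibrewise Jensen inequality together with strict convexity of $\Theta$ to force $\sigma_x=\delta_{f(x)}$. The only difference is one of exposition: you make explicit the tightness, truncation and subsequence arguments that the paper leaves implicit.
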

	\begin{proof}
		Let us start by noticing that the family $\mu_n\coloneqq \left(p_n, f_n\right)_\#\mm_n$ is equi--integrable, so, up to subsequences, $\mu_n\to \mu$ for some probability measure $\mu$ on $\X\times\mathbb R$. By Proposition \ref{prop: L^0 cs} it is thus sufficient to prove that $\mu=\left(id, f\right)_\#\mm$. Let us denote by $\mu_x$ the measures obtained by disintegrating $\mu$ with respect to the projection on the first variable, i.e. $\mu=\int \mu_x\,\d\mu$. Fix any $\Phi\in C^0_\b(\X)$: by the equiintegrability and the linear growth of the integrand, it follows
		\begin{align}
			\int \Phi(x)z\, d\mu(x, z)&=\lim_n \int \Phi(x)z\, d\mu_n(x, z)\\
			&=\lim_n \int (\Phi\circ p_n) f_n\,\d\mm_n\\
			&=\int \Phi f\,\d\mm,
		\end{align}
		so, in particular, $\int z\,\d\mu_x=f(x)$ for $m$--a.e. $x$.
		By Jensen inequality, then
		\begin{align}
			\int \Theta(f)\,\d\mm&\leq \int \int\Theta(z)\,\d\mu_x\d\mm\\
			&=\int \Theta(z)\,\d\mu\leq \mathop{\operatorname{liminf}}_n \int \Theta (z)\,\d\mu_n\\
			&=\int \Theta(f_n)\,\d\mm_n.
		\end{align}
		By the hypothesis, the integrals converge and so the inequalities are all equalities: it follows that $\mu_x=\delta_{f(x)}$ for $\mm$--a.e. $x$, which is equivalent to our thesis.
	\end{proof}
	The last form of convergence that we study is $\L^p$ convergence.
	\begin{deff}[Weak and strong $\L^p$ convergence]
		Let $1 <p<\infty$ and $f_n\in \L^p(\mm_n)$, $f\in \L^p(\mm)$. We say that $f_n\xrightharpoonup{\L^p}f$ if $f_n \mm_n\rightharpoonup f \mm$ and $\|f_n\|_{\L^p(\mm_n)}$ is equibounded.
		We say that $f_n\xrightarrow{\L^p}f$ if $f_n\xrightharpoonup{\L^p}f$ and $\|f_n\|_{\L^p(\mm_n)}\to \|f\|_{\L^p(\mm)}$.
	\end{deff}
	\begin{rmk}
		Notice that by convexity and positivity of $|\cdot|^p$, weak convergence implies the lower semicontinuity of the norm. 
	\end{rmk}
	Mimicking what happens when all the functions are defined on the same space, we can generalize also the definitions of convergence in $\L^1$.
	\begin{deff}
		Let $f_n\in \L^1(\mm_n)$, $f\in \L^1(\mm)$. We say that $f_n\xrightharpoonup{\L^1}f$ if $f_n \mm_n$ is equi--integrable and $f_n \mm_n\rightharpoonup f \mm$. We say that $f_n\xrightarrow{\L^1}f$ if $f_n\to f$ in $\L^0$ and $\|f_n\|_{\L^1(\mm_n)}\to \|f\|_{\L^1(\mm_n)}$.
	\end{deff}
	\begin{rmk}
		By a Scheffé--type argument, it can be verified that $f_n\to f$ in $\L^1$ implies $f_n\rightharpoonup f$ in $\L^1$.
	\end{rmk}
	A final useful notion to introduce is $p$--equi--integrability.
	\begin{deff}[$p$--equi--integrability]
		Let $1 <p<\infty$ and $f_n\in \L^p(\mm_n)$. We say that the sequence $(f_n)_{n\geq 1}$ is $p$--equi--integrable if $|f_n|^p\mm_n$ is equi--integrable according to Definition \ref{def: equi--int}.
	\end{deff}
	\begin{rmk}\label{oss: p equi}
		Notice that if $\sup_n\|f_n\|_{\L^p}$ is finite, then $(f_n)$ is $p$--equi--integrable iff 
		\begin{equation}
			\lim_{M\to \infty} \sup_n \int_{\{|f_n|\geq M\}} |f_n|^p\, \d\mm_n=0,
		\end{equation}
		while in general the $p$--equiintegrability hypothesis is weaker.
	\end{rmk}
	We can finally characterize $\L^p$ convergence in terms of properties that are easier to handle.
	\begin{prop}\label{prop: convergenze}
		Let $1 \leq p<\infty$, $f\in \L^p(\mm)$ and $(\aalpha_n)_n$ good couplings. Then the following are equivalent:
		\begin{enumerate}
			\item \label{it: L^p} $f_n\to f$ in $\L^p$;
			\item \label{it: conv dom} $f_n\to f$ in $\L^0$ and $(f_n)_n$ is $p$--equi--integrable; 
			\item \label{it: norma} $\int |f_n(x)-f(y)|^p\,\d\aalpha_n(x, y)\to 0$.
		\end{enumerate}
	\end{prop}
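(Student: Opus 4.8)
The plan is to prove the three conditions equivalent by establishing the cycle $(1)\Rightarrow(2)\Rightarrow(3)\Rightarrow(1)$, after fixing good couplings $(\aalpha_n)_n$ once and for all and recalling that, by Proposition \ref{prop: L0}, $f_n\to f$ in $\L^0$ amounts to $\aalpha_n(\{|f_n\circ\proj_1-f\circ\proj_2|>\eps\})\to0$ for every $\eps>0$. I would begin with the cleanest arrow, $(3)\Rightarrow(1)$: Markov's inequality turns the vanishing of $\int|f_n(x)-f(y)|^p\,\d\aalpha_n$ into $\L^0$ convergence, while reading $f_n\circ\proj_1$ and $f\circ\proj_2$ as elements of $\L^p(\aalpha_n)$ and using $\|f\circ\proj_2\|_{\L^p(\aalpha_n)}=\|f\|_{\L^p(\mm)}$ (the second marginal being $\mm$) gives $\|f_n\|_{\L^p(\mm_n)}\to\|f\|_{\L^p(\mm)}$ by the triangle inequality. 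Weak convergence $f_n\mm_n\rightharpoonup f\mm$ is obtained by splitting $\int\varphi(p_n)f_n\,\d\mm_n-\int\varphi f\,\d\mm$ (for $\varphi\in\Lip(\X)$) into a piece dominated by $(\int|f_n(x)-f(y)|^p\,\d\aalpha_n)^{1/p}$ and a piece dominated by $\int(\sfd_\X\wedge1)|f(y)|\,\d\aalpha_n$, the latter vanishing by the good-coupling property once $|f(y)|$ is truncated at a level $M$ and $f\in\L^1(\mm)$ is used for the tail. These three facts together amount to strong $\L^p$ convergence, for $p=1$ by definition and for $p>1$ since weak $\L^p$ convergence follows from the weak convergence of the measures and the bound on the norms.

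For $(1)\Rightarrow(2)$ I would treat $p>1$ and $p=1$ separately. When $p>1$ the idea is exactly that of Lemma \ref{lemma: Young}, with the uniform $p$-th moment bound playing the role of the bounded interval there: the measures $(p_n,f_n)_\#\mm_n$ on $\X\times\R$ are tight (first marginals $\tilde\mm_n\rightharpoonup\mm$, and the $\R$-direction is tight by the moment bound), any weak limit $\mu=\int\mu_x\,\d\mm$ has barycenter $f$ (using that $z$ is uniformly $\mu_n$-integrable, which is where $p>1$ enters through H\"older and Markov), and the norm convergence forces $\int|z|^p\,\d\mu=\|f\|_{\L^p(\mm)}^p$, so Jensen and strict convexity give $\mu_x=\delta_{f(x)}$; Proposition \ref{prop: L^0 cs} then yields $f_n\to f$ in $\L^0$. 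Once $\L^0$ convergence is in hand (for $p=1$ it is part of the definition), I would note that $|f_n|^p\to|f|^p$ in $\L^0$ by Corollary \ref{cor: cont} and that their $\L^1$ norms converge, i.e.\ $|f_n|^p\to|f|^p$ strongly in $\L^1$; the Scheff\'e-type argument recorded after the definition of $\L^1$ convergence then shows that $|f_n|^p\mm_n$ is equi--integrable, which is precisely the required $p$-equi--integrability.

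The core of the argument, and the step I expect to be the main obstacle, is $(2)\Rightarrow(3)$. The difficulty is that $p$-equi--integrability does not by itself bound $\sup_n\|f_n\|_{\L^p(\mm_n)}$ (see Remark \ref{oss: p equi}), yet such a bound is needed to run the final Vitali argument. My plan is to recover it from the $\L^0$ convergence: the inclusion $\{|f_n|>M\}\subset\{|f_n(x)-f(y)|>M/2\}\cup\{|f(y)|>M/2\}$ gives $\lims_n\mm_n(\{|f_n|>M\})\le\mm(\{|f|>M/2\})\to0$ as $M\to\infty$, so choosing $M$ with this quantity below the threshold $\delta$ furnished by Lemma \ref{lemma: equi-int cont} for the equi--integrable family $|f_n|^p\mm_n$ bounds $\lims_n\int_{\{|f_n|>M\}}|f_n|^p\,\d\mm_n$ and hence $\sup_n\|f_n\|_{\L^p(\mm_n)}$. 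Remark \ref{oss: p equi} then upgrades $p$-equi--integrability to the uniform tail bound $\lim_M\sup_n\int_{\{|f_n|\ge M\}}|f_n|^p\,\d\mm_n=0$. With $g_n(x,y)\coloneqq|f_n(x)-f(y)|^p$, the $\L^0$ convergence gives $g_n\to0$ in $\aalpha_n$-measure, while the uniform tail bound (for the $f_n(x)$-part, via the marginal $\mm_n$) together with $f\in\L^p(\mm)$ (for the $f(y)$-part, via the marginal $\mm$) make $(g_n)$ uniformly $\aalpha_n$-integrable, so that $\int_{B_n}g_n\,\d\aalpha_n\to0$ whenever $\aalpha_n(B_n)\to0$. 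A Vitali splitting over $\{g_n\le\eps\}$ and $\{g_n>\eps\}$ finally gives $\int g_n\,\d\aalpha_n\to0$, closing the cycle.
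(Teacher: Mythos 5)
Your proposal is correct and follows essentially the same route as the paper: the identical cycle of three implications, with Proposition \ref{prop: L0} and good couplings driving $(3)\Rightarrow(1)$, a Young-measure argument in the spirit of Lemma \ref{lemma: Young} giving the $\L^0$ convergence in $(1)\Rightarrow(2)$, and a Vitali-type splitting for $(2)\Rightarrow(3)$. The only noteworthy differences are to your credit: in $(2)\Rightarrow(3)$ you explicitly recover the uniform bound on $\|f_n\|_{\L^p(\mm_n)}$ from the $\L^0$ convergence together with Lemma \ref{lemma: equi-int cont} (a point the paper's one-line justification ``by $p$--equi--integrability and the fact that $f\in \L^p$'' glosses over, since $p$--equi--integrability alone does not bound the norms, cf.\ Remark \ref{oss: p equi}), and in $(1)\Rightarrow(2)$ you conclude the equi--integrability via the Scheff\'e-type remark rather than the paper's truncation-by-difference argument, both being legitimate uses of results stated earlier in the paper.
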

	\begin{proof}
		We show the equivalence in the case $p>1$. If $p=1$ the arguments are similar.
		Assume first that $f_n \to f$ in $\L^0$ and $(f_n)_n$ is $p$--equi--integrable. Let $\varepsilon>0$ and $E_n\coloneqq\{|f_n(x)-f(y)|>\varepsilon\}$. Since $f_n\to f$in $\L^0$, it follows that $\aalpha_n(E_n)\to 0$. We then have
		\begin{align}
			\lims_n \int |f_n(x)-f(y)|^p\,\d\aalpha_n(x, y)&\leq \lims_n\int_{\X_n\times \X\setminus E_n} |f_n(x)-f(y)|^p\,\d\aalpha_n(x, y)\\
			&\quad +2^{p-1}\lims_n\int_{E_n}(|f_n(x)|^p+|f(y)|^p)\,\d\aalpha_n\\
			&\leq \varepsilon^p
		\end{align}
		by $p$--equi--integrability and the fact that $f\in \L^p$.
		By generality of $\varepsilon$, $\|f_n\circ\operatorname{proj}_1-f\circ\operatorname{proj}_2\|_{\L^p(\aalpha_n)}\to 0$.
		
		Assume now that this latter condition holds. By the triangle inequality, it is immediate that $\|f_n\|_{\L^p}\to \|f\|_{\L^p}$. Reasoning as in Proposition \ref{prop: L0}, it follows that $f_n\to f$ in $\L^p$.
		
		Finally, assume that $f_n\to f$ in $\L^p$. By Lemma \ref{lemma: Young}, it follows that $f_n\to f$ in $\L^0$. In particular, by Corollary \ref{cor: cont} it follows that, for each $M>0$, $|f_n|\wedge M\to |f|\wedge M$ in $\L^p$. By difference and recalling Remark \ref{oss: p equi}, it is easy to check that $(f_n)_n$ is $p$--equi--integrable.
	\end{proof}
	\begin{rmk}
		With these results at hand, it is easy to see that a further characterization of $\L^0$ convergence is possible: fixed $p>1$, a sequence $f_n\in \L^0(\X_n), n\geq 1$ converges in $\L^0$ to $f\in \L^0(\X)$ iff $-M\vee f_n\wedge M$ converges in $\L^p$ to $-M\vee f\wedge M$ for each $M>0$.
	\end{rmk}
	\begin{cor}\label{cor: coupling}
		Let $1<p, p'<+\infty$ be such that $\frac{1}{p}+\frac{1}{p'}=1$. Assume that $f_n\in \L^p(\mm_n)$, $g_n\in \L^{p'}(\mm_n)$ are such that $f_n\to f$ in $\L^p$ and $g_n\rightharpoonup g$ in $\L^{p'}$. Then $f_ng_n\rightharpoonup fg$ in $\L^1$.
	\end{cor}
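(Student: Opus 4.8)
The plan is to verify the two defining requirements for weak $\L^1$ convergence separately: that $f_ng_n\,\mm_n$ is equi--integrable, and that $f_ng_n\,\mm_n\rightharpoonup fg\,\mm$ (note $fg\in\L^1(\mm)$ by Hölder, so the statement is meaningful). The equi--integrability is the easy half. Since $f_n\to f$ in $\L^p$, Proposition \ref{prop: convergenze} tells us that $(f_n)_n$ is $p$--equi--integrable, i.e.\ $|f_n|^p\mm_n$ is equi--integrable, while weak $\L^{p'}$ convergence gives $\|g_n\|_{\L^{p'}(\mm_n)}\le C$ for some constant. For Borel sets $E_n\subset\X_n$ with $\mm_n(E_n)\to0$, Hölder's inequality yields $\int_{E_n}|f_ng_n|\,\d\mm_n\le\big(\int_{E_n}|f_n|^p\,\d\mm_n\big)^{1/p}\,\|g_n\|_{\L^{p'}(\mm_n)}$; the first factor vanishes by $p$--equi--integrability and the second stays bounded, so the left side tends to $0$, which is exactly equi--integrability of $f_ng_n\mm_n$.

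For the weak convergence I would fix good couplings $\aalpha_n\in\adm(\mm_n,\mm)$ and test against $\varphi\in C^0_\b(\X)$, i.e.\ show $\int_{\X_n}(\varphi\circ p_n)f_ng_n\,\d\mm_n\to\int_\X\varphi fg\,\d\mm$. Writing the left-hand side as $\int_{\X_n\times\X}\varphi(p_n(x))f_n(x)g_n(x)\,\d\aalpha_n$, I split $\varphi(p_n(x))f_n(x)=\varphi(y)f(y)+R_n(x,y)$ with $R_n=\varphi(p_n(x))\big(f_n(x)-f(y)\big)+\big(\varphi(p_n(x))-\varphi(y)\big)f(y)$. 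Hölder on $\aalpha_n$ bounds the remainder contribution by $\|R_n\|_{\L^p(\aalpha_n)}\,\|g_n\|_{\L^{p'}(\mm_n)}$, and $\|R_n\|_{\L^p(\aalpha_n)}\to0$: the first summand is controlled by $\|\varphi\|_\infty\,\|f_n\circ\proj_1-f\circ\proj_2\|_{\L^p(\aalpha_n)}$, which vanishes by the characterization (item (3)) of Proposition \ref{prop: convergenze}; the second vanishes because $\varphi(p_n(x))-\varphi(y)\to0$ in $\aalpha_n$--measure (continuity of $\varphi$ together with the good--coupling property) and is bounded, while $|f(y)|^p$ has the fixed law $|f|^p\mm$ along the second marginal $\proj_2{}_\#\aalpha_n=\mm$, so $\int|\varphi(p_n(x))-\varphi(y)|^p|f(y)|^p\,\d\aalpha_n\to0$ by dominated convergence after disintegrating $\aalpha_n$ over $\mm$.

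The genuine obstacle is the leftover main term $\int_{\X_n\times\X}\varphi(y)f(y)\,g_n(x)\,\d\aalpha_n$: here a merely $\L^p(\mm)$ function living on the limit space must be paired against the only weakly convergent $g_n$, and one cannot test $g_n\mm_n\rightharpoonup g\mm$ directly against a non-continuous function nor compose it with $p_n$ (since $(p_n)_\#\mm_n\not\ll\mm$ in general). I would resolve this by density: choose $\psi\in C^0_\b(\X)$ with $\|\psi-\varphi f\|_{\L^p(\mm)}<\eps$, so that replacing $\varphi f$ by $\psi$ costs at most $\eps\,\|g_n\|_{\L^{p'}(\mm_n)}$ by Hölder; then replace $\psi(y)$ by $\psi(p_n(x))$ at the cost of $\|\psi\circ\proj_2-\psi\circ p_n\circ\proj_1\|_{\L^p(\aalpha_n)}\,\|g_n\|_{\L^{p'}(\mm_n)}$, which vanishes since $\psi$ is bounded continuous and $\psi(p_n(x))\to\psi(y)$ in $\aalpha_n$--measure; finally $\int_{\X_n}(\psi\circ p_n)g_n\,\d\mm_n\to\int_\X\psi g\,\d\mm$ by the very definition of $g_n\rightharpoonup g$, and $\int\psi g\,\d\mm$ differs from $\int\varphi fg\,\d\mm$ by at most $\eps\,\|g\|_{\L^{p'}(\mm)}$, using weak lower semicontinuity of the $\L^{p'}$ norm to bound $\|g\|_{\L^{p'}(\mm)}$. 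Letting $\eps\downarrow0$ identifies the limit of the main term as $\int_\X\varphi fg\,\d\mm$; combining with the vanishing remainder and the equi--integrability established above gives $f_ng_n\rightharpoonup fg$ in $\L^1$.
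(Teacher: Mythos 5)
Your proof is correct and follows essentially the same route as the paper: equi--integrability via H\"older together with the $p$--equi--integrability of $(f_n)$ from Proposition \ref{prop: convergenze}, then a density argument that replaces the strongly convergent factor by (the pullback through $p_n$ of) a regular function on $\X$, so that the weak convergence of $g_n$ can be tested against it, with all errors controlled by H\"older and $\sup_n\|g_n\|_{\L^{p'}}$. The only cosmetic difference is that you approximate the product $\varphi f$ by bounded continuous functions and shuttle the terms through the coupling $\aalpha_n$, whereas the paper approximates $f$ itself by a Lipschitz $\psi$ and compares $f_n$ with $\psi\circ p_n$ directly in $\L^p(\mm_n)$; the key ingredients (item (3) of Proposition \ref{prop: convergenze}, good couplings, density of regular functions in $\L^p(\mm)$, and the definition of weak convergence) are identical.
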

	\begin{proof}
		Let $E_n\in \mathcal{B}(\X_n)$ be such that $\mm_n(E_n)\to 0$. It follows
		\begin{equation}
			(|f_n g_n|\,\mm_n)(E_n)\leq\sup_n\|g_n\|_{\L^{p'}}\left(\int_{E_n}|f_n|^p\,\d\mm_n\right)^{\frac{1}{p}}\to 0,
		\end{equation}
		so the equi--integrability follows. Let now be $\varphi\in C^0_\b(\X)$. Let $\varepsilon>0$ and $\psi\in \operatorname{Lip}(\X)$ such that $\|f-\psi\|_{\L^p}<\varepsilon$.
		We can estimate
		\begin{align}
			\left| \int\varphi\circ p_n f_ng_n\,\d\mm_n-\int\varphi\psi g_n\,\d\mm_n\right|&\leq \|\varphi\|_\infty \sup_n \|g_n\|_{\L^{p'}}\left(\int|f_n-\psi\circ p_n|^p\,\d\mm_n\right)^\frac{1}{p}.
		\end{align}
		It is enough to show that the term in the parenthesis can be bounded in terms of $\varepsilon$. Indeed, if $\aalpha_n$ are good couplings,
		\begin{align}
			\int|f_n-\psi\circ p_n|^p\,\d\mm_n&\leq 2^{p-1}\bigg(\|f_n\circ\operatorname{proj}_1-\psi\circ\operatorname{proj}_2\|^p_{\L^p(\aalpha_n)}\\
			&\qquad\qquad\quad +\int |\psi(x)-\psi(y)|\,\d(id, p_n)_{\#}\aalpha_n\bigg)\\
			&\leq \varepsilon^p+o(1),
		\end{align}
		where the last term can be shown to be vanishing as in the proof of Proposition \ref{prop: L0}.
	\end{proof}
	\begin{cor}
		Let $f_n, g_n \in \L^p(\X)$, $1\leq p<\infty$ and suppose that $f_n\to f$ in $\L^p$. Then $g_n\to f$ in $\L^p$ iff $\|f_n-g_n\|_{\L^p}\to 0$.
	\end{cor}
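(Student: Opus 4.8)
The plan is to transport the whole statement into a single genuine normed space, namely $\L^p(\aalpha_n)$ for a fixed choice of good couplings, where all three quantities involved can be read off simultaneously and the claim reduces to the triangle inequality. First I fix a sequence of good couplings $(\aalpha_n)_n\in\adm(\mm_n,\mm)$ and recall the characterization of $\L^p$ convergence furnished by Proposition \ref{prop: convergenze}, equivalence \eqref{it: L^p}$\Leftrightarrow$\eqref{it: norma}: a sequence $h_n\in\L^p(\mm_n)$ converges to $f$ in $\L^p$ if and only if $\|h_n\circ\proj_1-f\circ\proj_2\|_{\L^p(\aalpha_n)}\to 0$. In this way both the hypothesis $f_n\to f$ and the candidate conclusion $g_n\to f$ become assertions about the $\L^p(\aalpha_n)$-distance of $h_n\circ\proj_1$ from the \emph{fixed} element $f\circ\proj_2$.

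The key observation is that the quantity $\|f_n-g_n\|_{\L^p(\mm_n)}$ also lives naturally in this same space. Since $\aalpha_n$ has first marginal $\mm_n$, we have $(\proj_1)_\#\aalpha_n=\mm_n$, and therefore
\[
\|f_n-g_n\|_{\L^p(\mm_n)}=\|f_n\circ\proj_1-g_n\circ\proj_1\|_{\L^p(\aalpha_n)}.
\]
Writing $F_n\coloneqq f_n\circ\proj_1$, $G_n\coloneqq g_n\circ\proj_1$ and $H\coloneqq f\circ\proj_2$ as elements of $\L^p(\aalpha_n)$, the hypothesis reads $\|F_n-H\|_{\L^p(\aalpha_n)}\to 0$, the conclusion $g_n\to f$ in $\L^p$ reads $\|G_n-H\|_{\L^p(\aalpha_n)}\to 0$, and the quantity to be controlled is exactly $\|F_n-G_n\|_{\L^p(\aalpha_n)}$.

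At this point both implications follow from the triangle inequality in $\L^p(\aalpha_n)$, using that $\|F_n-H\|_{\L^p(\aalpha_n)}\to 0$ is granted throughout. For the implication $g_n\to f\Rightarrow\|f_n-g_n\|_{\L^p(\mm_n)}\to 0$ I would bound $\|F_n-G_n\|_{\L^p(\aalpha_n)}\le\|F_n-H\|_{\L^p(\aalpha_n)}+\|H-G_n\|_{\L^p(\aalpha_n)}$, both summands vanishing. For the converse I would bound $\|G_n-H\|_{\L^p(\aalpha_n)}\le\|G_n-F_n\|_{\L^p(\aalpha_n)}+\|F_n-H\|_{\L^p(\aalpha_n)}$, again with both summands vanishing, and then conclude $g_n\to f$ in $\L^p$ by Proposition \ref{prop: convergenze}. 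I do not expect any genuine obstacle: the only point deserving care is the identification of $\|f_n-g_n\|_{\L^p(\mm_n)}$ with the $\L^p(\aalpha_n)$-norm of $F_n-G_n$ via the marginal property of $\aalpha_n$, which is precisely what places all three quantities inside one fixed normed space and collapses the statement to the standard triangle inequality.
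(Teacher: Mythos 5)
Your proof is correct and follows exactly the route the paper intends: the corollary is stated without proof immediately after Proposition \ref{prop: convergenze}, from whose coupling characterization (item \eqref{it: norma}) it follows precisely as you argue, by identifying $\|f_n-g_n\|_{\L^p(\mm_n)}$ with $\|f_n\circ\proj_1-g_n\circ\proj_1\|_{\L^p(\aalpha_n)}$ via the marginal property and applying the triangle inequality in $\L^p(\aalpha_n)$. Nothing is missing; in particular your use of the equivalence for all $1\leq p<\infty$ is legitimate, since the proposition covers the case $p=1$ as well.
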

	A further characterization of $\L^0$ convergence can be given in terms of pushforwards of weakly convergent sequence of measures:
	\begin{prop}\label{prop: L0 via misure}
		Let $(\Z, \sfd_\Z)$ be a complete and separable metric space. Let $T_n: \X_n\to \Z$ be Borel maps, $n\geq 1$, and $T:\X\to \Z$ a Borel map. Then the following are equivalent:
	\begin{itemize}		
	\item[i)] $T_n\xrightarrow{\L^0}{}T$
	\item[ii)] For every $C>1$ and every sequence of probability measures $\mu_n\in \pr(\X_n)$ with $\mu_n\leq C\mm_n$ weakly converging to some $\mu\in\pr(\X)$ we have $(T_n)_\#\mu_n\weakto T_\#\mu$
	\item[iii)] For every equi-integrable sequence of probability measures $\mu_n\in \pr(\X_n)$   weakly converging to some $\mu\in\pr(\X)$, we have $(T_n)_\#\mu_n\weakto T_\#\mu$
	\end{itemize}		
	\end{prop}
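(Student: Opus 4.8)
The plan is to establish the three conditions as a cycle, $(iii)\Rightarrow(ii)\Rightarrow(i)\Rightarrow(iii)$. The first implication is free: if $\mu_n\le C\mm_n$ and $\mm_n(E_n)\to 0$, then $\mu_n(E_n)\le C\mm_n(E_n)\to 0$, so every sequence admissible in $(ii)$ is equi--integrable and $(iii)$ applies verbatim. Thus only $(ii)\Rightarrow(i)$ and $(i)\Rightarrow(iii)$ require work, the latter being the substantial one.

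For $(ii)\Rightarrow(i)$ I would recover the testing condition of Definition~\ref{def: conv L0} by a reweighting argument. By the standing assumption $(p_n)_\#\mm_n\rightharpoonup\mm$, taking $\mu_n=\mm_n$ in $(ii)$ already gives $(T_n)_\#\mm_n\rightharpoonup T_\#\mm$, i.e. $\int\Phi(T_n)\,\d\mm_n\to\int\Phi(T)\,\d\mm$ for all $\Phi\in C^0_\b(\Z)$. To upgrade this to $\Phi(T_n)\mm_n\rightharpoonup\Phi(T)\mm$ one must also test against $\varphi\in C^0_\b(\X)$; by linearity (the constant part being covered by the case $\mu_n=\mm_n$) it suffices to treat $\varphi\ge 1$. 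Set $Z_n:=\int\varphi\circ p_n\,\d\mm_n\to\int\varphi\,\d\mm=:Z>0$ and consider the probabilities $\mu_n:=Z_n^{-1}(\varphi\circ p_n)\mm_n$. Since $\varphi$ is bounded and $Z_n\to Z>0$, one has $\mu_n\le C\mm_n$ for a suitable $C>1$, while $(p_n)_\#\big((\varphi\circ p_n)\mm_n\big)\rightharpoonup\varphi\mm$ gives $\mu_n\rightharpoonup Z^{-1}\varphi\mm$. Applying $(ii)$ and multiplying back by $Z_n$ yields $\int\varphi(p_n)\Phi(T_n)\,\d\mm_n\to\int\varphi\Phi(T)\,\d\mm$, which is exactly the weak convergence defining $T_n\xrightarrow{\L^0}T$ (the finitely many $n$ for which $\mu_n$ is ill--defined do not affect the limit).

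The heart of the matter is $(i)\Rightarrow(iii)$, and the main obstacle is that $T$ is merely Borel, so that functions built from $T$ are not continuous and weak limits cannot be taken naively. I would fix $\Phi$ bounded and Lipschitz (enough to test weak convergence of probabilities on the Polish space $\Z$) and estimate $\int\Phi(T_n)\,\d\mu_n-\int\Phi(T)\,\d\mu$, replacing $T$ by a continuous function on a large set via Lusin's theorem. Given $\eps>0$, pick a compact $A\subset\X$ with $T|_A$ continuous and $\mm(\X\setminus A)$ small; shrinking $A$ to a finite union of balls with $\mm$--null boundaries we may take it to be an $\mm$--continuity set. Then $\mm_n(p_n^{-1}(\X\setminus A))=(p_n)_\#\mm_n(\X\setminus A)\to\mm(\X\setminus A)$ is small, and Lemma~\ref{lemma: equi-int cont} transfers this to $\lims_n\mu_n(p_n^{-1}(\X\setminus A))\le\eps$; the Portmanteau inequality on the open set $\X\setminus A$ gives in addition $\mu(\X\setminus A)\le\eps$. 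Hence, up to errors of order $\eps\|\Phi\|_\infty$, the whole computation localizes on $A$.

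The key remaining step is to compare $T_n$ with $T\circ p_n$ on $p_n^{-1}(A)$. Choosing good couplings $\aalpha_n\in\adm(\mm_n,\mm)$, Proposition~\ref{prop: L0} gives $\aalpha_n(\{\sfd_\Z(T_n\circ\proj_1,T\circ\proj_2)>\eta\})\to0$, while the good--coupling property forces $p_n\circ\proj_1\approx\proj_2$ in $\aalpha_n$--probability; on the event where both projections lie in $A$ (whose complement has $\aalpha_n$--mass $O(\mm(\X\setminus A))+o(1)$) the uniform continuity of $T|_A$ yields $\sfd_\Z(T_n,T\circ p_n)\le 2\eta$. Projecting onto the first marginal bounds $\mm_n(\{p_n\in A,\ \sfd_\Z(T_n,T\circ p_n)>2\eta\})$ by $O(\mm(\X\setminus A))$ asymptotically, and Lemma~\ref{lemma: equi-int cont} again transfers this smallness to $\mu_n$. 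On the complementary good set $|\Phi(T_n)-\Phi(T\circ p_n)|\le 2\eta\,\Lip(\Phi)$, and on $p_n^{-1}(A)$ one has $\Phi(T\circ p_n)=\tilde h\circ p_n$, where $\tilde h\in C^0_\b(\X)$ is a Tietze extension of the continuous function $\Phi\circ T|_A$. Since $(p_n)_\#\mu_n\rightharpoonup\mu$ and $\tilde h=\Phi\circ T$ on $A$, we get $\int\tilde h\circ p_n\,\d\mu_n\to\int\tilde h\,\d\mu=\int_A\Phi(T)\,\d\mu+O(\eps)$. Collecting all errors, of order $(\|\Phi\|_\infty+\Lip(\Phi))(\eps+\eta)$, and letting $\eta\to0$ then $\eps\to0$ gives $\int\Phi(T_n)\,\d\mu_n\to\int\Phi(T)\,\d\mu$, i.e. $(T_n)_\#\mu_n\rightharpoonup T_\#\mu$. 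The recurring difficulty is exactly the transfer of negligibility from the reference measures $\mm_n$ to the test measures $\mu_n$, which is precisely what equi--integrability, via Lemma~\ref{lemma: equi-int cont}, is designed to supply.
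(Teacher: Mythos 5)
Two of your three implications are sound: $(iii)\Rightarrow(ii)$ is indeed immediate from $\mu_n\leq C\mm_n$, and your reweighting proof of $(ii)\Rightarrow(i)$ is essentially the paper's own proof of $(iii)\Rightarrow(i)$. But note that the paper deliberately avoids ever proving a ``hard'' implication into $(iii)$ directly from $(i)$: it goes $(i)\Rightarrow(ii)$ by writing $\int \Phi\,\d(T_n)_\#\mu_n=\int(\Phi\circ T_n)\rho_n\,\d\mm_n$ and invoking Corollary \ref{cor: coupling} (strong $\L^p$ convergence of $\Phi(T_n)$, which follows from $(i)$, Corollary \ref{cor: cont} and Proposition \ref{prop: convergenze} since $\Phi$ is bounded, paired with weak $\L^{p'}$ convergence of the bounded densities $\rho_n\leq C$), and then $(ii)\Rightarrow(iii)$ by a total-variation truncation $\mu_{n,M}\coloneqq (\mu_n\wedge M)/(\mu_n\wedge M)(\X_n)$, where equi-integrability makes $\|\mu_{n,M}-\mu_n\|_{\sf TV}\to 0$ uniformly in $n$. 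That route never touches the pointwise structure of the Borel map $T$, which is exactly where your argument runs into trouble.

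Your direct proof of $(i)\Rightarrow(iii)$ has a genuine gap at the step ``shrinking $A$ to a finite union of balls with $\mm$-null boundaries we may take it to be an $\mm$-continuity set''. A Lusin set $A$ (with $T|_A$ continuous and $\mm(\X\setminus A)$ small) cannot in general be arranged to also be an $\mm$-continuity set: enlarging $A$ to a union of balls destroys the continuity of $T$ on the set (breaking both the uniform-continuity comparison and the Tietze extension step), while a finite union of balls \emph{inside} $A$ need not exist, since $A$ may have empty interior while carrying almost full mass. Concretely, take $\X=[0,1]$ with Lebesgue measure and $T=\mathbbm{1}_C$ with $C$ a fat Cantor set, $\mm(C)=\tfrac12$: if $T|_A$ is continuous then $A\cap C$ is relatively open in $A$, and since $C$ has empty interior this forces ${\rm Int}(A)\cap C=\emptyset$, i.e.\ $A\cap C\subset \partial A$, whence $\mm(\partial A)\geq \tfrac12-\mm(\X\setminus A)$. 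So \emph{no} admissible $A$ is a continuity set here. Without $\mm(\partial A)=0$ your step 3 fails: Portmanteau only gives $\lims_n (p_n)_\#\mm_n(\X\setminus A)\leq \mm(\X\setminus A)+\mm(\partial A)$, and the error $\mm(\partial A)$ can be of order $1$; everything downstream (the coupling-mass estimates, the transfer to $\mu_n$ via Lemma \ref{lemma: equi-int cont}, and the bound $\mu(\X\setminus A)\leq\eps$) collapses. The strategy can be repaired, but it needs an extra idea: decouple the two roles of $A$, keeping the Lusin set $A_0$ only for the continuity of $T$, running all mass estimates on the closed neighbourhoods $A^r=\{\sfd(\cdot,A_0)\leq r\}$ (which are continuity sets for all but countably many $r$, and satisfy $\mu(A^r\setminus A_0)\to 0$ as $r\downarrow 0$), and replacing $\Phi\circ T\circ p_n$ by a bounded \emph{uniformly} continuous extension of $\Phi\circ T|_{A_0}$ defined on all of $\X$ — or, more simply, route through $(ii)$ as the paper does.
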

	\begin{proof}
		We prove that (iii) implies (i), then that (i) implies (ii) and conclude by showing that (ii) implies (iii). 
		\begin{itemize}
			\item Assume item (iii) holds. Let $\Phi\in C^0_\b(\Z)$ and let $\varphi\in C^0_\b(\X)$. Then 
			\begin{equation}
				\int_\X \varphi\, \d (p_n)_\#(\Phi\circ T_n)\mm_n=\int_{\X_n} (\varphi\circ p_n)(\Phi\circ T_n)\,\d\mm_n=\int_\Z \Phi\,\d(T_n)_\#(\varphi\circ p_n\, \mm_n).
			\end{equation}
			Set $\mu_n\coloneqq \left(\varphi\circ p_n\right)\, \mm_n$ and notice that $\left(p_n\right)_\# \mu_n=\varphi \left(p_n\right)_\# \mm_n$, so $\mu_n$ weakly converges to $\mu\coloneqq \varphi\, \mm$. This implies (i).
			\item Assume (i) holds. Let $\mu_n= \rho_n\, \mm_n$, $\mu=\rho\, \mm$ and notice that $\rho_n\rightharpoonup \rho$ in $\L^p$ for all $p>1$. Let $\Phi\in C^0_\b(\Z)$. It holds 
			\begin{equation}
				\int \Phi\,\d(T_n)_\#\mu_n=\int \left(\Phi\circ T_n\right)\rho_n\,\d\mm_n.
			\end{equation}
			Now $\Phi\circ T_n\to \Phi\circ T$ in $\L^p$ for all $p>1$, so by Corollary \ref{cor: coupling} item (ii) follows.
			\item Assume (ii) holds. Item (iii) follows by truncation. Indeed, if we set \begin{equation}
				\mu_{n, M}\coloneqq \frac{1}{(\mu_n\wedge M)(\X_n)} (\mu_n\wedge M),
			\end{equation}
			we have that $\|\mu_{n, M}-\mu_n\|_{TV}\xrightarrow[m\to \infty]{} 0$ uniformly in $n$ by equi--integrability. Up to subsequences, now, $\mu_{n, M}\xrightharpoonup[n\to \infty]{}\tilde{\mu}_M$ with $\|\tilde{\mu}_{M}-\mu\|_{TV}\xrightarrow[m\to \infty]{} 0$. Approximating $\mu_n$ with $\mu_{n, M}$ and applying (ii) yields (iii).
		\end{itemize}
	\end{proof}	
We close this section with the definition of a last notion of convergence.
\begin{deff}\label{def: L0 mappe}
	Let $(\Y_n, \sfd^\Y_n, \mm^\Y_n)$ be a sequence of n.m.m.s. and let $p^\Y_n:\Y_n\to \Y$ be a sequence of Borel projections to a n.m.m.s. $(\Y, \sfd^\Y, \mm^Y)$ such that $(p^\Y_n)_\#\mm^\Y_n\weakto \mm^\Y$. 
	Let $T_n:\X_n\to \Y_n$ be a sequence of Borel maps such that $(T_n)_\#\mm_n$ is equi--integrable. 
	We say that $T_n$ converges in measure to $T:\X\to \Y$, or $T_n\xrightarrow{\L^0}T$, if
	\begin{equation}
		p^\Y_n\circ T_n \xrightarrow{\L^0} T.
	\end{equation}
\end{deff}
% \begin{rmk}
% 	Notice that, whenever $\Y_n\equiv\Z$ and the maps $T_n$ satisfy this weak hypothesis on the compression, this notion of convergence is obviuosly a generalization of the one in Definition \ref{def: conv L0}. Without this hypothesis is clear that one cannot hope to find an intrinsic notion of convergence when the spaces are only converging in concentration
% \end{rmk}
Notice that many of the results and characterization proved in this section for $\L^0$ convergence when the target is a fixed separable metric space (without measure) can easily be extended to this more general setting. In particular, we state the following generalization of Proposition \ref{prop: L0 via misure}, whose proof is quite immediate.
\begin{prop}\label{prop: L0 via misure bis}
	Let $\Y_n, \Y$ as in Definition \ref{def: L0 mappe} and let $T_n:\X_n \to \Y_n$. Then the following are equivalent:
	\begin{enumerate}[i)]
		\item $T_n\xrightarrow{\L^0}T$;
		\item for every $C>1$ and every sequence of probability measures $\mu_n\in \pr(\X_n)$ with $\mu_n\leq C\mm_n$ weakly converging to some $\mu\in\pr(\X)$, we have that $(T_n)_\#\mu_n$ is equi--integrable and converges weakly to $T_\#\mu$;
		\item for every equi--integrable sequence of probability measures $\mu_n\in \pr(\X_n)$ weakly converging to some $\mu\in\pr(\X)$, we have that $(T_n)_\#\mu_n$ is equi--integrable and converges weakly to $T_\#\mu$.
	\end{enumerate}
\end{prop}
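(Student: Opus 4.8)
The plan is to reduce the whole statement to Proposition \ref{prop: L0 via misure}, applied to the composite maps $S_n\coloneqq p_n^\Y\circ T_n\colon\X_n\to\Y$, which take values in the \emph{fixed} complete and separable space $\Y$. By Definition \ref{def: L0 mappe}, item (i) is by definition the assertion $S_n\xrightarrow{\L^0}T$. The crucial (and essentially trivial) observation is that the weak-convergence clauses appearing in (ii) and (iii) are verbatim the narrow convergence on $\Y$ of the pushforwards $(S_n)_\#\mu_n$: indeed, by the definition of weak convergence of measures on the spaces $\Y_n$ (taken with respect to the projections $p_n^\Y$), one has $(T_n)_\#\mu_n\weakto T_\#\mu$ if and only if $(p_n^\Y)_\#(T_n)_\#\mu_n\weakto T_\#\mu$, and $(p_n^\Y)_\#(T_n)_\#\mu_n=(p_n^\Y\circ T_n)_\#\mu_n=(S_n)_\#\mu_n$. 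Hence the weak-convergence parts of (ii) and (iii) coincide exactly with conditions (ii) and (iii) of Proposition \ref{prop: L0 via misure} written for $S_n$.

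It remains only to dispose of the extra requirement in the new statement, namely that the pushforwards $(T_n)_\#\mu_n$ be equi--integrable; the point of the argument is to show that this is automatic under the standing assumption of Definition \ref{def: L0 mappe} that $(T_n)_\#\mm_n$ is equi--integrable. Under the hypothesis of (ii), monotonicity of the pushforward yields $(T_n)_\#\mu_n\leq C\,(T_n)_\#\mm_n$, so equi--integrability is inherited directly from that of $(T_n)_\#\mm_n$ (according to Definition \ref{def: equi--int}). Under the hypothesis of (iii), given any sequence of Borel sets $F_n\subset\Y_n$ with $\mm^\Y_n(F_n)\to 0$, I would set $E_n\coloneqq T_n^{-1}(F_n)$; then equi--integrability of $(T_n)_\#\mm_n$ gives $\mm_n(E_n)=\big((T_n)_\#\mm_n\big)(F_n)\to 0$, and equi--integrability of the $\mu_n$ in turn gives $\big((T_n)_\#\mu_n\big)(F_n)=\mu_n(E_n)\to 0$, which is exactly the desired equi--integrability of $(T_n)_\#\mu_n$.

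Combining the two observations, the equi--integrability clauses in (ii) and (iii) are superfluous, so these items reduce precisely to items (ii) and (iii) of Proposition \ref{prop: L0 via misure} for the maps $S_n$; since (i) is $S_n\xrightarrow{\L^0}T$, the chain of equivalences (i)$\Leftrightarrow$(ii)$\Leftrightarrow$(iii) follows at once from that proposition. I expect no genuine obstacle here: the only place demanding care is the bookkeeping of the previous paragraph, where one must remember to invoke the equi--integrability of $(T_n)_\#\mm_n$ to transfer vanishing of the reference measure through $T_n$ before applying the equi--integrability of the $\mu_n$.
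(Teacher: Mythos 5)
Your proposal is correct and follows exactly the route the paper intends: the paper states this proposition without a written proof, declaring it ``quite immediate'' as a generalization of Proposition \ref{prop: L0 via misure}, and your reduction---reading (i) by definition as $p^\Y_n\circ T_n\xrightarrow{\L^0}T$, identifying the weak-convergence clauses of (ii) and (iii) with those of Proposition \ref{prop: L0 via misure} applied to $S_n=p^\Y_n\circ T_n$, and noting that the equi-integrability of $(T_n)_\#\mu_n$ is automatic from that of $(T_n)_\#\mm_n$ (via $(T_n)_\#\mu_n\leq C\,(T_n)_\#\mm_n$ in case (ii), and via pulling back null sequences of Borel sets through $T_n$ in case (iii))---is precisely the bookkeeping that justifies that claim. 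There is no gap.
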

\begin{rmk}
	The hypothesis on the compression allows to get a general result in the spirit of Corollary \ref{cor: cont}. Indeed it is easy to see that the composition of sequences of maps converging in the sense of Definition \ref{def: L0 mappe} converges to the composition of the limits. In the same way, given a sequence $T_n:\X_n\to \Y_n$ converging in measure to $T:\X\to \Y$ and a sequence of maps to some separable metric space $G_n:\Y_n\to \Z$ which converges to $G:\Y\to \Z$ in the sense of Definition \ref{def: conv L0}, then $G_n\circ T_n$ converges to $G\circ T$ in the sense of Definition \ref{def: conv L0}.
\end{rmk}

\section{Considerations about limits}\label{section: technical}
In this section we prove two important results about the structure of limits of measures (as well as functions) living on sequences of n.m.m.s. converging in concentration. In the first subsection we address the issue of the, in some sense, ``geometricity'' of limits, while in the second we prove an important technical procedure that will be instrumental to prove the stability of the slope of the entropy in Subsection \ref{subsection: stab}. 

\subsection{On the uniqueness of limits}
In Section \ref{section: conv} we have recalled how, given a sequence of spaces $\X_n$ converging in concentration to a limit space $\X$, an equi--integrable sequence of probability measures $\mu_n$ and a sequence of projections $p_n:\X_n\to\X$ in the sense of Proposition \ref{prop: caratterizzazione}, one can give meaning to a weak limit of the sequence. Anyhow, this limit might depend a priori on the choice of the sequence of projections, which is not canonical. In this subsection we show how, in reality, the limit is unique, up to the action of the group of automorphisms of the limit space. 
\begin{rmk}
	In this subsection we will only uniqueness results about the convergence of equi--integrable sequences of probability measures. Since all of the definitions of convergences given in Section \ref{section: conv} are directly based on - or characterized through - this notion, it is easy to convince oneself that this really yields a uniqueness result for general limits.
\end{rmk}

The precise statement is the following:
\begin{thm}\label{teo: unicità limite}
	Let $\X_n$ be a sequence of n.m.m.s. spaces converging in concentration to both the (isomorphic) spaces $\X^1$ and $\X^2$ and let $p_n^1:\X_n\to \X^1$, $p_n^2:\X_n\to \X^2$ be two sequences of projection maps as in Proposition \ref{prop: caratterizzazione} inducing the convergence. 
	Then, for all measure preserving isometries $\Phi:\X^1\to\X^2$,  there exists a sequence of measure preserving isometries $\Phi_n:\X^1\to\X^1$ such that, given any equi--integrable sequence $\mu_n\in \pr (\X_{n})$ and any subsequence $k\mapsto n_k$, $\mu_{n_k}$ converges weakly to $\mu^{1}$ with respect to $p_{n_k}^1$ if and only if it converges weakly to $\mu^2=\Phi_\#\mu^1$ with respect to $\Phi_n\circ p_{n_k}^2$.
\end{thm}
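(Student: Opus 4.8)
The plan is to reduce the statement to a rigidity property of projections and then transfer it to measures. Using the fixed isomorphism $\Phi:\X^1\to\X^2$, set $r_n:=\Phi\circ p_n^1:\X_n\to\X^2$; since $\Phi$ preserves both $\Lip_1$ and the measure, $r_n$ is again a sequence of projections realizing $\X_n\concto\X^2$ (it satisfies item \eqref{it:pnconc} of Proposition \ref{prop: caratterizzazione} because $r_n^*\Lip_1(\X^2)=(p_n^1)^*\Lip_1(\X^1)$, and $(r_n)_\#\mm_n\rightharpoonup\mm$). As $\Phi$ is a homeomorphism, $\mu_{n_k}\rightharpoonup\mu^1$ with respect to $p^1_{n_k}$ is equivalent to $\mu_{n_k}\rightharpoonup\mu^2=\Phi_\#\mu^1$ with respect to $r_{n_k}$. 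Hence it suffices to produce measure preserving isometries $\Phi_n$ of $\X^2$ (equivalently of $\X^1$, through $\Phi$) such that $r_n$ and $\Phi_n\circ p_n^2$ induce the same weak limits on every equi-integrable sequence, and I claim this holds as soon as $\sfd_{\L^0}(r_n,\Phi_n\circ p_n^2)\to 0$ in the sense of Remark \ref{oss: equivalenza}. Indeed, writing $\eps_n:=\sfd_{\L^0}(r_n,\Phi_n\circ p_n^2)$ and $B_n:=\{\sfd(r_n,\Phi_n\circ p_n^2)>\eps_n\}$, for any bounded $\varphi\in\Lip(\X^2)$ one has $|\int\varphi\,\d(r_n)_\#\mu_n-\int\varphi\,\d(\Phi_n\circ p_n^2)_\#\mu_n|\le\Lip(\varphi)\,\eps_n+2\|\varphi\|_\infty\,\mu_n(B_n)$, and $\mu_n(B_n)\to 0$ follows from $\mm_n(B_n)<\eps_n\to 0$ and the equi-integrability of $(\mu_n)$ via Lemma \ref{lemma: equi-int cont}. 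Thus the theorem is reduced to the following: \emph{if $p_n,q_n:\X_n\to\X$ are two sequences of projections realizing $\X_n\concto\X$, there exist measure preserving isometries $\theta_n$ of $\X$ with $\sfd_{\L^0}(p_n,\theta_n\circ q_n)\to 0$.}

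To prove this rigidity fact I would argue along subsequences, using only that (by item \eqref{it:pnconc} and the triangle inequality for $\sfd_{\L^0}^{\sf H}$) one has $\sfd_{\L^0}^{\sf H}(p_n^*\Lip_1(\X),q_n^*\Lip_1(\X))\to 0$, together with $(p_n)_\#\mm_n,(q_n)_\#\mm_n\rightharpoonup\mm$. Consider the self-couplings $\gamma_n:=(p_n,q_n)_\#\mm_n\in\pr(\X\times\X)$; both marginals converge to $\mm$, so $(\gamma_n)$ is tight and, along a subsequence, $\gamma_n\rightharpoonup\gamma$, a coupling of $\mm$ with itself. The crux is to show $\gamma$ is concentrated on the graph of a measure preserving isometry. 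Fix $f\in\Lip_1(\X)$: the Hausdorff bound furnishes $g_{f,n}\in\Lip_1(\X)$ with $\sfd_{\L^0}(f\circ p_n,g_{f,n}\circ q_n)\to 0$. Bounding the $g_{f,n}$ at a point through tightness of $(q_n)_\#\mm_n$ and exploiting their $1$-Lipschitz character, Arzelà--Ascoli yields a subsequential locally uniform limit $g_f\in\Lip_1(\X)$; tightness then allows me to replace $g_{f,n}$ by $g_f$ without destroying the $\L^0$-closeness, so $\sfd_{\L^0}(f\circ p_n,g_f\circ q_n)\to 0$. Applying the portmanteau theorem to the open sets $\{|f(x)-g_f(y)|>\eps\}$ gives $f(x)=g_f(y)$ for $\gamma$-a.e.\ $(x,y)$, and symmetrically every $g\in\Lip_1(\X)$ is matched by some $f_g$.

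Finally I would reconstruct the isometry. Picking a countable family $\{f_j\}\subset\Lip_1(\X)$ separating the points of $\X=\supp\mm$ (e.g.\ truncated distances to a countable dense set), the relations $f_j(x)=g_{f_j}(y)$, holding simultaneously $\gamma$-a.e., force $x$ to be a measurable function $T(y)$ of $y$, so $\gamma=(T,\mathrm{id})_\#\mm$ with $T_\#\mm=\mm$; symmetrically $T$ is $\mm$-a.e.\ invertible. Feeding $f=\sfd(\cdot,z)\wedge 1$ into $g_f(y)=\sfd(T(y),z)\wedge 1$ and using $g_f\in\Lip_1$ (with $z$ ranging over a countable dense set) shows $T$ and $T^{-1}$ are $1$-Lipschitz, hence $T$ is an automorphism of $\X$. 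Since $T$ is continuous, the kernel $(x,y)\mapsto\sfd(x,Ty)\wedge 1$ is bounded continuous and vanishes $\gamma$-a.e., so $\gamma_n\rightharpoonup\gamma$ gives $\int\sfd(p_n,T\circ q_n)\wedge 1\,\d\mm_n\to 0$, i.e.\ $\sfd_{\L^0}(p_n,T\circ q_n)\to 0$ along the subsequence with a constant $\theta:=T$. A routine subsequence argument upgrades this to the full sequence: the quantity $a_n:=\inf_\theta\sfd_{\L^0}(p_n,\theta\circ q_n)$ (infimum over measure preserving isometries) tends to $0$, and near-minimizers $\theta_n$ are the sought isometries. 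I expect the reconstruction step---passing from the $\gamma$-a.e.\ matching of pulled-back Lipschitz functions to a single, globally defined automorphism, and especially upgrading the resulting measure preserving bijection to a genuine isometry---to be the main obstacle, the $n$-dependence of the matching functions $g_{f,n}$ being the principal technical nuisance.
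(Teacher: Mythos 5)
Your proposal is correct in substance, but it follows a genuinely different route from the paper's. The paper reduces the theorem to an $\L^0$--precompactness statement for the projection maps themselves (Theorem \ref{teo: comp proiezioni}): after isometrically embedding $\X$ into $\ell^\infty$, it uses items \eqref{it:appr1lip}--\eqref{it:misurakn} of Proposition \ref{prop: caratterizzazione} to regard $p^2_n$ as $1$-Lipschitz up to an additive error, then Shioya's McShane-type extension result (Lemma \ref{lemma: Lipschitz appr}), a finite-dimensional reduction via tightness (Lemma \ref{lemma: approssimazione dim infinita}) and an Ascoli--Arzel\`a argument to extract, along subsequences, an $\L^0$--limit which is a measure preserving isometry; the transfer from maps to weak limits of equi-integrable measures goes through Proposition \ref{prop: L0 via misure}, and the $\Phi_n$'s are then produced, exactly as in your last step, by near-minimization over isometries plus a diagonal argument. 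You instead run the whole compactness argument on the limit space: Prohorov for the couplings $\gamma_n=(p_n,q_n)_\#\mm_n$, matching of $\Lip_1$ functions through the Hausdorff bound of item \eqref{it:pnconc}, Ascoli--Arzel\`a for the real-valued $1$-Lipschitz matching functions $g_{f,n}$, portmanteau on the open sets $\{|f(x)-g_f(y)|>\eps\}$, a countable separating family plus uniformization to see that $\gamma$ is concentrated on a graph, and reconstruction of the isometry from distance functions. Your argument needs no ambient embedding and no extension lemma, and it uses only items \eqref{it:pnconc}--\eqref{it:convpn} of Proposition \ref{prop: caratterizzazione} rather than the approximate $1$-Lipschitz property of the $p_n$'s, so it is more economical in its hypotheses; what the paper's route buys is the standalone map-level compactness of Theorem \ref{teo: comp proiezioni} and Corollary \ref{cor: compattezza dim inf}, together with the $\ell^\infty$-extension technology, which are reused elsewhere (e.g.\ in Corollary \ref{cor: proiezioni equivalenti} and in the proofs of Proposition \ref{proposition: limsup}).

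One slip to fix in the isometry-reconstruction step: feeding only the truncated functions $f=\sfd(\cdot,z)\wedge 1$ yields merely $\sfd(Ty,Ty')\wedge 1\le \sfd(y,y')$, i.e.\ that $T$ preserves distances below $1$ and sends pairs at distance $\ge 1$ to pairs at distance $\ge 1$; that alone does not make $T$ an isometry (three points with equal masses and distinct pairwise distances all $\ge 1$ show the implication fails in general). Since $\Lip_1(\X)$ in this paper carries no boundedness requirement, simply feed the untruncated $f=\sfd(\cdot,z)$: your matching and Arzel\`a--Ascoli steps go through verbatim (boundedness of $g_{f,n}$ at a point still follows from tightness of both pushforwards, since $f$ is bounded on compact sets), and then the genuine $1$-Lipschitz bounds for $T$ and $T^{-1}$ follow as you indicate.
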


\begin{rmk}\label{oss: isom}
Notice that that the uniqueness of the concentration limit already tells that two possible limits of the same sequence of measure are isomorphic in a way that may depend on the sequence itself:
Assume $\X_n\xrightarrow{conc} \X$ and the convergence is realized (in the sense of Proposition \ref{prop: caratterizzazione}) by two sequences of projections $(p_n)_{n\geq 1}$ and $(q_n)_{n\geq 1}$. Then whenever $(\mu_n)_{n\geq 1}$ is an equi--integrable sequence converging to $\mu_p$ with respect to the structure induced by $(p_n)_{n\geq 1}$ and to $\mu_q$ with respect to $(q_n)_{n\geq 1}$, it is easy to see that there exists an isometry $\phi: \supp \mu_p\to \supp \mu_q$ such that $\phi_{\#} \mu_p=\mu_q$. Indeed equi--integrability implies that $(\X_n, \sfd_n, \mu_n)$ converges in concentration to both $(\X, \sfd, \mu_p)$ and $(\X, \sfd, \mu_q)$, which must then be isomorphic.
Notice however that the isomorphism $\phi$ depends a priori on the sequence $(\mu_n)_{n\geq 1}$. The added value of Theorem \ref{teo: unicità limite} is that it shows that the identification between limit measures is actually, in some sense, canonical.
\end{rmk}

Building on Section \ref{section: conv}, we can link convergence properties of the projections and the relations between limits with respect to them. Indeed Proposition \ref{prop: L0 via misure} has this interesting Corollary:
\begin{cor}
	Let $\X_n$ be a sequence of n.m.m.s. spaces converging in concentration to both the (isomorphic) spaces $\X^1$ and $\X^2$ and let $p_n^1:\X_n\to \X^1$, $p_n^2:\X_n\to \X^2$ be two sequences of projection maps as in Proposition \ref{prop: caratterizzazione} inducing the convergence. Let $\Phi:\X^1\to\X^2$ be a Borel bijection. Then the following two statements are equivalent:
	\begin{itemize}
		\item[i)] Given any equi--integrable sequence $\mu_{n}\in \pr (\X_{n})$, it converges weakly to $\mu^{1}$ with respect to $p_{n}^1$ if and only if it converges weakly to $\mu^2=\Phi_\#\mu^1$ with respect to $p_{n}^2$;
		\item[ii)] It holds $p_n^2\xrightarrow[p_n^1]{\L^0} \Phi$.
	\end{itemize}
\end{cor}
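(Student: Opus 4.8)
The plan is to deduce both implications from the characterisation of $\L^0$--convergence recorded in Proposition~\ref{prop: L0 via misure}, applied with base space $\X^1$ and projections $p_n^1$, fixed target $\Z=\X^2$, maps $T_n=p_n^2$ and limit map $T=\Phi$. Under these identifications item~(iii) of that proposition reads: for every equi--integrable $\mu_n\in\pr(\X_n)$ with $(p_n^1)_\#\mu_n\rightharpoonup\mu$ one has $(p_n^2)_\#\mu_n\rightharpoonup\Phi_\#\mu$. This is verbatim the ``only if'' half of the biconditional in~(i), and by the proposition it is equivalent to~(ii). So the whole statement reduces to understanding the ``if'' half of~(i).

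The implication (i)$\Rightarrow$(ii) is then immediate: the forward half of the biconditional in~(i) is literally item~(iii) above, and Proposition~\ref{prop: L0 via misure} turns it into $p_n^2\xrightarrow[p_n^1]{\L^0}\Phi$, which is~(ii). No further work is needed here.

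For (ii)$\Rightarrow$(i), the proposition again hands us the forward half of~(i) for free, so only the reverse half remains: given an equi--integrable $\mu_n$ with $(p_n^2)_\#\mu_n\rightharpoonup\Phi_\#\mu^1$, I must show $(p_n^1)_\#\mu_n\rightharpoonup\mu^1$. I would argue by subsequences. Fix any subsequence of $(\mu_n)$; since equi--integrability is intrinsic to $(\X_n,\mm_n)$ and does not depend on the chosen projections, Lemma~\ref{lemma: compattezza equi-int} lets me extract a further subsequence along which $(p_n^1)_\#\mu_n\rightharpoonup\tilde\mu^1$ for some $\tilde\mu^1\in\pr(\X^1)$. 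Applying the (already available) forward half along this sub-subsequence gives $(p_n^2)_\#\mu_n\rightharpoonup\Phi_\#\tilde\mu^1$; comparing with the hypothesis $(p_n^2)_\#\mu_n\rightharpoonup\Phi_\#\mu^1$ forces $\Phi_\#\tilde\mu^1=\Phi_\#\mu^1$.

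The delicate point, and the one I expect to be the real obstacle, is to pass from $\Phi_\#\tilde\mu^1=\Phi_\#\mu^1$ to $\tilde\mu^1=\mu^1$, i.e.\ the injectivity of the push-forward $\Phi_\#$ on $\pr(\X^1)$. Since $\Phi$ is a Borel bijection between the standard Borel spaces $\X^1,\X^2$, its inverse is Borel as well (a Borel bijection of standard Borel spaces is a Borel isomorphism, cf.\ \cite{Ke95}), whence $\mu=(\Phi^{-1})_\#\Phi_\#\mu$ for all $\mu$ and $\Phi_\#$ is injective. Therefore every subsequence of $(\mu_n)$ has a further subsequence with $(p_n^1)_\#\mu_n\rightharpoonup\mu^1$, and the subsequence criterion for weak convergence recalled just before Lemma~\ref{lemma: compattezza equi-int} upgrades this to $(p_n^1)_\#\mu_n\rightharpoonup\mu^1$ for the full sequence. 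This establishes the reverse half and completes the proof of~(i).
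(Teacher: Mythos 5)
Your proof is correct and takes essentially the same route as the paper's: both directions rest on Proposition \ref{prop: L0 via misure} (identifying (ii) with the forward half of (i)), and the reverse half is handled by the same compactness/limit-point argument via Lemma \ref{lemma: compattezza equi-int} together with the bijectivity of $\Phi_\#$, concluding by the subsequence criterion. Your explicit justification that $\Phi_\#$ is injective --- invoking the Borel isomorphism theorem to get measurability of $\Phi^{-1}$ --- merely spells out a step the paper leaves implicit in the phrase ``since $\Phi_\#$ is a bijection''.
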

\begin{rmk}
	Item (ii) is to be compared with the immediate $p_n\xrightarrow[p_n]{\L^0} {\rm id}$ holding for all projections $p_n$.
\end{rmk}
\begin{proof}
	Statement (ii) follows immediately from (i) by Proposition \ref{prop: L0 via misure}. Viceversa, assume that (ii) holds and let $\mu_n$ be as in the statement. Let $\mu^1\in \pr(\X^1)$ be a weak limit point of $\mu_n$. By equi--integrability, there exist a subsequence $\mu_{n_j}$ such that 
	\begin{align}
		&\mu_{n_j}\xrightharpoonup[p^1_{n_j}]{}\mu^1\\
		&\mu_{n_j}\xrightharpoonup[p^2_{n_j}]{}\mu^2
	\end{align}
	for some $\mu^2\in\pr(\X^2)$.
	By (ii) and Proposition \ref{prop: L0 via misure}, then, $\mu^2=\Phi_\# \mu^1$. In the same way, if $\mu^2$ is a weak limit point of $\mu_n$ with respect to $p^2_n$, then  $\mu^2=\Phi_\# \mu^1$ for some $\mu^1\in\pr(\X^1)$. Since $\Phi_\#$ is a bijection, the thesis follows.
\end{proof}
\begin{rmk}\label{oss: conseguenza equivalenza}
	Recalling the definition of equivalent projections in Remark \ref{oss: equivalenza} and reasoning as above, we see that the following three statements are equivalent:
	\begin{itemize}
		\item two sequences of projections $p^{1, 2}_n:\X_n\to \X$ as in Proposition \ref{prop: caratterizzazione} are equivalent;
		\item it holds $p^{1, 2}_n\xrightarrow[p^{2,1}_n]{\L^0}id_{\X}$;
		\item given any equi--integrable sequence of probability measures $n\mapsto\mu_n\in\pr(\X_n)$ and $\mu\in\pr(\X)$, we have that $\mu_n\xrightharpoonup[p^1_n]{}\mu$ if and only if $\mu_n\xrightharpoonup[p^2_n]{}\mu$.
	\end{itemize}
\end{rmk}
The proof of Theorem \ref{teo: unicità limite} rests on a compactness argument which, in turns, lies on this $\L^0$ compactness property of projections inducing the convergence.
\begin{thm}\label{teo: comp proiezioni}
	Let $\X_n$ be a sequence of n.m.m.s. spaces converging in concentration to both $\X^1$ and $\X^2$ and let $p_n^1:\X_n\to \X^1$, $p_n^2:\X_n\to \X^2$ be two sequences of projection maps as in Proposition \ref{prop: caratterizzazione} inducing the convergence. 
	Then $\X^1$ is isomorphic to $\X^2$ and, up to subsequences, 
	\begin{equation}
		p_n^2\xrightarrow[p_n^1]{\L^0} \Phi,
	\end{equation}
	where $\Phi$ is a measure preserving isometry.
	% Then there exists a subsequence $j\mapsto n_j$ and a measure preserving isometry $\Phi:\X^1\to \X^2$ such that, given an equi--integrable sequence $\mu_{n_j}\in \pr (\X_{n_j})$, it converges weakly to $\mu^{1}$ with respect to $p_{n_j}^1$ if and only if it converges weakly to $\mu^2=\Phi_\#\mu^1$ with respect to $p_{n_j}^2$.
\end{thm}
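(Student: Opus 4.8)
The plan is to realise $\Phi$ as the $\L^0$-limit (with respect to $p_n^1$) of the projections $p_n^2$, using the Young-measure characterisation of $\L^0$ convergence. First I would consider the couplings $\ssigma_n\coloneqq(p_n^1,p_n^2)_\#\mm_n\in\pr(\X^1\times\X^2)$. Their marginals are $(p_n^1)_\#\mm_n\weakto\mm_{\X^1}$ and $(p_n^2)_\#\mm_n\weakto\mm_{\X^2}$ by item \eqref{it:convpn}, so the family $(\ssigma_n)_n$ is tight and, up to a subsequence, $\ssigma_n\weakto\ssigma$ for some $\ssigma\in\pr(\X^1\times\X^2)$ with marginals $\mm_{\X^1}$ and $\mm_{\X^2}$. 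By Proposition \ref{prop: L^0 cs} (with base $p_n^1$ and target the metric space $\X^2$), proving $p_n^2\xrightarrow[p_n^1]{\L^0}\Phi$ along this subsequence amounts to showing that $\ssigma=({\rm id}_{\X^1},\Phi)_\#\mm_{\X^1}$ for a measure preserving isometry $\Phi$; this simultaneously produces the asserted isomorphism $\X^1\simeq\X^2$ (alternatively one may invoke here the uniqueness of the concentration limit).

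The core step is to transfer the metric information encoded in item \eqref{it:pnconc} to the limit coupling $\ssigma$. Applying item \eqref{it:pnconc} to both families and the triangle inequality for $\sfd^{\sf H}_{\L^0}$ gives
\begin{equation}
	\sfd^{\sf H}_{\L^0}\big((p_n^1)^*\Lip_1{(\X^1)},(p_n^2)^*\Lip_1{(\X^2)}\big)\leq 2\varepsilon_n .
\end{equation}
Fix countable families of bounded $1$-Lipschitz functions $\{f^1_k\}_k$ on $\X^1$ and $\{h^2_l\}_l$ on $\X^2$ — for instance truncated distance functions $\sfd(\cdot,z)\wedge R$ to dense sequences of points — chosen so that each family separates points and recovers its distance as a supremum of increments. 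Read in both directions, the closeness above yields $g^2_{k,n}\in\Lip_1(\X^2)$ with $\sfd_{\L^0(\mm_n)}(f^1_k\circ p_n^1,g^2_{k,n}\circ p_n^2)\to0$ and, symmetrically, $h^1_{l,n}\in\Lip_1(\X^1)$ with $\sfd_{\L^0(\mm_n)}(h^2_l\circ p_n^2,h^1_{l,n}\circ p_n^1)\to0$. By Ascoli--Arzelà (cf.\ the proof of Corollary \ref{prop: approssimazione e comp L0}) and a diagonal extraction we may assume $g^2_{k,n}\to g^2_k\in\Lip_1(\X^2)$ and $h^1_{l,n}\to h^1_l\in\Lip_1(\X^1)$ locally uniformly. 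Testing the weak convergence $\ssigma_n\weakto\ssigma$ against the bounded continuous functions $(y^1,y^2)\mapsto|f^1_k(y^1)-g^2_k(y^2)|\wedge1$ (and the analogous ones for $h$), and controlling the discrepancy between $g^2_{k,n}$ and $g^2_k$ by the tightness of $(p_n^2)_\#\mm_n$ together with the local uniform convergence, I obtain, on a single $\ssigma$-full set,
\begin{equation}
	f^1_k(y^1)=g^2_k(y^2)\quad\text{and}\quad h^2_l(y^2)=h^1_l(y^1)\qquad\text{for all }k,l.
\end{equation}

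From the second group of identities, since $\{h^2_l\}_l$ separates points of $\X^2$, the value $y^2$ is determined by $y^1$ for $\ssigma$-a.e.\ $(y^1,y^2)$; this defines a Borel representative of a map $\Phi$ with $\ssigma=({\rm id}_{\X^1},\Phi)_\#\mm_{\X^1}$, and $\Phi_\#\mm_{\X^1}=\mm_{\X^2}$ follows from the marginals. For the isometry I use both families: writing $f^1_k=g^2_k\circ\Phi$ and $h^2_l\circ\Phi=h^1_l$, taking suprema of increments over $k$ and over $l$ respectively, and using that the $g^2_k$, $h^1_l$ are $1$-Lipschitz while the families recover the distances, gives for $\mm_{\X^1}\otimes\mm_{\X^1}$-a.e.\ $(y^1,\tilde y^1)$ both $\sfd_{\X^1}(y^1,\tilde y^1)\leq\sfd_{\X^2}(\Phi(y^1),\Phi(\tilde y^1))$ and the reverse inequality. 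A distance preserving map defined on a set of full measure is defined on a dense subset of $\supp\mm_{\X^1}=\X^1$, hence extends canonically to an isometric embedding, whose image is closed and, by $\Phi_\#\mm_{\X^1}=\mm_{\X^2}$, dense, thus all of $\X^2$; this continuous representative still satisfies $\ssigma=({\rm id}_{\X^1},\Phi)_\#\mm_{\X^1}$, so Proposition \ref{prop: L^0 cs} yields $p_n^2\xrightarrow[p_n^1]{\L^0}\Phi$ along the subsequence.

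The main obstacle I expect is precisely this transfer from the soft $\L^0$-closeness of the pulled-back $1$-Lipschitz classes (item \eqref{it:pnconc}) to pointwise metric identities for $\Phi$: it requires interchanging two genuinely different limits — the weak convergence of the couplings $\ssigma_n$ and the locally uniform convergence of the approximants $g^2_{k,n}$, $h^1_{l,n}$ on the fixed spaces — while keeping the exceptional sets uniform over the countable families, and then arguing that distances are recovered as suprema over these families. Note that the almost $1$-Lipschitz bound of item \eqref{it:appr1lip} only provides one-sided control, so it is the two-sided nature of item \eqref{it:pnconc} that must be exploited to obtain both inequalities in the isometry.
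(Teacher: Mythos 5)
Your proof is correct, but it follows a genuinely different route from the paper's. The paper embeds the limit space into $\ell^\infty$, uses the almost-$1$-Lipschitz property of item \eqref{it:appr1lip} to regard $p_n^2$ as maps that are $1$-Lipschitz up to vanishing additive errors, and then builds a map-level $\L^0$-precompactness theorem (Shioya's extension result, Lemma \ref{lemma: Lipschitz appr}, finite-dimensional approximation, Lemma \ref{lemma: approssimazione dim infinita}, and an Ascoli--Arzel\`a argument, culminating in Corollary \ref{cor: compattezza dim inf}); it extracts from this a $1$-Lipschitz measure-preserving limit $\Phi:\X^1\to\X^2$, runs the symmetric argument to get $\Psi:\X^2\to\X^1$, and only at the very end upgrades $\Phi$ to an isometry via the rigidity of having $1$-Lipschitz measure-preserving maps in both directions. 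You instead never touch item \eqref{it:appr1lip}: your compactness input is Prohorov's theorem applied to the couplings $(p_n^1,p_n^2)_\#\mm_n$ plus Ascoli--Arzel\`a for countably many real-valued $1$-Lipschitz functions, and the metric information enters through the \emph{two-sided} Hausdorff closeness of item \eqref{it:pnconc}, which hands you both distance inequalities directly from the two families of test functions, so no rigidity step and no $\ell^\infty$-machinery is needed. What each approach buys: the paper's lemmas are reusable infrastructure (they reappear in the proofs of Theorem \ref{teo: unicità limite} and Corollary \ref{cor: proiezioni equivalenti}, hence in the second proof of Proposition \ref{proposition: limsup}), whereas your argument is more self-contained, stays entirely within the convergence toolkit of Section \ref{section: conv} (in particular Proposition \ref{prop: L^0 cs}), and produces the isometry property in one pass. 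Two small points to make explicit if you write this up: truncate the approximants $g^2_{k,n}$ at level $\|f^1_k\|_\infty$ (this does not increase the $\L^0$ distance and keeps them $1$-Lipschitz) so that the Ascoli--Arzel\`a extraction is legitimate, exactly as the paper normalizes in Corollary \ref{prop: approssimazione e comp L0}; and justify the uniform-in-$n$ tightness of $(p_n^2)_\#\mm_n$ (it holds because a weakly convergent sequence of probability measures on a Polish space is uniformly tight), which is what lets you interchange the weak limit of $\ssigma_n$ with the locally uniform limit of $g^2_{k,n}$.
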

\begin{rmk}
	We emphasided the fact that $\X^1$ and $\X^2$ are isomorphic, which is already known by abstract means, because this result could be used to prove the uniqueness of limit when this was defined directly through the sequence of projections of Proposition \ref{prop: caratterizzazione}.
\end{rmk}
In order to prove this result, let us recall some definitions and lemmata.
\begin{deff}[see \cite{Sh16}]\label{def: 1-Lipschitz err}
	Let $\X$ be a n.m.m.s. and $\Z$ be a metric space. We say that a measurable map $f:\X\to \Z$ is 1-Lipschitz up to an additive error $\varepsilon>0$ if the following holds: there exists $E\subset \X$ with $\mm_\X(E)\leq \varepsilon$ such that 
	\begin{equation}
		\sfd_\Z(f(x), f(y))\leq \sfd_\X(x, y)+\varepsilon
	\end{equation}
	for $x, y$ in $\X\setminus E$.
\end{deff}
\begin{lm}
	Let $\X$ be a n.m.m.s. and $\Z$ be a metric space. Let $f:\X\to \Z$ be a 1-Lipschitz map and let $\varepsilon>0$. If $g:\X\to \Z$ is a measurable map such that $\sfd_{\L^0}(f, g)\leq \varepsilon$, then $g$ is 1-Lipschitz up to an additive error $2\varepsilon$.
\end{lm}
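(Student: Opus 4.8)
The plan is to produce an explicit exceptional set from the superlevel sets of the (Borel) function $x\mapsto \sfd_\Z(f(x),g(x))$ and to estimate its measure directly from the definition of the $\L^0$--distance. Concretely, I would set
\begin{equation}
	E\coloneqq \{x\in\X:\ \sfd_\Z(f(x),g(x))>\varepsilon\},
\end{equation}
which is Borel since $f,g$ are measurable and $\sfd_\Z$ is continuous. The whole argument then splits into a measure estimate for $E$ and a triangle inequality on its complement.

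First I would control $\mm_\X(E)$. Writing $\theta(\delta)\coloneqq \mm_\X(\{\sfd_\Z(f,g)>\delta\})$, the map $\theta$ is nonincreasing, so the set of $\delta$ with $\theta(\delta)<\delta$ is upward closed: if $\theta(\delta_0)<\delta_0$ and $\delta>\delta_0$, then $\theta(\delta)\leq\theta(\delta_0)<\delta_0<\delta$. Since by hypothesis $\sfd_{\L^0}(f,g)=\inf\{\delta>0:\ \theta(\delta)<\delta\}\leq\varepsilon$, every $\delta>\varepsilon$ satisfies $\theta(\delta)<\delta$. Applying this to $\delta=\varepsilon+\tfrac1n$ and using continuity from below of $\mm_\X$ along the increasing sets $\{\sfd_\Z(f,g)>\varepsilon+\tfrac1n\}\uparrow E$, I obtain
\begin{equation}
	\mm_\X(E)=\lim_n \theta\big(\varepsilon+\tfrac1n\big)\leq \lim_n \big(\varepsilon+\tfrac1n\big)=\varepsilon\leq 2\varepsilon.
\end{equation}

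Next, on the complement $\X\setminus E$ one has $\sfd_\Z(f(x),g(x))\leq\varepsilon$ by construction, so for $x,y\in\X\setminus E$ the triangle inequality combined with the \emph{exact} $1$--Lipschitz bound for $f$ gives
\begin{equation}
	\sfd_\Z(g(x),g(y))\leq \sfd_\Z(g(x),f(x))+\sfd_\Z(f(x),f(y))+\sfd_\Z(f(y),g(y))\leq \varepsilon+\sfd_\X(x,y)+\varepsilon=\sfd_\X(x,y)+2\varepsilon.
\end{equation}
Together with the bound $\mm_\X(E)\leq 2\varepsilon$, this is precisely the assertion that $g$ is $1$--Lipschitz up to the additive error $2\varepsilon$ in the sense of Definition \ref{def: 1-Lipschitz err}.

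The only genuinely delicate point is the passage from the strict inequality $\theta(\delta)<\delta$, which the definition of $\sfd_{\L^0}$ only supplies for $\delta>\varepsilon$, to the clean threshold bound $\mm_\X(E)\leq\varepsilon$ at the value $\delta=\varepsilon$ itself; this is what the monotonicity-plus-continuity-from-below step above delivers. Everything else is the triangle inequality, and it is exactly the two error terms $\sfd_\Z(f,g)\leq\varepsilon$ picked up at $x$ and at $y$ that force the additive constant to double from $\varepsilon$ to $2\varepsilon$.
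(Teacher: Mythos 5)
Your proof is correct. The paper actually states this lemma without any proof, treating it as immediate, and your argument --- taking the exceptional set $E=\{\sfd_\Z(f,g)>\varepsilon\}$, bounding its measure via monotonicity of $\delta\mapsto\mm_\X(\{\sfd_\Z(f,g)>\delta\})$ together with continuity from below, and concluding with the triangle inequality on the complement --- is precisely the intended standard one; you even obtain the sharper bound $\mm_\X(E)\leq\varepsilon$, so that the doubling of the error comes solely from the two triangle-inequality terms, exactly as you observe.
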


The following Lemma is a minor variation on a result in \cite{Sh16}.
\begin{lm}[{\cite[Lemma 5.4]{Sh16}}]\label{lemma: Lipschitz appr}
	Let $\Y$ be a nmms. and let $f:\Y\to \ell^\infty$ be 1-Lipschitz up to an additive error $\varepsilon>0$ in the sense of \cite{Sh16}, i.e. there exists $\tilde{\Y}\subset \Y$ with $\mm_\Y(\tilde{\Y})\geq 1-\varepsilon$ such that $\|f(x)-f(y)\|_{\ell^\infty}\leq \sfd_\Y(x, y)+\varepsilon$ for all $x, y\in \tilde{\Y}$. Then there exists a 1-Lipschitz map $g:\Y\to \ell^\infty$ such that $\sfd_{\L^0}(f, g)\leq \varepsilon$, where $\sfd_{\L^0}$ is defined as in \eqref{eq: dist L0}.
\end{lm}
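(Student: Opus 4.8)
The plan is to exploit the fact that $\ell^\infty$ is an injective metric space: real-valued $1$-Lipschitz functions extend coordinate-wise without increasing the constant, via the McShane (inf-convolution) formula, and the supremum of $1$-Lipschitz real functions is again $1$-Lipschitz for the $\ell^\infty$-norm. Writing $f=(f_i)_{i\in\N}$ for the coordinates of $f$, I would define, for $x\in\Y$,
\[
 g_i(x)\coloneqq\inf_{y\in\tilde\Y}\big(f_i(y)+\sfd_\Y(x,y)\big),\qquad g\coloneqq(g_i)_{i\in\N}.
\]
The crucial point is to take the infimum over the good set $\tilde\Y$ only, rather than over all of $\Y$: this is what keeps $g$ uniformly close to $f$, since the almost-Lipschitz bound is available precisely on $\tilde\Y$.

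The heart of the argument is a pointwise two-sided estimate on $\tilde\Y$. Fix $x\in\tilde\Y$ and a coordinate $i$. Taking $y=x$ in the infimum gives $g_i(x)\le f_i(x)$. Conversely, for every $y\in\tilde\Y$ the hypothesis gives $|f_i(x)-f_i(y)|\le\|f(x)-f(y)\|_{\ell^\infty}\le\sfd_\Y(x,y)+\varepsilon$, whence $f_i(y)+\sfd_\Y(x,y)\ge f_i(x)-\varepsilon$; taking the infimum over $y\in\tilde\Y$ yields $g_i(x)\ge f_i(x)-\varepsilon$. Thus $f_i(x)-\varepsilon\le g_i(x)\le f_i(x)$ for all $i$ and all $x\in\tilde\Y$, so $\|f(x)-g(x)\|_{\ell^\infty}\le\varepsilon$ on $\tilde\Y$. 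In particular, fixing a base point $x_0\in\tilde\Y$ gives $\sup_i|g_i(x_0)|\le\|f(x_0)\|_{\ell^\infty}+\varepsilon<\infty$.

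Next I would check that $g$ is a well-defined $1$-Lipschitz map into $\ell^\infty$. Each map $y\mapsto f_i(y)+\sfd_\Y(\cdot,y)$ is $1$-Lipschitz in the first variable, so each $g_i$ is an infimum of a nonempty family of $1$-Lipschitz functions; since it is finite at points of $\tilde\Y$ (by $g_i\le f_i$ there), it is everywhere finite and $1$-Lipschitz on $\Y$. The coordinatewise bound $|g_i(x)|\le|g_i(x_0)|+\sfd_\Y(x,x_0)$ is uniform in $i$, so $g(x)\in\ell^\infty$ for every $x$, and $\|g(x)-g(x')\|_{\ell^\infty}=\sup_i|g_i(x)-g_i(x')|\le\sfd_\Y(x,x')$ shows $g$ is $1$-Lipschitz; being $1$-Lipschitz it is continuous, hence Borel. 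Finally, from $\|f(x)-g(x)\|_{\ell^\infty}\le\varepsilon$ on $\tilde\Y$ we get $\{\,\|f-g\|_{\ell^\infty}>\varepsilon\,\}\subset\Y\setminus\tilde\Y$, which has measure at most $\varepsilon$; hence for every $\delta>\varepsilon$ we have $\mm_\Y(\{\|f-g\|_{\ell^\infty}>\delta\})\le\varepsilon<\delta$, and by the definition of $\sfd_{\L^0}$ in \eqref{eq: dist L0} this gives $\sfd_{\L^0}(f,g)\le\varepsilon$.

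The only genuinely delicate choices are the domain of the infimum and the role of the target. Infimizing over $\tilde\Y$ is what produces the two-sided bound $f_i-\varepsilon\le g_i\le f_i$ on $\tilde\Y$, and the injectivity of $\ell^\infty$ — concretely, that the coordinate-wise construction simultaneously yields an honest $1$-Lipschitz map and stays bounded, i.e. $\ell^\infty$-valued — is what legitimizes the coordinate-wise McShane extension. Everything else (finiteness of the infima, measurability) is routine once these two points are in place.
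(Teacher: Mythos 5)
Your proof is correct and is essentially the paper's argument: the paper simply invokes Shioya's Lemma 5.4 and notes that ``the same McShane--type argument can be carried on on the infinitely many components,'' which is precisely your coordinatewise inf-convolution $g_i(x)=\inf_{y\in\tilde\Y}\bigl(f_i(y)+\sfd_\Y(x,y)\bigr)$ over the good set $\tilde\Y$. The two-sided bound $f_i-\varepsilon\le g_i\le f_i$ on $\tilde\Y$, the uniform-in-$i$ Lipschitz and boundedness checks making $g$ an $\ell^\infty$-valued $1$-Lipschitz map, and the inclusion $\{\|f-g\|_{\ell^\infty}>\varepsilon\}\subset\Y\setminus\tilde\Y$ are exactly the intended details.
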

\begin{proof}
	The proof is the same as in \cite{Sh16}, noticing that the same McShane--type argument can be carried on on the infinitely many components of $f:\Y\to \ell^\infty$.
\end{proof}
The proof of Theorem \ref{teo: comp proiezioni} is based on three main facts: the fact that Lipschitz maps from $X_n$ to $\ell^\infty$ con be approximated by Lipschitz maps with values in a finite dimensional space and an Ascoli--Arzelà type compactness argument.
\begin{lm}\label{lemma: approssimazione dim infinita}
	Let $\X_n$ be a sequence of n.m.m.s.. Let $f_n:\X_n\to \ell^\infty$ be a sequence of measurable maps such that $(f_n)_\# \mm_n$ is a tight sequence. Then for all $\varepsilon>0$ there exists $N\geq 1$ and a sequence $g_n:\X_n\to \ell^\infty$ such that the following hold:
	\begin{itemize}
		\item we have $\sfd_{\L^0}(f_n, g_n)\leq \varepsilon$;
		\item there exists $V\subset \ell^\infty$ a vector space of dimension $N$ containing the images of all the $g_n$'s.
	\end{itemize}
\end{lm}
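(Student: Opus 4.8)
The plan is to use the uniform tightness to confine essentially all the mass to a \emph{single} compact subset of $\ell^\infty$, and then to exploit the total boundedness of that compact set to round the values of every $f_n$ into one common finite-dimensional subspace. The whole point is uniformity over $n$: the compact set and the finite-dimensional space must be chosen once and for all, and this is exactly what tightness of $\big((f_n)_\#\mm_n\big)_n$ buys us.

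First, fix $\varepsilon>0$. Since the sequence $(f_n)_\#\mm_n$ is tight, there is a compact set $K\subset\ell^\infty$ with $(f_n)_\#\mm_n(\ell^\infty\setminus K)=\mm_n\big(f_n^{-1}(\ell^\infty\setminus K)\big)<\varepsilon$ for every $n$. The crucial feature is that one single $K$ works simultaneously for all $n$; this is where the uniform tightness hypothesis enters, and it is the only genuinely delicate ingredient of the argument.

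Next, I use that $K$, being compact, is totally bounded: there are finitely many points $z_1,\dots,z_M\in K$ with $K\subset\bigcup_{j=1}^M B(z_j,\varepsilon/2)$. Set $V\coloneqq\mathrm{span}\{z_1,\dots,z_M\}$, a linear subspace of $\ell^\infty$ of some dimension $N\le M$; by construction $z_1,\dots,z_M\in V$ and $0\in V$. I then define a Borel ``rounding'' map $\rho:\ell^\infty\to V$ by partitioning $K$ into the Borel sets $A_j\coloneqq\big(B(z_j,\varepsilon/2)\cap K\big)\setminus\bigcup_{i<j}B(z_i,\varepsilon/2)$, setting $\rho\equiv z_j$ on $A_j$ and $\rho\equiv 0$ on $\ell^\infty\setminus K$. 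Then $\rho$ is Borel, its image is contained in $V$, and $\|\rho(z)-z\|_{\ell^\infty}\le\varepsilon/2$ for every $z\in K$.

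Finally, set $g_n\coloneqq\rho\circ f_n$. Each $g_n$ is measurable with image contained in the common space $V$, which gives the second requirement with the single pair $(N,V)$. For the first, note that $\{\|f_n-g_n\|_{\ell^\infty}>\varepsilon\}\subset f_n^{-1}(\ell^\infty\setminus K)$, since on $f_n^{-1}(K)$ one has $\|f_n-g_n\|_{\ell^\infty}=\|f_n-\rho\circ f_n\|_{\ell^\infty}\le\varepsilon/2$. Hence $\mm_n\big(\{\|f_n-g_n\|_{\ell^\infty}>\varepsilon\}\big)<\varepsilon$ for every $n$, that is $\sfd_{\L^0}(f_n,g_n)\le\varepsilon$, as desired. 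The main (indeed only) obstacle is the uniform choice of $K$ in the first step; once that is in hand, the construction of $V$ and $\rho$ is a purely deterministic, $n$-independent discretization of a compact set.
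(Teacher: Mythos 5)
Your proof is correct and follows essentially the same route as the paper's: tightness yields a uniform finite family of $\varepsilon/2$-balls centered at points $z_1,\dots,z_M$, you take $V=\mathrm{span}\{z_1,\dots,z_M\}$, and you replace $f_n$ by a measurable map into $V$ that moves points of the good set by at most $\varepsilon/2$. Your explicit Borel ``rounding'' onto the centers is just a concrete realization of what the paper calls a projection of $f_n$ onto $V$, so there is nothing to add.
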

\begin{proof}
	By tightness, there exist $z_1, \ldots, z_N$ in $\ell^\infty$ such that $(f_n)_\# \mm_{\X_n}(\bigcup_j B\left(z_j, \frac{\varepsilon}{2}\right))\geq 1-\frac{\varepsilon}{2}$ for all $n$. It is now enough to set $V$ as the vector space generated by the $z_j$'s and to define $g_n$ as a projection of $f_n$ on $V$ (cfr. with \cite[Lemma 3.5]{Sh16}).
\end{proof}
\begin{cor}
	Let $\X_n$ be a sequence of n.m.m.s.. Let $f_n:\X_n\to \ell^\infty$ be a sequence of 1-Lipschitz maps such that $(f_n)_\# \mm_n$ is weakly converging. Then for all $\varepsilon>0$ there exists $N\geq 1$ and a sequence of 1-Lipschitz maps $g_n:\X_n\to \ell^\infty$ such that the following hold:
	\begin{itemize}
		\item we have $\sfd_{\L^0}(f_n, g_n)\leq \varepsilon$;
		\item there exists $V\subset \ell^\infty$ a vector space of dimension $N$ containing the images of all the $g_n$'s.
	\end{itemize}
\end{cor}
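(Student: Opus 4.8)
The plan is to reduce the statement, via tightness, to the construction of a single auxiliary self-map of $\ell^\infty$, and then to build that map by an Arzelà--Ascoli argument applied to the coordinate functionals. First I would note that, since $(f_n)_\#\mm_n$ converges weakly and each of these measures is Radon (being the pushforward under the continuous map $f_n$ of a Borel probability on the Polish space $\X_n$), the sequence is uniformly tight by Prohorov's theorem. Hence, fixing $\varepsilon>0$, there is a compact $K\subset\ell^\infty$ with $(f_n)_\#\mm_n(K)\geq 1-\varepsilon$ for all $n$, i.e. $\mm_n(f_n^{-1}(K))\geq 1-\varepsilon$.

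The key reduction is: it suffices to produce one $1$-Lipschitz map $R:\ell^\infty\to\ell^\infty$ whose range lies in a finite-dimensional subspace $V$ and with $\sup_{u\in K}\|R(u)-u\|_{\ell^\infty}\leq\varepsilon$. Indeed, $g_n:=R\circ f_n$ is then $1$-Lipschitz (composition of $1$-Lipschitz maps) with range in $V$, and since $\|g_n(x)-f_n(x)\|_{\ell^\infty}\leq\varepsilon$ on the set $f_n^{-1}(K)$ of measure at least $1-\varepsilon$, the set $\{\|f_n-g_n\|_{\ell^\infty}>\eta\}$ has $\mm_n$-measure $\leq\varepsilon<\eta$ for every $\eta>\varepsilon$, whence $\sfd_{\L^0}(f_n,g_n)\leq\varepsilon$.

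To build $R$ I would exploit that the coordinate functionals $\pi_i:\ell^\infty\to\R$, $\pi_i(u)=u_i$, are each $1$-Lipschitz, so the family $\{\pi_i|_K\}_{i\in\N}$ is equicontinuous and uniformly bounded by $\sup_{u\in K}\|u\|_{\ell^\infty}<\infty$. By Arzelà--Ascoli this family is relatively compact, hence totally bounded, in $C(K)$: there are finitely many indices $i_1,\dots,i_m$ and an assignment $l:\N\to\{1,\dots,m\}$ with $\|\pi_i-\pi_{i_{l(i)}}\|_{C(K)}\leq\varepsilon$ for every $i$. I then define $R$ by replacing each coordinate with its chosen representative, $R(u)_i:=u_{i_{l(i)}}$. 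Three facts close the construction: $R$ is $1$-Lipschitz, because each output coordinate is literally an input coordinate, so $\|R(u)-R(u')\|_{\ell^\infty}=\sup_i|u_{i_{l(i)}}-u'_{i_{l(i)}}|\leq\|u-u'\|_{\ell^\infty}$; the range lies in the $m$-dimensional $V:=\mathrm{span}\{v^{(1)},\dots,v^{(m)}\}$, with $v^{(l)}:=\sum_{i:\,l(i)=l}e_i$, since $R(u)=\sum_{l=1}^m u_{i_l}\,v^{(l)}$; and for $u\in K$ we have $\|R(u)-u\|_{\ell^\infty}=\sup_i|\pi_{i_{l(i)}}(u)-\pi_i(u)|\leq\varepsilon$ by the net property.

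The step I expect to be delicate is precisely the simultaneous demand of \emph{exact} $1$-Lipschitzianity and finite-dimensional range. The naive route---first projecting $f_n$ onto the span of a finite net as in Lemma~\ref{lemma: approssimazione dim infinita}, then repairing the Lipschitz constant through the McShane-type extension of Lemma~\ref{lemma: Lipschitz appr}---does not work: a partition-of-unity projection onto the span of a net inflates the Lipschitz constant, while the extension of Lemma~\ref{lemma: Lipschitz appr} brings the constant back to $1$ only at the cost of leaving the prescribed subspace, since the inf-convolutions are performed coordinatewise and do not preserve a given $V$. The coordinate-selection map $R$ sidesteps both issues, because selecting coordinates is automatically a $1$-Lipschitz operation for the sup-norm and manifestly keeps the range in a fixed finite-dimensional space; the sole nontrivial input is the relative compactness of $\{\pi_i|_K\}_i$ in $C(K)$, which is exactly where the hypothesis enters through tightness.
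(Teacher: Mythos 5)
Your proof is correct, and it takes a genuinely different route from the paper's. The paper first notes that the measures $(f_n)_\#\mm_n$, being tight and weakly convergent, are concentrated on a common $\sigma$-compact subset of $\ell^\infty$ and hence are equi-tight (this is where it deals with the non-separability of $\ell^\infty$), and then concludes by chaining three ingredients: Lemma \ref{lemma: approssimazione dim infinita} (projection of $f_n$ onto the span of a finite net, which forces finite-dimensional range but loses the Lipschitz bound), Definition \ref{def: 1-Lipschitz err} (the projected maps are $1$-Lipschitz up to a small additive error), and the McShane-type repair of Lemma \ref{lemma: Lipschitz appr} (which restores exact $1$-Lipschitzianity). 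You instead produce a single linear, norm-one, finite-rank ``coordinate selection'' operator $R$ on $\ell^\infty$ that moves the compact set $K$ by at most $\varepsilon$, and take $g_n:=R\circ f_n$; the only analytic input is the total boundedness in $C(K)$ of the coordinate functionals restricted to $K$ (Arzel\`a--Ascoli). What your construction buys is that exact $1$-Lipschitzianity and finite-dimensional range come simultaneously and for free, whereas in the paper's chain the coordinatewise inf-convolutions of Lemma \ref{lemma: Lipschitz appr} have no reason to take values in the finite-dimensional subspace produced by Lemma \ref{lemma: approssimazione dim infinita}: the defect you explicitly anticipate is a genuine issue with the paper's two-line argument as written, and your proof can be read as supplying the missing step rather than merely offering an alternative.

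One caveat on your first step: Prohorov's converse (weak convergence implies uniform tightness) cannot be invoked verbatim on $\ell^\infty$, which is not separable. The repair is exactly the paper's: each $(f_n)_\#\mm_n$ is tight, hence concentrated on a $\sigma$-compact set; taking the union over $n$, all these measures (and, by the portmanteau theorem, their weak limit) are concentrated on a common closed separable subspace of $\ell^\infty$, on which Prohorov's theorem does apply and yields the single compact set $K$ you need. With this adjustment your proof is complete.
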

\begin{proof}
	Since the maps are Lipschitz, the supports of the pushforward measures live in a $\sigma$--compact (thus measurable) subset of $\ell^\infty$: the pushforward measures are thus equi--tight.
	The result follows directly from Lemma \ref{lemma: approssimazione dim infinita}, Definition \ref{def: 1-Lipschitz err} and Lemma \ref{lemma: Lipschitz appr}.
\end{proof}
\begin{lm}
	Let $\X$ be a n.m.m.s. and let $(f_n)_{n\geq 1}$ be a sequence of maps such that $f_n:\X\to \ell^\infty$ is 1-Lipschitz up to an additive error $\varepsilon_n$, with $\varepsilon_n\to 0$. Assume that $(f_n)_\#\mm_n$ is weakly converging. Then $(f_n)_{n\geq 1}$ is precompact in the $\L^0$ distance.
\end{lm}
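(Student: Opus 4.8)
The plan is to reduce, in three stages, to a genuine Ascoli--Arzel\`a argument in finite dimensions. First, since each $f_n$ is $1$-Lipschitz up to the additive error $\varepsilon_n$ in the sense of Definition~\ref{def: 1-Lipschitz err}, Lemma~\ref{lemma: Lipschitz appr} produces honest $1$-Lipschitz maps $\tilde f_n:\X\to\ell^\infty$ with $\sfd_{\L^0}(f_n,\tilde f_n)\le\varepsilon_n\to 0$. Because $\sfd_{\L^0}$ is a metric with $\sfd_{\L^0}(f_n,\tilde f_n)\to 0$, the sequence $(f_n)$ is $\L^0$-precompact if and only if $(\tilde f_n)$ is; moreover $(\tilde f_n)_\#\mm$ has the same weak limit as $(f_n)_\#\mm$ (convergence in measure of the difference forces the pushforwards to share their limit in distribution), hence is weakly convergent and tight. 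It therefore suffices to prove that the genuinely $1$-Lipschitz sequence $(\tilde f_n)$ is $\L^0$-precompact.

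Next I exploit the finite-dimensional approximation. For every $m\ge 1$, applying the corollary following Lemma~\ref{lemma: approssimazione dim infinita} (with $\X_n\equiv\X$ and $\varepsilon=1/m$) yields an integer $N_m$, a subspace $V_m\subset\ell^\infty$ with $\dim V_m=N_m$, and $1$-Lipschitz maps $g_n^m:\X\to V_m$ with $\sfd_{\L^0}(\tilde f_n,g_n^m)\le 1/m$ for all $n$. For each fixed $m$ I claim that $(g_n^m)_n$ is $\L^0$-precompact. Indeed $(g_n^m)_\#\mm$ is tight, being $\sfd_{\L^0}$-close to the tight family $(\tilde f_n)_\#\mm$, so one may fix a radius $R$ with $\mm(\{|g_n^m|\le R\})\ge 1/2$ for all $n$ and a compact $C\subset\X$ with $\mm(C)>1/2$. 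These two sets meet, so there is $a_n\in C$ with $|g_n^m(a_n)|\le R$, and the $1$-Lipschitz bound gives $|g_n^m|\le R+\operatorname{diam}(C)$ on all of $C$. Thus $(g_n^m|_C)_n$ is equibounded and equi-Lipschitz, and the classical Ascoli--Arzel\`a theorem (the target $V_m$ being finite-dimensional) extracts a subsequence converging uniformly on $C$. Exhausting $\X$ by compacts $C_k$ with $\mm(\X\setminus C_k)\to 0$ and diagonalizing over $k$ produces a subsequence converging $\mm$-a.e., hence in $\L^0$, to some measurable $g^m:\X\to V_m$.

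Finally I combine the two families of indices. A diagonal extraction over $m$ gives a single subsequence $(n_j)$ along which $g_{n_j}^m\to g^m$ in $\L^0$ for every $m$. Letting $j\to\infty$ in $\sfd_{\L^0}(g^m,g^{m'})\le\sfd_{\L^0}(g^m,g_{n_j}^m)+\sfd_{\L^0}(g_{n_j}^m,\tilde f_{n_j})+\sfd_{\L^0}(\tilde f_{n_j},g_{n_j}^{m'})+\sfd_{\L^0}(g_{n_j}^{m'},g^{m'})$ shows $\sfd_{\L^0}(g^m,g^{m'})\le 1/m+1/m'$, so $(g^m)_m$ is Cauchy in the complete space $\L^0(\X;\ell^\infty)$ and converges to some $g$. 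Then $\sfd_{\L^0}(\tilde f_{n_j},g)\le 1/m+\sfd_{\L^0}(g_{n_j}^m,g^m)+\sfd_{\L^0}(g^m,g)$, and sending $j\to\infty$ and then $m\to\infty$ gives $\tilde f_{n_j}\to g$ in $\L^0$, whence $f_{n_j}\to g$ as well, which is the desired precompactness. The only genuinely delicate point is the uniform boundedness in the Ascoli--Arzel\`a step: this is exactly where tightness of the pushforwards enters, through the intersection argument that anchors each $g_n^m$ at a point of a fixed compact set, and it is what prevents the maps from running off to infinity despite carrying no \emph{a priori} bound from the $1$-Lipschitz condition alone.
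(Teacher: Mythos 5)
Your proposal is correct and takes essentially the same approach as the paper: reduce to genuinely $1$-Lipschitz maps via Lemma \ref{lemma: Lipschitz appr}, pass to finite-dimensional targets via Lemma \ref{lemma: approssimazione dim infinita} (in the form of its corollary), and conclude by Ascoli--Arzel\`a, with your final diagonal/Cauchy argument over $m$ simply making explicit the standard principle the paper invokes implicitly (a sequence admitting, for every $\varepsilon>0$, an $\varepsilon$-close precompact approximating sequence is itself precompact). The details you add --- anchoring at a point of a fixed compact set to get equiboundedness, and the exhaustion by compacts --- are exactly what the paper's one-line appeal to Ascoli--Arzel\`a leaves to the reader.
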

\begin{proof}
	Let us prove that for all $\varepsilon>0$ there exists a sequence $g^n_\varepsilon$ which is compact and $\varepsilon$ close to $f_n$.
	By Lemma \ref{lemma: Lipschitz appr} we might assume that the $f_n$'s are 1--Lipschitz. Reasoning as before, we might also assume that the pushforward measures are tight, so that we can apply Lemma \ref{lemma: approssimazione dim infinita} and get the existence of a sequence $g_n$ of 1--Lipschitz map with values in a finite dimensional space which are $\varepsilon$--close to $f_n$. The compactness of $g_n$ follows (for instance) from Ascoli--Arzelà.  
\end{proof}
\begin{cor}\label{cor: compattezza dim inf}
	Let $\X_n$ be a sequence of n.m.m.s. such that $\X_n\concto \X$ and let $f_n:\X_n\to \ell^\infty$ be a sequence of 1--Lipschitz maps up to an additive error $\varepsilon_n\to 0$. Assume that $(f_n)_\#\mm_n$ is weakly converging. Then $(f_n)_{n\geq 1}$ is precompact with respect to the $\L^0$ convergence.
\end{cor}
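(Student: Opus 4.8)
The plan is to follow the proof of the previous lemma almost verbatim, the only --- and crucial --- difference being that its final Ascoli--Arzelà compactness step, available only when the maps share a common domain, must be replaced by the compactness of real-valued $1$-Lipschitz functions on varying spaces recorded in Corollary \ref{prop: approssimazione e comp L0}(ii). As in that proof, it suffices to show that for every $\varepsilon>0$ one can find a sequence $(g_n^\varepsilon)_n$ which is precompact with respect to $\L^0$-convergence and satisfies $\limsup_n\sfd_{\L^0}(f_n, g_n^\varepsilon)\le\varepsilon$; the general statement then follows by a diagonal/Cauchy argument spelled out at the end.

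First I would reduce to genuinely $1$-Lipschitz maps. Applying Lemma \ref{lemma: Lipschitz appr} to each $f_n$ (our Definition \ref{def: 1-Lipschitz err} matches the hypothesis there) produces $1$-Lipschitz maps $\tilde f_n:\X_n\to\ell^\infty$ with $\sfd_{\L^0}(f_n,\tilde f_n)\le\varepsilon_n\to 0$; since $\L^0$-closeness is preserved under pushforward, $(\tilde f_n)_\#\mm_n$ still converges weakly to the same limit. Fixing $\varepsilon>0$, the finite-dimensional approximation established right after Lemma \ref{lemma: approssimazione dim infinita} (stated for varying spaces) yields $N\ge 1$, a vector space $V\subset\ell^\infty$ with $\dim V=N$, and $1$-Lipschitz maps $g_n=g_n^\varepsilon:\X_n\to V$ with $\sfd_{\L^0}(\tilde f_n,g_n)\le\varepsilon$, whence $\limsup_n\sfd_{\L^0}(f_n,g_n)\le\varepsilon$.

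The heart of the matter is the $\L^0$-precompactness of $(g_n)_n$. Writing $V\cong(\R^N,\|\cdot\|)$ and $g_n=(g_n^1,\dots,g_n^N)$, each component $g_n^j:\X_n\to\R$ is $L$-Lipschitz for some $L$ depending only on $N$ and the norm of $V$ (hence uniform in $n$), and $(g_n)_\#\mm_n$ is tight. For fixed $M\ge\tfrac12$ the map $\phi_n^{j,M}:=\tfrac{1}{2M}\big(((-M)\vee (g_n^j/L)\wedge M)+M\big)$ is $1$-Lipschitz with values in $[0,1]$, so Corollary \ref{prop: approssimazione e comp L0}(ii) lets us extract, for each $j$, a subsequence along which $\phi_n^{j,M}\to\phi^{j,M}$ in $\L^0$. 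Letting $M\to\infty$ along a diagonal and using the uniform tightness to bound $\mm_n(\{|g_n^j|>LM\})$ recovers $\L^0$-convergence of $g_n^j$ itself to some $g^j:\X\to\R$; iterating over the finitely many indices $j$ and then applying the continuity property of Corollary \ref{cor: cont} to the linear identification $\R^N\cong V$, we obtain that along a common subsequence $g_n\to g:=(g^1,\dots,g^N)$ in $\L^0$. This is exactly the precompactness claimed, and it is here that the argument genuinely departs from the fixed-domain lemma: since convergence in concentration admits no extrinsic common realization of the $\X_n$, Ascoli--Arzelà is unavailable, and the scalar compactness of Corollary \ref{prop: approssimazione e comp L0}(ii) together with the truncation bookkeeping is what makes the step go through --- this is the main obstacle.

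Finally, to pass from the approximants to $(f_n)_n$, I would choose $\varepsilon=1/k$ and extract nested subsequences, obtaining a single subsequence $(n_i)$ along which $g_{n_i}^{1/k}\to g^{(k)}$ in $\L^0$ for every $k$. The triangle inequality for the varying-space $\L^0$-distance, combined with $\sfd_{\L^0(\mm_{n_i})}(g_{n_i}^{1/k},g_{n_i}^{1/l})\le \sfd_{\L^0}(f_{n_i},g_{n_i}^{1/k})+\sfd_{\L^0}(f_{n_i},g_{n_i}^{1/l})$, shows after letting $i\to\infty$ that $(g^{(k)})_k$ is Cauchy in the complete space $\L^0(\X;\ell^\infty)$; hence $g^{(k)}\to T$ for some Borel $T:\X\to\ell^\infty$, and the same triangle inequality gives $f_{n_i}\to T$ in $\L^0$, completing the proof of precompactness.
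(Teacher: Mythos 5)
Your proof is correct and follows essentially the same route as the paper's: reduction to genuinely $1$-Lipschitz maps via Lemma \ref{lemma: Lipschitz appr}, reduction to a finite-dimensional target via Lemma \ref{lemma: approssimazione dim infinita} and its corollary, componentwise compactness coming from the concentration hypothesis, and a final diagonal/Cauchy argument. The only cosmetic difference is that you invoke Corollary \ref{prop: approssimazione e comp L0}(ii) as a black box (hence the truncation bookkeeping needed to land in $[0,1]$), whereas the paper repeats that proposition's proof componentwise and then appeals to the preceding fixed-space precompactness lemma; both hinge on the same transfer-plus-Ascoli--Arzel\`a mechanism.
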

\begin{proof}
	Let $\aalpha_n$ be good couplings.	Reasoning as before, we may also assume that the maps are 1--Lipschitz. Let us prove that for each $\varepsilon>0$ there exists an $\L^0$ compact sequence $g_n:\X\to \ell\infty$ such that $\sfd^\saalpha_{\L^0}(f_n, g_n)\leq 2\varepsilon$. By Lemma \ref{lemma: approssimazione dim infinita} we can find a sequence $h_n$ of 1--Lipschitz maps with values in a finite dimensional space $V$ which are $\varepsilon$--close to $f_n$. Reasoning componentwise as in the proof of Proposition \ref{prop: approssimazione e comp L0}, we can find a sequence $g_n$ of 1--Lipschitz maps with values in $V$ which are $\varepsilon$--close to $h_n$ and thus such that $\sfd^\saalpha_{\L^0}(f_n, g_n)\leq 2\varepsilon$. The result now follows e.g. from the previous Lemma.
\end{proof}
\begin{proof}[Proof of Theorem {\ref{teo: comp proiezioni}}]
	Up to an isometric immersion, we can assume that $\X\subset \ell^\infty$.
	Let $p^1_n, p^2_n$ be projection maps inducing the convergence as in the statement. By Proposition \ref{prop: caratterizzazione} we know that $p^2_n$ are 1--Lipschitz up to an additive error $\varepsilon_n\to 0$ and that $(p^2_n)_\#\mm_n$ is weakly converging. By Corollary \ref{cor: compattezza dim inf} we know that up to subsequences $p^2_n\to \Phi$ in $\L^0$ for some 1--Lipschitz measure preserving map $\Phi:\X^1\to\X^2$. Since the argument can be carried on in the same way for $p^1_n$, we get the existence of $\Psi:\X^2\to\X^1$ 1--Lipschitz and measure preserving. This now implies that $\X^1$ and $\X^2$ are isomorphic and thus that $\Phi$ is an isometry.
\end{proof}

We are finally ready to prove the main result of this subsection:
\begin{proof}[Proof of Theorem \ref{teo: unicità limite}]
	Given $\Phi$ as above, we want to show that there exist $\Phi_n$ such that $\Phi_n\circ p_n^2\xrightarrow[p^1_n]{L^0} \Phi$. Let us fix $\aalpha^1_n$ good couplings with respect to $p^1_n$ and notice that, by Theorem \ref{teo: comp proiezioni} and an easy compactness argument, for $\varepsilon>0$ and $n$ big enough
	\begin{equation}
		\sfd^{\saalpha^1_n}_{\L^0}(p_n^2, {\rm Iso}(\X^1; \X^2))<\varepsilon
	\end{equation}
	holds. In particular, there exist $\Phi_n^\varepsilon$ such that, eventually in $n$,
	\begin{equation}
		\sfd^{\saalpha^1_n}_{\L^0}(\Phi^\varepsilon_n\circ p_n^2, \Phi)<\varepsilon.
	\end{equation}
	By a diagonal argument, the thesis follows.
\end{proof}
Notice that, a posteriori, the precompactness result of Theorem \ref{teo: comp proiezioni} can be reinterpreted as a consequence of a compactness result for the isomorphisms $\Phi_n$. Indeed the following holds:
\begin{lm}
	Let $(\Z, \sfd_Z, \mm_\Z)$ be a n.m.m.s.. Then 
	\begin{equation}
		{\rm Aut}(\Z)\coloneqq\{ \Phi:\Z\to \Z \text{ measure preserving isometry}\}
	\end{equation}
	is compact in the $\L^0$ topology.
\end{lm}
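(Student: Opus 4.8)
The plan is to prove sequential compactness of ${\rm Aut}(\Z)$ inside the complete metric space $(\L^0(\Z;\Z),\sfd_{\L^0})$, which for a metric space is the same as compactness. So I fix an arbitrary sequence $(\Phi_n)_{n\geq 1}\subset {\rm Aut}(\Z)$ and aim to extract a subsequence converging in $\L^0$ to some $\Phi\in{\rm Aut}(\Z)$. Identifying $\Z$ with $\supp\mm_\Z$ and embedding it isometrically into $\ell^\infty$, each $\Phi_n$ is a genuine $1$-Lipschitz map $\Z\to\ell^\infty$ with $(\Phi_n)_\#\mm_\Z=\mm_\Z$; in particular the pushforwards form a constant, hence weakly convergent, sequence. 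Applying Corollary \ref{cor: compattezza dim inf} to the constant sequence $\X_n\equiv\Z$ (with $\varepsilon_n\equiv 0$, in which case the good couplings reduce to the identity coupling and $\sfd^{\saalpha_n}_{\L^0}$ to $\sfd_{\L^0}$) yields $\L^0$-precompactness, so up to a subsequence $\Phi_n\to\Phi$ in $\L^0$. Running the same argument on the isometries $\Phi_n^{-1}\in{\rm Aut}(\Z)$ and passing to a further subsequence, I also obtain $\Phi_n^{-1}\to\Psi$ in $\L^0$ for some map $\Psi$.

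Next I record the elementary properties of the limits. Since $\L^0$-convergence lets one pass the inequality $\sfd_\Z(\Phi_n(x),\Phi_n(y))\le \sfd_\Z(x,y)$ to the limit $\mm_\Z\otimes\mm_\Z$-a.e.\ (extract an a.e.\ convergent sub-subsequence), both $\Phi$ and $\Psi$ are $1$-Lipschitz after the usual choice of Borel representative on $\supp\mm_\Z$. Moreover, $\L^0$-convergence of maps into the fixed target forces weak convergence of the pushforwards, so $\mm_\Z=(\Phi_n)_\#\mm_\Z\rightharpoonup \Phi_\#\mm_\Z$ gives $\Phi_\#\mm_\Z=\mm_\Z$, and likewise $\Psi_\#\mm_\Z=\mm_\Z$; in particular both maps take values in $\supp\mm_\Z=\Z$.

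The heart of the matter is to upgrade $\Phi$ from a $1$-Lipschitz measure preserving self-map to a bijective isometry, which I do by showing $\Psi\circ\Phi={\rm id}$ and $\Phi\circ\Psi={\rm id}$ $\mm_\Z$-a.e. Fix $\varepsilon>0$; the triangle inequality gives
\begin{equation}
\sfd_\Z\big(\Phi_n^{-1}(\Phi_n(x)),\Psi(\Phi(x))\big)\le \sfd_\Z\big(\Phi_n(x),\Phi(x)\big)+\sfd_\Z\big(\Phi_n^{-1}(\Phi(x)),\Psi(\Phi(x))\big),
\end{equation}
where I used that $\Phi_n^{-1}$ is $1$-Lipschitz to bound the first term. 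The first summand tends to $0$ in measure because $\Phi_n\to\Phi$ in $\L^0$. For the second, the set where it exceeds $\varepsilon$ is exactly $\Phi^{-1}$ of $\{\sfd_\Z(\Phi_n^{-1},\Psi)>\varepsilon\}$, and since $\Phi_\#\mm_\Z=\mm_\Z$ its measure equals $\mm_\Z(\{\sfd_\Z(\Phi_n^{-1},\Psi)>\varepsilon\})\to 0$. Hence $\Phi_n^{-1}\circ\Phi_n\to\Psi\circ\Phi$ in $\L^0$; but the left-hand side is identically ${\rm id}$, so $\Psi\circ\Phi={\rm id}$ $\mm_\Z$-a.e., and symmetrically $\Phi\circ\Psi={\rm id}$. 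Being $1$-Lipschitz and mutually inverse, $\Phi$ and $\Psi$ are then isometries, whence $\Phi\in{\rm Aut}(\Z)$ and sequential compactness follows. The main obstacle is exactly this composition step: $\L^0$-convergence is not stable under composition in general, and what saves the argument is the measure preserving property of the limits, which controls the outer composition through the invariance $\mm_\Z(\Phi^{-1}(\,\cdot\,))=\mm_\Z(\,\cdot\,)$.
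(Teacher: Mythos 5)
Your proof is correct, and it takes a genuinely different route from the paper's. The paper argues by hand in Ascoli--Arzel\`a style: it fixes compact sets $K^m\subset\Z$ with $\mm_\Z(K^m)\geq 1-\tfrac1m$, sets $E^m=\lims_n \Phi_n^{-1}(K^m)$ (of measure $\geq 1-\tfrac1m$ by measure preservation), extracts via equicontinuity of isometries and a diagonal argument a subsequence converging pointwise on each $E^m$, and concludes that the resulting maps are $\L^0$--Cauchy, so the subsequence converges in $\L^0$; whether the limit belongs to ${\rm Aut}(\Z)$ is not discussed, so the written argument really yields sequential \emph{pre}compactness. You instead get precompactness for free from Corollary \ref{cor: compattezza dim inf} applied to the constant sequence $\X_n\equiv\Z$ with identity projections (a legitimate, non-circular use, since that corollary is proved before and independently of this lemma; the fixed-space lemma immediately preceding it would serve even more directly), and you spend your effort on the closedness half of the statement, which is exactly what the paper glosses over: you extract $\L^0$-limits $\Phi$ of $\Phi_n$ and $\Psi$ of $\Phi_n^{-1}$, verify both are $1$-Lipschitz and measure preserving, and prove $\Psi\circ\Phi=\Phi\circ\Psi={\rm id}$ $\mm_\Z$-a.e., hence everywhere by continuity and density of full-measure sets in $\supp\mm_\Z$, so that $\Phi$ is a bijective isometry. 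The key observation that inner composition with a measure preserving limit is stable under $\L^0$ convergence --- $\mm_\Z(\Phi^{-1}(B_n))=\mm_\Z(B_n)\to 0$ --- is what rescues an operation ($\L^0$-convergence under composition) that fails in general, and it is the genuinely delicate point of the lemma: compactness, as opposed to precompactness, needs ${\rm Aut}(\Z)$ to be closed under $\L^0$ limits. In short, the paper's proof is more elementary and self-contained but incomplete on the membership of the limit in ${\rm Aut}(\Z)$; yours invokes heavier machinery already available in the paper (the $\ell^\infty$ embedding and finite-dimensional approximation hidden in Corollary \ref{cor: compattezza dim inf}) but delivers the full statement.
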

\begin{proof}
	Given $m\geq 1$, by tightness we can choose $K^m\subset \X$ a compact set such that $\mm_\Z(K^m)\geq 1-\frac{1}{m}$. Define $K^m_n\coloneqq T_n^{-1}(K^m)$ and $E^m=\lims_n K^m_n$. Now, $\mm_\Z(E^m)\geq \inf_n \mm_\Z(K^m_n)\geq 1-\frac{1}{m}$; by an Ascoli-Arzelà type argument it is also easy to see that there exists $T^m:\Z\to\Z$ such that, up to subsequences, $T_n\to T^m$ pointwise on $E^m$. By a diagonal argument, we might assume that $T^m=T^n$ on $E^m$ for $n\geq m$. Finally, since $\lim_m\lims_n \sfd_{\L^0}(T_n, T^m)=0$ and $T^m$ is $\L^0$--Cauchy, the thesis follows.
\end{proof}
\subsection{A ``guided pullback" procedure} \label{subsection: pullback guidato} 

	% \begin{manualtheorem}{\ref{teo: pullback guidato}}[Guided pullback]
	% 	This is a theorem.
	% \end{manualtheorem}

	Throughout this subsection we fix a sequence $n\mapsto (\X_n,\sfd_n,\mm_n)$ of normalized metric measure spaces converging in concentration to a limit space $(\X,\sfd,\mm)$. We also fix a corresponding sequence of `projections' $p_n:\X_n\to \X$  as in Proposition \ref{prop: caratterizzazione}.
	
	% {\color{red} As we have seen, in these circumstances we can give a meaning to the concept of weakly converging sequence of measures $\mu_n\weakto \mu$ for $\mu_n$ and, by xxx, if the $\mu_n$'s are equi-integrable, the limit measure $\mu$ only depends on the restriction of the $\mu_n$'s to the subsets $\hat \X_n\subset \X_n$ where the projections $p_n$ are (almost) 1-Lipschitz.}

% It follows that if $\mu_{1,n},\mu_{2,n}\in \pr(\X_n)$ are equi-integrable sequences of measures weakly converging to $\mu_1,\mu_2\in \pr(\X)$ respectively, we have
% \begin{equation}
% \label{eq:w2lsc}
% \W_2(\mu_1,\mu_2)\leq\limi_{n\to\infty} \W_2(\mu_{1,n},\mu_{2,n}).
% \end{equation}
The question we want to address here is the following. Suppose that we are given $(\mu_{1,n}),(\mu_{2,n})$, can we `slightly modify' one of the two sequences in order to have convergence of the $\W_2$-distances? The main result of this section is the following theorem, where `slightly modify' is interpreted in the sense of small difference in the Total Variation norm:\footnote{This approximation result (that can be regarded as an extension of \cite[Lemma 3.15]{FuSh13}, see also Lemma \ref{lm: approssimazione}) is crucial in the proof of Theorem \ref{teo: pullback guidato}. When dealing with the $\Gamma-\limi$ inequality for the slope, we are given a converging sequence of measures $\mu_n\rightharpoonup\mu$ and we want to prove that the slope of the entropies is lower semicontinuous in the limit. In order to approximate the difference quotient appearing in \eqref{eq: sup slope}, given $\nu \in\pr(\X)$, we would like to approximate both the entropy and the Wasserstein distance from $\mu$: Theorem \ref{teo: pullback guidato} will allow us to produce a sort of ``guided'' pullback $\nu_n\in\pr (\X_n)$ of $\nu$ which can be thought as ``parallel'' to $\mu_n$.}
	\begin{thm}\label{teo: pullback guidato}
		Let $(\X_n,\sfd_n,\mm_n)\stackrel{conc}\to (\X,\sfd,\mm)$, $p_n:\X_n\to\X$ be projection maps as in Proposition \ref{prop: caratterizzazione} and let $\mu_{1, n}, \mu_{2, n}\in \pr(\X_n)$ be equi--integrable sequences weakly converging to $\mu_1,\mu_2\in \pr(\X)$ respectively.
		
		Then for every $\eps>0$ there exist $\nu_n\in \pr(\X_n)$, $n\in\N$, such that:
		\begin{subequations}
		\begin{align}
		\label{eq:boundW2}
		 \lims_{n\to\infty}&\W_2(\nu_n, \mu_{1, n})\leq \W_2(\mu_2, \mu_1),\\
		\label{eq:boundTV}
		\lims_{n\to\infty}& \|\nu_n- \mu_{2, n}\|_{\sf TV}=0,\\
		\label{eq:boundnun}
		\nu_n&\leq \mu_{1, n}+(1+\eps )\mu_{2, n},\qquad\forall n\in\N,\text{ sufficiently big}.
		\end{align}
		\end{subequations}
		\end{thm}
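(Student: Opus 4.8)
The plan is to realize on $\X_n$, fiber by fiber, a near-optimal transport dictated by the limit optimal plan, relying only on the concentration of $1$-Lipschitz functions encoded in Proposition \ref{prop: caratterizzazione} (no extrinsic realization being available), and then to correct the outcome to the prescribed $\mu_{2,n}$ by a vanishingly small modification that is charged to the $\mu_{1,n}$-term allowed by \eqref{eq:boundnun}.

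First I would discretize the limit problem. Fix an optimal $\gamma\in\adm(\mu_1,\mu_2)$ for $\W_2$ and, using tightness of $\mu_1,\mu_2$, choose a finite Borel partition $\{A_i\}_i$ of a set of full $(\mu_1+\mu_2)$-measure into cells of diameter $<\delta$ whose boundaries are $\mu_1$- and $\mu_2$-negligible, with representatives $x_i\in A_i$. Setting $c_{ij}:=\gamma(A_i\times A_j)$ one has $\sum_j c_{ij}=\mu_1(A_i)$, $\sum_i c_{ij}=\mu_2(A_j)$ and, for $\delta$ small, $\sum_{ij}c_{ij}\,\sfd(x_i,x_j)^2\le \W_2^2(\mu_1,\mu_2)+\eps$. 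I then transfer this combinatorial datum to $\X_n$ through $B_i^n:=p_n^{-1}(A_i)$: since the assumed weak convergences $\mu_{1,n}\rightharpoonup\mu_1$, $\mu_{2,n}\rightharpoonup\mu_2$ mean $(p_n)_\#\mu_{k,n}\rightharpoonup\mu_k$ on $\X$, testing on the continuity cells gives $\mu_{1,n}(B_i^n)\to\mu_1(A_i)$ and $\mu_{2,n}(B_j^n)\to\mu_2(A_j)$, so the masses to be moved match asymptotically.

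The heart of the construction is the cost control of the moves $B_i^n\to B_j^n$, and this is where the absence of an ambient space bites: item \eqref{it:appr1lip} only yields the lower bound $\sfd_n(x,x')\ge\sfd(p_n(x),p_n(x'))-\eps_n$. For the matching upper bound I would exploit the defining concentration property \eqref{it:pnconc}: the $1$-Lipschitz function $y\mapsto\sfd_n(y,B_j^n)$ is $\L^0$-close to a pullback $p_n^*h$ with $h\in\Lip_1(\X)$; since $h\approx 0$ $\mm$-a.e.\ on $A_j$ and $h$ is $1$-Lipschitz with $\operatorname{diam}(A_i)<\delta$, one gets $h\approx h(x_i)\le h(x_j)+\sfd(x_i,x_j)\approx\sfd(x_i,x_j)$ on $A_i$. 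Combined with the lower bound this forces $\sfd_n(\cdot,B_j^n)=\sfd(x_i,x_j)+o(1)$ on all but an $\eps_n$-fraction of $B_i^n$, so almost all the mass leaving $B_i^n$ can indeed reach $B_j^n$ at the correct cost. This is the step I expect to be the true obstacle, since it must be made quantitative uniformly over the finitely many pairs $(i,j)$ while keeping the exceptional sets of total measure $o(1)$, and it must be coordinated with where the mass actually lands inside $B_j^n$.

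It remains to place the incoming mass onto the prescribed $\mu_{2,n}|_{B_j^n}$ and to glue everything together. Here I would invoke a within-fiber matching lemma, the natural extension of \cite[Lemma 3.15]{FuSh13}, which, using concentration of measure inside the fibers (the observable-diameter bound) together with the equi-integrability hypothesis, transports the landed mass onto $\mu_{2,n}|_{B_j^n}$ at cost $o(1)$ after discarding an arbitrarily small fraction. Defining $\nu_n$ to coincide with $\mu_{2,n}$ on each $B_j^n$ up to the vanishing mass-matching discrepancy, and assembling the global plan $\pi_n\in\adm(\mu_{1,n},\nu_n)$ out of the source splitting $c_{ij}/\mu_1(A_i)$, the between-cell moves, and the within-cell matching, I obtain $\int\sfd_n^2\,\d\pi_n\le\sum_{ij}c_{ij}\,\sfd(x_i,x_j)^2+o(1)\le\W_2^2(\mu_1,\mu_2)+\eps+o(1)$, which yields \eqref{eq:boundW2} on letting $\eps,\delta\to0$; the matching masses make the modification TV-small, giving \eqref{eq:boundTV}; and the finite-$n$ mass mismatches together with the exceptional sets are absorbed into the $\mu_{1,n}$-term, which simultaneously produces the domination \eqref{eq:boundnun} with the slack $(1+\eps)$ and is consistent with the semicontinuity \eqref{eq: semicont W2} that forces the bound to be sharp.
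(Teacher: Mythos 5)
Your overall architecture (discretize the limit optimal plan into cells of diameter $<\delta$, move mass cell-to-cell at the cost read off from the limit, absorb all mismatches into a small TV error and the $(1+\eps)$ slack) is the same as the paper's, whose Lemma \ref{lm: appr discretizzazione} plays the role of your mass-splitting step and whose Lemma \ref{lemma: trasporto} treats each small piece. The genuine gap sits exactly where you placed your hedges, and it is not repairable as stated. Your cost control only bounds $\sfd_n(\cdot,B_j^n)$, the distance from a point of $B_i^n$ to the \emph{set} $B_j^n$; to produce a plan you must ``land'' the mass of $\mu_{1,n}\restr{B_i^n}$ somewhere in $B_j^n$ via a measurable selection of near-optimal points, and then match the landed measure with $\mu_{2,n}\restr{B_j^n}$. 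But the landed measure is a pushforward under that selection map, and nothing makes it equi-integrable with respect to $\mm_n\restr{B_j^n}$: it can perfectly well be Dirac-like inside the fiber. The within-fiber matching lemma you invoke (like \cite[Lemma 3.15]{FuSh13}, or the case $B_1=B_2$ of Lemma \ref{lemma: trasporto}) needs \emph{both} measures to charge no small sets; concentration of measure never transports irregular measures cheaply --- on the round sphere $S^n$ the $\W_2$-distance from a Dirac mass to the normalized volume tends to $\pi/2$, and discarding a small fraction of the mass does not change this. So the ``land, then match'' step fails, and equi-integrability of the original sequences cannot be passed to the landed measure.

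The paper's proof of Lemma \ref{lemma: trasporto} avoids ever materializing a landed measure: it bounds the truncated $1$-Wasserstein distance between $\mu_{1,n}$ and $\mu_{2,n}$ \emph{directly} by Kantorovich duality. Any $f_n\in\Lip_1(\X_n)$ is $\L^0(\mm_n)$-close to a pullback $g_n\circ p_n$ with $g_n\in\Lip_1(\X)$ (item \eqref{it:pnconc} of Proposition \ref{prop: caratterizzazione}); equi-integrability of \emph{both} $\mu_{1,n}$ and $\mu_{2,n}$ transfers this closeness to them, and the oscillation of $g_n$ over $B_1\cup B_2$ is at most ${\sf D}$, whence $\lims_n \W_1'(\mu_{1,n},\mu_{2,n})\leq {\sf D}$. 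This \emph{average} upper bound is then played against the \emph{pointwise} lower bound $\sfd_n\geq {\sf D}-\delta-\eps_n$ (your item \eqref{it:appr1lip} estimate, valid plan-a.e.): a Markov-type argument forces the optimal $\W_1'$-plan to give mass at least $1-3\delta^{\frac12}$ to $\{\sfd_n<{\sf D}+\delta^{\frac12}\}$, and restricting the plan to that set, completed by the diagonal map on the leftover $\mu_{1,n}$-mass, defines $\nu_n$. It is precisely this restriction that generates both the TV error \eqref{eq:boundTV} and the domination \eqref{eq:boundnun}. In short, duality against $1$-Lipschitz observables is the only mechanism by which concentration controls transport cost between the \emph{given} measures; a pointwise ``find a close point and send the mass there'' construction cannot be fixed inside the fibers. (A further minor point: your cells need $\mm$-negligible, not merely $(\mu_1+\mu_2)$-negligible, boundaries, since your fiber argument requires $\mm_n(B_j^n)\to\mm(A_j)>0$.)
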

%	In general the spaces are not equibounded (in which case the proof can be significantly simplified), so we have to deal with sort of a ``patchwork'' argument. 
Since the proof of Theorem \ref{teo: liminf slope} is quite technical, we have chosen to split it in a few Lemmata, in order to keep the presentation as clear and linear as possible. The core argument is in the following result:
%	 Technicalities a part, the gist of the construction is in the following Lemma, which is a variation on the theme of \cite[Lemma 3.13]{FuSh13}.
	\begin{lm}[Transverse transport]\label{lemma: trasporto} With the same assumptions and notation as in Theorem \ref{teo: pullback guidato} the following holds.
	
	Assume that the limit measures $\mu_1,\mu_2$ are concentrated on the Borel sets $B_1,B_2\subset\X$ with ${\rm diam}(B_{1, 2})\leq  \frac{\delta}{2}$ respectively and for some $0<\delta<\frac18$. Let  ${\sf D}\coloneqq  \operatorname{diam}(B_1\cup B_2)$. 
	
	Then for every $\eps>0$  there exist $\nu_n\in \pr(\X_n)$, $n\in\N$, such that:
		\begin{subequations}
		\begin{align}
		\label{eq:boundW21}
		\lims_{n\to\infty} \W_2(\mu_{1, n}, \nu_n)&\leq (1+\delta^\frac12){\sf D}+5\delta^\frac12,\\
		\label{eq:boundTV1}
		\lims_{n\to\infty} \|\nu_n-\mu_{2, n}\|_{\sf TV}&\leq 3\delta^\frac12 ,\\
		\label{eq:boundnun1}
\text{for every $n$ }\qquad		\nu_n&\leq \mu_{1, n}+(1+\eps)\mu_{2, n}.%\qquad\forall n\in\N.
		\end{align}
		\end{subequations}
	\end{lm}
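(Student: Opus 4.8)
The plan is to build the coupling between $\mu_{1,n}$ and $\nu_n$ by hand, transporting the bulk of $\mu_{1,n}$ onto the region carrying $\mu_{2,n}$ with $\sfd_n$-displacements of size at most $\approx\sD$, while leaving the small (equi--integrably negligible) remainder in place. The whole difficulty is that the maps $p_n$ are only \emph{one-sidedly} almost $1$-Lipschitz (item~\eqref{it:appr1lip}): one cannot bound $\sfd_n$ from above by $\sfd\circ(p_n,p_n)$, so an optimal plan between $\mu_1$ and $\mu_2$ cannot be naively lifted, the $\sfd_n$-distance between the relevant fibres being a priori enormous. The tool that unlocks the argument is the concentration estimate on fibres recalled after Proposition~\ref{prop: caratterizzazione}, namely $\lims_n\operatorname{ObsDiam}(p_n^{-1}(A),\alpha)\le\operatorname{diam}(A)$. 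Before using it I would perform two reductions: fixing a small closed neighbourhood $A\supset B_1\cup B_2$ with $\operatorname{diam}(A)\le\sD+\delta$, I discard the part of each $\mu_{i,n}$ outside $p_n^{-1}(A)$ (which is $o(1)$ by weak convergence, hence negligible by equi--integrability and Lemma~\ref{lemma: equi-int cont}), and by Lemma~\ref{lemma: troncamenti} I may also assume, at the same price, that $\mu_{1,n},\mu_{2,n}$ have $\mm_n$-densities bounded by a fixed constant $M$; both discarded pieces will be reinserted at the end and only feed the $\delta^{1/2}$ error terms.

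The central step converts concentration into a genuine metric statement. I consider the $1$-Lipschitz functions $h_n\coloneqq\sfd_n(\cdot,\operatorname{supp}\mu_{2,n})$ restricted to $p_n^{-1}(A)$. Since $\mu_{2,n}$ has density $\le M$, its support has $\mm_n$-measure $\ge 1/M$, so $h_n\equiv 0$ on a set of measure bounded below; on the other hand the fibre estimate forces the push-forward of the (normalised) restriction of $\mm_n$ to $p_n^{-1}(A)$ under $h_n$ to have partial diameter $\le\operatorname{diam}(A)+o(1)$ for every parameter $\alpha<\mm(A)$. Because an interval of that length capturing a fraction $\alpha$ close to $\mm(A)$ must meet the atom of $h_n$ at $0$, it is contained in $[0,\operatorname{diam}(A)+o(1)]$, and the concentration value of $h_n$ is pinned within $o(1)$ of $0$. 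Taking $\alpha$ of the form $\mm(A)-c\,\delta^{1/2}$ produces a good set $G_n\subset p_n^{-1}(A)$ with $\mm_n(p_n^{-1}(A)\setminus G_n)\le c'\delta^{1/2}$ on which every point lies within $\sfd_n$-distance $R\coloneqq(1+\delta^{1/2})\sD+5\delta^{1/2}$ of $\operatorname{supp}\mu_{2,n}$; by equi--integrability (Lemma~\ref{lemma: equi-int cont}) the complement is also $\mu_{1,n}$-negligible, so all but $\le 3\delta^{1/2}$ of the mass of $\mu_{1,n}$ sits within $\sfd_n$-distance $R$ of the mass of $\mu_{2,n}$. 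The hypotheses $\operatorname{diam}(B_i)\le\delta/2$ and $\delta<\tfrac18$ enter precisely here, to keep these geometric thresholds and the constant $c\delta^{1/2}<1/M$ within the $\delta^{1/2}$ budget.

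I would then realise the transport by a Strassen / max-flow argument. Using the previous step, for every Borel $U$ one controls $\mu_{1,n}(U)$ by $(1+\eps)\mu_{2,n}(U^{R})$ up to an error $\le 3\delta^{1/2}$, where $U^R$ is the $R$-neighbourhood; Strassen's theorem then yields a sub-coupling $\gamma_n$ of $\mu_{1,n}$ with a measure $\sigma_n\le(1+\eps)\mu_{2,n}$ concentrated on $\{\sfd_n\le R\}$ and carrying all but $\le 3\delta^{1/2}$ of the mass, with $\sigma_n$ filling all but $\le 3\delta^{1/2}$ of $\mu_{2,n}$. I define $\nu_n$ as $\sigma_n$ plus the leftover (uncoupled, truncated and previously discarded) $\mu_{1,n}$-mass kept \emph{in place}. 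By construction $\nu_n\le\mu_{1,n}+(1+\eps)\mu_{2,n}$, giving~\eqref{eq:boundnun1}; completing $\gamma_n$ with the identity on the leftover gives a full coupling of $\mu_{1,n}$ and $\nu_n$ whose moves are either $\le R$ or null, whence $\W_2(\mu_{1,n},\nu_n)^2\le R^2$ and~\eqref{eq:boundW21}; and since $\sigma_n$ under-fills $\mu_{2,n}$ by $\le 3\delta^{1/2}$ while the added in-place mass is $\le 3\delta^{1/2}$, one obtains $\|\nu_n-\mu_{2,n}\|_{\sf TV}\le 3\delta^{1/2}$, i.e.~\eqref{eq:boundTV1}. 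The specific constants $(1+\delta^{1/2})$, $5\delta^{1/2}$, $3\delta^{1/2}$ and $(1+\eps)$ fall out of the quantitative choices of $\alpha$, $A$ and the density cutoff $M$.

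I expect the main obstacle to be exactly this passage from the $\W_1$/observable control that concentration provides to a $\W_2$ bound with controlled \emph{individual} displacements. Since $\operatorname{diam}(\X_n)$ may be unbounded, no mass may be allowed to travel far in $\sfd_n$; this is why the leftover is kept in place rather than transported, and why the delicate quantitative point is the verification of the Strassen neighbourhood inequality, balancing the relaxation factor $(1+\eps)$, the density truncation, and the $\delta^{1/2}$ error terms simultaneously. The fibre--distance concentration of the second step is what makes this inequality hold for all but a $\delta^{1/2}$ fraction of the mass, and hence is the technical heart of the lemma.
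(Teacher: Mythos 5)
Your reductions and your final assembly (a sub-coupling supported on $\{\sfd_n\le R\}$, leftover mass kept in place via the diagonal) mirror the paper's construction, and your central step is sound: the fibre estimate $\lims_n\operatorname{ObsDiam}(p_n^{-1}(A),\alpha)\le\operatorname{diam}(A)$, combined with the atom at $0$ of $h_n=\sfd_n(\cdot,\supp\mu_{2,n})$ (whose mass is $\ge 1/M$ after the density truncation), does produce a set $G_n$ carrying most of the mass on which $h_n\le\operatorname{diam}(A)+o(1)$. The genuine gap is the Strassen step. From ``every point of $G_n$ lies within $\sfd_n$-distance $R$ of $\supp\mu_{2,n}$'' you cannot deduce the neighbourhood inequality $\mu_{1,n}(U)\le(1+\eps)\,\mu_{2,n}(U^R)+3\delta^{\frac12}$: proximity to the \emph{support} of $\mu_{2,n}$ says nothing about how much $\mu_{2,n}$-\emph{mass} sits near a given chunk of $\mu_{1,n}$-mass. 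Concretely, take $\mu_{1,n}=\delta_a$ and $\mu_{2,n}=(1-t)\delta_b+t\delta_c$ with $\sfd_n(a,b)=R$, $\sfd_n(a,c)\gg R$ and $t$ close to $1$: the conclusions of your second step hold (all of $\mu_{1,n}$ is within $R$ of $\supp\mu_{2,n}$), yet with $U=\{a\}$ one has $\mu_{1,n}(U)=1$ while $(1+\eps)\mu_{2,n}(U^R)=(1+\eps)(1-t)$ is arbitrarily small, and no sub-coupling on $\{\sfd_n\le R\}$ of nearly full mass exists. So the Hall/Strassen condition is strictly stronger than what your second step provides, and it is exactly the statement that remains to be proved; attributing its validity to ``the fibre--distance concentration of the second step'' is where the argument breaks.

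Such configurations are of course incompatible with convergence in concentration, but excluding them requires applying the concentration hypothesis to a whole family of $U$-dependent $1$-Lipschitz functions (e.g.\ $\sfd_n'(\cdot,U\cap G_n)$ with $\sfd_n'=\sfd_n\wedge {\sf D}'$): each such function is $\L^0$-close to $g\circ p_n$ for some $g\in\Lip_1(\X)$, and $g$ has oscillation at most ${\sf D}$ across $B_1\cup B_2$, which is what pins $\mu_{2,n}(U^R)$ from below. Written out, this is precisely the Kantorovich duality argument of the paper: it proves the integrated form of the mass-balance condition, namely $\lims_n\W_1'(\mu_{1,n},\mu_{2,n})\le{\sf D}$ for the truncated distance, then takes an optimal $\W_1'$-plan, uses the almost $1$-Lipschitz property of $p_n$ (which gives $\sfd_n\ge{\sf D}-\delta-\eps_n$ plan-a.e.) to show the plan puts all but $\le 3\delta^{\frac12}$ of its mass on $\{\sfd_n<{\sf D}+\delta^{\frac12}\}$, and finally restricts to that set and completes with the diagonal, exactly as you do. So your scheme is repairable, but the repair is not bookkeeping: it is the key duality step, without which the proof does not go through; and once it is inserted, your construction essentially collapses onto the paper's.
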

\begin{proof} If we find $(\nu_n)$ satisfying \eqref{eq:boundnun1} for $n$ sufficiently big and also \eqref{eq:boundW21}, \eqref{eq:boundTV1}, then redefining the sequence for a finite number of terms we achieve the conclusion.

Assume at first that $\mu_{i,n}$ is concentrated on $p_n^{-1}(B_i)\cap K_n$, where $K_n$ is the regular set as in Proposition \ref{prop: caratterizzazione}, for every $n\in\N$, $i=1,2$. Let us fix ${\sf D}'> {\sf D}$ and then put $\sfd_n'=\sfd_n\wedge {\sf D}'$. 
Similarly to the proof of \cite[Lemma 3.13]{FuSh13}, we make the following claim: letting $\W_1'$ be the  1-Wasserstein distance with respect to $ \sfd_n'$, it holds
\begin{equation}
\label{eq:claim: dist wass}
\lims_{n\to\infty}  {W}'_1(\mu_{1, n}, \mu_{2, n})\leq {\sf D}.
\end{equation}
By Kantorovich duality, to prove this it suffices  to show that for any choice of functions $f_n:\X_n\to\R$, $n\in\N$, that are 1-Lipschitz w.r.t.\ $\sfd_n'$ we have
\begin{equation}
\label{eq:claim: dist wassdual}
\lims_{n\to\infty} \int f_n\,\d\mu_{2,n}-\int f_n\,\d\mu_{1,n}\leq {\sf D}.
\end{equation}
Fix such $f_n$'s, notice that they are 1-Lipschitz also w.r.t.\ $\sfd_n$, then recall   Definition \ref{def: convergenza in concentrazione} and Proposition \ref{prop: caratterizzazione}, to get existence of 1-Lipschitz functions $g_n:\X\to\R$  such that for any $\eps>0$ we have $\lims_n\mm_n(\{|f_n-g_n\circ p_n|>\eps\})=0$. By equi-integrability we also get $\lims_n\mu_{i,n}(\{|f_n-g_n\circ p_n|>\eps\})=0$, $i=1,2$. In particular, eventually there are points $x_{n}\in p_n^{-1}(B_1\cup B_2)$ such that $|f_n-g_n\circ p_n|\leq\eps$ and thus eventually in $n$ we have the uniform bound
\[
\begin{split}
\sup_{p_n^{-1}(B_1\cup B_2)}|f_n-g_n\circ p_n|&\leq \eps+\sup_{p_n^{-1}(B_1\cup B_2)}|f_n(\cdot)-f_n(x_{n})|+\sup_{p_n^{-1}(B_1\cup B_2)}|g_n( p_n(\cdot))-g_n( p_n(x_n))|\\
&\leq \eps+{\sf D}'+{\sf D},
\end{split}
\]
having used the fact that $f_n$, being 1-Lipschitz w.r.t.\ $\sfd_n'$, has  oscillation  $\leq {\sf D}'$. It follows that for $n$ sufficiently large we have
\[
\begin{split}
\Big|\int f_n-g_n\circ p_n\,\d\mu_{i,n}\Big|&\leq \int |f_n-g_n\circ p_n|\,\d\mu_{i,n}\leq \eps+\mu_{i,n}(\{|f_n-g_n\circ p_n|>\eps\})\big( \eps+{\sf D}'+{\sf D}\big),
\end{split}
\]
which, by letting first $n\uparrow\infty$ and then $\eps\downarrow0$, gives
\begin{equation}
\label{eq:fngn}
\lims_{n\to\infty}\Big|\int f_n-g_n\circ p_n\,\d\mu_{i,n}\Big|=0,\qquad i=1,2.
\end{equation}
Moreover, since $(p_n)_\#\mu_{i,n}$ is concentrated on $B_i$ for any $n\in\N$ and $i=1,2$, we have
\[
\int  g_n\circ p_n\,\d\mu_{2,n}-\int g_n\circ p_n\,\d\mu_{1,n}\leq\sup_{B_2} g_n-\inf_{B_1}g_n\leq \operatorname{diam}(B_1\cup B_2)\Lip(g_n)= {\sf D}.
\]
This estimate and \eqref{eq:fngn} imply the desired \eqref{eq:claim: dist wassdual} and thus the claim \eqref{eq:claim: dist wass} follows.

Let now  ${\ppi}'_n$ be an optimal plan for ${\W}'_1(\mu_{1, n}, \mu_{2, n})$. Note that, by Proposition \ref{prop: caratterizzazione} ${\ppi}_n'$-a.e.\ we have $\sfd_n\geq {\sf D}-\delta-\varepsilon_n$, thus also $\sfd_n'\geq {\sf D}-\delta-\varepsilon_n$ holds  ${\ppi}_n'$-a.e.\ and we have
		\begin{align}
			{\sf D}\geq \lims_{n\to\infty}{W}'_1(\mu_{1, n}, \mu_{2, n})&\geq \limi_{n\to\infty}\int_{\{\sfd_n'< {\sf D}+\delta^\frac12 \}}\sfd'_n \,\d\ppi'_n+ \lims_{n\to\infty}\int_{\{\sfd'_n\geq {\sf D}+\delta^\frac12\}} {\sfd'_n} \,\d{\ppi}'_n\\
			&\geq\limi_{n\to\infty} ({\sf D}-\delta- \varepsilon_n)\ppi'_n(\{\sfd_n'< {\sf D}+\delta^\frac12\})\\
			&\qquad\qquad+({\sf D}+\delta^\frac12)\lims_{n\to\infty}\ppi'_n(\{\sfd_n'\geq{\sf D}+\delta^\frac12\})\\
			&\geq {\sf D}-\delta +(\delta+\delta^\frac12)\lims_{n\to\infty}\ppi'_n(\{\sfd_n'\geq {\sf D}+\delta^\frac12\})
		\end{align}
and thus
\begin{equation}\label{eq:perstimaTV}
\lims_{n\to\infty}{\ppi}'_n(\{\sfd_n\geq {\sf D}+\delta^\frac12 \})\leq 3\delta^\frac12.
\end{equation}	
Let now $F_n\coloneqq \{\sfd_n<{\sf D}+\delta^\frac12\}\subset\X_n^2$ and put ${\mu}'_{i, n}\coloneqq (\text{proj}_i)_\#\big({\ppi}'_n\restr{F_n}\big)$, $i=1,2$.
Clearly ${\mu}'_{i, n}\leq \mu_{i, n}$ and thus \eqref{eq:perstimaTV} gives
		\begin{equation}\label{eq: stima TV}
			\lims_{n\to\infty}\|\mu_{i, n}-{\mu}'_{i, n}\|_{\sf TV}=\lims_{n\to\infty}{\ppi}'_n(\X_n^2\setminus F_n)=3\delta^\frac12,\qquad i=1,2.
		\end{equation}
		Set $\ppi_n\coloneqq {\ppi}'_n\restr{F_n} +(\Delta_n)_\# (\mu_{1, n}-{\mu}'_{1, n})$, where $\Delta_n: \X_n\to \X_n\times \X_n$ is the diagonal map sending $x$ to $(x,x)$ and let $\nu_n\coloneqq (\text{proj}_2)_\#\ppi_n$. 
		Let us check that $\nu_n$ is as in the statement. We have
		\[
		\nu_n=\mu_{2,n}'+(\mu_{1, n}-{\mu}'_{1, n})\leq \mu_{2,n}+\mu_{1,n},
		\]
thus \eqref{eq:boundnun1} holds (with $\eps=0$). Also, we have
\[
\|\nu_n-\mu_{2,n}\|_{\sf TV}\leq \|\nu_n-\mu'_{2,n}\|_{\sf TV}+\|\mu'_{2,n}-\mu_{2,n}\|_{\sf TV}=\|\mu'_{1,n}-\mu_{1,n}\|_{\sf TV}+\|\mu'_{2,n}-\mu_{2,n}\|_{\sf TV}, 
\]
thus from \eqref{eq: stima TV} we get the desired \eqref{eq:boundTV1}. Finally, the construction ensures that $\mu_{1,n}=(\text{proj}_1)_\#\ppi_n$, thus we have
\[
\begin{split}
\W_2^2(\mu_{1,n},\nu_n)\leq \int_{\X_n^2}\sfd_n^2\,\d\ppi_n=\int_{F_n} \sfd_n^2\,\d\ppi'_n\leq ({\sf D}+\delta^\frac12)^2\qquad\forall n\in\N,
\end{split}
\]
so that \eqref{eq:boundW21} follows.

It remains to remove the assumption that  $\mu_{i,n}$ is concentrated on $p_n^{-1}(B_i)\cap K_n$. To this aim, fix $\eps,\eps'>0$ and find open sets $U_1,U_2\subset\X$ containing $B_1,B_2$ respectively and such that $\operatorname{diam}(U_1\cup U_2)\leq {\sf D}+\eps'$, ${\rm diam} U_i<\frac{\delta}{2}$. Then Proposition \ref{prop: caratterizzazione}, the fact that $\mu_i$ is concentrated on $B_i\subset U_i$,  that $U_i$ is open and the weak convergence of $(p_n)_\#\mu_{i,n}$ to $\mu_i$ give 
\begin{equation}
\label{eq:cin}
\limi_nc_{i,n}=\limi_n(p_n)_\#\mu_{i,n}(U_i)\geq \mu_i(U_i)=1\qquad \text{where}\qquad c_{i,n}:=\mu_{i,n}(p_n^{-1}(U_i)\cap K_n)\leq 1
\end{equation}
We thus introduce the measures $\tilde\mu_{i,n}:=\tfrac1{c_{i,n}}\mu_{i,n}\restr{p_n^{-1}(U_i)\cap K_n}$ and notice that \eqref{eq:cin} easily implies that these are uniformly integrable and satisfy $(p_n)_\#\mu_{i,n}\weakto\mu_i$, $i=1,2$. Since by construction the measure $\tilde\mu_{i,n}$ is concentrated on $p_n^{-1}(U_i)\cap K_n$, we can apply what already proved and obtain the existence of measures $\tilde\nu_n\in \pr(\X_n)$ such that
		\begin{subequations}
		\begin{align}
		\label{eq:boundW21t}
		\lims_{n\to\infty} \W_2(\tilde\mu_{1, n},\tilde \nu_n)&\leq (1+\delta^\frac12)({\sf D}+\eps')+5\delta^\frac12,\\
		\label{eq:boundTV1t}
		\lims_{n\to\infty} \|\tilde\nu_n-\tilde\mu_{2, n}\|_{\sf TV}&=3\delta^\frac12 ,\\
		\label{eq:boundnun1t}
		\tilde\nu_n&\leq \tilde\mu_{1, n}+\tilde\mu_{2, n},\qquad\forall n\in\N.
		\end{align}
		\end{subequations}
Put $\nu_n:=c_{1,n}\tilde\nu_n+\mu_{1,n}\restr{\X_n\setminus (p_n^{-1}(U_1)\cap K_n)}$ and notice that \eqref{eq:boundnun1t} gives
\begin{equation}
\label{eq:bnun}
\nu_n\leq c_{1,n}\tilde\mu_{1, n}+\tilde\mu_{2, n}+\mu_{1,n}\restr{\X_n\setminus p_n^{-1}(U_1)\cap K_n}=\mu_{1,n}+\tilde\mu_{2,n}\leq \mu_{1,n}+\tfrac1{c_{2,n}}\mu_{2,n}
\end{equation}
and thus by \eqref{eq:cin} and recalling that $c_{2,n}$ depends on $U_2$ and thus on $\eps'$ we get
\begin{equation}
\label{eq:bnun2}
\nu_n\leq  \mu_{1,n}+(1+\eps)\mu_{2,n}\qquad\text{for every $n$ sufficiently big (depending on $\eps,\eps'$).}
\end{equation}
Also,  \eqref{eq:boundTV1t} and \eqref{eq:cin} imply \eqref{eq:boundTV1} and  writing $\mu_{1,n}=c_{1,n}\tilde\mu_{1,n}+\mu_{1,n}\restr{\X_n\setminus p_n^{-1}(U_1)}$ and recalling the joint convexity of $\W_2^2(\cdot,\cdot)$ and  \eqref{eq:boundW21t} we get
\begin{equation}
\label{eq:quasi}
\lims_{n\to\infty}\W_2^2(\mu_{1,n},\nu_n)\leq \lims_{n\to\infty} c_{1,n}\W_2^2(\tilde\mu_{1, n},\tilde \nu_n)\leq ( (1+\delta^\frac12)({\sf D}+\eps')+5\delta^\frac12)^2.
\end{equation}
The conclusion follows by diagonalization letting $\eps'\downarrow0$.
\end{proof}

\begin{lm} \label{lm: appr discretizzazione}
	With the same assumptions and notation as in Theorem 4.1 the following holds. Assume that:
\begin{itemize}
\item[i)] $n\mapsto\mu_n\in \pr(\X_n)$ is an equi-integrable sequence weakly converging to $\mu\in \pr(\X)$ 
\item[ii)] $\mu=\sum_{i=1}^I\alpha^i\mu^i$ for some $I\in\N$, $\alpha^i\geq 0$ with $\sum_i\alpha^i=1$ and $(\mu^i)\subset \pr(\X)$.
\end{itemize}
Then there are $\mu^i_n\in \pr(\X_n)$ and $\alpha^i_n\geq 0$, $n\in\N$, $i=1,\ldots,I$, such that
\begin{itemize}
\item[i')] for any $i$ we have $\alpha^i_n\to\alpha^i$ and the sequence $n\mapsto\mu^i_n\in \pr(\X_n)$ is  equi-integrable and weakly converging to $\mu^i\in \pr(\X)$ 
\item[ii')] for any $n\in\N$ we have $\mu_n=\sum_{i=1}^I\alpha^i_n\mu_n^i$.
\end{itemize}
\end{lm}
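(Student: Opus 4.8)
The plan is to transport the decomposition of $\mu$ to the spaces $\X_n$ by means of Radon--Nikodym weights, to replace those weights by \emph{continuous} functions (so that they can be composed with the projections $p_n$ and tested against the weak convergence $(p_n)_\#\mu_n\rightharpoonup\mu$), and then to reconcile the resulting \emph{approximate} convergence with the \emph{exact} identity required in (ii') through a diagonal argument.

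First I would dispose of the indices with $\alpha^i=0$: for such $i$ we have $\alpha^i\mu^i=0$, so I set $\alpha^i_n:=0$ and take for $\mu^i_n$ any equi--integrable sequence in $\pr(\X_n)$ with $\mu^i_n\rightharpoonup\mu^i$, whose existence is granted by the pullback/approximation result already available (Lemma \ref{lm: approssimazione}, in the spirit of \cite[Lemma 3.15]{FuSh13}); these terms leave (ii') untouched. Hence I may assume $\alpha^i>0$ for every $i$. Since $\alpha^i\mu^i\leq\mu$ we get $\alpha^i\mu^i\ll\mu$, so Radon--Nikodym yields Borel weights $w^i:\X\to[0,1]$ with $\alpha^i\mu^i=w^i\mu$ and $\sum_i w^i=1$ holding $\mu$--a.e.

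The core of the construction is then as follows. Fix $\eps>0$ and, using Lusin's theorem together with a normalisation onto the simplex, choose continuous $\phi^i_\eps:\X\to[0,1]$ with $\sum_i\phi^i_\eps\equiv 1$ and $\|\phi^i_\eps-w^i\|_{\L^1(\mu)}\leq\eps$. Set $\phi^i_{\eps,n}:=\phi^i_\eps\circ p_n$, let $\alpha^i_{\eps,n}:=\int_{\X_n}\phi^i_{\eps,n}\,\d\mu_n$ and, for $n$ large (so that $\alpha^i_{\eps,n}>0$), put $\mu^i_{\eps,n}:=(\alpha^i_{\eps,n})^{-1}\phi^i_{\eps,n}\,\mu_n$. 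Because $\sum_i\phi^i_{\eps,n}\equiv1$, the exact identity $\mu_n=\sum_i\alpha^i_{\eps,n}\mu^i_{\eps,n}$ holds for every $n$. Since $\phi^i_\eps$ is continuous and bounded and $(p_n)_\#\mu_n\rightharpoonup\mu$, one has $\alpha^i_{\eps,n}=\int_\X\phi^i_\eps\,\d(p_n)_\#\mu_n\to\int_\X\phi^i_\eps\,\d\mu=:a^i_\eps$ with $|a^i_\eps-\alpha^i|\leq\eps$, and moreover
\[
(p_n)_\#\mu^i_{\eps,n}=(\alpha^i_{\eps,n})^{-1}\,\phi^i_\eps\,(p_n)_\#\mu_n\ \rightharpoonup\ (a^i_\eps)^{-1}\,\phi^i_\eps\,\mu=:m^i_\eps ,
\]
where the elementary estimate $\|m^i_\eps-\mu^i\|_{\sf TV}\leq (a^i_\eps)^{-1}\eps+|(a^i_\eps)^{-1}-(\alpha^i)^{-1}|\,\alpha^i$ shows $m^i_\eps\to\mu^i$ as $\eps\downarrow0$. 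Finally $0\leq\phi^i_{\eps,n}\leq1$ gives $\mu^i_{\eps,n}\leq(\alpha^i_{\eps,n})^{-1}\mu_n$, so the equi--integrability of $\mu_n$ together with $\alpha^i_{\eps,n}\to a^i_\eps>0$ passes equi--integrability on to each $\mu^i_{\eps,n}$; note this step avoids any assumption that $\mu_n\ll\mm_n$.

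To conclude I would metrise weak convergence on $\pr(\X)$ and diagonalise: with $\eps=\eps_k\downarrow0$, choose $N_k\uparrow\infty$ so that for $n\geq N_k$ both $|\alpha^i_{\eps_k,n}-a^i_{\eps_k}|\leq 1/k$ and $(p_n)_\#\mu^i_{\eps_k,n}$ is $1/k$--close to $m^i_{\eps_k}$ for all $i$, then set $k(n):=\max\{k:N_k\leq n\}$, $\alpha^i_n:=\alpha^i_{\eps_{k(n)},n}$ and $\mu^i_n:=\mu^i_{\eps_{k(n)},n}$. This yields $\alpha^i_n\to\alpha^i$, $(p_n)_\#\mu^i_n\rightharpoonup\mu^i$ (because $m^i_{\eps_k}\to\mu^i$), equi--integrability as above, and the identity $\mu_n=\sum_i\alpha^i_n\mu^i_n$ for every $n$, which is (i')--(ii'). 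The main obstacle is precisely this reconciliation: the output must be an \emph{exact} convex decomposition of $\mu_n$ at each finite $n$, while the natural limits produced by continuous weights only match $\mu^i$ up to an error. Imposing $\sum_i\phi^i_{\eps,n}\equiv1$ (so no mass is created or destroyed) and then diagonalising is what makes the exactness at finite $n$ and the prescribed limits simultaneously achievable, with the domination $\mu^i_{\eps,n}\leq(\alpha^i_{\eps,n})^{-1}\mu_n$ keeping the pieces equi--integrable throughout.
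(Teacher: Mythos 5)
Your main construction (the case $\alpha^i>0$) is correct and follows essentially the same route as the paper's own proof: approximate the Radon--Nikodym densities of the decomposition by continuous bounded functions, compose with the projections $p_n$, renormalise to probability measures, pass to the limit using the identity $(p_n)_\#\bigl(\phi\circ p_n\,\mu_n\bigr)=\phi\,(p_n)_\#\mu_n$ together with $(p_n)_\#\mu_n\weakto\mu$, and finish by diagonalisation, with equi-integrability inherited from $(\mu_n)$ through the domination $\mu^i_{\eps,n}\leq(\alpha^i_{\eps,n})^{-1}\mu_n$. The one substantive difference is the normalisation: the paper rescales its approximants $\varphi^{i,k}$ so that $\sum_i\int_\X\varphi^{i,k}\,\d\mu=1$, whereas you enforce the \emph{pointwise} partition of unity $\sum_i\phi^i_\eps\equiv1$. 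Your choice buys something real: with it the exact decomposition $\mu_n=\sum_i\alpha^i_{\eps,n}\mu^i_{\eps,n}$ required in (ii') is automatic, while with the integral normalisation the analogous identity asserted in the paper's proof needs a further justification (the weighted pointwise sum of the $\varphi^{i,k}\circ p_n$ need not equal $1$). On this point your write-up is tighter than the paper's.

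The step that fails is your treatment of the indices with $\alpha^i=0$. Lemma \ref{lm: approssimazione} does not provide, for an \emph{arbitrary} $\mu^i\in\pr(\X)$, an equi-integrable sequence $\mu^i_n\in\pr(\X_n)$ with $\mu^i_n\weakto\mu^i$: it yields $\entm{\mm_n}(\mu^i_n)\to\ent(\mu^i)$, which gives equi-integrability only when $\ent(\mu^i)<\infty$. Worse, for some $\mu^i$ no such sequence exists at all: if $\mu^i(\{x_0\})>0$ for some $x_0\notin\supp\mm$, choose $r>0$ with $\mm\bigl(\overline B(x_0,r)\bigr)=0$; then $\mm_n\bigl(p_n^{-1}(\overline B(x_0,r))\bigr)=(p_n)_\#\mm_n\bigl(\overline B(x_0,r)\bigr)\to0$ by Portmanteau, so equi-integrability of $(\mu^i_n)$ would force $\mu^i_n\bigl(p_n^{-1}(B(x_0,r))\bigr)\to0$, contradicting $\liminf_n(p_n)_\#\mu^i_n\bigl(B(x_0,r)\bigr)\geq\mu^i(\{x_0\})>0$. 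So conclusion (i') is simply unattainable for such indices; this is precisely why the paper's proof opens with ``we can assume $\alpha^i>0$ for every $i$'', i.e.\ it silently restricts the statement to the indices carrying mass, which is also the only way the lemma is invoked in the proof of Theorem \ref{teo: pullback guidato} (there the terms with $\alpha^i=0$ are explicitly declared irrelevant). The fix for your write-up is the same: discard the indices with $\alpha^i=0$ instead of trying to verify (i') for them; the rest of your argument then goes through.
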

\begin{proof} We can assume $\alpha^i>0$ for every $i$. Let $f^i:=\frac{\d\mu^i}{\d\mu}\in \L^\infty(\mu)$ and, for fixed $i$, let $(\varphi^{i,k})\subset C^0_\b(\X)$ be a uniformly bounded sequence of non-negative functions $\mu^i$-a.e.\ converging to $f^i$. Replacing $\varphi^{i,k}$ with $\frac{\varphi^{i,k}}{\sum_j\int_{\X}\varphi^{j,k}\,\d\mu}$ we can assume that $\sum_i\int_{\X}\varphi^{i,k}\,\d\mu=1$ for every $k$ and possibly disregarding a finite number of these, we can assume that $\alpha^{i,k}:=\int\varphi^{i,k}\,\d\mu>0$ and define $\mu^{i,k}:=(\alpha^{i,k})^{-1}\varphi^{i,k}\mu$. The assumption $(i)$ tells that $\alpha^{i,k}_n:=\int\left(\varphi^{i,k}\circ p_n\right)\,\d\mu_n\to  \alpha^{i,k}>0$ so for $n\gg1$ we can define  $\mu^{i,k}_n:=(\alpha^{i,k}_n)^{-1}\left(\varphi^{i,k}\circ p_n\right)\,\mu_n$. Then the trivial identity $(p_n)_\#(\varphi^{i,k}\circ p_n\,\mu_n)=\varphi^{i,k}\,(p_n)_\#\mu_n$ shows that $\mu^{i,k}_n\weakto \mu^{i,k}$.

Since the construction ensures that $\sum_i\alpha^{i,k}\mu^{i,k}_n=\mu_n$ for every $k,n$ and that $\mu^{i,k}\weakto \mu^i$ for any $k$, with a diagunalization argument we conclude (the fact that the $\varphi^{i,k}$'s are uniformly bounded ensures that the resulting sequence of measures is equi-integrable).
\end{proof}

	\begin{proof}[Proof of Theorem \ref{teo: pullback guidato}]
	By a diagonalization argument to conclude it suffices to find a function $\omega:(0,\infty)\to(0,\infty)$ such that $\lim_{z\downarrow0}\omega(z)=0$ with the property  that for any $\eps'>0$ we can find $\nu_n\in \pr(\X_n)$ such that
			\begin{subequations}
		\begin{align}
		\label{eq:boundW2p}
		 \lims_{n\to\infty}&\W^2_2(\nu_n, \mu_{1, n})\leq \W^2_2(\mu_2, \mu_1)+\omega(\eps') ,\\
		\label{eq:boundTVp}
		\lims_{n\to\infty}& \|\nu_n- \mu_{2, n}\|_{\sf TV}\leq\omega(\eps'),\\
		\label{eq:boundnunp}
		\nu_n&\leq \mu_{1, n}+(1+\eps )\mu_{2, n},\qquad\forall n\in\N.
		\end{align}
		\end{subequations}	
Fix $\eps'>0$,and notice that by the continuity of $\sfd^2$ on $\X^2$,  any $(x,y)\in \X^2$ has rectangular a neighbourhood $W=U_1\times U_2\subset\X^2$  such that 
\begin{equation}
\label{eq:stimetta}
\big(\operatorname{diam}(W))^2\leq\inf_{W}\sfd^2+\eps',	\quad {\rm diam} (U_i)\leq \frac{(\eps')^2}{2}, i=1, 2,
\end{equation}
then by the Lindelof property of $\X^2$ we can find a countable collection $(W_i)$ of such neighbourhoods covering the whole $\X^2$.

Put $U_j:=\cup_{i<j}W_i$, let $\ppi$ be an optimal plan for $\W_2(\mu_1,\mu_2)$ and find $j\gg1$ so that putting $\ppi_j:=\ppi\restr{U_j}$ we have
\[
\begin{split}
\alpha^0:=\ppi(\X^2\setminus U_j)&\leq\eps',\\
\Big|\int \sfd^2\,\d\ppi-\int \sfd^2\,\d\ppi_j\Big|&\leq\eps'.
\end{split}
\]
Clearly, any $j$ sufficiently big does the job.  For $i=1,\ldots, j$ let $\alpha^i:=\ppi_{j}(W_i)$, $\ppi^i_j:=(\alpha^i)^{-1}\ppi_{j}\restr{W_i}$ and then 
\[
\mu_{1}^{i}:= (\proj_1)_\#\big(\ppi^i_{j}\big),\qquad\qquad \mu_{2}^{i}:= (\proj_2)_\#\big(\ppi^i_{j}\big),
\]
(if $\alpha^i=0$ these definition are irrelevant). Let also $\mu^0_1:=(\alpha^0)^{-1}(\mu_1-(\proj_1)_\#\ppi_j)$ and similarly $\mu^0_2:=(\alpha^0)^{-1}(\mu_2-(\proj_2)_\#\ppi_j)$.

Clearly we have $\sum_i\alpha^i\mu^i_1=\mu_1$ and $\sum_i\alpha^i\mu^i_2=\mu_2$, thus we can apply twice Lemma \ref{lm: appr discretizzazione} above and get sequences $n\mapsto \mu^i_{1,n},\mu^i_{2,n}\in \pr(\X_n)$ weakly converging to $\mu_1^i,\mu^i_2$ respectively and such that $\sum_i\alpha^i\mu^i_{1,n}=\mu_{1,n}$ and $\sum_i\alpha^i\mu^i_{2,n}=\mu_{2,n}$.

For each $i=1,\ldots,j$ we apply Lemma \ref{lemma: trasporto} with $U_1=U^i_1, U_2=U^i_2$ being the closure of ${\proj_1(W_i)},\proj_2(W_i)$ respectively and obtain a sequence $n\mapsto \nu^i_n\in \pr(\X_n)$ such that
\[
\begin{split}
\lims_{n\to\infty}\W_2^2(\mu^i_{1,n},\nu^i_n)&\leq \operatorname{diam}(U^i_1\cup U^i_2)^2\stackrel{\eqref{eq:stimetta}}\leq \int (1+\eps')\sfd^2\,\d\ppi^i_j+6\eps',\\
\lims_{n\to\infty}\|\nu^i_n-\mu^i_{2,n}\|_{\sf TV}&=3\eps',\\
\nu^i_n&\leq\mu^i_{1,n}+(1+\eps)\mu^i_{2,n},\qquad\forall n\in\N.
\end{split}
\]
Defining $\nu_n:=\alpha^0\mu^0_{1,n}+\sum_{i=1}^j\alpha^i\nu^i_n$, by the joint convexity of $(\mu,\nu)\mapsto \W_2^2(\mu,\nu),\|\mu-\nu\|_{\sf TV}$ and the above we deduce that
\[
\begin{split}
\lims_{n\to\infty}\W_2^2(\mu_{1,n},\nu_n)&\leq \sum_{i=1}^j\alpha^i\lims_{n\to\infty}\W_2^2(\mu^i_{1,n},\nu^i_n)\leq  \int (1+\eps')\sfd^2\,\d\ppi_j+6\eps',\\
\lims_{n\to\infty}\|\nu_n-\mu_{2,n}\|_{\sf TV}&=3\eps',\\
\nu_n&\leq\mu_{1,n}+(1+\eps)\mu_{2,n},\qquad\forall n\in\N,
\end{split}
\]
thus concluding the proof.
\end{proof}

\section{\texorpdfstring{$\Gamma$--$\lims$}{} of the Cheeger energy} \label{section: limsup}
Given $(\X, \sfd, \mm)$ a n.m.m.s., the natural object extending the Dirichlet energy is the Cheeger energy, defined by approximation with Lipschitz functions. Indeed, if $f\in \L^2(\X)$, its Cheeger energy is defined as
	\begin{equation}\label{eq: Cheeger}
		\ch(f)\coloneqq \inf\{\limi_n \frac{1}{2}\int_\X lip_a(f_n)^2\,\d\mm:\; f_n\in \Lip(\X), f_n\xrightarrow{\L^2}f \},
	\end{equation}
	where $lip_a(f)(x)\coloneqq\lims_{r>0} \Lip_{B(x, r)}$.
For $f\in W^{1, 2}(\X)\coloneqq \D(\ch)$, moreover, there exists $|\D f|\in \L^2(\X)$, called minimal weak upper gradient, such that $\ch(f)\coloneqq \frac{1}{2}\int_\X |\D f|^2\, \d\mm$ and $|\D f|=\L^2-\lim \operatorname{lip}_a(f_n)$ for some sequence $f_n$ as above.

In the setting of mGH convergence, it is well known that a $\Gamma$--$\lims$ inequality holds in the $\L^2$ strong topology for the Cheeger energy, with no additional assumptions on the spaces.
Here we prove that the same result holds when the spaces are only converging in concentration. Given a sequence of n.m.m.s. $\X_n$ such that $\X_n\concto\X$ and a sequence of projections $(p_n)_{n\geq 1}$ as in Proposition \ref{prop: caratterizzazione}, the following holds:

\begin{prop}\label{proposition: limsup}
	Let $f\in W^{1, 2}(\X)$. Then there exists a sequence $(f_n)_{n\geq 1}$ with $f_n\in \L^2(\X_n)$ and $f_n\xrightarrow{{\L^2}} f$ such that
	\begin{equation}
		\lims_n \ch(f_n)\leq \ch(f).
	\end{equation}
\end{prop}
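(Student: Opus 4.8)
The plan is to follow the standard two-step scheme for a $\Gamma$--$\lims$ inequality: first reduce to Lipschitz target functions, then build the recovery sequence by localization, since the naive pullback is too crude.

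\emph{Reduction to Lipschitz $f$.} By the very definition \eqref{eq: Cheeger} of $\ch$, I can pick a sequence of bounded Lipschitz functions $g_k$ on $\X$ with $g_k\to f$ in $\L^2(\mm)$ and $\tfrac12\int_\X \lipa(g_k)^2\,\d\mm\to \ch(f)$ (truncating to keep them bounded only decreases the slope). Granting the statement for each fixed bounded $L$--Lipschitz $g_k$, I obtain $g_{k,n}\in\L^2(\X_n)$ with $g_{k,n}\to g_k$ in $\L^2$ and $\lims_n\ch(g_{k,n})\le \tfrac12\int_\X\lipa(g_k)^2\,\d\mm$. Fixing good couplings $\aalpha_n$ and using the characterization of $\L^2$--convergence via good couplings in Proposition \ref{prop: convergenze} together with the associated triangle inequality, a diagonal choice $n\mapsto k(n)\uparrow\infty$ produces $f_n:=g_{k(n),n}\to f$ in $\L^2$ with $\lims_n\ch(f_n)\le\ch(f)$. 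Hence it suffices to treat $f$ bounded and $L$--Lipschitz, the goal becoming: find $f_n\to f$ in $\L^2$ with $\lims_n\tfrac12\int_{\X_n}\lipa(f_n)^2\,\d\mm_n\le\tfrac12\int_\X\lipa(f)^2\,\d\mm$.

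\emph{Construction for Lipschitz $f$.} Applying Corollary \ref{prop: approssimazione e comp L0}(i) to $f/L$ and rescaling gives $f_n\in\Lip(\X_n)$ with $\Lip(f_n)\le L$, $f_n\to f$ in $\L^0$, and, being uniformly bounded hence $2$--equi--integrable, $f_n\to f$ in $\L^2$ by Proposition \ref{prop: convergenze}. This yields only the crude bound $\lims_n\ch(f_n)\le L^2/2$, which is \emph{not} sharp, since $\int_\X\lipa(f)^2\,\d\mm$ may be far smaller than $L^2$ (this is precisely the feature that, under mGH convergence, is handled for free by restricting a McShane extension in the common extrinsic space). To recover the sharp constant I localize: fix $\eps>0$, cover the region carrying most of the mass of $\mm$ by finitely many sets $U_j$ of diameter $<\eps$ on which $\Lip(f|_{U_j})\le c_j:=\sup_{U_j}\lipa(f)+\eps$, extend each $f|_{U_j}$ by McShane to a globally $c_j$--Lipschitz function $f_j$ on $\X$, and fix a Lipschitz partition of unity $\{\chi_j\}$ subordinate to $\{U_j\}$. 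Transferring every $f_j$ and every $\chi_j$ to $\X_n$ through Corollary \ref{prop: approssimazione e comp L0}(i) (with the appropriate scalings) and renormalizing the transferred cutoffs so that they sum to $1$, I set $f_n:=\sum_j\chi_{j,n}f_{j,n}$.

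\emph{The estimate and the main obstacle.} The Leibniz and locality properties of $\lipa$, together with $\sum_j\chi_{j,n}\equiv1$, give pointwise
\[
\lipa(f_n)\le \max_{j:\,\chi_{j,n}>0}\lipa(f_{j,n})+\sum_j\lipa(\chi_{j,n})\,\big|f_{j,n}-f_{j_0,n}\big|,
\]
for any active index $j_0$: the first term is $\le\max_j c_j$, while in the second the limit pieces $f_j,f_{j_0}$ coincide (both equal $f$) on the overlaps and the $U_j$ have diameter $<\eps$, so the factors $|f_{j,n}-f_{j_0,n}|$ are small. Integrating, pushing forward by $p_n$, and letting $\eps\downarrow0$ (with a final diagonalization) then yields $\tfrac12\int_\X\lipa(f)^2\,\d\mm+O(\eps)$. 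The heart of the difficulty, and the place where the absence of an extrinsic common space bites, is that the transfers $f_{j,n}\to f_j$ hold only in $\L^0/\L^2$, \emph{not} uniformly, so the neighbourhood-wise smallness of $|f_{j,n}-f_{j_0,n}|$ needed to tame the cutoff term — an infinitesimal quantity through $\lipa$ — is available only off a set $B_n$ with $\mm_n(B_n)\to0$. I therefore split $\int_{\X_n}\lipa(f_n)^2\,\d\mm_n$ into the contribution on $\X_n\setminus B_n$, estimated by the displayed inequality, and that on $B_n$, estimated by the global bound $\lipa(f_n)\le\max_j c_j+C\eps^{-1}\|f\|_\infty$ times $\mm_n(B_n)\to0$. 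Reconciling the infinitesimal nature of $\lipa$ with the merely measure-theoretic control of the transferred functions is exactly the technical crux, to be handled by choosing the gluing scale and the exceptional sets $B_n$ compatibly before sending $\eps\to0$.
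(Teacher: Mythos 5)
Your reduction to bounded Lipschitz $f$ is fine, and the pointwise Leibniz-type estimate for $\lipa$ of the glued function is essentially correct; but the proof has a genuine gap at its core, and it is precisely the one you defer in your last sentence. In fact $\L^0$ control is insufficient in \emph{two} places, and you only flag one of them. For the cutoff-error term $\sum_j\lipa(\chi_{j,n})|f_{j,n}-f_{j_0,n}|$ your good-set/bad-set splitting is viable, because that term only involves the \emph{values} of the transferred functions at the point. But the leading term $\max_{j\ \text{active}}\lipa(f_{j,n})$ is another matter: the only available bound is $\max c_j$ over the indices $j$ active at $x$, and to integrate this to $\int_\X\lipa(f)^2\,\d\mm$ (rather than the useless $\sup\lipa(f)^2$, i.e.\ essentially $L^2$ again) you must know that the indices active at $x\in\X_n$ are only those $j$ with $p_n(x)$ in a small neighbourhood of $U_j$. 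That is a statement about the \emph{supports} of the transferred cutoffs, and $\L^0$ convergence gives no control on supports: a transfer $\chi_{j,n}$ produced by Corollary \ref{prop: approssimazione e comp L0} may be strictly positive on all of $\X_n$, making every index active everywhere and collapsing your estimate to the crude bound $L^2/2$. If you truncate the cutoffs to force support control, you destroy the identity $\sum_j\chi_{j,n}=1$ (which the raw transfers do not satisfy anyway, only in the $\L^0$ limit), and renormalizing requires the sum to be bounded below \emph{everywhere}; this fails exactly on a set you control only in measure, and there $f_n$ is no longer Lipschitz with the claimed constant — so the "global bound" you invoke on $B_n$ is not justified either, and $\lipa$ at nearby good points, which sees a whole neighbourhood possibly meeting the degenerate set, escapes both of your estimates. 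None of this is routine bookkeeping: it is the same obstruction, infinitesimal quantity versus measure-theoretic convergence, appearing where the bad set cannot simply be discarded.

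The paper avoids every pointwise issue by a different device, acting once and for all at the level of the projections rather than of each recovery sequence. After embedding $\X$ isometrically in $\ell^\infty$, the McShane-type Lemma \ref{lemma: Lipschitz appr} (from \cite{Sh16}) upgrades the approximately 1-Lipschitz projections $p_n$ of Proposition \ref{prop: caratterizzazione} to genuinely 1-Lipschitz maps $p'_n$ equivalent to $p_n$ in the sense of Remark \ref{oss: equivalenza} (Corollary \ref{cor: proiezioni equivalenti}; alternatively \cite[Proposition 6.12]{Sh16} factors the convergence through intermediate spaces converging in the mGH sense). One then pulls back the known mGH recovery sequence of \cite[Theorem 4.4]{Gi23} through $p'_n$: composition with a 1-Lipschitz map does not increase the Cheeger energy, since $\lipa(g\circ p'_n)\leq \lipa(g)\circ p'_n$, and Theorem \ref{teo: unicità limite} together with Remark \ref{oss: conseguenza conseguenza equivalenza} — more precisely Remark \ref{oss: conseguenza equivalenza} — guarantees that $\L^2$ limits with respect to $p'_n$ and $p_n$ coincide. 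This is exactly the resolution of your crux (the lack of an extrinsic realization); if you want to salvage your argument, the cleanest fix is to invoke that same lemma: with a genuinely 1-Lipschitz $p'_n$ in hand, your whole partition-of-unity construction can be carried out on the limit space $\X$ alone and then composed with $p'_n$, using upper semicontinuity of $\lipa(g)$ and weak convergence of $(p'_n)_\#\mm_n$ to close the estimate — at which point your proof reduces to the paper's.
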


We will provide two slightly different proofs of this result: one relies on Theorem \ref{teo: unicità limite} and the important \cite[Proposition 6.12]{Sh16}, while the other relies on a more direct construction.
\begin{proof}
	By \cite[Proposition 6.12]{Sh16} there exists a sequence of spaces $(\X'_n, \sfd'_n, \mm_n')$ converging in the mGH topology to $\X$ and a sequence of maps $q_n:\X_n\to \X'_n$ such that:
	\begin{itemize}
		\item the maps $q_n$ are 1-Lipschitz;
		\item it holds $\left(q_n\right)_\# \mm_n=\mm'_n$.
	\end{itemize}
	By mGH convergence, we might assume that $\X'_n\subset\X$ for all $n$ (up to enlarging $\X$ and without modifying $\supp \,\mm$). It is now easy to check that the sequence $(q_n)_{n\geq 1}$ is as in Proposition \ref{prop: caratterizzazione}. By Theorem \ref{teo: unicità limite} with $\Phi=id$, we get that there exists a sequence of isomorphisms $\Phi_n\in {\rm Iso}(\X_n)$ such that the projections $p'_n\coloneqq \Phi_n\circ q_n$ are equivalent to $p_n$ in the sense of Remark \ref{oss: equivalenza}. In particular, limits taken with respect to $(p_n)_{n\geq 1}$ or $(p'_n)_{n\geq 1}$ are the same. 

	Now, by \cite[Theorem 4.4]{Gi23} there exists a sequence $(f_n')_{n\geq 1}$ with $f_n'\in \L^2(\X'_n)$ such that $\lims_n \ch(f_n')\leq \ch(f)$. Let $f_n\coloneqq f'_n\circ p'_n$. On one hand, it is immediate that 
	\begin{equation}
		f'_n\xrightarrow[p'_n]{\L^2}f, 
	\end{equation}
	and so, by Remark \ref{oss: conseguenza equivalenza}, 
	\begin{equation}
		f'_n\xrightarrow[p_n]{\L^2}f. 
	\end{equation}
	On the other hand, by the 1-lipschitzianity of $p'_n$ it holds that $\ch(f_n)\leq \ch(f'_n)$, from which the thesis follows.
\end{proof}
As in the proof, in order to provide a recovery sequence for the $\Gamma-\lims$ it is enough to have a sequence of $1$--Lipschitz projections $p'_n$ which is equivalent to $p_n$. Thankfully, \ref{lemma: Lipschitz appr} does exactly that job. In order to have enough space for the approximation, we assume that $\X$ is a subset of $\ell^\infty$ (which can always be done, up to isometry). 

\begin{cor}\label{cor: proiezioni equivalenti}
	Let $\X_n\concto \X$ and let $p_n:\X_n\to \X\subset \ell^\infty$ be projection as in Proposition \ref{prop: caratterizzazione} realizing the convergence. Then there exists a sequence of 1-Lipschitz maps $p'_n:\X_n\to \ell^\infty$ realizing the convergence and equivalent to $p'_n$ in the sense of Remark \ref{oss: equivalenza}.
\end{cor}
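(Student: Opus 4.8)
The plan is to produce $p'_n$ by applying the Lipschitz approximation Lemma \ref{lemma: Lipschitz appr} directly to the given maps $p_n$, once we have recorded that these are already $1$-Lipschitz up to an additive error that tends to $0$. Concretely, I would first check that each $p_n\colon\X_n\to\ell^\infty$ (here using the isometric inclusion $\X\subset\ell^\infty$) is $1$-Lipschitz up to an additive error in the sense of Definition \ref{def: 1-Lipschitz err}. This is read off from Proposition \ref{prop: caratterizzazione} by taking the exceptional set $E_n\coloneqq\X_n\setminus K_n$: item \eqref{it:misurakn} gives $\mm_n(E_n)\leq\eps_n$, while item \eqref{it:appr1lip} gives $\|p_n(x)-p_n(x')\|_{\ell^\infty}=\sfd(p_n(x),p_n(x'))\leq\sfd_n(x,x')+\eps_n$ for all $x,x'\in\X_n\setminus E_n$. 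Thus $p_n$ is $1$-Lipschitz up to the additive error $\eps_n$.

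Next I would invoke Lemma \ref{lemma: Lipschitz appr} with $\Y=\X_n$ and target $\ell^\infty$, obtaining for each $n$ a genuinely $1$-Lipschitz map $p'_n\colon\X_n\to\ell^\infty$ with $\sfd_{\L^0}(p_n,p'_n)\leq\eps_n$. Since $\eps_n\downarrow0$, this yields $\lims_n\sfd_{\L^0}(p_n,p'_n)=0$, which is precisely the equivalence condition \eqref{eq:eqpnqn}. By Remark \ref{oss: equivalenza} the $p'_n$ therefore inherit all the properties of Proposition \ref{prop: caratterizzazione} (possibly with different non-exceptional domains and a different infinitesimal sequence), so that they realize the convergence $\X_n\concto\X$ and are equivalent to $p_n$; this is the assertion of the corollary.

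The only delicate point, and the one I would address with care, is that the $p'_n$ take values in $\ell^\infty$ rather than in $\X$, whereas Remark \ref{oss: equivalenza} is phrased for $\X$-valued maps. This causes no real trouble: the $\L^0$-distance \eqref{eq: dist L0} between $p_n$ and $p'_n$ is measured with the $\ell^\infty$-norm, which coincides with $\sfd$ on $\X$, and each property of Proposition \ref{prop: caratterizzazione} transfers through the bound $\sfd_{\L^0}(p_n,p'_n)\leq\eps_n$ exactly as in Remark \ref{oss: equivalenza}. For instance $(p'_n)_\#\mm_n\weakto\mm$ follows from item \eqref{it:convpn} for $p_n$ by testing against bounded Lipschitz functions on $\ell^\infty$ and splitting the integral over $\{\|p_n-p'_n\|_{\ell^\infty}\leq\eps_n\}$ and its complement, whose $\mm_n$-measure is below $\eps_n$; the almost-$1$-Lipschitz bound of item \eqref{it:appr1lip} is now automatic since $p'_n$ is honestly $1$-Lipschitz; and item \eqref{it:bounded} is preserved off a set of small measure. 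I do not anticipate a genuine obstacle: the content is entirely in chaining Definition \ref{def: 1-Lipschitz err}, Lemma \ref{lemma: Lipschitz appr} and Remark \ref{oss: equivalenza} in this order.
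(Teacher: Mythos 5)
Your proposal is correct and follows exactly the paper's route: the paper's own proof consists of the single sentence ``It follows immediately from Lemma \ref{lemma: Lipschitz appr}'', i.e.\ precisely your chain of observing that items \eqref{it:appr1lip} and \eqref{it:misurakn} of Proposition \ref{prop: caratterizzazione} make $p_n$ $1$-Lipschitz up to the additive error $\eps_n$, applying Lemma \ref{lemma: Lipschitz appr}, and concluding equivalence via Remark \ref{oss: equivalenza}. Your extra care about the $p'_n$ being $\ell^\infty$-valued (which is why the paper embeds $\X\subset\ell^\infty$ beforehand) is a detail the paper leaves implicit, and you handle it correctly.
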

\begin{proof}
	It follows immediately from Lemma \ref{lemma: Lipschitz appr}.
\end{proof}
\begin{proof}[Second proof of Proposition \ref{proposition: limsup}]
The proof is the same as the first one, but with $p'_n$ as in Corollary \ref{cor: proiezioni equivalenti}.	
\end{proof}

We want to remark that this result was already obtained \cite{OzYo19} under the hypothesis of a uniform $\CD$ condition, which we drop here.

\section{Stability of the  heat flow under a curvature condition}\label{section: gamma conv slope}
	\subsection{Gradient flows on nonsmooth spaces}
	Optimal transport is an invaluable tool in order to study probabilty measures on a metric space $\X$: it allows to turn $\pr(\X)$, or some distinguished subset of it, into a metric space, the Wasserstein space. Some good reference about this topic are \cite{Vi09} or \cite{AmGiSa08}.

	One of the main reasons of the widespread study of the Wasserstein spaces is their importance in the study of various gradient flows, often related to the solution of diffusion equation (as will be evident later). When the space $\X$ is smooth, the Wasserstein space $\pr_2$ can be endowed with a (at least formal) Riemannian structure (see for instance \cite{Ot01}), which allows to carry on the study of gradient flows as in the finite dimensional case. When $\X$ is nonsmooth, though, a different notion has to be employed. The formulation which allows to generalize the theory is due to De Giorgi and is based on the observation that the definition of a gradient flow is equivalent to an inequality regarding the dissipation of energy along the evolution, the so called \emph{EDE} (in)equality (a more precise discussion and references can be found in \cite{AmGiSa08} and \cite{Gi23}). Here we recall some of the most relevant definitions and properties which will be of use in the study of the heat flow, one of the main objectives of this work.
	
	The main family of functional which are of interest in order to study gradient flows, at least from our perspective, is that of geodesically $K$\emph{--convex functionals}. From here on, $\X$ is a complete and separable metric space.
	\begin{deff}[Geodesically $K$--convex functional]
		A functional $E:\X\to [0, +\infty]$ is said to be geodesically $K$--convex if for all $x, y\in \D(E)$ there exists $\gamma:[0, 1]\to \X$ a geodesic between $x$ and $y$ such that
		\begin{equation}
			E(\gamma_t)\leq (1-t)E(x)+tE(y)-\frac{1}{2}Kt(1-t)d^2(x, y)
		\end{equation}
		holds.
	\end{deff}
	In order to extend the definition of gradient flow, we need a quantity mimicking the ``speed at which $E$ decreases'', i.e. the (descending) slope.
	\begin{deff}[Slope]
		Let $E:\X\to [0, +\infty]$.The slope $|\partial^-E|(\cdot)$ of $E$ is defined as follows: 
		\begin{equation}
			|\partial^-E|(x)=\begin{cases}				
				\lims_{y\to x} \frac{(E(x)-E(y))^+}{\sfd(x, y)}\qquad &\text{if }x\in \D(E)\\
				+\infty\qquad &\text{if }x \notin \D(E).
			\end{cases}
		\end{equation}
	\end{deff}
	It is important to notice that whenever $E$ is $K$--geodesically convex, then
	\begin{equation}\label{eq: sup slope}
		|\partial^-E|(x)=\sup_y \frac{\left(E(x)-E(y)+\frac{K}{2}\sfd(x, y)^2\right)^+}{d(x, y)}.
	\end{equation}
	Finally, recall that if $\gamma:[0, 1]\to \X$ is a curve, we say that $\gamma$ is absolutely continuous whenever there exists $v\in \L^1(0, 1)$ such that 
	\begin{equation}
		|\gamma_s-\gamma_t|\leq \int_s^t v_r\,\d r
	\end{equation}
	for all $s\leq t\in [0, 1]$. In this case, for a.e. $t\in(0, 1)$ there exist $|\dot{\gamma}_t|\coloneqq\lim_{h\to 0}\frac{\d(\gamma_{t-h}, \gamma_t)}{|h|}$, which is called the \emph{metric speed} of $\gamma$.
	
	Given all these notions, it is easy to see that, given a lower semicontinuous and $K$--geodesically convex functional $E$ and any curve $\gamma$, it holds
	\begin{equation}\label{eq: EDE inversa}
		E(\gamma_0)\leq E(\gamma_t)+\frac{1}{2}\int_0^t |\dot{\gamma}_s|^2\,\d s+\frac{1}{2}\int_0^t|\partial^-E|(\gamma_s)^2\,\d s
	\end{equation}
	for all $t>0$.
	On the other hand, in any smooth setting, $\dot{\gamma_t}=-\nabla E(\gamma_t)$ implies that the equality holds in \eqref{eq: EDE inversa}. The $EDE$ gradient flow formulation reflects this observation.
	\begin{deff}[Gradient flow of the entropy functional]
		Let $E:\X\to [0, +\infty]$ a lower semicontinuous and $K$--convex functional. We say that a curve $\gamma:[0, +\infty)\to \X$ is a \emph{$EDE$ gradient flow} for $E$ starting at $x$ if it is absolutely continuous and
		\begin{equation}\label{eq: EDI}
			E(x)\geq E(\gamma_t)+\frac{1}{2}\int_0^t|\dot{\gamma}_r|^2\,\d r+\frac{1}{2}\int_0^t |\partial^-E|(\gamma_r)^2\,\d r
		\end{equation}
		for all $t>0$.
	\end{deff}
	\begin{rmk}\label{oss: EDE}
		If $\gamma_t$ is a gradient flow for $E$ starting at $x$, then
		\begin{equation}
			E(x)= E(\gamma_t)+\frac{1}{2}\int_0^t|\dot{\gamma}_r|^2\,\d r+\frac{1}{2}\int_0^t |\partial^-E|(\gamma_r)^2\,\d r
		\end{equation}
		for all $t\geq 0$.
	\end{rmk}
	
	In this section we deal with the problem of establishing $\Gamma$--convergence of the slope $\slope{\mm_n}{\cdot}$ of the Boltzmann entropy functional and Mosco convergence of the Dirichlet energy functional. Building on that, we show that the gradient flow of the entropy and the heat flow (which are strictly related) are stable with respect to convergence in concentration of the domain spaces $\X_n$.
	\subsection{CD and RCD spaces}
	Here we recall the definition of Curvature--Dimension condition (in the infinite dimensional case) and some basic functional analytic properties.
	
	Given a n.m.m.s. $(\X, \sfd, \mm)$, the \emph{Boltzmann entropy functional} $\ent: \pr(\X)\to [0, +\infty]$ is defined as
	\begin{equation}
		\ent(\mu)\coloneqq \begin{cases}
			\int_\X \rho\log(\rho)\,\d\mm\qquad&\text{if }\mu=\rho\,\mm;\\
			+\infty &\text{ if else.}
		\end{cases}
	\end{equation}
	\begin{deff}
		We say that a n.m.m.s. $(\X, \sfd, \mm)$ satisfies the $\CD(K, \infty)$ condition for $K\in\mathbb R$ if $\ent$ is $K$--geodesically convex.
	\end{deff}
	The $K$-- geodesical convexity of the entropy functional allows to define (mimicking what happens in the smooth case) a notion of heat flow as the gradient flow of $\rm{Ent}$.
	\begin{thm}\label{teo: monot slope}
		Let $(\X, \sfd, \mm)$ be a $\CD(K, \infty)$ n.m.m.s. and $\mu\in \D(\ent)$. Then there exist a unique $\W_2$--gradient flow of the entropy $\mu_t\eqqcolon\hgf{\mu}$ starting from $\mu$. Moreover:
		\begin{itemize}
			\item $|\dot{\mu_t}|=\slope{m}{\mu_t}$ for a.e. $t>0$;
			\item $t\mapsto e^{Kt}\slope{m}{\mu_t}$ is l.s.c. and non increasing.
		\end{itemize}
	\end{thm}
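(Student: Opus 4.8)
The plan is to read this as the specialization of the abstract theory of gradient flows of $K$-geodesically convex functionals (as developed in \cite{AmGiSa08}) to the complete and separable metric space $(\prd(\X),\W_2)$ with the choice $E=\ent$. First I would check that all the structural hypotheses are in force: since $\X$ is $\CD(K,\infty)$ it is in particular a geodesic space, so $(\prd(\X),\W_2)$ is itself geodesic; the entropy is proper (because $\mu\in\D(\ent)$), it is lower semicontinuous for weak---hence $\W_2$---convergence, and it is $K$-geodesically convex by the very definition of the $\CD(K,\infty)$ condition.

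For existence I would run the minimizing-movements scheme, defining discrete solutions recursively as minimizers of $\nu\mapsto\ent(\nu)+\frac1{2\tau}\W_2^2(\nu,\mu^\tau_k)$ and letting the step $\tau\downarrow0$; the $K$-convexity guarantees that the interpolants converge to a locally absolutely continuous curve satisfying the energy-dissipation inequality \eqref{eq: EDI}, i.e.\ a gradient flow in the required sense, which I would denote $\mu_t=\hgf{\mu}$. The hard part will be uniqueness: in the present $\CD(K,\infty)$ generality the space need not be infinitesimally Hilbertian, so one cannot invoke the $\mathrm{EVI}_K$ formulation and the ensuing $\W_2$-contraction to separate two flows issued from the same datum. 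Uniqueness is exactly the content of \cite{Gi10}, and it is there that the genuine work lies.

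Granting existence and uniqueness, the two additional properties follow by soft arguments. By Remark \ref{oss: EDE} the gradient flow satisfies \eqref{eq: EDI} with equality; on the other hand, for a $K$-convex lower semicontinuous functional the slope is a strong upper gradient, so for every absolutely continuous curve one has $\ent(\mu_0)-\ent(\mu_t)\leq\int_0^t\sl{\ent}(\mu_r)\,|\dot\mu_r|\,\d r\leq\frac12\int_0^t|\dot\mu_r|^2\,\d r+\frac12\int_0^t\sl{\ent}(\mu_r)^2\,\d r$. Comparing this chain with the equality version of \eqref{eq: EDI} forces Young's inequality to be an equality for a.e.\ $r$, whence $|\dot\mu_t|=\sl{\ent}(\mu_t)$ for a.e.\ $t$.

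For the last item, lower semicontinuity of $t\mapsto\sl{\ent}(\mu_t)$ is immediate from \eqref{eq: sup slope}, which exhibits $\sl{\ent}$ as a supremum of $\W_2$-continuous functionals and hence as a lower semicontinuous quantity, while the monotonicity of $t\mapsto e^{Kt}\sl{\ent}(\mu_t)$ is the standard a priori regularizing estimate for curves of maximal slope of $K$-convex functionals, for which I would again appeal to \cite{AmGiSa08}. Thus the only step requiring substantial input beyond the verification of hypotheses is the uniqueness statement of \cite{Gi10}; existence, the energy-dissipation identity, and the regularizing bounds are all instances of the general metric theory, and the remaining checks (lower semicontinuity and $K$-convexity of $\ent$, completeness and geodesic character of $(\prd(\X),\W_2)$) are routine.
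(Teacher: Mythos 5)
Your proposal is correct and is essentially the paper's own treatment: the paper states this theorem without proof, as a recalled result whose content is exactly the combination you assemble---existence via the minimizing-movement/EDE theory of $K$-geodesically convex functionals and the slope regularization estimates from \cite{AmGiSa08}, uniqueness from \cite{Gi10} (see also the presentation in \cite{AmGiSa14}, which the paper itself points to), and the a.e.\ identification of metric speed and slope obtained by forcing equality in Young's inequality against the strong-upper-gradient bound. The only slip worth noting is harmless: the terms in the supremum \eqref{eq: sup slope} are $\W_2$-lower semicontinuous rather than continuous in $\mu$ (since $\ent$ is only lower semicontinuous), but a supremum of lower semicontinuous functions is still lower semicontinuous, so your argument for the lower semicontinuity of $t\mapsto e^{Kt}\slope{\mm}{\mu_t}$ goes through unchanged.
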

	
	%  Given $f\in \L^2(\X)$, its Cheeger energy is defined as
	% \begin{equation}
	% 	\ch(f)\coloneqq \inf\{\limi_n \frac{1}{2}\int_\X lip_a(f_n)^2\,\d\mm:\; f_n\in \Lip(\X), f_n\xrightarrow{\L^2}f \},
	% \end{equation}
	% where $lip_a(f)(x)\coloneqq\lims_{r>0} \Lip_{B(x, r)}$.
	% For $f\in W^{1, 2}(\X)\coloneqq \D(\ch)$, moreover, there exist $|\D f|\in \L^2(\X)$, called minimal weak upper gradient, such that $\ch(f)\coloneqq \frac{1}{2}\int_\X |\D f|^2\, \d\mm$ and $|\D f|=\L^2-\lim \operatorname{lip}_a(f_n)$ for some sequence $f_n$ as above.

	On the other hand, a heat flow can be defined also via $\L^2(\X)$ as a gradient flow of the Cheeger energy.
	Given the definition in \eqref{eq: Cheeger}, it is easy to check that $\ch$ is convex and lower semicontinuous, so another notion of heat flow $h_t$ can be defined as gradient flow of $\ch$ via the classical theory of gradient flows in Hilbert spaces.
	In case $W^{1, 2}$ is Hilbert (which amounts to $\ch$ being a quadratic form), $\X$ is said to be infinitesimally hilbertian: $h_t$ can be shown to be linear, the subdifferential of $\ch$ at every $f\in \L^2$ contains at most one element $\Delta f$, which is called Laplacian of $f$ as in the smooth case, and $\Delta$ is a linear unbounded operator.
	
	This two seemingly competing definitions of the heat flow are indeed the same.	
	\begin{thm}[Slope of $\ent$ and Cheeger energy]\label{teo: Slope=Cheeger}
		Let $\X$ be a $\CD(K, \infty)$ n.m.m.s. and let $f\in \L^2(\X)$ with $\|f\|_{\L^2}=1$.
		Then 
		\begin{equation}\label{eq:slopecheeger}
			\slope{\mm}{f^2\mm}=8 \ch(f).
		\end{equation}
		Moreover $\mathcal H_t(f^2 \mm)=(h_t f)\mm$ for all $t>0$.
	\end{thm}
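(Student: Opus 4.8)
The plan is to deduce both assertions from a single identity relating the descending slope of the entropy at $\rho\,\mm$ to its \emph{Fisher information}
\[
I(\rho):=\int_{\{\rho>0\}}\frac{|\D\rho|^2}{\rho}\,\d\mm,
\]
and then to run a De Giorgi energy-dissipation argument to identify the two flows. Throughout we may assume $f\ge 0$: since $f^2\mm=|f|^2\mm$ and, by locality of minimal weak upper gradients, $|\D f|=|\D|f||$ $\mm$-a.e., neither side of \eqref{eq:slopecheeger} changes upon replacing $f$ by $|f|$.

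\emph{Step 1 (slope $=$ Fisher information).} I would first prove that
\[
\big(\slope{\mm}{\rho\mm}\big)^2=I(\rho)\qquad\text{for every }\rho\mm\in\D(\ent).
\]
Applying the chain rule $|\D(f^2)|=2f\,|\D f|$ with $f\ge 0$ then gives $I(f^2)=4\int_\X|\D f|^2\,\d\mm=8\,\ch(f)$, which is exactly \eqref{eq:slopecheeger}. The lower bound $\big(\slope{\mm}{\rho\mm}\big)^2\ge I(\rho)$ is obtained by exhibiting an explicit competitor in the difference quotient \eqref{eq: sup slope}: one pushes $\rho\,\mm$ infinitesimally along a bounded Lipschitz potential $\varphi$ approximating $\log\rho$, estimates the first-order variation of the entropy (heuristically $-\int\langle\nabla\rho,\nabla\varphi\rangle\,\d\mm$) against the displacement bound $\W_2\lesssim\varepsilon\,\|\,|\D\varphi|\,\|_{\L^2(\rho\mm)}$, and optimizes over $\varphi$. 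The upper bound $\big(\slope{\mm}{\rho\mm}\big)^2\le I(\rho)$ is the more delicate half: here the $K$-geodesic convexity granted by $\CD(K,\infty)$ is essential, since it legitimizes the representation \eqref{eq: sup slope}, allowing one to control \emph{all} competitors simultaneously via Kantorovich duality and the Hopf--Lax semigroup and thereby reduce the estimate to the metric chain-rule calculus.

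\emph{Step 2 (identification of the flows).} The central analytic input is Kuwada's duality lemma: the curve $t\mapsto\mu_t:=\rho_t\,\mm$ produced by the $\L^2$-gradient flow $\rho_t=h_t(f^2)$ of $\ch$ is $\W_2$-absolutely continuous with metric speed controlled by the Fisher information, $|\dot\mu_t|^2\le I(\rho_t)$ for a.e.\ $t$. Granting this, together with the exact entropy-dissipation identity $\tfrac{\d}{\d t}\ent(\rho_t\mm)=-I(\rho_t)$ along the $\L^2$-flow and the slope bound of Step 1, I would integrate and use $|\dot\mu_s|^2\le I(\rho_s)$ and $\big(\slope{\mm}{\mu_s}\big)^2\le I(\rho_s)$ to obtain
\[
\ent(f^2\mm)\ge \ent(\rho_t\mm)+\tfrac12\int_0^t|\dot\mu_s|^2\,\d s+\tfrac12\int_0^t\big(\slope{\mm}{\mu_s}\big)^2\,\d s,
\]
which is precisely the $EDE$ inequality \eqref{eq: EDI}. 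Hence $t\mapsto\rho_t\,\mm$ is an $EDE$ gradient flow of $\ent$ issuing from $f^2\mm$; by the uniqueness statement of Theorem \ref{teo: monot slope} it coincides with $\hgf{f^2\mm}$, giving $\hgf{f^2\mm}=(h_t f^2)\,\mm$, the ``moreover'' part. Since \eqref{eq: EDE inversa} forces the reverse inequality, all the displayed inequalities are in fact equalities a.e., which both re-derives $|\dot\mu_t|=\slope{\mm}{\mu_t}$ and closes the Fisher identity along the flow.

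The main obstacle is Step 2's Kuwada lemma together with the upper bound in Step 1: both rest on the full metric-measure calculus — locality and chain rule for weak upper gradients, and the duality between the heat semigroup and the Hamilton--Jacobi/Hopf--Lax flow — rather than on formal manipulations, and it is precisely there that the $\CD(K,\infty)$ hypothesis is used, through the $K$-convexity of $\ent$ (hence \eqref{eq: sup slope} and the well-posedness of its $\W_2$-gradient flow recorded in Theorem \ref{teo: monot slope}).
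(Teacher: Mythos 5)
This theorem is never proved in the paper: it is recalled as background in Section 5.2 and is precisely the identification theorem of \cite{Gigli-Kuwada-Ohta10} and \cite{AmGiSa14}, which the introduction cites for exactly this purpose. So there is no internal proof to compare yours against; the relevant question is whether your reconstruction matches the argument of those references, and in outline it does. Your architecture --- (i) the identity between the squared slope and the Fisher information $I(\rho)$, with the lower bound obtained by perturbing $\rho\,\mm$ along (an approximation of) $\nabla\log\rho$ and the upper bound, the half that genuinely needs $\CD(K,\infty)$, via the representation \eqref{eq: sup slope}; (ii) Kuwada's lemma $|\dot\mu_t|^2\le I(\rho_t)$ along the $\L^2$-flow, the dissipation identity $\tfrac{\d}{\d t}\ent(\rho_t\mm)=-I(\rho_t)$, hence the EDE inequality \eqref{eq: EDI}, and identification via the uniqueness in Theorem \ref{teo: monot slope} --- is exactly the proof in \cite{AmGiSa14}. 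The one caveat is that the items you invoke as known inputs (Kuwada's lemma, the dissipation identity, the rigorous first-variation estimates in the nonsmooth setting) constitute the bulk of the actual work in that reference; but this is the same material the paper itself outsources, so it is not a gap relative to the paper.

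Two corrections are worth recording. First, what your Step 1 establishes is $\big(\slope{\mm}{f^2\mm}\big)^2=8\ch(f)$: the square is necessary (the slope scales like $1/\text{length}$, while $\ch$ scales like $1/\text{length}^2$), so the paper's display \eqref{eq:slopecheeger} as printed is missing it, and your claim that your identity ``is exactly'' \eqref{eq:slopecheeger} is true only after this correction. Likewise, the ``moreover'' part should read $\hgf{f^2\mm}=\big(h_t(f^2)\big)\mm$ --- which is what you actually prove --- rather than $(h_tf)\mm$ as printed: $h_t$ preserves the integral, and $\int f\,\d\mm\neq 1$ in general, so the printed formula cannot hold. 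Second, a minor misattribution in your closing remarks: in \cite{AmGiSa14} the Hopf--Lax/Kantorovich-duality machinery is the engine of Kuwada's lemma (your Step 2), whereas the upper bound on the slope in Step 1 comes from \eqref{eq: sup slope} together with a first-variation estimate of the entropy along $\W_2$-geodesics (convexity of $s\mapsto s\log s$ plus Cauchy--Schwarz); this does not affect the soundness of your outline.
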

	% \add{osservazione su $\pr_2$ vs $\pr$?.}

	Stability properties of these quantities are well known in the setting of measured Gromov--Hausdorff convergence. Since the very beginning of the theory (\cite{LoVi09}, \cite{St06-1}, \cite{St06-2}) convergence of the entropies and stability of the $\CD(K, N)$ conditions have been established. Somewhat more recently, convergence of the slope of the entropy and of the heat flow have been proved in \cite{GiMoSa15}. In the setting of convergence in concentration, these properties were only partially investigated. We recall here some of the known results.	
	\begin{thm}[\cite{OzYo19}, cfr. \cite{FuSh13}]\label{teo: Gamma ent}
		Let $(\X_n, \sfd_n, \mm_n), (\X, d, m)$ be n.m.m.s.. If $\X_n\to \X$ in concentration, then $\entm{\mm_n}\xrightarrow{\Gamma}\ent$, with respect to weak convergence of measures.
	\end{thm}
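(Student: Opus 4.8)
The statement to be proved is $\Gamma$-convergence of $\entm{\mm_n}$ to $\ent$ with respect to weak convergence of measures, which is understood relative to a fixed sequence of projections. The plan is to invoke Proposition \ref{prop: caratterizzazione} to produce maps $p_n\colon\X_n\to\X$ realizing the convergence, and then to establish the two inequalities defining $\Gamma$-convergence separately. For the $\Gamma$-$\limi$ inequality I would exploit the Donsker--Varadhan dual representation of the entropy: since $\mm_n$ and $\mm$ are probability measures, $\entm{\mm_n}(\mu_n)$ is the relative entropy of $\mu_n$ with respect to $\mm_n$, and for every bounded Borel $\psi$ one has
\[
\entm{\mm_n}(\mu_n)\ \geq\ \int_{\X_n}\psi\,\d\mu_n-\log\int_{\X_n}e^{\psi}\,\d\mm_n ,
\]
with equality upon taking the supremum over $\psi\in C^0_\b(\X_n)$ (the inequality is trivial when $\mu_n$ is not absolutely continuous, as then the left-hand side is $+\infty$).

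Given $\mu_n\rightharpoonup\mu$, that is $(p_n)_\#\mu_n\rightharpoonup\mu$, I would test the above with $\psi=\varphi\circ p_n$ for an arbitrary $\varphi\in C^0_\b(\X)$. Then $\int\varphi\circ p_n\,\d\mu_n=\int\varphi\,\d(p_n)_\#\mu_n\to\int\varphi\,\d\mu$, while $\int e^{\varphi\circ p_n}\,\d\mm_n=\int e^{\varphi}\,\d\tilde{\mm}_n\to\int e^{\varphi}\,\d\mm$, since $e^{\varphi}\in C^0_\b(\X)$ and $\tilde{\mm}_n=(p_n)_\#\mm_n\rightharpoonup\mm$ by item \eqref{it:convpn} of Proposition \ref{prop: caratterizzazione}. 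Passing to the $\limi$ and then taking the supremum over $\varphi\in C^0_\b(\X)$ recovers exactly $\ent(\mu)$, giving $\ent(\mu)\le\limi_n\entm{\mm_n}(\mu_n)$.

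For the recovery sequence I would first dispose of the model case $\mu=\rho\,\mm$ with $\rho\in C^0_\b(\X)$ and $0\le\rho\le C$. Setting $Z_n:=\int\rho\circ p_n\,\d\mm_n=\int\rho\,\d\tilde{\mm}_n\to 1$ and $\mu_n:=Z_n^{-1}(\rho\circ p_n)\,\mm_n\in\pr(\X_n)$, one checks that $(p_n)_\#\mu_n=Z_n^{-1}\rho\,\tilde{\mm}_n\rightharpoonup\rho\,\mm=\mu$ and, by a direct computation,
\[
\entm{\mm_n}(\mu_n)=Z_n^{-1}\int(\rho\log\rho)\,\d\tilde{\mm}_n-\log Z_n\ \longrightarrow\ \int\rho\log\rho\,\d\mm=\ent(\mu),
\]
the convergence of the integral following from $\rho\log\rho\in C^0_\b(\X)$; thus along such densities one even obtains full convergence of the entropies. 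A truncation-and-approximation step then extends this to arbitrary $\mu$ with $\ent(\mu)<\infty$: truncating $\rho$ at level $M$, renormalizing (the constant tends to $1$), and approximating the resulting bounded density in $\L^1(\mm)$ by continuous ones produces measures approaching $\mu$ both weakly and in entropy, and a diagonal argument yields the recovery sequence. Finally, when $\ent(\mu)=+\infty$ the $\lims$ bound is vacuous, so it suffices to exhibit \emph{some} sequence $\mu_n\rightharpoonup\mu$; this follows by approximating $\mu$ weakly with measures having bounded continuous densities (which are weakly dense in $\pr(\supp\mm)$) and diagonalizing against the construction above.

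The liminf inequality is essentially immediate from the dual formula together with the two weak convergences $(p_n)_\#\mu_n\rightharpoonup\mu$ and $(p_n)_\#\mm_n\rightharpoonup\mm$, so I expect the only genuinely delicate point to be the reduction step in the recovery construction: approximating a general finite-entropy density by continuous bounded ones while keeping the entropies under control, and then interchanging that limit with $n\to\infty$. This is routine but is where the care lies; it is handled by dominated convergence for $t\mapsto t\log t$ and a diagonalization exploiting the metrizability of weak convergence on $\pr(\X)$.
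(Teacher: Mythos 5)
Your proof is correct. Note, though, that the paper does not actually prove Theorem \ref{teo: Gamma ent}: it is imported from \cite{OzYo19} (cfr.\ \cite{FuSh13}), so the only internal material to compare with is Lemma \ref{lemma: approssimazione}, whose sketched construction --- approximate $\frac{\d\mu}{\d\mm}$ by bounded continuous densities $\varphi_j$ with converging entropies, pull back as $\varphi_{j_n}\circ p_n\,\mm_n$, diagonalize --- is essentially your recovery-sequence argument; you are in fact slightly more careful there, since you insert the normalization $Z_n^{-1}$ (in general $\int\varphi_j\circ p_n\,\d\mm_n\neq 1$, a point the paper's sketch glosses over). Your $\Gamma$--$\limi$ half is the genuinely distinctive part: rather than any concentration-specific machinery, you use the Donsker--Varadhan inequality $\entm{\mm_n}(\mu_n)\geq\int\psi\,\d\mu_n-\log\int e^{\psi}\,\d\mm_n$, correctly stated for bounded \emph{Borel} $\psi$ (essential, since $\psi=\varphi\circ p_n$ is only Borel), and then the variational identity on the Polish limit space to recover $\ent(\mu)$; the passage to the limit is legitimate because $\varphi,e^{\varphi}\in C^0_\b(\X)$, $(p_n)_\#\mu_n\rightharpoonup\mu$ by definition of the convergence, and $(p_n)_\#\mm_n\rightharpoonup\mm$ by item \eqref{it:convpn} of Proposition \ref{prop: caratterizzazione}. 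This is exactly the classical proof that relative entropy is jointly weakly lower semicontinuous in both of its arguments, and it buys a $\limi$ inequality requiring no equi-integrability and no curvature assumption, as the statement demands. The one point you should make fully explicit is the one you implicitly flag by writing $\pr(\supp\mm)$: for the existence of a recovery sequence when $\ent(\mu)=+\infty$ you need $\mu$ to be concentrated on $\supp\mm$, i.e.\ you must invoke the paper's standing identification of $(\X,\sfd,\mm)$ with $(\supp\mm,\sfd,\mm)$; with that convention your density-plus-diagonalization argument closes this last case, and the proof is complete.
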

	For later use, we state here this really simple lemma.
	\begin{lm}\label{lemma: approssimazione}
		Let $\X_n, \X$ be n.m.m.s. and let $p_n, \tilde{\X}_n, \varepsilon_n$ be as in Proposition \ref{prop: caratterizzazione}.
		If $\mu\in \pr(\X)$ is such that $\mu\leq C \mm$, there exist a recovery sequence $\mu_n$ such that $\mu_n\leq 2C \mm_n$: it is enough to approximate $\frac{\d\mu}{\d\mm}$ with continuous functions $\varphi_j\leq 2C$ such that $\int \varphi_j\, \d\mm=1$ and $\int \varphi_j\log(\varphi_j)\,\d\mm\to \ent(\mu)$; finally, a diagonal argument allows to chose a suitable $\mu_n=\varphi_{j_n}\circ p_n \mm_n$.
	\end{lm}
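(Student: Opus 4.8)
The plan is to produce an explicit refinement of the recovery sequence for the $\Gamma$--$\lims$ part of Theorem \ref{teo: Gamma ent}: writing $\mu=\rho\,\mm$ with $0\le\rho\le C$ and $\int\rho\,\d\mm=1$, I want $\mu_n\in\pr(\X_n)$ with $(p_n)_\#\mu_n\rightharpoonup\mu$, with $\entm{\mm_n}(\mu_n)\to\ent(\mu)$, and with the extra bound $\mu_n\le 2C\mm_n$. The construction goes through a double approximation: first approximate the density $\rho$ on the limit space by \emph{continuous} densities, then pull these back along the projections $p_n$, and finally diagonalize.

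First step (approximation on $\X$). Since $\mm\in\pr(\X)$ and $(\X,\sfd)$ is complete and separable, $C^0_\b(\X)$ is dense in $\L^1(\mm)$; taking continuous approximations of $\rho$, truncating them into $[0,C]$ (which preserves $\L^1$--convergence because $0\le\rho\le C$) and renormalizing so that the integral equals $1$, I obtain $\varphi_j\in C^0_\b(\X)$ with $\varphi_j\ge0$, $\int\varphi_j\,\d\mm=1$ and $\varphi_j\to\rho$ in $\L^1(\mm)$; since the renormalizing constants tend to $1$, for $j$ large one has $\varphi_j\le 2C$ (in fact one may keep $\varphi_j\le(2-\eta)C$, which leaves room for the second normalization below). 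Passing to an $\mm$--a.e.\ convergent subsequence and using that $t\mapsto t\log t$ is continuous and bounded on $[0,2C]$, dominated convergence gives $\int\varphi_j\log\varphi_j\,\d\mm\to\int\rho\log\rho\,\d\mm=\ent(\mu)$; moreover $\mu_j:=\varphi_j\,\mm$ converges to $\mu$ in total variation, hence weakly.

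Second step (pullback for fixed $j$). Set $c_{j,n}:=\int_{\X_n}\varphi_j\circ p_n\,\d\mm_n=\int_\X\varphi_j\,\d\tilde\mm_n$. As $\tilde\mm_n\rightharpoonup\mm$ and $\varphi_j\in C^0_\b(\X)$, we have $c_{j,n}\to1$ as $n\to\infty$, so $\mu_{j,n}:=c_{j,n}^{-1}(\varphi_j\circ p_n)\,\mm_n\in\pr(\X_n)$ is well defined for $n$ large. Then $(p_n)_\#\mu_{j,n}=c_{j,n}^{-1}\varphi_j\,\tilde\mm_n\rightharpoonup\varphi_j\,\mm=\mu_j$, that is $\mu_{j,n}\rightharpoonup\mu_j$; a direct computation gives
\[
\entm{\mm_n}(\mu_{j,n})=\frac{1}{c_{j,n}}\int_\X\varphi_j\log\varphi_j\,\d\tilde\mm_n-\log c_{j,n}\xrightarrow[n\to\infty]{}\int_\X\varphi_j\log\varphi_j\,\d\mm=\ent(\mu_j),
\]
where again $\varphi_j\log\varphi_j\in C^0_\b(\X)$ passes to the limit under $\tilde\mm_n\rightharpoonup\mm$. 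Finally, $\mu_{j,n}\le c_{j,n}^{-1}(2-\eta)C\,\mm_n\le 2C\,\mm_n$ for $n$ large, since $c_{j,n}\to1$.

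Conclusion (diagonalization) and main obstacle. Metrizing weak convergence on $\pr(\X)$ and tracking $(p_n)_\#\mu_{j,n}$, the two iterated limits --- $\mu_{j,n}\rightharpoonup\mu_j$ with $\entm{\mm_n}(\mu_{j,n})\to\ent(\mu_j)$ as $n\to\infty$, and $\mu_j\rightharpoonup\mu$ with $\ent(\mu_j)\to\ent(\mu)$ as $j\to\infty$ --- permit a standard diagonal extraction $j_n\to\infty$ such that $\mu_n:=\mu_{j_n,n}$ satisfies $(p_n)_\#\mu_n\rightharpoonup\mu$, $\entm{\mm_n}(\mu_n)\to\ent(\mu)$ and $\mu_n\le 2C\mm_n$. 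The only genuinely delicate point is the interaction of the two approximations with the entropy: the argument works precisely because the densities are continuous with values in the fixed compact interval $[0,2C]$, so that $\varphi_j\log\varphi_j$ is bounded continuous and survives the weak convergence $\tilde\mm_n\rightharpoonup\mm$, and because both normalizing constants tend to $1$, which simultaneously controls the entropy correction terms and preserves the density bound; the lemma is otherwise soft.
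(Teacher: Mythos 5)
Your proof is correct and follows essentially the same route as the paper's own (sketched) argument: approximate the density $\frac{\d\mu}{\d\mm}$ by continuous functions $\varphi_j\le 2C$ with unit integral and convergent entropies, pull back along $p_n$ to get $\varphi_j\circ p_n\,\mm_n$, and conclude by a diagonal extraction. Your explicit handling of the normalization constants $c_{j,n}=\int\varphi_j\circ p_n\,\d\mm_n\to 1$ is a welcome tightening of a detail the paper glosses over (its formula $\mu_n=\varphi_{j_n}\circ p_n\,\mm_n$ is a probability measure only after this normalization), but it does not change the approach.
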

	The $\lims$ inequality can be generalized to this result, which was the inspiration for Lemma \ref{lemma: trasporto}, of fundamental importance in this paper.
	\begin{lm}\cite{OzYo19}\label{lm: approssimazione}
		Let $(\X_n, \sfd_n, \mm_n), (\X, \sfd, \mm)$ be n.m.m.s. with $\X_n\to \X$ in concentration. For every $\mu, \nu\in \pr(\X)$ and $p\in [1, +\infty)$ there exist $\mu_n, \nu_n \in \pr(\X_n)$ such that $\entm{\mm_n}(\mu_n)\to \ent(\mu)$, $\entm{\mm_n}(\nu_n)\to \ent(\nu)$ and $\W_p(\mu_n, \nu_n)\to \W_p(\mu, \nu)$.
	\end{lm}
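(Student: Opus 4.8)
The plan is to fix once and for all a sequence of projections $p_n:\X_n\to\X$ as in Proposition \ref{prop: caratterizzazione} (they exist since $\X_n\concto\X$) and to reduce, by a routine truncation-and-diagonalization, to the case in which $\mu,\nu$ have bounded densities, say $\mu,\nu\leq C\mm$. Indeed, truncating and renormalizing the densities produces approximations $\mu^{(k)},\nu^{(k)}$ converging to $\mu,\nu$ both in entropy and in $\W_p$, so that $\W_p(\mu^{(k)},\nu^{(k)})\to\W_p(\mu,\nu)$ as well, and recovery sequences obtained for each $k$ can be diagonalized. The degenerate cases $\ent(\mu)=+\infty$ or $\W_p(\mu,\nu)=+\infty$ are disposed of immediately, since then the $\Gamma$--$\limi$ inequality for the entropy (Theorem \ref{teo: Gamma ent}), resp.\ the semicontinuity \eqref{eq: semicont W2}, forces the corresponding quantity to diverge along any equi--integrable weakly convergent recovery sequence. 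Once $\mu,\nu\leq C\mm$, the idea is to produce $\mu_n$ recovering the entropy of $\mu$, to produce $\nu_n$ recovering simultaneously the entropy of $\nu$ and the distance to $\mu_n$, and to obtain the convergence of distances by matching a lower bound from semicontinuity with an upper bound coming from the guided-pullback machinery already developed.

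Concretely, by Lemma \ref{lemma: approssimazione} I first choose entropy recovery sequences $\mu_n,\hat\nu_n\in\pr(\X_n)$ with $\mu_n,\hat\nu_n\leq 2C\mm_n$, $\mu_n\rightharpoonup\mu$, $\hat\nu_n\rightharpoonup\nu$, $\entm{\mm_n}(\mu_n)\to\ent(\mu)$ and $\entm{\mm_n}(\hat\nu_n)\to\ent(\nu)$; in particular both are equi--integrable. I then apply the $\W_p$--version of Theorem \ref{teo: pullback guidato} (whose proof is insensitive to the exponent: the truncation argument bounds the $\W_p^p$--cost of the restricted plan exactly as it bounds the $\W_2^2$--cost) with $\mu_{1,n}:=\mu_n$, $\mu_{2,n}:=\hat\nu_n$ and $\eps=1$, obtaining $\nu_n\in\pr(\X_n)$ with
\[
\lims_n\W_p(\mu_n,\nu_n)\leq\W_p(\mu,\nu),\qquad \lims_n\|\nu_n-\hat\nu_n\|_{\sf TV}=0,\qquad \nu_n\leq\mu_n+2\hat\nu_n .
\]
The pair $(\mu_n,\nu_n)$ is my candidate recovery sequence.

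It remains to verify the two convergences. The sequence $\mu_n$ already recovers $\ent(\mu)$; for $\nu_n$ I observe that the domination $\nu_n\leq\mu_n+2\hat\nu_n\leq 6C\mm_n$ keeps its density in the fixed range $[0,6C]$, on which $t\mapsto t\log t$ is Lipschitz, so that
\[
\big|\entm{\mm_n}(\nu_n)-\entm{\mm_n}(\hat\nu_n)\big|\leq(1+\log(6C))\,\|\nu_n-\hat\nu_n\|_{\sf TV}\xrightarrow[n\to\infty]{}0,
\]
whence $\entm{\mm_n}(\nu_n)\to\ent(\nu)$. For the distance, the upper bound is the first estimate above, while the lower bound $\W_p(\mu,\nu)\leq\limi_n\W_p(\mu_n,\nu_n)$ is the semicontinuity \eqref{eq: semicont W2} in its $\W_p$ form (a minor modification of \cite[Lemma 35]{OzYo19}), applicable because $\mu_n,\nu_n$ are equi--integrable and $\mu_n\rightharpoonup\mu$, $\nu_n\rightharpoonup\nu$; the latter holds since $\nu_n$ is $\sf TV$--asymptotic to $\hat\nu_n\rightharpoonup\nu$. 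Thus $\W_p(\mu_n,\nu_n)\to\W_p(\mu,\nu)$, which settles the bounded-density case and, after the diagonalization described above, the lemma.

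The only genuinely delicate point is the upper bound on $\W_p(\mu_n,\nu_n)$ hidden in Theorem \ref{teo: pullback guidato}, that is, the transverse-transport Lemma \ref{lemma: trasporto}. One cannot couple two measures supported over small sets $U_1,U_2\subset\X$ by transporting along the fibers of $p_n$, because these fibers may have arbitrarily large intrinsic $\sfd_n$--diameter: this is precisely the concentration-of-measure phenomenon and it runs opposite to the almost-$1$--Lipschitz bound of Proposition \ref{prop: caratterizzazione}. The resolution, isolated in \cite{FuSh13} and \cite{OzYo19} and reworked here in Lemma \ref{lemma: trasporto}, is to estimate instead the $1$--Wasserstein distance for the truncated metric $\sfd_n\wedge{\sf D}'$ by Kantorovich duality: any $1$--Lipschitz competitor is, by convergence in concentration, well approximated by $g_n\circ p_n$ with $g_n$ being $1$--Lipschitz on $\X$, and such a $g_n$ oscillates by at most $\diam(U_1\cup U_2)$ on the relevant fibers; restricting the resulting optimal plan to the region where $\sfd_n\leq\diam(U_1\cup U_2)+o(1)$ and completing it by the diagonal converts this into the sought $\W_p$ bound, at the price of the vanishing $\sf TV$--error that was exploited in the entropy computation above.
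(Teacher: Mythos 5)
The paper itself does not prove Lemma \ref{lm: approssimazione}: it is quoted from \cite{OzYo19}, and the logical flow in the paper goes the other way (the lemma is credited as the inspiration for Lemma \ref{lemma: trasporto}). So there is no internal proof to compare against; what can be judged is whether your derivation from the paper's own machinery is sound. Your route is legitimate and non-circular, since the proofs of Lemma \ref{lemma: trasporto}, Lemma \ref{lm: appr discretizzazione} and Theorem \ref{teo: pullback guidato} nowhere invoke Lemma \ref{lm: approssimazione}; and your key structural observation is correct: the plan built in Lemma \ref{lemma: trasporto} is the restriction of an optimal plan for the truncated distance to the set $\{\sfd_n<{\sf D}+\delta^{\frac12}\}$, completed by the diagonal, so its $p$-th cost is bounded by $({\sf D}+\delta^{\frac12})^p$ for every $p$, and both Lemma \ref{lemma: trasporto} and Theorem \ref{teo: pullback guidato} do generalize to $\W_p$. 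Granting also the $\W_p$ version of \eqref{eq: semicont W2} (which the paper states only for $p=2$, itself deferred to \cite{OzYo19}), the bounded-density case of your argument is correct in outline, and it has the merit of making the lemma a consequence of the paper's Theorem \ref{teo: pullback guidato}.

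Two concrete points need repair. First, your quantitative entropy estimate is false: $t\mapsto t\log t$ is \emph{not} Lipschitz on $[0,6C]$, because its derivative $1+\log t$ diverges as $t\downarrow0$; the claimed inequality already fails pointwise wherever the density of $\nu_n$ vanishes on a set where that of $\hat\nu_n$ is small but positive. The conclusion $\entm{\mm_n}(\nu_n)\to\ent(\nu)$ is nevertheless true, and the paper's toolkit gives it exactly as in the proof of Theorem \ref{teo: liminf slope}: the uniform bound $\nu_n\leq 6C\mm_n$ together with $\|\nu_n-\hat\nu_n\|_{\sf TV}\to0$ and Lemma \ref{lemma: semicontinuità uniforme} yield $\lims_n\entm{\mm_n}(\nu_n)\leq\ent(\nu)$, while the opposite inequality follows from the $\Gamma$--$\limi$ part of Theorem \ref{teo: Gamma ent}, since the ${\sf TV}$ bound transfers weak convergence from $\hat\nu_n$ to $\nu_n$. (Alternatively: uniform continuity of $t\log t$ on $[0,6C]$ with a concave modulus, plus Jensen's inequality.)

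Second, the preliminary reduction is weaker than you claim. Truncating densities presupposes $\mu,\nu\ll\mm$, so singular measures (which have infinite entropy) are not covered by it; and your degenerate-case remark does not dispose of them, because even when $\ent(\mu)=+\infty$ you still must exhibit an \emph{equi-integrable}, weakly convergent sequence $\mu_n\weakto\mu$ (both Theorem \ref{teo: Gamma ent} in the form you use it and \eqref{eq: semicont W2} need this), together with a $\nu_n$ recovering the possibly finite $\ent(\nu)$ and the possibly finite $\W_p(\mu,\nu)$; a diagonal over bounded-density approximations need not be equi-integrable, since the density bounds blow up along the diagonal. Moreover, on an unbounded space, total-variation closeness of the truncations to $\mu,\nu$ does not give $\W_p(\mu^{(k)},\nu^{(k)})\to\W_p(\mu,\nu)$: the renormalization displaces a vanishing amount of mass, but possibly across unbounded distances, so one needs finite $p$-th moments or an additional argument. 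These are corner cases, but the statement of the lemma admits them, and they are precisely where work beyond Theorem \ref{teo: pullback guidato} would be required.
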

%	\begin{teo}[\cite{FuSh13}]
%		Let $(\X_n, d_n, m_n), (\X, d, m)$ be n.m.m.s. with $\X_n\to \X$ in concentration. If $\X_n$ is $\CD(K, \infty)$ for all $n\geq 1$ and some $K\in\mathbb R$, then $\X$ is $\CD(K, \infty)$.
%	\end{teo}
%	\begin{teo}[\cite{OzYo19}]
%		Let $(\X_n, d_n, m_n), (\X, d, m)$ be n.m.m.s. with $\X_n\to \X$ in concentration and with $\X_n$ satisfying $\CD(K, \infty)$ for all $n\geq 1$. Then $\ch_n \xrightarrow{\Gamma} \ch$ with respect to strong $\L^2$ convergence. In particular, if $\X_n$ is $R\CD(K, \infty)$ for all $n\geq 1$ then $\X$ is as well.
%	\end{teo}
	Finally, we conclude this part with some simple technical results that we will use in the following.	
	\begin{lm} \label{lemma: tight}
		Given $\varepsilon>0$ and $C>0$ there exists $\delta=\delta(\varepsilon, C)$ such that, whenever $(\Y, \mm)$ is a probability space, $\mu\in \pr(\Y)$ and $\ent(\mu)\leq C$ then $\mm(E)<\delta$ implies $\mu(E)<\varepsilon$ for every $E\in\mathcal{B}(\Y)$.
	\end{lm}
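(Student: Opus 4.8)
The plan is to exploit the super-linear growth of the integrand $t\mapsto t\log t$ to deduce equi-absolute continuity, a routine de la Vall\'ee Poussin-type estimate. First I would observe that, since $\ent(\mu)\le C<\infty$, by definition of the Boltzmann entropy the measure $\mu$ is absolutely continuous with respect to $\mm$; write $\mu=\rho\,\mm$ with $\rho\ge0$ and $\int_\Y\rho\,\d\mm=1$. The whole point is then to control the mass of $\mu$ carried by the region where $\rho$ is large, uniformly in $(\Y,\mm,\mu)$.

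The key step is the following tail estimate. For any threshold $M>1$ I split
\begin{equation}
\ent(\mu)=\int_{\{\rho\le M\}}\rho\log\rho\,\d\mm+\int_{\{\rho> M\}}\rho\log\rho\,\d\mm,
\end{equation}
and use the pointwise bound $\rho\log\rho\ge -1/e$ together with $\mm(\{\rho\le M\})\le1$ to get $\int_{\{\rho\le M\}}\rho\log\rho\,\d\mm\ge -1/e$; hence $\int_{\{\rho> M\}}\rho\log\rho\,\d\mm\le C+1/e$. On $\{\rho>M\}$ we have $\log\rho>\log M>0$, so $\rho\le(\log M)^{-1}\rho\log\rho$, which yields
\begin{equation}
\int_{\{\rho>M\}}\rho\,\d\mm\le \frac{C+1/e}{\log M}.
\end{equation}

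To conclude, given $\varepsilon>0$ I would first fix $M>1$ so large that $(C+1/e)/\log M<\varepsilon/2$ (this choice depends only on $\varepsilon$ and $C$), and then set $\delta:=\varepsilon/(2M)$. For any Borel $E$ with $\mm(E)<\delta$ one splits
\begin{equation}
\mu(E)=\int_{E\cap\{\rho\le M\}}\rho\,\d\mm+\int_{E\cap\{\rho>M\}}\rho\,\d\mm\le M\,\mm(E)+\frac{C+1/e}{\log M}<M\delta+\frac{\varepsilon}{2}=\varepsilon.
\end{equation}
Since $M$ and $\delta$ depend only on $\varepsilon$ and $C$, and not on $\Y$, $\mm$ or $\mu$, the claim follows.

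This argument presents no genuine obstacle; it is a standard uniform-integrability estimate. The only point requiring a little care is that $t\log t$ is negative on $(0,1)$, so the global bound $\ent(\mu)\le C$ does not directly bound the tail $\int_{\{\rho>M\}}\rho\log\rho\,\d\mm$. This is exactly why one inserts the pointwise lower bound $-1/e$ to absorb the (possibly negative) contribution of the sublevel set $\{\rho\le M\}$ before extracting the tail estimate.
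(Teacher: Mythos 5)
Your proof is correct and follows essentially the same route as the paper's: split $\mu(E)$ according to whether the density $\rho$ exceeds a threshold $M$, bound the high-density part by $(\log M)^{-1}\int\rho\log\rho$ via the entropy bound and the low-density part by $M\,\mm(E)$. The only (immaterial) difference is bookkeeping: you fix $M=M(\varepsilon,C)$ first and then take $\delta=\varepsilon/(2M)$, whereas the paper couples the two by choosing $M=\delta^{-1/2}$ and letting $\delta\to0$; your explicit $-1/e$ lower bound for the negative part is in fact slightly more careful than the paper's constant.
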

	\begin{proof}
		Let $\mu=\rho \mm$. For $M>1$ holds 
		\begin{align}
			\mu(E)&=\int_E \rho\, \d\mm=\int_{E\cap\{\rho\geq M\}}\rho\, \d\mm+\int_{E\cap\{\rho<M\}}\rho\, \d\mm\\
			&\leq \int_{E\cap\{\rho\geq M\}}\frac{\rho\log(\rho)+1}{\log(M)}\, \d\mm+M\mm(E)\\
			&\leq \frac{C+1}{\log(M)}+M\delta.
		\end{align}
		Choosing $M=\delta^{-\frac{1}{2}}$, it follows $\mu(E)\leq  2\frac{C+1}{|\log(\delta)|}+\delta^{\frac{1}{2}}$. Finally, by choosing $\delta$ small enough, the thesis holds.
	\end{proof}
	\begin{cor}
		Let $(\X_n, \sfd_n, \mm_n)\to (\X, \sfd, \mm)$ in concentration and let $\mu_n\in \pr (\X_n)$ be such that $\sup_n \entm{\mm_n}(\mu_n)<+\infty$. Then $(\mu_n)$ is equi--integrable.
	\end{cor}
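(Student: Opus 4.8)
The plan is to read off the conclusion directly from the quantitative estimate in Lemma \ref{lemma: tight}, whose whole force lies in the fact that the threshold $\delta$ produced there depends only on $\varepsilon$ and on the entropy bound $C$, and \emph{not} on the particular probability space under consideration. Set $C\coloneqq \sup_n \entm{\mm_n}(\mu_n)$, which is finite by hypothesis.

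First I would fix $\varepsilon>0$ and invoke Lemma \ref{lemma: tight} with this $\varepsilon$ and with the constant $C$, obtaining a $\delta=\delta(\varepsilon,C)>0$ with the property that for \emph{any} probability space $(\Y,\mm)$ and \emph{any} $\mu\in\pr(\Y)$ with $\ent(\mu)\leq C$ one has $\mm(E)<\delta\Rightarrow\mu(E)<\varepsilon$ for all $E\in\B(\Y)$. Applying this uniform statement to each triple $(\X_n,\sfd_n,\mm_n)$ together with the measure $\mu_n$ — which is legitimate precisely because $\entm{\mm_n}(\mu_n)\leq C$ for every $n$ — yields the implication
\begin{equation}
	\mm_n(E)<\delta\ \Longrightarrow\ \mu_n(E)<\varepsilon\qquad\text{for all }E\in\B(\X_n)\text{ and all }n.
\end{equation}

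Next I would test this against an arbitrary sequence of Borel sets $E_n\subset\X_n$ with $\mm_n(E_n)\to 0$, as required by Definition \ref{def: equi--int}. Since $\mm_n(E_n)\to 0$, there is $N$ such that $\mm_n(E_n)<\delta$ for all $n\geq N$, whence $\mu_n(E_n)<\varepsilon$ for all $n\geq N$. Thus $\lims_n \mu_n(E_n)\leq\varepsilon$, and letting $\varepsilon\downarrow 0$ gives $\lim_n\mu_n(E_n)=0$, which is exactly the equi--integrability of $(\mu_n)$.

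I expect no serious obstacle here: the only point worth flagging is the uniformity of $\delta$ across the spaces $\X_n$, which is already built into the statement of Lemma \ref{lemma: tight}. It is in fact worth remarking that the convergence in concentration hypothesis plays no role in this argument — only the uniform bound $\sup_n\entm{\mm_n}(\mu_n)<\infty$ is used — so the conclusion holds verbatim for any sequence of n.m.m.s.\ with uniformly bounded entropies.
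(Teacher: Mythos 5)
Your proof is correct and is precisely the argument the paper intends: the corollary is stated right after Lemma \ref{lemma: tight} with no proof, exactly because the uniformity of $\delta(\varepsilon,C)$ in the underlying probability space --- the point you flag --- makes the deduction immediate, just as you carry it out. Your closing remark that convergence in concentration is never used is also accurate; the hypothesis is only there because that is the setting of the paper.
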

	\begin{lm}[\cite{OzYo19}]\label{lemma: semicontinuità uniforme}
		Let $\X$ be a topological space and $\mm, \mu, \nu\in \pr (\X)$. Then for all $\varepsilon, M>0$ there exists $\delta=\delta (\varepsilon, M)$ (not depending on $\X$) such that whenever $\nu\leq M \mm$ and $\|\nu-\mu\|_{TV}\leq \delta$ it holds
		\begin{equation}
			\ent(\nu)\leq \ent(\mu)+\varepsilon.
		\end{equation}
	\end{lm}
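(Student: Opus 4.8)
The plan is to reduce everything to elementary properties of the convex function $\phi(t):=t\log t$ on $[0,\infty)$ and to exploit the asymmetry of the hypothesis, namely that only $\nu$ is required to have bounded density. First I would dispose of degenerate cases. If $M<1$ there is no probability measure $\nu$ with $\nu\le M\mm$, since integrating $\d\nu/\d\mm\le M<1$ against the probability measure $\mm$ would force $1\le M<1$; so the statement is vacuous and any $\delta$ works, and I may assume $M\ge 1$. If $\mu$ is not absolutely continuous with respect to $\mm$ then $\ent(\mu)=+\infty$ and the inequality is automatic; hence I may write $\mu=f\mm$ and $\nu=g\mm$ with $0\le g\le M$, and recall that $\int_\X|g-f|\,\d\mm=\|\nu-\mu\|_{TV}$.

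With these reductions the quantity to estimate is
\[
\ent(\nu)-\ent(\mu)=\int_\X\big(\phi(g)-\phi(f)\big)\,\d\mm .
\]
The key observation is that on $\{f>M\}$ the integrand is nonpositive: since $M\ge 1>\tfrac1e$, the function $\phi$ is nondecreasing on $[M,\infty)$ and attains its maximum over $[0,M]$ at the endpoint, so $\phi(g)\le\phi(M)\le\phi(f)$ whenever $g\le M<f$. Thus the region where $f$ is large — precisely where $\ent(\mu)$ might be enormous — only helps, and it suffices to estimate the integral over $\{f\le M\}$:
\[
\ent(\nu)-\ent(\mu)\le\int_{\{f\le M\}}\big(\phi(g)-\phi(f)\big)\,\d\mm\le\int_{\{f\le M\}}|\phi(g)-\phi(f)|\,\d\mm .
\]

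On $\{f\le M\}$ both densities take values in the compact interval $[0,M]$, and on $[0,M]$ the function $\phi$ is $\tfrac12$-Hölder continuous with a constant $L_M$ depending only on $M$ (the logarithmic blow-up of $\phi'$ near $0$ is milder than any power, so $\int_0^s(1+|\log t|)\,\d t\le s(2-\log s)=o(s^{1/2})$ as $s\downarrow0$). Hence, writing $h:=|g-f|\,\mathbbm{1}_{\{f\le M\}}\in[0,M]$ and using that $\mm$ is a probability measure, the concavity of $t\mapsto t^{1/2}$ (Jensen, or Cauchy–Schwarz) gives
\[
\int_{\{f\le M\}}|\phi(g)-\phi(f)|\,\d\mm\le L_M\int_\X h^{1/2}\,\d\mm\le L_M\Big(\int_\X h\,\d\mm\Big)^{1/2}\le L_M\,\|\nu-\mu\|_{TV}^{1/2}\le L_M\,\delta^{1/2}.
\]
Choosing $\delta=\delta(\varepsilon,M):=(\varepsilon/L_M)^2$ then yields $\ent(\nu)\le\ent(\mu)+\varepsilon$, and $\delta$ depends only on $\varepsilon$ and $M$ through the Hölder constant $L_M$, not on the space $\X$, as required. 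The only genuinely delicate point is the sign analysis on $\{f>M\}$, which is exactly what encodes why a one-sided density bound on $\nu$ alone suffices; the rest is a routine modulus-of-continuity estimate.
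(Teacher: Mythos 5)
Your argument is correct, and the first thing to say is that there is nothing in the paper to compare it against: the lemma is stated with the citation \cite{OzYo19} and no proof is reproduced, so your proof is a self-contained substitute rather than a variant of an internal argument. The mathematical content holds up. The reductions (vacuity for $M<1$; triviality when $\mu\not\ll\mm$) are fine; the central observation, that on $\{f>M\}$ one has $\phi(g)\le\phi(M)\le\phi(f)$ because $M\ge1$ makes $\phi(t)=t\log t$ maximal over $[0,M]$ at the right endpoint and nondecreasing on $[M,\infty)$, is exactly the mechanism by which the one-sided bound $\nu\le M\mm$ suffices; and on $\{f\le M\}$ the uniform $\tfrac12$-H\"older bound for $\phi$ on $[0,M]$ together with Jensen's inequality (here it matters that $\mm$ is a probability measure, which is also what keeps $\delta$ independent of $\X$) gives the claim.

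Three points deserve a touch-up in the write-up. (i) Even after reducing to $\mu=f\mm$ one may have $\ent(\mu)=+\infty$; either dispose of this trivial case as well, or --- cleaner --- never form the difference $\ent(\nu)-\ent(\mu)$: integrate the pointwise inequality $\phi(g)\le\phi(f)+L_M|g-f|^{1/2}\mathbbm{1}_{\{f\le M\}}$, which is legitimate because $\phi\ge-1/e$ makes both entropy integrals well defined in $(-1/e,+\infty]$. (ii) Your justification of the H\"older constant only bounds $\int_0^s|\phi'(t)|\,\d t$, i.e.\ intervals anchored at the origin; for an arbitrary interval $[s,t]\subset[0,M]$ use $|\phi'(r)|=|1+\log r|\le(1-\log r)\mathbbm{1}_{\{r\le1\}}+(1+\log M)$, so that away from the origin one has a Lipschitz bound with constant $1+\log M$, and splitting an interval at the point $1$ combines the two regimes; note that any H\"older exponent in $(0,1)$ would work, $\tfrac12$ is not special. (iii) The factor-of-two ambiguity in the normalization of $\|\cdot\|_{TV}$ only rescales $\delta$. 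None of these affects validity, and your $\delta=(\varepsilon/L_M)^2$ indeed depends only on $\varepsilon$ and $M$, as the statement requires.
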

	\subsection{Stability properties}\label{subsection: stab} 
	Throughout all this section we shall always assume to have a converging sequence $(\X_n,\sfd_n,\mm_n)\xrightarrow{conc}(\X,\sfd,\mm)$, all the spaces being $\CD(K,\infty)$ for some $K\in\R$. We shall also fix projection maps $p_n:\X_n\to\X$ as in Proposition \ref{prop: caratterizzazione}.
\begin{proof}[Proof of Theorem \ref{teo: liminf slope}]
Let us start with the additional assumption that for some $M>0$ we have $\mu_n\leq M\mm_n$ for every $n\in\N$. Recall that
		\begin{equation}
		\label{eq:reprslope}
 |\partial^-\ent (\mu)|=\sup_{\mu_2\neq \mu}\left(\frac{\ent (\mu)-\ent (\mu_2)}{\W_2(\mu, \mu_2)}+\frac{K}{2}\W_2(\mu, \mu_2)\right)^+
		\end{equation}
and notice that by an approximation argument it is enough to consider probability measures $\mu_2$ with bounded density. Thus fix such   $\mu_2\in \pr (\X)$ and let $\mu_{2, n}\in \pr (\X_n)$ be such that $\mu_{2, n}\rightharpoonup\mu_2$, $\entm{\mm_n}(\mu_{2, n})\to \ent (\mu_2)$ and $\big|\frac{\d\mu_{2, n}}{\d \mm_n}\big|\leq\big\|\frac{\d\mu_2}{\d\mm}\big\|_{\infty}$ (recall Lemma \ref{lemma: approssimazione}).

		Let $(\nu_n)$ be the sequence given by Theorem \ref{teo: pullback guidato} (with $\eps=1$). Since both $\mu_n$ and $\mu_{2, n}$ have equibounded densities, by \eqref{eq:boundnun} $\nu_n$  does too. Then Lemma \ref{lemma: semicontinuità uniforme} and \eqref{eq:boundTV} give $\ent (\mu_2)\geq \lims_n \entm{\mm_n}(\nu_n)$ and therefore using also \eqref{eq:boundW2} we get
\begin{align}
\frac{\ent (\mu)-\ent(\mu_2)}{\W_2(\mu, \mu_2)}+\frac{K}{2}\W_2(\mu, \mu_2)&\leq \limi_{n\to\infty}\frac{\entm{\mm_n}(\mu_n)-\ent{\mm_n}(\nu_n)}{\W_2(\mu_n, \nu_n)}+\frac{K}{2}\W_2(\nu_n, \mu_n)\\
\text{(by \eqref{eq:reprslope})}\qquad\qquad&\leq \limi_{n\to\infty}  |\partial^-\entm{\mm_n} (\mu_n)|.
\end{align}
By the generality of $\mu_2$ the conclusion follows.

It remains to prove that we can reduce to the case of the $\mu_n$'s having uniformly bounded density.  For $M>0$ define   $\mu^M_n=c_{n, M}\rho_n^M\,\mm_n$, where $\mu_n=\rho_n \mm_n$ and $\rho^M_n\coloneqq \rho_n\wedge M$ and $c_{n,M}$ is the normalization constant. The equi--integrability of $(\mu_n)$ and Lemma \ref{lemma: troncamenti} gives $\lim_{M\to \infty} \sup_n c_{n, M}=1$ and then Lemma \ref{lemma: compattezza equi-int}  yield that, up to a subsequence in $n$, we have
\[
\begin{split}
\mu_n^M &\xrightharpoonup[n\to\infty]{} \mu^M\qquad \text{for some }\quad \mu^M\in\pr(\X)\quad \text{and every }M\in\N,\\
\mu^M&\xrightharpoonup[M\to \infty]{} \mu.
\end{split}
\]
In particular we have  $\mu^M_n\leq 2M\mm_n$ for every $n\in\N$ provided $M$ large enough and thus the above argument yields $|\partial^-\ent (\mu^M)|\leq\limi_n|\partial^-\entm{\mm_n} (\mu^M_n)|$. Since $|\partial^-\ent (\mu)|\leq \limi_M|\partial^-\ent (\mu^M)|$ holds by lower semicontinuity of the slope on $\X$, the conclusion follows from the key identity \eqref{eq:slopecheeger}  and the fact that Cheeger energy decreases when truncating.
\end{proof}

	Now, building on the $\Gamma-\limi$ of the slope of the entropy, we can recover most of the stability results proven in \cite{Gi10} and \cite{GiMoSa15} for mGH convergence, see also the survey \cite{Gi23}.
	
	We start with the stability of the gradient flow of the entropy.
	\begin{thm}[Convergence of the gradient flow of $\ent$] \label{teo: conv heat}
		Let $\mu_n\in\pr(\X_n)$ be weakly converging to $\mu$ and  such that $\entm{\mm_n}(\mu_n)\to \ent(\mu)<+\infty$.
		
		Let $t\mapsto\mu_{n,t}$ be the (only, by \cite{Gi10}, see also the presentation in \cite{AmGiSa14}) $\W_2$-gradient flow of $\entm{\mm_n}$ starting from $\mu_n$, and analogously $t\mapsto\mu_t$.
		Then 
		\begin{itemize}
			\item  $\mu_{n,t} \rightharpoonup\mu_t$ for all $t>0$;
			\item $\entm{\mm_n}(\mu_{n,t})\to\ent(\mu_t)$ for all $t>0$;
			\item $|\partial^-\entm{\mm_n} (\mu_{n,t})|\to |\partial^-\ent (\mu_t)|$ for all $t\in(0, +\infty)\setminus \mathcal S$, where $\mathcal S$ is the discontinuity set of $t\mapsto |\partial^-\ent (\mu_t)|$.
		\end{itemize}
	\end{thm}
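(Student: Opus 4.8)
The plan is to follow the classical strategy for stability of gradient flows under $\Gamma$-convergence of the driving functionals together with lower semicontinuity of the slopes, here adapted to the varying-space setting. The starting point is the energy dissipation identity of Remark \ref{oss: EDE}: for every $n$ and $t>0$,
\[
\entm{\mm_n}(\mu_n)=\entm{\mm_n}(\mu_{n,t})+\tfrac12\int_0^t|\dot\mu_{n,r}|^2\,\d r+\tfrac12\int_0^t|\partial^-\entm{\mm_n}|(\mu_{n,r})^2\,\d r.
\]
Since $\mm_n\in\pr(\X_n)$, Jensen's inequality gives $\entm{\mm_n}\geq 0$, so each of the three terms on the right is bounded by $\sup_n\entm{\mm_n}(\mu_n)<\infty$; in particular the metric speeds and slopes are uniformly square-integrable in time. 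As the entropy is non-increasing along its flow we also get $\sup_{n,t}\entm{\mm_n}(\mu_{n,t})<\infty$, whence the uniform entropy bound makes $\{\mu_{n,t}\}_{n,t}$ equi-integrable (Lemma \ref{lemma: tight} and the corollary following it).

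First I would set up compactness. Looking at the pushforwards $(p_n)_\#\mu_{n,t}\in\pr(\X)$, equi-integrability yields tightness at each fixed $t$ (Lemma \ref{lemma: compattezza equi-int}), while the uniform bound $\W_2(\mu_{n,s},\mu_{n,t})\leq C|t-s|^{1/2}$ coming from the speed estimate, transported through the almost $1$-Lipschitz maps $p_n$ (item \eqref{it:appr1lip} of Proposition \ref{prop: caratterizzazione}), gives asymptotic equicontinuity of the curves $t\mapsto (p_n)_\#\mu_{n,t}$. A refined Arzelà--Ascoli argument then produces a subsequence and a limit curve $t\mapsto\nu_t$ with $\mu_{n,t}\rightharpoonup\nu_t$ for every $t$; in particular $\nu_0=\mu$ since $\mu_n\rightharpoonup\mu$. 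I expect this compactness step to be the main technical obstacle: one must make Arzelà--Ascoli work across the varying spaces and transfer equicontinuity through projections that are only approximately Lipschitz and only on the non-exceptional domains $K_n$.

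Next I would identify $\nu_\cdot$ as the gradient flow on $\X$. Passing to the $\liminf$ in the dissipation identity and using, for the three terms respectively, the $\Gamma$-$\liminf$ of the entropy (Theorem \ref{teo: Gamma ent}), the lower semicontinuity of the action built from the $\W_2$-semicontinuity \eqref{eq: semicont W2} for the speed term, and Fatou's lemma combined with the pointwise slope $\Gamma$-$\liminf$ (Theorem \ref{teo: liminf slope}, applicable since each $\{\mu_{n,r}\}_n$ is equi-integrable and all spaces are $\CD(K,\infty)$) for the slope term, I obtain
\[
\ent(\mu)\geq \ent(\nu_t)+\tfrac12\int_0^t|\dot\nu_r|^2\,\d r+\tfrac12\int_0^t|\partial^-\ent|(\nu_r)^2\,\d r,
\]
which is exactly the EDI \eqref{eq: EDI} for $\nu_\cdot$ starting at $\mu$. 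By uniqueness of the $\W_2$-gradient flow (Theorem \ref{teo: monot slope}) we conclude $\nu_t=\mu_t$; since the limit is independent of the extracted subsequence, the full sequence converges, which is the first bullet.

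Finally, for the second and third bullets I would exploit that the limit flow saturates the dissipation (Remark \ref{oss: EDE}). Writing $a_n=\entm{\mm_n}(\mu_{n,t})$, $b_n=\tfrac12\int_0^t|\dot\mu_{n,r}|^2\,\d r$ and $c_n=\tfrac12\int_0^t|\partial^-\entm{\mm_n}|(\mu_{n,r})^2\,\d r$, the identity for each $n$ gives $a_n+b_n+c_n\to\ent(\mu)$, while the three semicontinuity bounds above give $\liminf a_n\geq\ent(\mu_t)$, $\liminf b_n\geq\tfrac12\int_0^t|\dot\mu_r|^2\,\d r$ and $\liminf c_n\geq\tfrac12\int_0^t|\partial^-\ent|(\mu_r)^2\,\d r$, whose sum equals $\ent(\mu)$ by the EDE for $\mu_\cdot$. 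The elementary ``sum of $\liminf$s'' principle then forces $a_n$, $b_n$, $c_n$ to converge separately to their lower bounds, which yields the second bullet and, crucially, the convergence $\int_0^t|\partial^-\entm{\mm_n}|(\mu_{n,r})^2\,\d r\to\int_0^t|\partial^-\ent|(\mu_r)^2\,\d r$ for all $t$. For the slopes themselves I would use the monotonicity of $r\mapsto e^{Kr}|\partial^-\entm{\mm_n}|(\mu_{n,r})$ from Theorem \ref{teo: monot slope} (which also shows $g(r):=|\partial^-\ent|(\mu_r)$ has at most countably many discontinuities, i.e.\ $\mathcal S$ is countable): at a continuity point $t_0\notin\mathcal S$, sandwiching $|\partial^-\entm{\mm_n}|(\mu_{n,t_0})^2$ between the averages $\tfrac1h\int_{t_0-h}^{t_0}|\partial^-\entm{\mm_n}|(\mu_{n,r})^2\,\d r$ and $\tfrac1h\int_{t_0}^{t_0+h}|\partial^-\entm{\mm_n}|(\mu_{n,r})^2\,\d r$ up to factors $e^{\pm2|K|h}$ from the monotonicity, then letting $n\to\infty$ using the integral convergence and $h\downarrow0$ using that $t_0$ is a continuity point, the $\Gamma$-$\liminf$ lower bound pins down $|\partial^-\entm{\mm_n}|(\mu_{n,t_0})\to g(t_0)$, which is the third bullet.
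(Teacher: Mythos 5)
Your proposal is correct and follows essentially the same strategy as the paper's proof: uniform bounds and equi-H\"older continuity from the dissipation identity, an Ascoli--Arzel\`a extraction combined with \eqref{eq: semicont W2}, identification of the limit curve as the gradient flow via the three semicontinuity results (Theorem \ref{teo: Gamma ent}, lower semicontinuity of the action, Theorem \ref{teo: liminf slope}) plus uniqueness, then forcing separate convergence of the three terms of the EDE and upgrading the integral convergence of the slopes to pointwise convergence at continuity points via the monotonicity in Theorem \ref{teo: monot slope}. Your write-up merely makes explicit some steps the paper leaves implicit (Fatou for the slope term, the averaged sandwich argument), but the route is the same.
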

	\begin{proof} We closely follow the arguments in \cite{Gi10}. By the energy dissipation inequality \eqref{eq: EDI}, the curves are equibounded in entropy and equi--$\W_2$--H\"older. Thus by \eqref{eq: semicont W2} and an Ascoli--Arzelà-type argument, it is easy to see that any subsequence of the curves has a weakly convergent sequence. Up to the extraction of a non relabelled subsequence, let us suppose that $\mu_{n, t}\rightharpoonup \nu_t$ for all $t\geq 0$ and some $(\nu_t)\subset\pr(\X)$. Let us start by proving that $(\nu_t)$ is a gradient flow of $\ent$, which,  by the uniqueness of the gradient flow, yields $\nu_t=\mu_t$ for every $t\geq 0$. The representation formula 
	\[
	\int_0^t |\dot{\mu}_r|^2\,\d r=\sup\sum_i\frac{\W_2^2(\mu_{t_i},\mu_{t_{i+1}})}{2(t_{i+1}-t_i)},
	\]
the sup being taken among all $I\in \N$ and partitions $0=t_0<\cdots<t_I=t$ of $[0,t]$, and again \eqref{eq: semicont W2} yield that	
		\begin{equation}\label{eq: lsc energy}
			\int_0^t |\dot{\nu}_r|^2\,\d r\leq \limi_{n\to\infty} \int_0^t |\dot{\mu}_{n, r}|^2\,\d r,\qquad\forall t>0.
		\end{equation}
We now see that the left hand side of \eqref{eq: EDI} converges by assumption, while the right hand side may only decrease in the limit by \eqref{eq: lsc energy} and Theorems \ref{teo: Gamma ent} and \ref{teo: liminf slope}. This shows that $(\nu_t)$ is a gradient flow of $\ent$, as desired.
		
		Now, by Remark \ref{oss: EDE} and the semicontinuity results, the three terms in the right hand side of \eqref{eq: EDI} must all converge: in particular $\entm{\mm_n}(\mu_{n, t})\to\ent(\mu_t)$ for all $t>0$ and 
		\begin{equation}
				\int_0^t|\partial^-\entm{\mm_n} (\mu_{n,r})|^2\,\d r\to \int_0^t |\partial^-\ent (\mu_r)|^2\,\d r.
		\end{equation}
		By Theorem \ref{teo: liminf slope} and the monotonicity statement in Theorem \ref{teo: monot slope}, the convergence is pointwise in all continuity points of the limit.
	\end{proof}
	With the help of the convergence of the gradient flow of the entropy, we recover the full $\Gamma$--convergence of the slope.
	\begin{thm}[$\Gamma$-convergence of the slope]\label{teo: Gamma slope} For any $\mu\in\pr(\X)$, $\mu\ll\mm$, we have
	\[
\begin{split}
 |\partial^-\ent (\mu)|&\leq  |\partial^-\entm{\mm_n} (\mu_{n})|\qquad \text{for every $n\mapsto\mu_n\in\pr(\X_n)$ equi-integrable with $\mu_n\weakto\mu$,}\\
 |\partial^-\ent (\mu)|&\geq  |\partial^-\entm{\mm_n} (\mu_{n})|\qquad \text{for some $n\mapsto\mu_n\in\pr(\X_n)$ equi-integrable with $\mu_n\weakto\mu$.}
\end{split}
\]
	\end{thm}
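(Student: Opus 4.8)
The first inequality is precisely the content of Theorem \ref{teo: liminf slope}: for any equi--integrable sequence $\mu_n\rightharpoonup\mu$ that theorem gives $|\partial^-\ent(\mu)|\leq\limi_n|\partial^-\entm{\mm_n}(\mu_n)|$, which is exactly the $\Gamma$--$\limi$ bound. Hence only the construction of a recovery sequence (the $\Gamma$--$\lims$ bound) needs an argument, and we may assume $|\partial^-\ent(\mu)|<+\infty$, hence also $\ent(\mu)<+\infty$. In the remaining case $|\partial^-\ent(\mu)|=+\infty$ the inequality is automatic for \emph{any} equi--integrable recovery sequence, and such a sequence is furnished by Theorem \ref{teo: Gamma ent} when $\ent(\mu)<+\infty$ and by a truncation argument otherwise.

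The plan is to exploit the regularizing effect of the gradient flow. By Theorem \ref{teo: Gamma ent} there is $\mu_n^0\in\pr(\X_n)$ with $\mu_n^0\rightharpoonup\mu$ and $\entm{\mm_n}(\mu_n^0)\to\ent(\mu)$; since $\sup_n\entm{\mm_n}(\mu_n^0)<+\infty$, the equi--integrability criterion following Lemma \ref{lemma: tight} shows that $(\mu_n^0)$ is equi--integrable. Let $t\mapsto\mu_{n,t}$ be the $\W_2$--gradient flow of $\entm{\mm_n}$ issuing from $\mu_n^0$ and $t\mapsto\mu_t=\hgf{\mu}$ the one of $\ent$ issuing from $\mu$. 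Theorem \ref{teo: conv heat} then applies and yields, for every $t>0$,
\begin{equation}
\mu_{n,t}\rightharpoonup\mu_t,\qquad \entm{\mm_n}(\mu_{n,t})\to\ent(\mu_t),
\end{equation}
together with $|\partial^-\entm{\mm_n}(\mu_{n,t})|\to|\partial^-\ent(\mu_t)|$ for every $t\notin\mathcal S$, where $\mathcal S$ (the discontinuity set of $t\mapsto|\partial^-\ent(\mu_t)|$) is at most countable.

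Next I would control the limiting slope as $t\downarrow0$. By the monotonicity in Theorem \ref{teo: monot slope}, $t\mapsto e^{Kt}|\partial^-\ent(\mu_t)|$ is non--increasing; evaluating at $t=0$ (where $\mu_0=\mu$) gives $|\partial^-\ent(\mu_t)|\leq e^{-Kt}|\partial^-\ent(\mu)|$, whence $\lims_{t\downarrow0}|\partial^-\ent(\mu_t)|\leq|\partial^-\ent(\mu)|$. Moreover $\W_2(\mu_t,\mu)\to0$ as $t\downarrow0$, so $\mu_t\rightharpoonup\mu$. Choosing $t_j\downarrow0$ with $t_j\notin\mathcal S$ along which $|\partial^-\ent(\mu_{t_j})|$ realizes $\lims_{t\downarrow0}|\partial^-\ent(\mu_t)|$, a diagonal argument produces $N_1<N_2<\cdots$ so that the sequence $\nu_n:=\mu_{n,t_j}$ for $N_j\leq n<N_{j+1}$ satisfies $\nu_n\rightharpoonup\mu$ (combining $\mu_{n,t_j}\rightharpoonup\mu_{t_j}$ in $n$ with $\mu_{t_j}\rightharpoonup\mu$ in $j$) and
\begin{equation}
\lims_n|\partial^-\entm{\mm_n}(\nu_n)|\leq\lims_{t\downarrow0}|\partial^-\ent(\mu_t)|\leq|\partial^-\ent(\mu)|.
\end{equation}
Finally, since the entropy does not increase along its gradient flow, $\entm{\mm_n}(\nu_n)\leq\entm{\mm_n}(\mu_n^0)$ stays uniformly bounded, so $(\nu_n)$ is equi--integrable by the same criterion as above; this is the required recovery sequence.

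The technically delicate point is the diagonalization in the last step: one must select $t_j\downarrow0$ avoiding the countable set $\mathcal S$ while simultaneously realizing $\lims_{t\downarrow0}|\partial^-\ent(\mu_t)|$ and arranging, through a metric for narrow convergence on $\pr(\X)$, that the slow merging of the two limits $(p_n)_\#\mu_{n,t_j}\to\mu_{t_j}$ (in $n$) and $\mu_{t_j}\to\mu$ (in $j$) still yields $(p_n)_\#\nu_n\rightharpoonup\mu$. Everything else is bookkeeping on the already established Theorems \ref{teo: conv heat} and \ref{teo: monot slope}.
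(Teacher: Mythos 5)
Your proposal is correct and follows essentially the same route as the paper's proof (which itself repeats \cite[Theorem 5.14]{GiMoSa15}): the $\limi$ bound from Theorem \ref{teo: liminf slope}, and the recovery sequence obtained by starting the gradient flows from the entropy-recovery sequence of Theorem \ref{teo: Gamma ent}, invoking Theorem \ref{teo: conv heat} for convergence along times $t_k\downarrow 0$, bounding the limiting slope via the monotonicity in Theorem \ref{teo: monot slope}, and diagonalizing. Your explicit verification of the equi-integrability of the resulting sequence (via the uniform entropy bound and the corollary to Lemma \ref{lemma: tight}) is a detail the paper leaves implicit, but it is the same argument.
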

	\begin{proof}
		The first part is precisely the content of Theorem \ref{teo: liminf slope}. The proof of the approximation result is the same as \cite[Theorem 5.14]{GiMoSa15}, that we repeat for completeness. We can assume that $|\partial^-\ent (\mu)|<\infty$ (and thus that $\ent(\mu)<\infty$) or otherwise there is nothing to prove. Use Theorem \ref{teo: Gamma ent} to find  $\mu_n\in\mathcal \pr(\X_n)$ weakly converging to $\mu$ and such that $\entm{\mm_n}(\mu_n)\to \ent(\mu)$. Use  Theorem \ref{teo: conv heat} to find  sequence $t_k\downarrow 0$ such that $ |\partial^-\entm{\mm_n} (\mu_{n,t_k})|\to |\partial^-\ent (\mu_{t_k})|$ as $n\to\infty$ for every $k\in\N$, where the curves $(\mu_{n,t}),(\mu_t)$ are the gradient flows of the entropies starting from $\mu_n,\mu$ respectively. Then   Theorem \ref{teo: monot slope} gives
\[
\begin{split}
\lim_{k\to\infty} \lim_{n\to\infty}\entm{\mm_n}(\mu_{n,t_k})&=\ent(\mu),\\
\lims_{k\to\infty} \lim_{n\to\infty}|\partial^-\entm{\mm_n}| (\mu_{n,t_k})&\leq  |\partial^-\ent |(\mu)
\end{split}
\]
and the desired recovery sequence can be built by a diagonal argument.
	\end{proof}

\begin{rmk}
	Notice also that coupling Proposition \ref{proposition: limsup} with Theorem \ref{teo: Slope=Cheeger} we could have immediately obtained the $\Gamma$--$\lims$ inequality for the slope of the entropy, even without any $\CD$ condition on the spaces.
\end{rmk}
	
	Our next goal is to prove of Mosco convergence, i.e. the $\Gamma$--convergence with respect to both the weak and the strong $\L^2$ topologies, of the Cheeger energy. From \cite{OzYo19} we already know that the $\Gamma$--convergence holds with respect to the strong topology. Their proof, though, seems to quite heavily rely on the strong convergence in order to control the oscillations ``on the fibers'' of the converging functions. In order to deal with those oscillations, we introduce a Rellich--type theorem - whose statement and proof closely follows that of \cite{GiMoSa15} - which basically allows to reduce to the strong convergence case. While doing so, we recover Ozawa--Yokota's result with slightly different methods.
	\begin{prop}[Rellich-type compactness]\label{prop: rellich}
		Let $f_n\in \L^2(\X_n,\mm_n)$ be a 2--equi--integrable sequence such that $f_n\xrightharpoonup{\L^2}f$, $\sup_n \ch_n(f_n)<+\infty$ and $\bigcup_n p_n(\supp f_n)$ is bounded. 
		
		Then $f_n\xrightarrow{\L^2} f$ and $\ch(f)\leq \limi_n \ch_n(f_n)$.
	\end{prop}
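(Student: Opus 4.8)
The plan is to first upgrade the weak $\L^2$ convergence to strong, and then read off the energy inequality essentially for free. By Proposition \ref{prop: convergenze} and the standing $2$--equi--integrability, strong $\L^2$ convergence is equivalent to $\L^0$ convergence, and the engine producing the latter will be the Young--measure Lemma \ref{lemma: Young}: if we can show that the probability measures $\mu_n:=f_n^2\mm_n$ satisfy $\entm{\mm_n}(\mu_n)\to\ent(\mu)$, where $\mu$ is their weak limit, then applying Lemma \ref{lemma: Young} with $\Theta(s)=s\log s$ on $J=[0,\infty)$ to the densities $f_n^2$ gives $f_n^2\to\tfrac{\d\mu}{\d\mm}$ in $\L^0$, hence $f_n\to\sqrt{\d\mu/\d\mm}$ in $\L^0$ and, with $2$--equi--integrability, strongly in $\L^2$; uniqueness of the weak limit then forces $\sqrt{\d\mu/\d\mm}=f$ and $\mu=f^2\mm$. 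First I would reduce to the convenient case $f_n\ge 0$ (the general case follows by treating $f_n^\pm$ separately, since $\ch_n(f_n^\pm)\le\ch_n(f_n)$ and their weak limits exist along subsequences, and recombining), then to $\|f_n\|_{\L^2(\mm_n)}=1$ (rescaling along a subsequence realizing $\lim_n\|f_n\|_{\L^2}$, the limit $0$ being trivial), and finally to $f_n\le M$ for a fixed $M$ --- the removal of this last truncation being done exactly as in the last paragraph of the proof of Theorem \ref{teo: liminf slope}. Under these reductions $\mu_n\le M^2\mm_n$, it is equi--integrable, $\mu_n\rightharpoonup\mu$, and $\slope{\mm_n}{\mu_n}=8\ch_n(f_n)\le C$ by Theorem \ref{teo: Slope=Cheeger}.

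So everything rests on the \emph{convergence of entropies} $\entm{\mm_n}(\mu_n)\to\ent(\mu)$. The inequality $\ent(\mu)\le\limi_n\entm{\mm_n}(\mu_n)$ is the $\Gamma$--$\limi$ of the entropy, Theorem \ref{teo: Gamma ent}. For the reverse inequality I would exploit the uniform slope bound through its variational characterization \eqref{eq:reprslope}: for every competitor $\sigma\in\pr(\X_n)$,
\[
\entm{\mm_n}(\mu_n)\le \entm{\mm_n}(\sigma)+\slope{\mm_n}{\mu_n}\,\W_2(\mu_n,\sigma)+\tfrac{|K|}{2}\W_2(\mu_n,\sigma)^2 .
\]
The idea is to feed in a competitor that is at the same time $\W_2$--close to $\mu_n$ and total--variation--close to a well prepared sequence. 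Concretely, pick a recovery sequence $\tilde\mu_n\rightharpoonup\mu$ with $\tilde\mu_n\le 2M^2\mm_n$ and $\entm{\mm_n}(\tilde\mu_n)\to\ent(\mu)$ (Lemma \ref{lemma: approssimazione}, noting $\mu\le M^2\mm$), and apply the guided pullback Theorem \ref{teo: pullback guidato} with $\mu_{1,n}=\mu_n$ and $\mu_{2,n}=\tilde\mu_n$, which converge to the \emph{same} $\mu$. This yields $\nu_n\in\pr(\X_n)$ with $\W_2(\mu_n,\nu_n)\to 0$ (since $\W_2(\mu,\mu)=0$), $\|\nu_n-\tilde\mu_n\|_{\sf TV}\to 0$, and $\nu_n\le\mu_n+2\tilde\mu_n\le 5M^2\mm_n$. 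Choosing $\sigma=\nu_n$ above and using $\W_2(\mu_n,\nu_n)\to0$ gives $\lims_n\entm{\mm_n}(\mu_n)\le\lims_n\entm{\mm_n}(\nu_n)$, while the uniform density bound on $\nu_n$ together with $\|\nu_n-\tilde\mu_n\|_{\sf TV}\to0$ lets Lemma \ref{lemma: semicontinuità uniforme} transfer this to $\lims_n\entm{\mm_n}(\nu_n)\le\lim_n\entm{\mm_n}(\tilde\mu_n)=\ent(\mu)$. Hence $\lims_n\entm{\mm_n}(\mu_n)\le\ent(\mu)$, closing the entropy convergence.

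Once $f_n\to f$ strongly in $\L^2$ is established, the energy inequality is immediate: by Corollary \ref{cor: coupling} we have $f_n^2\mm_n\rightharpoonup f^2\mm$ with $f_n^2\mm_n$ equi--integrable, so Theorem \ref{teo: liminf slope} applied to these probability measures and Theorem \ref{teo: Slope=Cheeger} give
\[
8\,\ch(f)=\slope{\mm}{f^2\mm}\le\limi_n\slope{\mm_n}{f_n^2\mm_n}=8\,\limi_n\ch_n(f_n).
\]

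The main obstacle is precisely the reverse entropy inequality $\lims_n\entm{\mm_n}(\mu_n)\le\ent(\mu)$: unlike in the mGH setting there is no geometric control on the oscillation of $f_n$ along the collapsing fibers of $p_n$, so one cannot argue by localization. The whole point is that the guided pullback Theorem \ref{teo: pullback guidato} manufactures a single competitor $\nu_n$ that is simultaneously Wasserstein--negligibly far from $\mu_n$ (so that the slope bound, i.e.\ the Cheeger energy bound, controls the entropy drop) and total--variation--negligibly far from the bounded--density recovery sequence (so that entropy is continuous under the replacement). The remaining delicate bookkeeping --- the reduction to bounded densities and the passage to the truncation limit --- is handled exactly as in Theorem \ref{teo: liminf slope}, the $2$--equi--integrability guaranteeing that the truncation tails are uniformly negligible.
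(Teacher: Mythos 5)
Your proposal is correct and follows essentially the same route as the paper's own proof: reduce to bounded, nonnegative, normalized $f_n$; prove convergence of the entropies of $\mu_n=f_n^2\mm_n$ by combining Lemma \ref{lemma: approssimazione}, the guided pullback Theorem \ref{teo: pullback guidato}, Lemma \ref{lemma: semicontinuità uniforme} and the slope/$K$-convexity inequality; then apply Lemma \ref{lemma: Young} to upgrade to $\L^0$ and hence strong $\L^2$ convergence, and obtain the energy $\limi$ from Theorems \ref{teo: Slope=Cheeger} and \ref{teo: liminf slope}. The only differences are cosmetic (you split $f_n^{\pm}$ where the paper adds a constant to reach nonnegativity, and you write the convexity inequality with $|K|$ in place of $-K$), and they do not affect the argument.
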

	\begin{proof} For $M>0$ let $f_n^M\coloneqq -M\vee f_n\wedge M$ and $f^M$ be any $\L^2$--weak limit point of $n\mapsto f_n^M$. Then $f^M\to f$ in $\L^2$ by 2--equiintegrability and thus taking into account the fact that the Cheeger energies decrease under truncation and the $\L^2$-lower semicontinuity of $\ch$ we may then assume that the $f_n$'s are equibounded in $\L^\infty$. Then, up to adding a constant, we shall also assume that they are nonnegative.
		
		If $\|f_n\|_{\L^2(\mm_n)}\to 0$, there is nothing more to prove, so let us assume that, up to subsequence, the norms converge to a positive constant. Up to multiplying by a converging sequence of constants and possibly passing to a subsequence, we may now assume that $\mu_n\coloneqq f_n^2 \mm_n$ is a sequence of probability measures such that:
		\begin{itemize}
			\item $\mu_n\leq C \mm_n$ for some fixed $C>0$ and all $n\geq 1$;
			\item $ \lim_{n\to\infty}|\partial^-\entm{\mm_n}|({\mu_n})\leq C$ for all $n\geq 1$ (by identity \eqref{eq:slopecheeger});
			\item $\left(p_n\right)_\#\mu_n \to \mu$ in $\W_2$ for some probability measure $\mu$ satisfying $\mu\leq C\mm$ (by weak convergence and uniform bounds on the supports).
		\end{itemize}
		Use Lemma \ref{lemma: approssimazione} to find   $\nu_n\in \pr(\X_n)$ such that $\nu_n\rightharpoonup\mu$, $\nu_n\leq 2C\mm_n$ and $\entm{\mm_n}(\nu_n)\to \ent(\mu)$. Then use Theorem \ref{teo: pullback guidato} to find perturbations $\nu'_n\in\pr(\X_n)$ such that $\W_2(\mu_n, \nu_n')\to 0$
 and, using also Lemma \ref{lemma: semicontinuità uniforme}, $\lims_n \entm{\mm_n}(\nu'_n)\leq \ent(\mu)$. Passing to the limit in
 \[
 \entm{\mm_n}(\mu_n)\leq \entm{\mm_n}(\nu_n')+ \W_2(\mu_n,\nu'_n)|\partial^-\entm{\mm_n}|({\mu_n})-\frac K2\W_2^2(\mu_n,\nu'_n)
 \]
 (that follows from the $K$--convexity of the entropy) we get  $\lims_n \entm{\mm_n}(\mu_n)\leq \lims_n \entm{\mm_n}(\nu'_n)\leq \ent(\mu)$. Since the opposite inequality $\lims_n \entm{\mm_n}(\mu_n)\geq \ent(\mu)$ always holds by Theorem \ref{teo: Gamma ent}, we can apply Lemma \ref{lemma: Young} to deduce  that $f_n^2\to g:=\frac{\d\mu}{\d\mm}$ in $\L^0$ and thus in $\L^p$ for all $p$ in $[1, +\infty)$. In particular $f_n\to g^{\frac{1}{2}}$ in $\L^0$ and so $g=f$ and $f_n\to f$ in $\L^2$.
 
		Finally, notice that $f_n\to f$ in $\L^2$ implies that $f_n^2\mm_n$ is equiintegrable and converges weakly to $f^2\mm$. The semicontinuity of the Cheeger energy follows then from Theorem \ref{teo: Slope=Cheeger} and Theorem \ref{teo: liminf slope} after renormalizing.
	\end{proof}
	\begin{cor}[Mosco convergence of Cheeger energies]\label{cor: Gamma Cheeger}
		The following two hold:
		\begin{itemize}
			\item Let $f_n\in \L^2(\mm_n)$ satisfy $f_n\rightharpoonup f$ in $\L^2$. Then $\ch(f)\leq \limi_n \ch(f_n)$.
			\item Let $f\in \L^2(\mm)$. Then there exists $f_n\in \L^2(\mm_n)$ such that $f_n\to f$ in $\L^2$ and $\ch(f_n)\to \ch(f)$.
		\end{itemize}
	\end{cor}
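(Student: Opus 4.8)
The second bullet will follow at once from the first: Proposition \ref{proposition: limsup} already produces, for any $f\in W^{1,2}(\X)$, a sequence $f_n\to f$ in $\L^2$ with $\lims_n\ch_n(f_n)\le\ch(f)$, and the first bullet supplies the matching bound $\limi_n\ch_n(f_n)\ge\ch(f)$, whence $\ch_n(f_n)\to\ch(f)$. So the whole content is the weak $\L^2$ lower semicontinuity $\ch(f)\le\limi_n\ch_n(f_n)$ for an arbitrary sequence $f_n\rightharpoonup f$ in $\L^2$, and the plan is to reduce this to the Rellich-type Proposition \ref{prop: rellich}. We may assume $L:=\limi_n\ch_n(f_n)<\infty$ and, passing to a subsequence, that $\ch_n(f_n)\to L$; by definition of weak $\L^2$ convergence, $S:=\sup_n\|f_n\|_{\L^2(\mm_n)}<\infty$. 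The two hypotheses of Proposition \ref{prop: rellich} that are \emph{not} granted for free are the $2$-equi-integrability of $(f_n)$ and the boundedness of $\bigcup_n p_n(\supp f_n)$; I would remove both by a double truncation depending on parameters $M,R>0$.

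First I truncate in value, $f_n^M:=(-M)\vee f_n\wedge M$, which is bounded by $M$ (hence trivially $2$-equi-integrable) and satisfies $\ch_n(f_n^M)\le\ch_n(f_n)$ since the Cheeger energy decreases under $1$-Lipschitz post-composition. Next I localize in space: fixing a cutoff $\chi^R\in\Lip(\X)$ with $\chi^R\equiv1$ on $B_R(\bar y)$, $\chi^R\equiv0$ off $B_{2R}(\bar y)$ and $\Lip(\chi^R)\le 1/R$, Corollary \ref{prop: approssimazione e comp L0} (applied to the $1$-Lipschitz function $R\chi^R$ and rescaled, then truncated to $[0,1]$) furnishes genuinely Lipschitz cutoffs $\chi_n^R$ on $\X_n$ with $0\le\chi_n^R\le1$, $\Lip(\chi_n^R)\le 1/R$ and $\chi_n^R\to\chi^R$ in $\L^0$. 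Set $g_n^{M,R}:=f_n^M\,\chi_n^R$. By the Leibniz rule for minimal weak upper gradients, Young's inequality and $\|f_n^M\|_{\L^2}\le S$,
\[
\ch_n(g_n^{M,R})\le (1+\eps)\,\ch_n(f_n^M)+C_\eps R^{-2}S^2\le (1+\eps)\,\ch_n(f_n)+C_\eps R^{-2}S^2,
\]
and $g_n^{M,R}$ is bounded by $M$, so $2$-equi-integrable. Using Remark \ref{oss: equi} to take $p_n\equiv\bar y$ off $K_n$, the support of $g_n^{M,R}$ has $p_n$-image inside $B_{2R}(\bar y)$ up to a set of $\mm_n$-measure tending to $0$ (this is the point where $\chi_n^R$ differs from $\chi^R\circ p_n$), so the bounded-support hypothesis of Proposition \ref{prop: rellich} is met asymptotically.

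Up to a diagonal subsequence I let $f_n^M\rightharpoonup h^M$ in $\L^2$ (possible since $\|f_n^M\|_{\L^2}\le S$ and the $f_n^M$ are $2$-equi-integrable); since $\chi_n^R\to\chi^R$ in $\L^0$ with a uniform bound, $\chi_n^R\to\chi^R$ strongly in $\L^2$ by Proposition \ref{prop: convergenze}, and Corollary \ref{cor: coupling} gives $g_n^{M,R}\rightharpoonup h^M\chi^R$ weakly in $\L^2$. Feeding this into Proposition \ref{prop: rellich} upgrades the convergence to strong $\L^2$ and yields $\ch(h^M\chi^R)\le\limi_n\ch_n(g_n^{M,R})\le(1+\eps)L+C_\eps R^{-2}S^2$. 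Letting $R\to\infty$ we have $\chi^R\uparrow1$ and $h^M\chi^R\to h^M$ in $\L^2(\mm)$, so the strong $\L^2$ lower semicontinuity of $\ch$ on the fixed space $\X$ gives $\ch(h^M)\le(1+\eps)L$, and then $\eps\downarrow0$ gives $\ch(h^M)\le L$. Finally I let $M\to\infty$: the uniform-in-$n$ Chebyshev estimate $\big|\int\varphi\,\d(f_n^M\mm_n)-\int\varphi\,\d(f_n\mm_n)\big|\le 2\|\varphi\|_\infty S^2/M$ for $\varphi\in C^0_\b(\X)$ shows that $h^M\rightharpoonup f$ weakly in $\L^2(\mm)$ as $M\to\infty$; since $\ch$ is convex and strongly lower semicontinuous it is weakly lower semicontinuous, whence $\ch(f)\le\limi_M\ch(h^M)\le L$, which is the claim.

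I expect the main obstacle to be the spatial localization step, namely matching the bounded-support hypothesis of Proposition \ref{prop: rellich}: one cannot use an \emph{intrinsic} cutoff $\phi(\sfd_n(\cdot,x_n^0))$, since this would slice the fibers of $p_n$ (which are precisely what spreads out under concentration) and destroy mass, so one is forced to use the abstractly-approximated $\chi_n^R$, for which $p_n(\supp g_n^{M,R})$ is bounded only up to a set of vanishing $\mm_n$-measure. The technical point to verify is therefore that the hypothesis in Proposition \ref{prop: rellich} is used only to secure the $\W_2$-convergence of the normalized measures $(p_n)_\#\big((g_n^{M,R})^2\mm_n\big)$, and that this conclusion survives when the escaping mass vanishes (controlling its second moment via item \eqref{it:bounded} of Proposition \ref{prop: caratterizzazione} together with $\lvert g_n^{M,R}\rvert\le M$). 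By contrast, the value-truncation half is comparatively painless: the tail estimate above is uniform in $n$, which is exactly what lets the weak limits $h^M$ return to $f$ without ever assuming $2$-equi-integrability of the original sequence $(f_n)$.
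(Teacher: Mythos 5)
The liminf half of your argument has a genuine gap at the spatial localization step, exactly at the point you flagged as ``the main obstacle''. You apply Proposition \ref{prop: rellich} to $g_n^{M,R}=f_n^M\chi_n^R$, but its hypothesis that $\bigcup_n p_n(\supp g_n^{M,R})$ be bounded genuinely fails: since $\chi_n^R$ is only an $\L^0$-approximation of $\chi^R\circ p_n$, the set $E_n$ where $\chi_n^R>0$ but $p_n\notin B_{2R}(\bar y)$ satisfies only $\mm_n(E_n)\to 0$. Your proposed repair --- controlling the second moment of the escaping mass through item (v) of Proposition \ref{prop: caratterizzazione} --- does not work: item (v) constrains $p_n$ only on $\X_n\setminus K_n$, while $E_n$ may perfectly well lie inside $K_n$, where (if $\X$ is unbounded) the images $p_n(E_n\cap K_n)$ can run off to infinity; moreover $\mm_n(E_n)\to0$ gives no bound on $\int_{E_n}\sfd^2(p_n(\cdot),\bar y)\,\d\mm_n$, because weak convergence of $(p_n)_\#\mm_n$ carries no uniform second-moment integrability. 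So the $\W_2$-convergence of the normalized measures $(p_n)_\#((g_n^{M,R})^2\mm_n)$, which is precisely what the bounded-support hypothesis secures inside the proof of Proposition \ref{prop: rellich}, is not recovered by your argument. (Note also that the weakened hypothesis of Remark \ref{oss: Rellich} is derived in the paper \emph{from} this very Corollary, so it cannot be invoked here without circularity.)

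The missing idea is the paper's choice of cutoff, which your heuristic ruled out too quickly: an intrinsic cutoff is fine provided it is the distance to the \emph{preimage of a ball} rather than to a point. The paper sets $\chi_{R,M}\coloneqq\big(1-\tfrac1M\sfd_n(\cdot,p_n^{-1}(B(x,R)))\big)^+$ and $f_{n,R,M}\coloneqq f_n\chi_{R,M}$ (after first reducing, as you do, to $\L^\infty$-bounded $f_n$'s). Since $p_n^{-1}(B(x,R))$ is a union of fibers of $p_n$, this cutoff is identically $1$ on all of it, so no fiber through it is sliced and no relevant mass is destroyed --- your objection applies to $\phi(\sfd_n(\cdot,x_n^0))$, whose balls indeed collapse in measure under concentration, not to this; it is $1/M$-Lipschitz, giving $\ch_n(f_{n,R,M})\leq \ch_n(f_n)+\tfrac1M\sup_n\|f_n\|^2_{\L^2(\mm_n)}$; and its support lies in the $\sfd_n$-neighbourhood of radius $M$ of $p_n^{-1}(B(x,R))$, whose $p_n$-image \emph{is} bounded, by items (iii) and (v) of Proposition \ref{prop: caratterizzazione} (normalizing $p_n$ off $K_n$ as in Remark \ref{oss: equi}). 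Hence Proposition \ref{prop: rellich} applies verbatim, and the limit is identified by locality ($f_{R,M}=f$ $\mm$-a.e.\ on $B(x,R)$, so $\ch(f)\leq\sup_R\limi_n\ch_n(f_{n,R,M})$), making your $M\to\infty$ weak-limit bookkeeping unnecessary. By contrast, your treatment of the recovery sequence is correct and genuinely different from the paper's: you combine Proposition \ref{proposition: limsup} with the liminf bound, whereas the paper takes square roots of the densities of the recovery measures of Theorem \ref{teo: Gamma slope} and uses the identity \eqref{eq:slopecheeger}; your route is cleaner, modulo the trivial case $f\in\L^2(\mm)\setminus W^{1,2}(\X)$, where one still needs \emph{some} strongly convergent sequence, for which the liminf inequality then forces $\ch_n(f_n)\to+\infty=\ch(f)$.
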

	\begin{proof}
		Let us start by proving the liminf inequality and thus assuming $\sup_n \ch(f_n)<\infty$.
		Reasoning as in the proof of Proposition \ref{prop: rellich}, taking into account the fact that Cheeger energy decreases when truncating and that it is lower semicontinuous wrt. weak $\L^2$ convergence, we may assume that the $f_n$ are equibounded in $\L^\infty$.
		Fix $M, R>0$ and $x\in \X$ and define $\chi_{R, M}(\cdot)\coloneqq \left(1-\frac{\sfd_n(\cdot, p_n^{-1}(B(x, R)))}{M}\right)^+$. Finally, set $f_{n, R, M}\coloneqq f_n\chi_{R, M}$. Notice that $\ch(f_{n, R, M})\leq \ch(f_n)+\frac{1}{M}\sup_n \|f_n\|^2_{\L^2(\mm_n)}$ and that $\bigcup_n p_n(\supp f_{n, R, M})$ is bounded, thanks to item (v) of Proposition \ref{prop: caratterizzazione}: by Proposition \ref{prop: rellich} $(f_{n, R, M})_{n\geq 1}$ is pre--compact in $\L^2$.
		Up to the extraction of a subsequence, we may then assume that $f_{n, R, M}\to f_{R, M}$ in $\L^2$. In particular, for all $R>0$ (except a countable quantity, at most) $f_{R, M}=f$ $\mm$--a.e. in $B(x, R)$ by construction. Finally, 
		\begin{align}
			\ch(f)&=\sup_R \int_{B(x, R)} |\d f|^2\,\d\mm\leq \sup_R \ch(f_{R, M})\\
			&\leq \sup_R \limi_n \ch(f_{n, R, M}))\leq \limi_n \ch(f_n)+\frac{1}{M}\sup_n \|f_n\|^2_{\L^2(\mm_n)}.
		\end{align}
		By the arbitrariness in the choice of $M>0$, the liminf inequality follows.
		
		In order to find a recovery sequence, by linearity and approximation, we may assume that $f\geq 0$, is bounded and $\|f\|_{\L^2}^2=1$. It is now easy to see that the square roots of the densities of the measures in the recovery sequence guaranteed by Theorem \ref{teo: Gamma slope} give the desired $\L^2$ converging sequence.
	\end{proof}
	\begin{rmk}\label{oss: Rellich}
		By reasoning as in the proof of Corollary \ref{cor: Gamma Cheeger}, it is easy to see that the condition on the projections of the supports in Proposition \ref{prop: rellich} can be weakened to 
		\begin{equation}
			\lim_{R\to \infty} \lims_n \int_{ \X_n\setminus p_n^{-1}(B(x, R))} |f_n|^2\,\d\mm_n=0
		\end{equation}
		for some $x\in \X$.
	\end{rmk}
	By exploiting the Mosco convergence result, reasoning exactly as in \cite{GiMoSa15}, we recover the convergence of the heat flow.
	\begin{deff}
		Let $f\in \L^2(\mm_n)$. Given $\tau>0$, the \emph{resolvent operator} $J_{n, \tau}$ is defined as follows:
		\begin{equation}
			J_{n, \tau}(f)\coloneqq \mathop{\operatorname{argmin}}_{g\in \L^2(\mm_n)} \frac{1}{2\tau}\|f-g\|^2_{\L^2(\mm_n)}+\ch(g).
		\end{equation}
	\end{deff}
	\begin{cor}
		Let $f_n\in \L^2(\mm_n)$ be such that $f_n\to f$ in $\L^2$. Then 
		\begin{equation}
			J_{n, \tau}(f_n)\xrightarrow{\L^2} J_\tau(f), \qquad \ch(J_{n, \tau}(f_n))\to \ch(J_\tau(f)).
		\end{equation}
	\end{cor}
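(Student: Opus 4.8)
The plan is to transplant to the varying-space framework the classical argument showing that Mosco-convergence of convex, lower semicontinuous functionals forces convergence of their resolvents, exactly as done in \cite{GiMoSa15}. Write $g_n := J_{n,\tau}(f_n)$ and $g := J_\tau(f)$: these are the unique minimizers of the strongly convex, lower semicontinuous and coercive functionals
\[
F_n(h) := \tfrac{1}{2\tau}\|f_n - h\|_{\L^2(\mm_n)}^2 + \ch_n(h), \qquad F(h) := \tfrac{1}{2\tau}\|f - h\|_{\L^2(\mm)}^2 + \ch(h),
\]
so existence and uniqueness are automatic, and the two inequalities in Corollary \ref{cor: Gamma Cheeger} (Mosco convergence of the Cheeger energies) are precisely the two ingredients that drive the argument.

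\emph{Upper bound.} Applying the recovery part of Corollary \ref{cor: Gamma Cheeger} to $g\in\L^2(\mm)$ I produce $h_n\in\L^2(\mm_n)$ with $h_n\xrightarrow{\L^2}g$ and $\ch_n(h_n)\to\ch(g)$. Since $f_n\xrightarrow{\L^2}f$ as well, the characterization of strong $\L^2$-convergence through good couplings (item \eqref{it: norma} of Proposition \ref{prop: convergenze}) yields $\|f_n-h_n\|_{\L^2(\mm_n)}\to\|f-g\|_{\L^2(\mm)}$, by estimating $\|(f_n-h_n)\circ\proj_1\|_{\L^2(\saalpha_n)}$ with the triangle inequality through $(f-g)\circ\proj_2$. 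Minimality of $g_n$ then gives $\lims_n F_n(g_n)\leq\lims_n F_n(h_n)=F(g)$; in particular $\sup_n\ch_n(g_n)<\infty$ and $\sup_n\|g_n\|_{\L^2(\mm_n)}<\infty$.

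\emph{Compactness and lower bound.} From the $\L^2$-bound the signed measures $g_n\mm_n$ are equi-integrable (Cauchy--Schwarz gives $\int_{E_n}|g_n|\,\d\mm_n\leq\sup_n\|g_n\|_{\L^2}\,\mm_n(E_n)^{1/2}$), so, reasoning as in the proof of Proposition \ref{prop: rellich} and using the compactness lemmata of Section \ref{section: conv}, some subsequence satisfies $g_{n_k}\xrightharpoonup{\L^2}\bar g$ for a density $\bar g\in\L^2(\mm)$. This extraction, and the fact that the weak limit is represented by an $\L^2(\mm)$ function, is the main technical point, being the varying-space analogue of weak sequential precompactness of bounded sets in a Hilbert space. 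Since $f_{n_k}\xrightarrow{\L^2}f$ strongly and $g_{n_k}\xrightharpoonup{\L^2}\bar g$ weakly, also $f_{n_k}-g_{n_k}\xrightharpoonup{\L^2}f-\bar g$; hence, by lower semicontinuity of the $\L^2$-norm under weak convergence, the $\Gamma$-$\limi$ for $\ch$ (first bullet of Corollary \ref{cor: Gamma Cheeger}), and superadditivity of $\limi$,
\[
F(\bar g)\leq\limi_{k\to\infty}F_{n_k}(g_{n_k}).
\]
Combined with the upper bound and the minimality of $g$ this produces the chain $F(g)\leq F(\bar g)\leq\limi_k F_{n_k}(g_{n_k})\leq\lims_k F_{n_k}(g_{n_k})\leq F(g)$, so all terms coincide; uniqueness of the minimizer forces $\bar g=g$, and since each lower-semicontinuity inequality holds while the sum converges, both summands converge: $\ch_{n_k}(g_{n_k})\to\ch(g)$ and $\|f_{n_k}-g_{n_k}\|_{\L^2}\to\|f-g\|_{\L^2}$.

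\emph{Strong convergence and conclusion.} To upgrade $g_{n_k}\xrightharpoonup{\L^2}g$ to strong convergence I expand $\|g_{n_k}\|_{\L^2(\mm_{n_k})}^2=\|f_{n_k}-g_{n_k}\|^2-\|f_{n_k}\|^2+2\langle f_{n_k},g_{n_k}\rangle_{\L^2(\mm_{n_k})}$: here $\|f_{n_k}-g_{n_k}\|^2\to\|f-g\|^2$ and $\|f_{n_k}\|^2\to\|f\|^2$ by strong convergence of $(f_n)$, while Corollary \ref{cor: coupling} with $p=p'=2$ gives $f_{n_k}g_{n_k}\rightharpoonup fg$ in $\L^1$ and hence, testing against the constant $1\in C^0_\b(\X)$, $\langle f_{n_k},g_{n_k}\rangle\to\langle f,g\rangle_{\L^2(\mm)}$. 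Therefore $\|g_{n_k}\|_{\L^2(\mm_{n_k})}\to\|g\|_{\L^2(\mm)}$, which together with weak convergence is, by definition, strong $\L^2$-convergence $g_{n_k}\xrightarrow{\L^2}g=J_\tau(f)$. As the limit $g$ and the value $\ch(g)$ do not depend on the chosen subsequence (they are determined by $f$ via the unique minimizer), the standard subsequence argument promotes these to the full sequence, yielding simultaneously $J_{n,\tau}(f_n)\xrightarrow{\L^2}J_\tau(f)$ and $\ch(J_{n,\tau}(f_n))\to\ch(J_\tau(f))$. I expect the weak-$\L^2$ compactness invoked above to be the only genuine obstacle; the remainder is a faithful transcription of the Attouch argument.
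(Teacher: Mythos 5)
Your argument is correct and is exactly the route the paper intends: the paper gives no separate proof for this corollary, deferring to the standard Attouch-type scheme of \cite{GiMoSa15} driven by Mosco convergence (Corollary \ref{cor: Gamma Cheeger}), which is precisely what you transcribe — recovery sequence for the upper bound, equi-integrability plus weak $\L^2$ compactness and the $\Gamma$-$\limi$ for the lower bound, identification of the limit by strict convexity, and the norm expansion together with Corollary \ref{cor: coupling} to upgrade to strong convergence. The weak-$\L^2$ compactness step you single out (representing the weak limit measure by an $\L^2(\mm)$ density via the bound $|\int\varphi\,\d\nu|\leq C\|\varphi\|_{\L^2(\mm)}$) is the same fact the paper itself uses implicitly in the proofs of Proposition \ref{prop: rellich} and Corollary \ref{cor: Gamma Cheeger}, so it is not a gap relative to the paper's standards.
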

	\begin{thm}\label{teo: stabilità calore}
		Let $f_n\in \L^2(\mm_n)$ be such that $f_n\to f$ in $\L^2$. Then $h_{n, t}f_n\to h_t f$ in $\L^2$.
	\end{thm}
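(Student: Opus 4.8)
The plan is to deduce convergence of the heat semigroups from the convergence of the resolvents established in the preceding Corollary, via the implicit Euler (Crandall--Liggett) scheme, carrying out the fixed-space argument of \cite{GiMoSa15} on each $\X_n$ while keeping track that all error estimates are uniform in $n$. Throughout I recall that each $h_{n,t}$ (resp.\ $h_t$) is a contraction on $\L^2(\mm_n)$ (resp.\ $\L^2(\mm)$), being the $\L^2$-gradient flow of the convex, lower semicontinuous, nonnegative functional $\ch_n$ (resp.\ $\ch$), and that by Proposition \ref{prop: convergenze} strong $\L^2$-convergence $g_n\to g$ is equivalent to $d_n(g_n,g)\to 0$, where for fixed good couplings $\aalpha_n$ I write $d_n(g_n,g):=\|g_n\circ\proj_1-g\circ\proj_2\|_{\L^2(\aalpha_n)}$. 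Since the two marginals of $\aalpha_n$ are $\mm_n,\mm$, the triangle inequality in $\L^2(\aalpha_n)$ gives, for every $n$, the comparison $d_n(a_n,a)\le\|a_n-b_n\|_{\L^2(\mm_n)}+d_n(b_n,b)+\|b-a\|_{\L^2(\mm)}$, which is the only form of ``triangle inequality across spaces'' I will use.

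The first step is to reduce to initial data of finite energy. Fix $\eps>0$ and choose $\tilde f\in\D(\ch)$ with $\|f-\tilde f\|_{\L^2(\mm)}<\eps$; by the recovery part of Corollary \ref{cor: Gamma Cheeger} there are $\tilde f_n\to\tilde f$ in $\L^2$ with $\ch_n(\tilde f_n)\to\ch(\tilde f)<\infty$, so $\sup_n\ch_n(\tilde f_n)<\infty$. Using that both $f_n\to f$ and $\tilde f_n\to\tilde f$ strongly, one checks that $\|f_n-\tilde f_n\|_{\L^2(\mm_n)}\to\|f-\tilde f\|_{\L^2(\mm)}$, whence the comparison above together with the contractivity of $h_{n,t}$ and $h_t$ yields
\begin{equation*}
d_n(h_{n,t}f_n,h_tf)\le \|f_n-\tilde f_n\|_{\L^2(\mm_n)}+d_n(h_{n,t}\tilde f_n,h_t\tilde f)+\|h_t\tilde f-h_tf\|_{\L^2(\mm)}.
\end{equation*}
Taking $\lims_n$, the first term tends to $\|f-\tilde f\|<\eps$ and the third is $\le\|\tilde f-f\|<\eps$, so once the middle term is shown to vanish we obtain $\lims_n d_n(h_{n,t}f_n,h_tf)\le 2\eps$, and letting $\eps\downarrow0$ gives the theorem. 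It therefore suffices to prove the claim under the additional assumption $\sup_n\ch_n(f_n)<\infty$.

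For such a sequence I fix $k\in\N$ and set $\tau=t/k$. Iterating the resolvent-convergence Corollary, for each fixed $j$ one gets $J_{n,\tau}^{\,j}f_n\to J_\tau^{\,j}f$ in $\L^2$; in particular $\lims_n d_n\big(J_{n,\tau}^{\,k}f_n,J_\tau^{\,k}f\big)=0$. On the other hand, the standard a priori estimate for the implicit Euler scheme of a $0$-convex functional furnishes a modulus $\eta$ with $\eta(\tau)\to0$ as $\tau\downarrow0$ such that
\begin{equation*}
\big\|J_{n,\tau}^{\,k}f_n-h_{n,t}f_n\big\|_{\L^2(\mm_n)}\le\eta(\tau),\qquad \big\|J_\tau^{\,k}f-h_tf\big\|_{\L^2(\mm)}\le\eta(\tau),
\end{equation*}
and, crucially, $\eta$ depends only on $t$ and on a bound for the initial energy, hence can be taken independent of $n$ because every $\ch_n$ is $0$-convex with $\inf\ch_n=0$ and $\sup_n\ch_n(f_n)<\infty$. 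Combining the two displays through the comparison inequality gives $\lims_n d_n(h_{n,t}f_n,h_tf)\le 2\eta(t/k)$, and letting $k\to\infty$ finishes the finite-energy case.

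The main obstacle is precisely the uniformity in $n$ of the Euler-scheme error $\eta$: it is what allows one to interchange the limits $n\to\infty$ (for a fixed time-step $\tau=t/k$, governed by resolvent convergence) and $k\to\infty$ (refinement of the time-step). This uniformity is available here only because the convexity constant is the same, namely $\lambda=0$, for all the $\ch_n$ simultaneously, and because the initial energies stay bounded along the sequence; if either were to degenerate, the two limiting procedures could not be exchanged. Every other ingredient is the fixed-space theory of gradient flows of convex functionals, applied verbatim on each $\X_n$ and on $\X$.
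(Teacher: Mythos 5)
Your proof is correct and follows essentially the same route as the paper, which states the resolvent-convergence corollary and then invokes the argument of \cite{GiMoSa15}: namely, reduction to finite-energy initial data by contractivity and density, iteration of the resolvent convergence for a fixed time step $\tau=t/k$, and the Crandall--Liggett implicit-Euler error estimate, whose uniformity in $n$ is exactly what the common convexity constant $\lambda=0$ and the bound $\sup_n\ch_n(f_n)<\infty$ provide. In effect you have written out in the varying-space setting (via the good-coupling ``triangle inequality'') precisely the details that the paper delegates to the citation.
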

	
	\section{Some applications} \label{section: app}
	In this last section we outline some possible applications of the stability results proven in this paper. In the following, we will always assume that for some $K\in\mathbb R$ the n.m.m.s. spaces $\X_n$, $n\geq 1$, satisfy $\RCD(K, \infty)$ and that $\X_n\to \X$ in concentration.
	
	We start by noticing that, by repeating verbatim the proof in \cite{GiMoSa15}, we recover the stability of the eigenvalues of the Laplacian. In the case of convergence in concentration, though, since Proposition \ref{prop: rellich} is a little bit weaker than the $\L^2$--compactness result in \cite{GiMoSa15}, we need a uniform log--Sobolev inequality, which is a stronger requirement than a uniform wLSTI.
	\begin{thm}
		Let $\X_n, \X$ be $\RCD(K, \infty)$ and such that $\X_n\to \X$ in concentration. Assume moreover that $\X_n$ satisfy a uniform log--Sobolev inequality, i.e. there exist $C>0$ such that
		\begin{equation}
			\int_{\X_n} f^2\log(f^2)\,\d\mm_n\leq C\int_{\X_n} |\d f_n|^2\,\d\mm_n
		\end{equation}
		for all $f_n\in \L^2(\X_n)$ such that $\|f_n\|_{\L^2(\X_n)}=1$. Then
		\begin{equation}
			\lambda_{n, k}\to \lambda_k
		\end{equation}
		for $n\to \infty$, where $\lambda_{n, k}$ is the $k$--th eigenvalue of the Laplacian on $\X_n$ and $\lambda_k$ the same on $\X$ ($\lambda_k=\infty$ if $W^{1, 2}(\X)$ has dimension less than $k$).
	\end{thm}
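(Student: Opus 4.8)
The plan is to combine the Courant--Fischer min--max characterization of the eigenvalues with the Mosco convergence of the Cheeger energies (Corollary \ref{cor: Gamma Cheeger}) and the Rellich-type compactness of Proposition \ref{prop: rellich}, the uniform log--Sobolev inequality being used precisely to make the latter applicable. Writing $R_n(f):=2\ch(f)/\|f\|^2_{\L^2(\mm_n)}$ for the Rayleigh quotient on $\X_n$ and $R$ for the one on $\X$, one has
\begin{equation}
\lambda_{n,k}=\min_{\substack{V\subset W^{1,2}(\X_n)\\ \dim V=k}}\ \max_{0\neq f\in V}R_n(f),
\end{equation}
and likewise for $\lambda_k$. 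That these min--max values are genuine eigenvalues requires discreteness of the spectra: on each $\X_n$ this is the standard consequence of the log--Sobolev inequality, and on $\X$ it follows because the uniform log--Sobolev inequality passes to the limit---given $f\in W^{1,2}(\X)$ with $\|f\|_{\L^2}=1$ and a recovery sequence $f_n\xrightarrow{\L^2}f$ with $\ch(f_n)\to\ch(f)$ from Corollary \ref{cor: Gamma Cheeger}, the $\Gamma$--$\liminf$ for the entropy (Theorem \ref{teo: Gamma ent}) gives $\ent(f^2\mm)\leq\limi_n\entm{\mm_n}(f_n^2\mm_n)\leq C\int|\d f|^2\,\d\mm$. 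I would then prove the two inequalities $\lims_n\lambda_{n,k}\leq\lambda_k$ and $\limi_n\lambda_{n,k}\geq\lambda_k$ separately.

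For the $\lims$ inequality I would assume $\lambda_k<\infty$, fix an $\L^2(\mm)$-orthonormal family $u_1,\dots,u_k$ spanning an optimal subspace $V$, and use the recovery statement of Corollary \ref{cor: Gamma Cheeger} to obtain $u_{j,n}\xrightarrow{\L^2}u_j$ with $\ch(u_{j,n})\to\ch(u_j)$. Strong $\L^2$-convergence forces the $\L^2$-Gram matrices and the energy Gram matrices of the $u_{j,n}$ to converge to those of the $u_j$; in particular the $u_{j,n}$ are eventually linearly independent, so $V_n:=\langle u_{1,n},\dots,u_{k,n}\rangle$ is $k$-dimensional and $\max_{0\neq f\in V_n}R_n(f)\to\max_{0\neq f\in V}R(f)=\lambda_k$, since on a fixed finite-dimensional space the maximal Rayleigh quotient depends continuously on these two matrices. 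Feeding $V_n$ into the min--max gives $\lims_n\lambda_{n,k}\leq\lambda_k$.

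The $\limi$ inequality is the heart of the matter and the only place the uniform log--Sobolev is truly needed. Assuming $L:=\limi_n\lambda_{n,k}<\infty$, I would pass to a subsequence realizing the liminf and take $\L^2(\mm_n)$-orthonormal eigenfunctions $u_{1,n},\dots,u_{k,n}$ with eigenvalues $\lambda_{n,1}\leq\cdots\leq\lambda_{n,k}$, so that $2\ch(\sum_j c_j u_{j,n})=\sum_j\lambda_{n,j}c_j^2\leq\lambda_{n,k}\sum_j c_j^2$ for all $c\in\R^k$. Applying the log--Sobolev inequality to each $u_{j,n}$ yields the entropy bound $\entm{\mm_n}(u_{j,n}^2\mm_n)\leq C\int|\d u_{j,n}|^2\,\d\mm_n=C\lambda_{n,j}\leq C\lambda_{n,k}$, uniformly bounded; by Lemma \ref{lemma: tight} and the corollary following it the probability measures $u_{j,n}^2\mm_n$ are equi--integrable (that is, $(u_{j,n})_n$ is $2$--equi--integrable), and since they are probabilities their pushforwards $(p_n)_\#(u_{j,n}^2\mm_n)$ are tight by Lemma \ref{lemma: compattezza equi-int}, so the tail condition of Remark \ref{oss: Rellich} holds. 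The uniform $\L^2$-bound together with $2$-equi-integrability provides a weak $\L^2$-limit, so Proposition \ref{prop: rellich} (in the form of Remark \ref{oss: Rellich}) applies and, up to a further subsequence, $u_{j,n}\xrightarrow{\L^2}u_j$ for every $j$. Strong $\L^2$-convergence preserves $\L^2(\mm)$-orthonormality, so $u_1,\dots,u_k$ span a $k$-dimensional $V\subset W^{1,2}(\X)$, and the liminf half of Corollary \ref{cor: Gamma Cheeger} gives, for every fixed $c\in\R^k$,
\begin{equation}
2\ch\Big(\sum_j c_j u_j\Big)\leq\limi_n 2\ch\Big(\sum_j c_j u_{j,n}\Big)\leq L\sum_j c_j^2=L\,\Big\|\sum_j c_j u_j\Big\|^2_{\L^2(\mm)}.
\end{equation}
Hence $\max_{0\neq f\in V}R(f)\leq L$, and the min--max yields $\lambda_k\leq L$; combined with the previous paragraph this proves $\lambda_{n,k}\to\lambda_k$.

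The main obstacle is exactly the strong $\L^2$-precompactness of a normalized, energy-bounded sequence of eigenfunctions. In the mGH setting this compactness is comparatively automatic, whereas the Rellich theorem available here (Proposition \ref{prop: rellich}) is conditional on the tail bound of Remark \ref{oss: Rellich}; the \emph{uniform} log--Sobolev inequality---strictly stronger than a uniform weak log--Sobolev/transport inequality---is what supplies the uniform entropy bound feeding Lemma \ref{lemma: tight}, and hence the equi--integrability and tightness that close the compactness argument.
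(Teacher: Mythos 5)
Your proposal is correct and follows essentially the same route as the paper, whose entire ``proof'' is the remark that one repeats the argument of \cite{GiMoSa15} verbatim---Courant--Fischer min--max, recovery sequences from the Mosco convergence of Corollary \ref{cor: Gamma Cheeger} for the $\lims$ bound, and the Rellich-type compactness of Proposition \ref{prop: rellich}/Remark \ref{oss: Rellich}, fed by the entropy bound that the uniform log--Sobolev inequality provides, for the $\limi$ bound. The only steps you state without proof (convergence of the energy cross-terms, which follows by polarization plus the Mosco $\limi$ inequality exactly as in the proof of Lemma \ref{lemma: convergenza moduli diff}, and discreteness of the spectra, which in the $\RCD(K,\infty)$ setting follows from the log--Sobolev inequality via the same compactness machinery applied to a fixed space) are precisely the points supplied by \cite{GiMoSa15}, to which the paper defers.
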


	Another possible application of the Mosco convergence of the Cheeger energy is in defining a meaningful notion of convergence in $W^{1, 2}$ for functions on varying spaces. Building on $W^{1, 2}$ convergence, then, one can define $\L^2$ convergence of vector and tensor fields. These ideas are already known in the setting of mGH convergence, see \cite{Gi23} for a comprehensive account.
	
	In \cite[Lemma 6.1]{Gi23}, given $f_n$ converging to $f$ in energy, the $\L^2$ convergence of the pointwise norms of the differentials $|\d f_n|$ to $|\d f|$ follows essentially from two ideas. On one hand, $\frac{\d f}{|\d f|}$ is approximated with more regular covector fields. On the other, once all the spaces have been isometrically embedded in the same $(\Y, \sfd)$, it is easy to see that any Lipschitz function $g$ on $\X$ can be extended to $\Y$ and the asymptotic Lipschitz constant of the extension is upper semicontinuous: this allows to approximate $g$ with functions $g_n$ defined on $\X_n$ with asymptotic Lipschitz constant which is not ``larger''. 
	
	In our case, since the embedding is not present, we rely on a slightly finer approximation of $\frac{\d f}{|\d f|}$ and mimic the approximating sequence $g_n$ by exploiting on the definition of convergence in concentration.
		\begin{lm}\label{lemma: mima scs}
		The set $\mathcal{R}\coloneqq\{\sum_n \mathbbm{1}_{A_n}\d g_n:\; (A_n)_n \text{are disjoint, }(g_n)_n\subset \Lip_1(\X) \}$ is strongly dense in $\{\omega\in \L^2(T\X):\, |\omega|\leq 1\}$.
	\end{lm}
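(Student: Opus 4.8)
The plan is to exploit the module structure of $\L^2(T\X)$, reducing first to finite ``simple'' covector fields and then correcting them so that they land in $\mathcal R$. The point that dictates the whole strategy is that the generators should have pointwise norm identically equal to $1$: this is what keeps the scalar coefficients bounded by $1$, hence absorbable into genuinely $1$-Lipschitz functions. For this reason I would not use arbitrary Lipschitz functions as generators, but rather distance functions $\sfd(\cdot,y)$, which are $1$-Lipschitz and satisfy the eikonal identity $|\d\sfd(\cdot,y)|=1$ $\mm$-a.e.\ on the (geodesic) space $\X$. Recalling that the cotangent module of an infinitesimally Hilbertian space is generated, as an $\L^\infty$-module, by differentials of distance functions (see \cite{Gi23}), the set of finite combinations $\sum_{i=1}^k\mathbbm{1}_{A_i}h_i\,\d\sfd(\cdot,y_i)$, with $h_i\in\L^\infty(\mm)$, $y_i\in\X$ and $(A_i)_i$ a Borel partition, is strongly dense in $\L^2(T\X)$.

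Fixing $\omega$ with $|\omega|\le 1$ and $\eps>0$, I would pick such a finite combination $\eta=\sum_i\mathbbm{1}_{A_i}h_i\,\d\sfd(\cdot,y_i)$ with $\|\omega-\eta\|_{\L^2(T\X)}<\eps$, and then radially project it onto the unit ball fibrewise, setting $\eta':=\min(1,|\eta|^{-1})\eta$. Since the map $v\mapsto\min(1,|v|^{-1})v$ is the $1$-Lipschitz nearest-point projection onto the unit ball of the Hilbertian fibres and $\omega$ already lies in the ball, the pointwise bound $|\eta'-\omega|\le|\eta-\omega|$ holds $\mm$-a.e., whence $\|\eta'-\omega\|_{\L^2(T\X)}<\eps$. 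Writing $\eta'=\sum_i\mathbbm{1}_{A_i}\tilde h_i\,\d\sfd(\cdot,y_i)$ with $\tilde h_i:=\min(1,|\eta|^{-1})h_i$, the eikonal identity yields $|\eta'|=\sum_i\mathbbm{1}_{A_i}|\tilde h_i|$ $\mm$-a.e., so that $|\tilde h_i|\le 1$ $\mm$-a.e.\ on $A_i$: this is exactly the bound that the norm-saturated generators buy us.

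To conclude I would approximate each bounded coefficient $\tilde h_i$ in $\L^2(A_i,\mm)$ by a simple function $\sum_j c_{ij}\mathbbm{1}_{A_{ij}}$ with $|c_{ij}|\le 1$ and $(A_{ij})_j$ a Borel partition of $A_i$. On each $A_{ij}$ one has $c_{ij}\,\d\sfd(\cdot,y_i)=\d\big(c_{ij}\,\sfd(\cdot,y_i)\big)$, and since $|c_{ij}|\le 1$ the function $g_{ij}:=c_{ij}\,\sfd(\cdot,y_i)$ is $1$-Lipschitz. The resulting field $\sum_{i,j}\mathbbm{1}_{A_{ij}}\,\d g_{ij}$ is indexed by a finite, hence countable, family of pairwise disjoint Borel sets and therefore belongs to $\mathcal R$; by construction it approximates $\eta'$, and thus $\omega$, within $2\eps$ in $\L^2(T\X)$. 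Letting $\eps\downarrow 0$ gives the claimed density.

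The only genuinely delicate point is the one highlighted above, and it is worth isolating as the main obstacle. If one starts from arbitrary Lipschitz generators $f_i$ and rescales $f_i/\Lip(f_i)$ to make them $1$-Lipschitz, the scalar coefficients acquire the factor $\Lip(f_i)\ge 1$; after projecting to the unit ball these coefficients need only satisfy an inequality of the form $|{\cdot}|\,\big|\d(f_i/\Lip f_i)\big|\le 1$, so they may exceed $1$ precisely where $|\d f_i|<\Lip(f_i)$, and such coefficients cannot be absorbed into a $1$-Lipschitz function. Forcing the generators to have unit pointwise norm through the eikonal equation is exactly what removes this obstruction, which is the reason the argument is run with distance functions rather than with general Lipschitz functions. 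The two structural inputs used, namely generation of the module by distance-function differentials and the eikonal identity $|\d\sfd(\cdot,y)|=1$, are standard facts of the differential calculus on $\RCD$ spaces.
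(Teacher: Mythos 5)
There is a genuine gap, and it sits in your very first step; everything afterwards rests on it. From ``the cotangent module is generated, as an $\L^\infty$-module, by differentials of distance functions'' you conclude that the fields $\sum_{i=1}^k\mathbbm{1}_{A_i}h_i\,\d\sfd(\cdot,y_i)$ with $(A_i)_i$ a Borel partition are dense in $\L^2(T\X)$. That deduction is invalid: generation as a module only gives density of finite sums $\sum_i h_i\,\d\sfd(\cdot,y_i)$ with \emph{overlapping} supports. To pass to the disjoint-support form one refines partitions and must rewrite, on each cell, the linear combination $\sum_i c_i\,\d\sfd(\cdot,y_i)$ as a multiple of the differential of a \emph{single} admissible generator. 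For Lipschitz generators this is exactly what linearity of the differential provides, since $\sum_i c_i\,\d f_i=\d\big(\sum_i c_i f_i\big)$ and the class of Lipschitz functions is closed under linear combinations; for distance functions it fails, because $\sum_i c_i\,\sfd(\cdot,y_i)$ is not a multiple of a distance function. (For a finite generating family the disjoint-support simple fields are genuinely non-dense: on $\R^2$ with Gaussian measure, $\{\d x,\d y\}$ generates the whole module, yet $\d x+\d y$ stays at positive $\L^2$-distance from all fields that are pointwise a multiple of $\d x$ or of $\d y$.) The distinction matters for you precisely because your coefficient bound via the eikonal identity, $|\eta'|=|\tilde h_i|$ on $A_i$, only makes sense when on each $A_i$ the field is a multiple of \emph{one} distance differential. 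Moreover, the disjoint-support density you invoke is essentially equivalent to the lemma itself together with the Remark that follows it in the paper (your own last step shows how constants of modulus at most $1$ get absorbed into $1$-Lipschitz functions, and conversely), so it cannot serve as an off-the-shelf input: it is not in \cite{Gi23}, and in the present setting of $\RCD(K,\infty)$ spaces --- possibly infinite dimensional, with no rectifiability or structure theory available --- the pointwise directional richness of distance differentials is precisely the nontrivial content. In substance the argument is circular.

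For comparison, the paper starts from the density statement that really is free: simple fields $\sum_n\mathbbm{1}_{A_n}\d g_n$ with \emph{arbitrary} Lipschitz $g_n$ are dense in $\L^2(T\X)$, by the construction of the cotangent module together with linearity of the differential. The actual work is the upgrade from Lipschitz to $1$-Lipschitz: reduce to $|\omega|\equiv 1$, use density in energy of Lipschitz functions to arrange $\sum_n\mathbbm{1}_{A_n}\lipa(g_n)\to 1$ in $\L^2$, truncate the $A_n$ and rescale so that $\lipa(g_n)\le 1$, and invoke the length-space property to conclude $g_n\in\Lip_1(\X)$. Your fibrewise projection onto the unit ball and the absorption $c\,\d\sfd(\cdot,y)=\d\big(c\,\sfd(\cdot,y)\big)$ for $|c|\le 1$ are correct and would make a clean finish, but only after a density statement of the paper's type has been established; as written, they cannot replace it.
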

	\begin{proof}
		Let us start by noticing that, by linearity of the differential, the thesis is immediate if the $g_n$'s are requested to be only Lipschitz. Now, again by linearity and continuity of the pointwise norm, we may assume that $|\omega|\equiv 1$. Let now $\omega_n\coloneqq \sum_n \mathbbm{1}_{A_n}dg_n$ be as above with Lipschitz $g_n$'s. It is clear that $\sum_n \mathbbm{1}_{A_n}|\D g_n|\to 1$ in $\L^2$: by a simple approximation argument, we may also assume that $\sum_n \mathbbm{1}_{A_n} lip_a(g_n)\to 1$ in $\L^2$. Up to another rescaling and a truncation of the sets $A_n$, we can also choose $(g_n)_n\geq 1$ to be such that $lip_a(g_n)\leq 1$: by the fact that $\X$ is a length space we conclude.
	\end{proof}
	\begin{rmk}
		One can take $g_n=\sfd(x_n, \cdot)$ for some $x_n$.
	\end{rmk}
	\begin{lm}\label{lemma: limsup lipschitz}
		Let $\X_n$ be $\RCD(K, \infty)$ and $g\in \Lip_1(\X)\cap \L^{\infty}(\X)$. Then there exist $g_n\in \Lip_1(\X_n)$ such that $g_n\to g$ in $\L^2$ and $\ch(g_n)\to \ch(g)$.
	\end{lm}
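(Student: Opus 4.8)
The plan is to prove the two inequalities $\ch(g)\le\limi_n\ch(g_n)$ and $\lims_n\ch(g_n)\le\ch(g)$ separately. The first one comes for free: whatever $1$-Lipschitz sequence $g_n\to g$ in $\L^2$ we end up constructing, in particular $g_n\rightharpoonup g$ in $\L^2$, so the $\liminf$ inequality of the Mosco convergence of the Cheeger energies (Corollary \ref{cor: Gamma Cheeger}) gives $\ch(g)\le\limi_n\ch(g_n)$. Thus the whole point is to produce $g_n\in\Lip_1(\X_n)$ with $g_n\to g$ in $\L^2$ and $\lims_n\ch(g_n)\le\ch(g)$. As a starting point I would first fix a rough recovery: by Corollary \ref{prop: approssimazione e comp L0}(i) there are $g_n^0\in\Lip_1(\X_n)$ with $g_n^0\to g$ in $\L^0$, and after truncating them at the levels $\pm\|g\|_{\L^\infty}$ (which preserves both the $1$-Lipschitz bound and the $\L^0$ convergence, since $g$ already takes values in $[-\|g\|_{\L^\infty},\|g\|_{\L^\infty}]$) the uniform bound together with the remark following Proposition \ref{prop: convergenze} upgrades this to $g_n^0\to g$ in $\L^2$. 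These $g_n^0$ are $1$-Lipschitz but carry no control on $\ch(g_n^0)$ beyond $\ch(g_n^0)\le\tfrac12$: they may oscillate wildly along the fibres of $p_n$, which is precisely what inflates the energy.

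The key idea is to kill these fibrewise oscillations by running the heat flow for a short time, exploiting the $\RCD$ hypothesis. Fix $t>0$ and set $u_n^t:=h_{n,t}g_n^0$. Since $g_n^0$ is $1$-Lipschitz we have $|\D g_n^0|\le 1$ $\mm_n$-a.e., so the Bakry--Émery contraction estimate $|\D h_{n,t}u|^2\le e^{-2Kt}\,h_{n,t}(|\D u|^2)$, valid on $\RCD(K,\infty)$ spaces (see e.g.\ \cite{AmGiSa14}), yields $|\D u_n^t|\le e^{-Kt}$ $\mm_n$-a.e.; by the Sobolev-to-Lipschitz property, again available on $\RCD(K,\infty)$ spaces, $u_n^t$ has an $e^{-Kt}$-Lipschitz representative. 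On the other hand, since $g_n^0\to g$ in $\L^2$, the stability of the heat flow (Theorem \ref{teo: stabilità calore}) gives $u_n^t\to h_t g$ in $\L^2$ for every fixed $t>0$, while the Mosco convergence of the Cheeger energies together with the attendant convergence of the gradient flows (the resolvent version being the Corollary preceding Theorem \ref{teo: stabilità calore}) gives the convergence of the energies along the flow, $\ch(u_n^t)\to\ch(h_t g)$. Finally I would rescale: put $c_t:=\min\{1,e^{Kt}\}\le 1$ and $g_n^t:=c_t\,u_n^t$, so that $g_n^t\in\Lip_1(\X_n)$, while $g_n^t\to c_t h_t g$ in $\L^2$ and $\ch(g_n^t)=c_t^2\,\ch(u_n^t)\to c_t^2\,\ch(h_t g)$ as $n\to\infty$.

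It then remains to let $t\downarrow 0$ and diagonalise. As $t\downarrow0$ one has $c_t\to1$ and, since $g\in\Lip_1(\X)\cap\L^\infty(\X)\subset W^{1,2}(\X)$ has finite Cheeger energy, the heat flow is continuous at the initial time in energy, i.e.\ $h_t g\to g$ in $\L^2$ and $\ch(h_t g)\to\ch(g)$; hence $c_t h_t g\to g$ in $\L^2$ and $c_t^2\,\ch(h_t g)\to\ch(g)$. A standard diagonal argument then selects $t=t_n\downarrow0$ slowly enough that $g_n:=g_n^{t_n}\in\Lip_1(\X_n)$ satisfies $g_n\to g$ in $\L^2$ and $\lims_n\ch(g_n)\le\ch(g)$, which combined with the $\liminf$ inequality above closes the proof. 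I expect the main obstacle to be the middle step: making the uniform-in-$n$ Lipschitz bound on $u_n^t$ rigorous — this is exactly where the $\RCD$ assumption enters, through the Bakry--Émery estimate and the Sobolev-to-Lipschitz property — and, secondarily, justifying the energy convergence $\ch(u_n^t)\to\ch(h_t g)$ of the flows, which has to be extracted from the Mosco convergence of the Cheeger energies rather than from the mere $\L^2$-convergence of the heat flows supplied by Theorem \ref{teo: stabilità calore}.
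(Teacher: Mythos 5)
Your proposal is correct and follows essentially the same route as the paper's own proof: a rough recovery sequence in $\Lip_1(\X_n)$ from Corollary \ref{prop: approssimazione e comp L0}, truncated at $\pm\|g\|_{\L^\infty}$ to upgrade $\L^0$ to $\L^2$ convergence, then regularized by the heat flow $h_{n,t}$ --- with the uniform Lipschitz bound coming from the Bakry--\'Emery estimate plus a rescaling, and the energy convergence from Theorem \ref{teo: stabilità calore} and Mosco convergence --- followed by a diagonal argument in $t_n\downarrow 0$. The only difference is one of detail: you make explicit the rescaling constant $c_t=\min\{1,e^{Kt}\}$, the Sobolev-to-Lipschitz step, and the extraction of energy convergence along the flow, all of which the paper compresses into ``Bakry--\'Emery estimates and a suitable rescaling''.
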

	\begin{proof}
		Let $\tilde{g}_n\in \L^2(\X_n)$ be the sequence given by Proposition \ref{prop: approssimazione e comp L0}. Without loss of generality, we may assume that $\|\tilde{g}_n\|_\infty\leq \|g\|_\infty$, so by Proposition \ref{prop: convergenze} it follows $\tilde{g}_n\to g$ in $\L^2$. It is now easy, thanks to Theorem \ref{teo: stabilità calore}, to find a sequence of times $t_n\to 0$ such that $f_n\coloneqq h_{n, t_n} \tilde{g}_n$ converges in energy to $g$. The condition on the Lipschitz constant is guaranteed by Bakry--\'Emery estimates and a suitable rescaling.		
	\end{proof}
	With these two Lemmata we are finally able to prove convergence of the minimal weak upper gradients.
	\begin{lm}\label{lemma: convergenza moduli diff}
		Let $f_n\in \L^2(\X_n)$ be such that $\sup_n \|f_n\|_{\L^\infty}+\infty<$, $f_n\to f$ in $\L^2$ and $\ch(f_n)\to \ch(f)$. Then $|\d f_n|\to|\d f|$ in $\L^2$.
	\end{lm}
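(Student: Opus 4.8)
The plan is to deduce strong $\L^2$ convergence from weak convergence together with convergence of the norms. The hypothesis $\ch(f_n)\to\ch(f)$ means $\int_{\X_n}|\d f_n|^2\,\d\mm_n\to\int_\X|\d f|^2\,\d\mm$, so the norms $\||\d f_n|\|_{\L^2(\mm_n)}$ converge; in particular the sequence is $\L^2$--bounded, which makes $|\d f_n|\,\mm_n$ equi--integrable (by Cauchy--Schwarz) and hence weakly precompact by Lemma \ref{lemma: compattezza equi-int}. Up to a subsequence I may thus assume $|\d f_n|\rightharpoonup G$ weakly in $\L^2$ for some $G\geq0$, and lower semicontinuity of the norm under weak convergence gives $\|G\|_{\L^2(\mm)}\leq\limi_n\||\d f_n|\|_{\L^2(\mm_n)}=\||\d f|\|_{\L^2(\mm)}$. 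The statement then reduces to the pointwise bound $G\geq|\d f|$ $\mm$--a.e.: granting it, the two norm inequalities force $G=|\d f|$, and since strong $\L^2$ convergence is exactly weak convergence plus convergence of norms, a routine subsequence argument yields $|\d f_n|\to|\d f|$ in $\L^2$.

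To prove $G\geq|\d f|$ I would use the identity $|\d f|=\langle\d f,\tfrac{\d f}{|\d f|}\rangle$, valid by infinitesimal Hilbertianity with the convention $\tfrac{\d f}{|\d f|}:=0$ on $\{|\d f|=0\}$. By Lemma \ref{lemma: mima scs} the unit covector field $\tfrac{\d f}{|\d f|}$ is approximated strongly in $\L^2(T\X)$ by fields $\omega=\sum_i\mathbbm 1_{A_i}\d g_i\in\mathcal R$ with $|\omega|\leq1$ and $g_i\in\Lip_1(\X)$; the ``finer'' approximation mentioned in the text amounts to smoothing the characteristic functions so that one may work with Lipschitz coefficients. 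For each $g_i$, Lemma \ref{lemma: limsup lipschitz} provides $g_{i,n}\in\Lip_1(\X_n)$ converging to $g_i$ in energy, i.e.\ $g_{i,n}\to g_i$ in $\L^2$ with $\ch(g_{i,n})\to\ch(g_i)$. Putting $A_{i,n}:=p_n^{-1}(A_i)$ and $\omega_n:=\sum_i\mathbbm 1_{A_{i,n}}\d g_{i,n}$, disjointness of the $A_i$ and $1$--Lipschitzianity of the $g_{i,n}$ give $|\omega_n|\leq1$, whence the pointwise inequality $\langle\d f_n,\omega_n\rangle\leq|\d f_n|$ holds $\mm_n$--a.e.

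The decisive point is to pass to the limit, for every nonnegative $\phi\in\Lip(\X)\cap\L^\infty(\X)$, in this inequality tested against $\phi\circ p_n$:
\begin{equation}
\int_{\X_n}(\phi\circ p_n)\,\langle\d f_n,\omega_n\rangle\,\d\mm_n\longrightarrow\int_\X\phi\,\langle\d f,\omega\rangle\,\d\mm.
\end{equation}
Granting this convergence, weak convergence $|\d f_n|\rightharpoonup G$ bounds the left-hand side above by $\int_\X\phi\,G\,\d\mm$ in the limit, so $\int_\X\phi\langle\d f,\omega\rangle\,\d\mm\leq\int_\X\phi\,G\,\d\mm$. Letting $\omega\to\tfrac{\d f}{|\d f|}$ strongly in $\L^2(T\X)$ and using the arbitrariness of $\phi\geq0$ then gives $|\d f|\leq G$ $\mm$--a.e., which by the reduction of the first paragraph completes the proof.

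I expect the convergence of these localized scalar products to be the main obstacle. I would first establish convergence of the \emph{global} bilinear forms whenever $u_n\to u$ and $v_n\to v$ in energy: expanding the $\Gamma$--$\limi$ inequality of Corollary \ref{cor: Gamma Cheeger} applied to $u_n+tv_n$ (which converges in $\L^2$ to $u+tv$) and matching the coefficients of $t$ for $t>0$ and for $t<0$ yields
\begin{equation}
\lims_n\int_{\X_n}\langle\d u_n,\d v_n\rangle\,\d\mm_n\leq\int_\X\langle\d u,\d v\rangle\,\d\mm\leq\limi_n\int_{\X_n}\langle\d u_n,\d v_n\rangle\,\d\mm_n,
\end{equation}
hence convergence. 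Localizing this to obtain the displayed limit is the delicate part: through the Leibniz rule it reduces to the same global statement applied to products of the $g_{i,n}$ with recovery sequences (from Lemma \ref{lemma: limsup lipschitz}) of the cutoff functions, and the genuinely careful point is the treatment of the characteristic functions $\mathbbm 1_{A_{i,n}}$, which --- being discontinuous --- must be introduced via a Lipschitz smoothing rather than directly through weak convergence. Verifying that the energy convergence of $g_{i,n}$ to $g_i$, combined with that of $f_n$ to $f$, suffices to carry out this localization is where the bulk of the technical work lies.
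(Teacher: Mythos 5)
Your overall architecture is the same as the paper's: the reduction to showing $G\geq|\d f|$ $\mm$--a.e.\ for any weak $\L^2$ limit $G$ of $|\d f_n|$ (then norm comparison forces $G=|\d f|$ and weak-plus-norms gives strong convergence), the approximation of $\tfrac{\d f}{|\d f|}$ via Lemma \ref{lemma: mima scs}, the $1$--Lipschitz recovery sequences $g_{n,k}$ from Lemma \ref{lemma: limsup lipschitz}, and the global bilinear convergence $\int\langle\d f_n,\d g_n\rangle\,\d\mm_n\to\int\langle\d f,\d g\rangle\,\d\mm$ obtained by expanding $\ch(f_n+\eps g_n)$ against the Mosco $\Gamma$--$\limi$ of Corollary \ref{cor: Gamma Cheeger} are all exactly the paper's steps. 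But the decisive step --- passing to the limit in the localized scalar products with indicator weights --- is left in your text as a plan (``where the bulk of the technical work lies''), and the idea that actually closes it is absent from your sketch. The paper proves a two-part claim: (a) if the weights $\psi_n$ converge \emph{in energy} (strong $\L^2$, uniform $\L^\infty$ bound, convergence of Cheeger energies), then $\int\psi_n\langle\d f_n,\d g_n\rangle\,\d\mm_n\to\int\psi\langle\d f,\d g\rangle\,\d\mm$, via the Leibniz representation
\begin{equation}
	\int \psi_n|\d f_n|^2\,\d\mm_n=\int \Big(\langle \d f_n, \d(\psi_nf_n)\rangle-\langle \d\psi_n, \d\tfrac{|f_n|^2}{2}\rangle\Big)\d\mm_n
\end{equation}
and polarization; and, crucially, (b) when the $g_n$'s are \emph{uniformly Lipschitz} the weight may be replaced by any sequence $\tilde\psi_n$ converging merely in $\L^2$, because of the Cauchy--Schwarz estimate
\begin{equation}
	\Big| \int(\psi_n-\tilde{\psi}_n)\langle \d f_n, \d g_n\rangle\,\d\mm_n\Big|\leq \|\psi_n-\tilde{\psi}_n\|_{\L^2(\mm_n)}\,\Lip(g_n)\,\ch(f_n)^\frac{1}{2}.
\end{equation}
Part (b) is precisely what makes the discontinuous coefficients harmless: one takes arbitrary $\varphi_{n,k}\in\L^2(\X_n)$ with $0\leq\varphi_{n,k}\leq1$ and $\varphi_{n,k}\to\mathbbm{1}_{A_k}$ in $\L^2$ (no Lipschitz regularity, no energy bound needed), and applies (b) using $\Lip(g_{n,k})\leq1$.

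Without this device your argument has a genuine hole, and the repair you gesture at (Lipschitz smoothing of the indicators) introduces a problem you do not address. Your pointwise bound $\langle\d f_n,\omega_n\rangle\leq|\d f_n|$ is proved for $\omega_n=\sum_i\mathbbm{1}_{p_n^{-1}(A_i)}\d g_{i,n}$, but for these sharp pulled-back indicators the limit passage fails to be justified: $\mathbbm{1}_{A_i}\circ p_n\to\mathbbm{1}_{A_i}$ in $\L^2$ requires portmanteau-type conditions (e.g.\ $\mm$--negligible boundaries) that the Borel sets produced by Lemma \ref{lemma: mima scs} need not satisfy. If instead you smooth the indicators into Lipschitz cutoffs $\chi_{i}$ and pull them back by energy-recovery sequences $\chi_{i,n}$, the objects $\omega_n$ change and the pointwise bound degrades to $\langle\d f_n,\omega_n\rangle\leq\big(\sum_i\chi_{i,n}\big)|\d f_n|$; you then need $\sum_i\chi_{i,n}\leq1$ (or control of its limit), which the recovery sequences of Lemma \ref{lemma: limsup lipschitz} do not automatically provide since their supports overlap. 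Both issues are repairable, but they constitute exactly the content of the paper's claim (a)+(b), so as written the proof is incomplete at its crux.
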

	\begin{proof}
		Let us start by proving that if $g, \psi\in W^{1, 2}(\X)$ and $g_n, \psi_n\in W^{1, 2}(\X_n), n\geq 1$ are such that $g_n, \psi_n\to g, \psi$ in $\L^2$, $\sup_n \|g_n\|_{\infty}, \sup_n\|\psi_n\|_\infty<\infty$ and $\ch(g_n), \ch(\psi_n)\to \ch(g), \ch(\psi)$, then 
		\begin{equation}\label{eq: claim limite forte}
			\int \psi_n\langle \d f_n, \d g_n\rangle\,\d\mm_n\to \int \psi\langle \d f, \d g\rangle\,\d\mm.
		\end{equation}
		By the $\Gamma-\limi$ inequality in Corollary \ref{cor: Gamma Cheeger}, it holds that
		\begin{equation}
			\ch(f+\varepsilon g)\leq \limi_n \ch(f_n+\varepsilon g_n).
		\end{equation}
		By expanding the squares end exploiting the fact that $g_n$ has bounded Cheeger energy, it implies that
		\begin{equation}
			\int \langle \d f_n, \d g_n\rangle\,\d\mm_n\to \int \langle \d f, \d g\rangle\,\d\mm.
		\end{equation}
		Exploiting this and the representation 
		\begin{equation}
			\int \psi_n|\d f_n|^2\,\d\mm_n=\int \left(\langle \d f_n, \d(\psi_nf_n)\rangle-\langle \d\psi_n, \d\frac{|f_n|^2}{2}\rangle\right)\d\mm_n,
		\end{equation}
		it is easy to verify that 
		\begin{equation}
			\int \psi_n|\d f_n|^2\,\d\mm_n\to  \int \psi|\d f|^2\,\d\mm.
		\end{equation}
		By polarization, \eqref{eq: claim limite forte} follows. Assume now that $\tilde{\psi}_n\in \L^2(\X)$ satisfies $\tilde{\psi}_n\to \psi$ in $\L^2$ and that $\sup_n \Lip(g_n)<\infty$.
		We can estimate 
		\begin{equation}
			\left| \int\psi_n\langle \d f_n, \d g_n\rangle\,\d\mm_n-\int \tilde{\psi}_n\langle \d f_n, \d g_n\rangle\,\d\mm_n\right|\leq \|\psi_n-\tilde{\psi}_n\|_{\L^2(\mm_n)}\Lip(g)\ch(f_n)^\frac{1}{2},
		\end{equation}
		so \eqref{eq: claim limite forte} holds even with this assumptions.
		
		Let $G$ be any weak limit of $|\d f_n|$ in $\L^2$. By the convergence of the Cheeger energy and Lemma \ref{lemma: mima scs}, it is sufficient to prove that $\int \psi \langle \d f, \omega\rangle \,\d\mm\leq \int \psi G\,\d\mm$ for all $\omega\in\mathcal R$ and $\psi\in \L^2(\X)$, $\psi\geq 0$.
		Let $\omega=\sum_k \mathbbm{1}_{A_k}\d g_k\in \mathcal R$ and $\psi$ as above.
		Let $\varphi_{n, k}\in \L^2(\X_n)$, be such that $\varphi_{n, k}\to \mathbbm{1}_{A_k}$ in $\L^2$, $|\varphi_{n, k}|\leq 1$ $\mm_n$--a.e. and $\varphi_{n, k}\geq 0$; let $\psi_n \in \L^2(\X_n)$ be such $\psi_n\to \psi$ in $\L^2$ and $\psi_n\geq 0$; finally, let $g_{n, k}\in \Lip(\X_n)$ be as in Lemma \ref{lemma: limsup lipschitz}.
		We have
		\begin{align}
			\int \psi\langle \d f, \omega\rangle\,\d\mm&=\lim_n\sum_k\int\psi_n\varphi_{n, k}\langle \d f_n, \d g_n\rangle\,\d\mm_n\\
			&\leq \lims_n \int \psi_n (\sum_k \varphi_{n, k})|\d f_n|\,\d\mm_n\\
			&\leq \int\psi G\,\d\mm.
		\end{align}
	\end{proof}
	Once that the convergence of the pointwise norm of the differentials is assured, the machinery introduced in \cite{Gi23} allows to speak about convergence of vectors and tensors. The basic ideas is to define a sufficiently wide family of test sequence, which we implicitly assume to strongly converge and then to use it, as the name suggests, to test the convergence of the general sequence in a ``duality'' fashion. 
	\begin{deff}[Test sequences]
		Let $f_n\in \operatorname{Test}(\X_n)$, $f_\infty\in \operatorname{Test}(\X)$. We say that $(f_n)$ is a test sequence if
		\begin{align}
			\sup \|f_n\|_{\L^\infty}+\operatorname{Lip}(f_n)&+\|\Delta f_n\|_{\L^\infty}<+\infty\\
			f_n\to f_\infty\text{ in }\L^2\quad &\text{ and }\quad \ch(f_n)\to \ch(f)\\
			 \Delta f_n\to \Delta f_\infty\text{ in }\L^2\quad &\text{ and }\quad \sup \ch(\Delta f_n)<+\infty.
		\end{align}
		A test sequence of vectors is a sequence of the form $v_n=\sum_j f_{j, n} \nabla g_{j, n}$ with $(f_{j, n})_n,(g_{j, n})_n$ being test sequence. Finally, for $r>1$, a test sequence of $r$--tensors is a sequence of the form $ z_n=\sum_{j=1}^m f_{j, n}\nabla g_{1, j, n}\otimes\ldots\otimes\nabla g_{r, j, n}$, where $(f_{j, n})_n, (g_{i, j, n})_n$ are test sequences.
	\end{deff}
	\begin{deff}\label{def: conv vettori}
		Let $v_n\in \L^2(T^{\otimes r}\X_n)$ for $n\geq 1$ be a sequence of $r$-tensors. We say that $v_n\rightharpoonup v\in \L^2(T^{\otimes r}\X)$ in $\L^2$ if $\sup_n\|v_n\|_{\L^2(T^{\otimes r}\X_n)}<\infty$ and $\langle v_n, z_n\rangle\rightharpoonup \langle v, z\rangle$ in $\L^2$ for any test sequence of $r$--tensors $(z_n)$.
		We say that $v_n\to v$ in $\L^2$ if the above holds and moreover $\|v_n\|_{\L^2(T^{\otimes r}\X_n)}\to \|v\|_{\L^2(T^{\otimes r}\X)}$.
	\end{deff}
	\begin{rmk}
		Notice that if $f_n\in \L^2(\X_n)$ is a sequence of functions, the convergences now defined for $0$--tensors are equivalent to weak and strong $\L^2$ convergences as defined in Section \ref{section: conv}. This will follow by Remark \ref{oss: densità}. 
	\end{rmk}
	We list a few properties of test sequences, among which the fact that test sequences do converge strongly in the sense of Definition \ref{def: conv vettori}.
	\begin{lm}\label{lemma: convergenza prodotto}
		Let $(f_n)_{n\geq 1}, (g_n)_{n\geq 1}$ be test sequence. Then $f_n\d g_n\to f_\infty \d g_{\infty}$ in $\L^2$.
	\end{lm}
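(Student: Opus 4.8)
The plan is to verify directly the two clauses of strong convergence in Definition \ref{def: conv vettori}: uniform $\L^2$-boundedness together with $\langle f_n\d g_n, z_n\rangle\rightharpoonup\langle f_\infty\d g_\infty, z\rangle$ in $\L^2$ for every test sequence of $1$-tensors $(z_n)$, and the convergence of the norms. The boundedness is immediate, since test sequences satisfy $\sup_n\|f_n\|_{\L^\infty}<\infty$ and $\sup_n\Lip(g_n)<\infty$, whence $|f_n\d g_n|\leq\|f_n\|_{\L^\infty}\Lip(g_n)$ is bounded in $\L^\infty$ and a fortiori in $\L^2$.

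For the weak convergence, by linearity of the pairing I would reduce to a single-term test tensor $z_n=\phi_n\nabla h_n$ with $(\phi_n),(h_n)$ test sequences, so that $\langle f_n\d g_n, z_n\rangle=f_n\phi_n\langle\d g_n,\d h_n\rangle$; the same $\L^\infty$ bound as above shows this is $\L^2$-bounded. Fixing $\varphi\in C^0_b(\X)$, I would set $\tilde\psi_n:=(\varphi\circ p_n)\,f_n\phi_n$ and note that, being a product of uniformly bounded sequences that converge in $\L^2$, it converges in $\L^2$ to $\varphi f_\infty\phi_\infty$ (by Corollary \ref{cor: cont} together with Proposition \ref{prop: convergenze}). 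Since $(g_n),(h_n)$ have uniformly bounded Lipschitz constants, the sharpening of \eqref{eq: claim limite forte} obtained at the end of the proof of Lemma \ref{lemma: convergenza moduli diff} — which requires only that the weight converge in $\L^2$ — applies with weight $\tilde\psi_n$ and gives
\[
\int(\varphi\circ p_n)\,f_n\phi_n\,\langle\d g_n,\d h_n\rangle\,\d\mm_n\longrightarrow\int\varphi\,f_\infty\phi_\infty\,\langle\d g_\infty,\d h_\infty\rangle\,\d\mm.
\]
As $\varphi$ is arbitrary, this is exactly the desired weak $\L^2$ convergence of the pairings.

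For the norms I would invoke Lemma \ref{lemma: convergenza moduli diff} on $(g_n)$ — whose hypotheses hold since it is a test sequence — to obtain $|\d g_n|\to|\d g_\infty|$ in $\L^2$. Combining this with $f_n\to f_\infty$ in $\L^2$ and the uniform bounds, Corollary \ref{cor: cont} yields $f_n^2|\d g_n|^2\to f_\infty^2|\d g_\infty|^2$ in $\L^0$, and uniform boundedness promotes this to $\L^1$ by Proposition \ref{prop: convergenze}; integrating gives $\|f_n\d g_n\|_{\L^2}^2=\int f_n^2|\d g_n|^2\,\d\mm_n\to\int f_\infty^2|\d g_\infty|^2\,\d\mm=\|f_\infty\d g_\infty\|_{\L^2}^2$. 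Together with the weak convergence, this is precisely strong convergence in the sense of Definition \ref{def: conv vettori}.

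The delicate point is the weak-convergence step. The weight $(\varphi\circ p_n)\,f_n\phi_n$ that one is forced to use carries no control on Cheeger energy, so the basic bilinear limit is unavailable; the argument genuinely relies on the refined form of \eqref{eq: claim limite forte} that only demands $\L^2$-convergence of the weight. Once that is granted, the remainder is routine bookkeeping with the product rules for $\L^0$ and $\L^p$ convergence over varying spaces.
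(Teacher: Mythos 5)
Your proof is correct and takes essentially the same route as the paper: the paper's one-line proof consists precisely in invoking Lemma \ref{lemma: convergenza moduli diff} together with the arguments establishing \eqref{eq: claim limite forte}, which is exactly what you do — the refined, merely-$\L^2$-convergent-weight form of \eqref{eq: claim limite forte} (legitimate here since test sequences carry uniform Lipschitz and energy bounds) for the weak convergence of the pairings, and Lemma \ref{lemma: convergenza moduli diff} applied to $(g_n)$ for the convergence of the norms. The additional detail you supply (reduction to single-term test tensors, the $\L^0$/$\L^p$ product bookkeeping) is just an unpacking of what the paper leaves implicit.
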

	\begin{proof}
		The statement follows immediately from Lemma \ref{lemma: convergenza moduli diff} and the arguments in the proof of \eqref{eq: claim limite forte}.
	\end{proof}
	\begin{cor}
		Let $(f_n)_{n\geq 1}, (g_n)_{n\geq 1}$ be test sequences of functions, $(v_n)_{n\geq 1}$ be a test sequence of vectors and $(z_n)_{n\geq 1}$ be a test sequence of tensors.
		Then the following hold:
		\begin{itemize}
			\item $(f_ng_n)_{n\geq 1}$ is a test sequence;
			\item $\d f_n\to \d f_{\infty}$ in $\L^2$;
			\item $|z_n|\to |z_\infty|$ in $\L^2$;
			\item $z_n\to z_\infty$ in $\L^2$;
			\item $\div(v_n)\to \div(v_\infty)$ in $\L^2$.
		\end{itemize}
	\end{cor}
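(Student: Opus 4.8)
The plan is to reduce everything to a single analytic fact: for test sequences $(f_n),(g_n)$ of functions the pointwise inner products converge strongly,
\[
\langle \d f_n,\d g_n\rangle\xrightarrow{\L^2}\langle \d f_\infty,\d g_\infty\rangle.
\]
Granting this, the five assertions become bookkeeping with uniform $\L^\infty$ bounds, the continuity of $\L^0$-convergence under continuous maps (Corollary \ref{cor: cont}) and the $\L^p$ characterization of Proposition \ref{prop: convergenze}. I note first that the second bullet is already immediate: applying Lemma \ref{lemma: convergenza prodotto} with first factor the constant test sequence $1$ yields $\d f_n\to\d f_\infty$ in $\L^2$.

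To prove the inner-product convergence I would use polarization, writing
\[
2\langle \d f_n,\d g_n\rangle=|\d(f_n+g_n)|^2-|\d f_n|^2-|\d g_n|^2.
\]
Lemma \ref{lemma: convergenza moduli diff} applies to $(f_n)$ and to $(g_n)$ directly, giving $|\d f_n|\to|\d f_\infty|$ and $|\d g_n|\to|\d g_\infty|$ in $\L^2$, hence convergence of the squares in $\L^1$. For the first term I apply the same Lemma to $(f_n+g_n)$: its $\L^\infty$ norms are uniformly bounded and it converges to $f_\infty+g_\infty$ in $\L^2$, while the energy convergence $\ch(f_n+g_n)\to\ch(f_\infty+g_\infty)$ follows by expanding the square and invoking \eqref{eq: claim limite forte} with $\psi_n\equiv 1$ for the cross term, together with the Cheeger convergences of $f_n,g_n$. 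Thus $|\d(f_n+g_n)|^2\to|\d(f_\infty+g_\infty)|^2$ in $\L^1$, and polarization yields $\langle \d f_n,\d g_n\rangle\to\langle \d f_\infty,\d g_\infty\rangle$ in $\L^1$; the pointwise bound $|\langle \d f_n,\d g_n\rangle|\le\Lip(f_n)\Lip(g_n)$ upgrades this to $\L^2$ by Proposition \ref{prop: convergenze}.

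From here the remaining bullets follow. For $|z_n|\to|z_\infty|$ and $z_n\to z_\infty$ I expand $|z_n|^2$ and, for a test sequence $w_n$ of $r$-tensors, the contraction $\langle z_n,w_n\rangle$ into finite sums of products of coefficient functions and factors $\langle \nabla g_{i,j,n},\nabla g_{i,k,n}\rangle$; each factor converges strongly in $\L^2$ by the inner-product step and is $\L^\infty$-bounded, so these combinations converge in $\L^0$ (Corollary \ref{cor: cont}) and, being $\L^\infty$-bounded, in every $\L^p$. This gives both the norm convergence $\|z_n\|_{\L^2}\to\|z_\infty\|_{\L^2}$ and, a fortiori, the weak convergence demanded by Definition \ref{def: conv vettori}, hence $z_n\to z_\infty$. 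For $\div(v_n)$ I use $\div(f\nabla g)=\langle\nabla f,\nabla g\rangle+f\Delta g$: the first summand converges by the inner-product step and the second as the product of the $\L^\infty$-bounded $\L^0$-convergent sequence $f_{j,n}$ with the $\L^2$-convergent, $\L^\infty$-bounded sequence $\Delta g_{j,n}$. For the first bullet I check the three defining conditions for $(f_ng_n)$: the uniform $\L^\infty$, Lipschitz and $\|\Delta(f_ng_n)\|_{\L^\infty}$ bounds come from the Leibniz identity $\Delta(f_ng_n)=f_n\Delta g_n+g_n\Delta f_n+2\langle\nabla f_n,\nabla g_n\rangle$ and $|\langle\nabla f_n,\nabla g_n\rangle|\le\Lip(f_n)\Lip(g_n)$; the convergence $\Delta(f_ng_n)\to\Delta(f_\infty g_\infty)$ in $\L^2$ follows termwise as above; and $\ch(f_ng_n)\to\ch(f_\infty g_\infty)$ follows since $\d(f_ng_n)=f_n\d g_n+g_n\d f_n\to f_\infty\d g_\infty+g_\infty\d f_\infty$ strongly in $\L^2$ by Lemma \ref{lemma: convergenza prodotto}.

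The main obstacle is twofold, and both difficulties sit in the first bullet. First, the polarization step is the only genuinely analytic point, and it is where the curvature hypothesis really enters, through Lemma \ref{lemma: convergenza moduli diff} and the cross-term identity \eqref{eq: claim limite forte}. Second, verifying $\sup_n\ch(\Delta(f_ng_n))<\infty$ requires that the product and the squared gradient of two test functions again lie in $W^{1,2}$ with uniformly bounded energy; this is precisely where the $\RCD(K,\infty)$ structure is used beyond mere convergence of the spaces, namely the algebra property of the test class together with the Bakry--\'Emery and Hessian bounds, taken uniformly in $n$.
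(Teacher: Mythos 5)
Your proposal is correct and takes essentially the same approach as the paper: the paper's (very terse) proof reduces all five bullets to Lemma \ref{lemma: convergenza prodotto}, which itself rests on Lemma \ref{lemma: convergenza moduli diff} and the cross-term claim \eqref{eq: claim limite forte} --- exactly the two ingredients your polarization step for $\langle \d f_n,\d g_n\rangle\to\langle \d f_\infty,\d g_\infty\rangle$ in $\L^2$ invokes, so the underlying mechanism is identical. Your write-up is in fact more explicit than the paper's, notably in verifying $\sup_n\ch(\Delta(f_ng_n))<\infty$ via the uniform Bochner/Hessian bounds coming from the common $\RCD(K,\infty)$ assumption, a point the paper passes over in silence.
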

	\begin{proof}
		The only non trivial parts in the proof of the fact that $f_ng_n$ is a test sequence are showing the convergence of the Cheeger energy and of the Laplacians, both of which follows from Lemma \ref{lemma: convergenza prodotto}. The convergence of the differential is again an immediate consequence of Lemma \ref{lemma: convergenza prodotto}, as is the convergence of the pointwise norms of test sequences of tensors and the strong convergence. Finally, Lemma \ref{lemma: convergenza prodotto} and the convergence of Laplacians of test sequences imply the convergence of the divergences of test sequences of vectors.
	\end{proof}
	\begin{rmk}\label{oss: densità}
		Notice that by a simple mollification argument it is easy to prove that $\{v_\infty: (v_n)_{n\geq 1} \text{ is test}\}$ is dense in $\L^2(T^{\otimes r}\X)$.
	\end{rmk}
	By the exact same proofs in \cite{Gi23}, we obtain the following list of properties for converging sequences:
	\begin{prop}\label{prop: proprietà convergenze}
		Let $(f_n)_{n\geq 1}$ be a sequence of functions with $f_n\in \L^2(\X_n)$ and $(v_n)_{n\geq 1}, (w_n)_{n\geq 1}$ be sequences of tensors, with $v_n, w_n\in \L^2(T^{\otimes r}\X_n)$. Then the following properties hold:
		\begin{enumerate}
			\item \label{it: unicità} Weak (and thus strong) limits are unique.
			\item \label{it: semic} $v_n\rightharpoonup v$ implies $|v|\leq G$ for any $\L^2$ weak limit $G$ of $|v_n|$.
			\item \label{it: conv} $v_n\to v$ implies $|v_n|\to |v|$ in $\L^2$.
			\item \label{it: coupling} If $v_n\rightharpoonup v$ and $w_n\to w$ then $\langle v_n, w_n\rangle$ converges weakly in $\L^1$ to $\langle v, w\rangle$.
			\item \label{it: lin} If $v_n\to v$ and $w_n\to w$ then $v_n+w_n\to v+w$.
			\item \label{it: compattezza debole} If $\sup_n \|v_n\|_{\L^2}<\infty$ then up to a subsequence $v_n$ is weakly converging.
			\item \label{it: chiusura differenziale} If $f_n\rightharpoonup f$ and $\sup \ch(f_n)<\infty$ then $df_n\rightharpoonup df$. If moreover $\ch(f_n)\to \ch(f)$, then $\d f_n\to \d f$.
			\item \label{it: ch hess} If $f_n\to f$, $\ch(f_n)\to \ch(f)$ and $\sup_n \|\operatorname{Hess} f_n\|_{\L^2}<\infty$, then $\operatorname{Hess} f_n\rightharpoonup \operatorname{Hess} f$.
			\item \label{it: ch derivata cov} If $v_n\to v$ and $\sup_n \|\nabla v_n\|_{\L^2}<\infty$, then $v\in W^{1, 2}_C(T\X)$ and $\nabla v_n\rightharpoonup \nabla v$.
		\end{enumerate}
	\end{prop}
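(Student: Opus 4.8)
The plan is to deduce every item from four ingredients already in place: the duality \emph{definition} of convergence of tensors (Definition \ref{def: conv vettori}), the density of limits of test sequences in $\L^2(T^{\otimes r}\X)$ (Remark \ref{oss: densità}), the strong convergence of test objects together with their norms, differentials, divergences and Laplacians (Lemma \ref{lemma: convergenza prodotto}, the Corollary following it, and Lemma \ref{lemma: convergenza moduli diff}), and the coupling principle that the product of a weakly and a strongly converging sequence converges weakly in $\L^1$ (Corollary \ref{cor: coupling}). Since these reproduce, in the varying-space setting, exactly the structural facts used in \cite{Gi23}, the arguments there transfer almost verbatim; I would simply verify that each step uses only the tools just listed.

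First I would dispose of the soft items. Uniqueness (\ref{it: unicità}): if $v_n\rightharpoonup v$ and $v_n\rightharpoonup v'$ then $\langle v,z\rangle=\langle v',z\rangle$ for every $z$ that is a limit of a test sequence, and these are dense by Remark \ref{oss: densità}, so $v=v'$; strong limits are weak limits, hence also unique. Linearity (\ref{it: lin}): weak limits are visibly additive, and for the norms I would expand $\|v_n+w_n\|^2_{\L^2}=\|v_n\|^2_{\L^2}+2\int\langle v_n,w_n\rangle\,\d\mm_n+\|w_n\|^2_{\L^2}$ and pass to the limit using item \ref{it: coupling} on the cross term. Weak compactness (\ref{it: compattezza debole}): taking a countable dense family of test-sequence limits (Remark \ref{oss: densità}) together with a countable dense family of test functions, a diagonal extraction makes all the pairings $\int\psi\,\langle v_n,z_n\rangle\,\d\mm_n$ converge; the resulting functional is bounded thanks to $\sup_n\|v_n\|_{\L^2}<\infty$ and Cauchy--Schwarz, and Riesz representation on $\L^2(T^{\otimes r}\X)$ identifies the weak limit $v$.

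Next, the semicontinuity and strong-convergence items. For (\ref{it: semic}) I would pick a test sequence $z_n\to z$, use the pointwise bound $\langle v_n,z_n\rangle\le |v_n|\,|z_n|$, and pass to the limit: the left side tends to $\langle v,z\rangle$ weakly in $\L^1$ by item \ref{it: coupling}, while the right side tends to $G\,|z|$ by Corollary \ref{cor: coupling} (with $|z_n|\to|z|$ strongly and $|v_n|\rightharpoonup G$); letting $z$ range over the dense family so as to approximate $v/|v|$ gives $|v|\le G$ a.e. For (\ref{it: conv}), combining $|v|\le G$ from (\ref{it: semic}), the $\L^2$-lower semicontinuity $\int G^2\le\limi_n\int|v_n|^2$ and the norm convergence $\int|v_n|^2\to\int|v|^2$ forces $G=|v|$; then $|v_n|\rightharpoonup|v|$ weakly plus convergence of norms is exactly strong $\L^2$ convergence of the functions $|v_n|$. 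Item (\ref{it: coupling}) itself I would prove by approximating $w$ in $\L^2$ with limits $z^k$ of test sequences $z^k_n\to z^k$: for each $k$ the pairing $\langle v_n,z^k_n\rangle$ converges by definition, and the error is controlled by $\int\psi\,|v_n|\,|w_n-z^k_n|\,\d\mm_n\le\|\psi\|_\infty\,\|v_n\|_{\L^2}\,\|w_n-z^k_n\|_{\L^2}$, which is uniformly small since $\sup_n\|v_n\|_{\L^2}<\infty$ and $\|w_n-z^k_n\|_{\L^2}\to\|w-z^k\|_{\L^2}\to0$.

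Finally, the differential operators (\ref{it: chiusura differenziale})--(\ref{it: ch derivata cov}). The uniform device is integration by parts, moving the derivative onto a strongly convergent test object. For (\ref{it: chiusura differenziale}), testing $\d f_n$ against a test vector field $v_n$ and writing $\int\langle\d f_n,v_n\rangle\,\d\mm_n=-\int f_n\,\div(v_n)\,\d\mm_n$, the divergences converge strongly (Corollary following Lemma \ref{lemma: convergenza prodotto}) while $f_n\rightharpoonup f$, so Corollary \ref{cor: coupling} yields $\d f_n\rightharpoonup\d f$; adding $\ch(f_n)\to\ch(f)$, i.e.\ $\|\d f_n\|_{\L^2}\to\|\d f\|_{\L^2}$, and invoking (\ref{it: conv}) upgrades this to strong convergence. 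Items (\ref{it: ch hess}) and (\ref{it: ch derivata cov}) run along the same lines through the integration-by-parts characterizations of $\Hess$ and of the covariant derivative: the strong convergence $\d f_n\to\d f$ from (\ref{it: chiusura differenziale}) lets the first-order factors pass to the limit, the uniform $\L^2$-bounds on $\Hess f_n$, resp.\ $\nabla v_n$, give weak compactness via (\ref{it: compattezza debole}), and one identifies the weak limit with $\Hess f$, resp.\ $\nabla v$, by testing, with $v\in W^{1,2}_C(T\X)$ following from $\sup_n\int|\nabla v_n|^2\,\d\mm_n<\infty$. The main obstacle throughout is really item (\ref{it: coupling}): once the product of a weak and a strong limit is controlled in $\L^1$, every other statement reduces to choosing the right test object and integrating by parts. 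The one genuinely delicate point is that the ``test space'' varies with $n$, so each approximation must be realized by an honest test \emph{sequence} rather than a fixed object — precisely what Remark \ref{oss: densità} and the strong convergence of test sequences supply.
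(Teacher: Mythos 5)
Your proposal is correct and takes essentially the same approach as the paper: the paper's entire proof is the single sentence ``by the exact same proofs in \cite{Gi23}'', and what you have written is precisely that transfer, carried out with the toolkit the paper establishes (the duality definition of convergence, the density of test limits from Remark \ref{oss: densità}, the strong convergence of test sequences, and Corollary \ref{cor: coupling}). The one point to make explicit — consistent with your own closing remark — is that in items (7)--(9) the required weak $\L^2$ convergence of the pairings $\langle \d f_n, z_n\rangle$ is convergence of the measures $(p_n)_\#\big(\langle \d f_n, z_n\rangle\,\mm_n\big)$, so the localization must be absorbed into the test object as a test-function sequence $\psi_n$ (using that $\psi_n z_n$ is again a test sequence) before integrating by parts, since $\varphi\circ p_n$ is only Borel and cannot enter the integration-by-parts identity; the localized limit is then identified through density of test limits and the weak compactness of item (6).
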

	As a final application of this definitions and results, we prove the stability of solutions to the continuity equation and of flows of suitably converging vector fields. In order to do that, let us recall some basic definitions and results.
	\begin{deff}
		Let $(\mu_t)_{t\geq 0}\subset \pr_{ac}(\X)$ be a weakly continuous curve and let $(v_t)_{t>0}\in \L^\infty([0, 1], \L^2_x(T\X))$. We say that $\mu_t$ is a weak solution to the continuity equation $\partial_t\mu_t+\div(v_t\mu_t)=0$ with initial datum $\mu_0$ if for all $f\in \operatorname{TEST}(\X)$ the function $t\mapsto \int f\,d\mu_t$ is absolutely continuous and 
		\begin{equation}
			\frac{\d}{\d t} \int f\,d\mu_t=\int \d f(v_t)\,\d\mu_t
		\end{equation}
		for a.e. $t>0$.
	\end{deff}
	\begin{deff}[\cite{Gi23}, \cite{AmTr14}]
		Let $(v_t)_{t\geq 0}\in \L^2([0, 1]; \L^2(\X))$. A Borel map $F:[0, 1]\times \X\to \X$ is a Regular Lagrangian Flow for $(v_t)$ if the following hold:
		\begin{itemize}
			\item[i)] $t\mapsto F_t(x)$ is continuous for $\mm$--a.e. $x\in \X$;
			\item[ii)] The maps $F_t$ have equi--bounded compression;
			\item[iii)] For easch $f\in \W^{1, 2}$ the following holds: for $\mm$--a.e. $x\in \X$ the curve $t\mapsto f(F_t(x))$ is in $\W^{1, 1}$ with derivative $\d f(v_t)(F_t(x))$.
		\end{itemize}
	\end{deff}

	The following existence Theorem holds:
	\begin{thm}[\cite{Gi23}, \cite{AmTr14}]\label{teo: equazioni continuità}
		Let $\mu\in \pr (\X)$ be such that $\|\frac{\d\mu}{\d\mm}\|\in \L^\infty$ and let $v_t\in \L^\infty([0, 1]; \L^2_x(T\X))$ with
		\begin{equation}
			\mathbf N(v_\cdot)\coloneqq \int_0^1 \left[\||v_t|\|_{\L^\infty(\X)}+\|\div(v_t)\|_{\L^\infty(\X)}+\|\nabla v_t\|_{\L^2(T^{\otimes 2}\X)}\right]\,\d t<\infty.
		\end{equation}
	 	Then there exist a unique weak solution to the continuity equation
		\begin{equation}
			\partial_t \mu_t+\div(v_t\mu_t)=0
		\end{equation}
		with $\mu_0=\mu$. Moreover it holds
		\begin{equation}
			\|\rho_t\|_{\L^\infty(\X)}\leq \|\rho_0\|_{\L^\infty(\X)}e^{\int_0^t\|\div(v_t)^-\|_{\L^\infty(\X)}\,\d r},
		\end{equation} 
		for all $t>0$, where $\mu_t=\rho_t m$.
	\end{thm}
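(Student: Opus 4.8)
The plan is to run the DiPerna--Lions renormalization programme in the $\RCD$ form of \cite{AmTr14}, organizing the argument into existence, the a priori $\L^\infty$ estimate, and uniqueness. Since the continuity equation is linear in $(\mu_t)$, uniqueness reduces to showing that the only solution with $\mu_0=0$ is $\mu_t\equiv0$; moreover the $\L^\infty$ bound and uniqueness will both be consequences of the same renormalization identity, so the real work is concentrated there.

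The technical core is a commutator estimate. Given a weak solution $\mu_t=\rho_t\mm$, I would regularize by the heat semigroup, setting $\rho^\eps_t\coloneqq h_\eps\rho_t$, and apply $h_\eps$ to the weak equation to find that $\rho^\eps_t$ solves $\partial_t\rho^\eps_t+\div(v_t\rho^\eps_t)=r^\eps_t$, where the commutator $r^\eps_t\coloneqq\div(v_t\,h_\eps\rho_t)-h_\eps\,\div(v_t\rho_t)$ measures the failure of the semigroup to commute with the advection. The key claim --- and the main obstacle --- is that $r^\eps_\cdot\to0$ in $\L^1([0,1]\times\X)$ as $\eps\downarrow0$; this is exactly where the second-order calculus on $\RCD(K,\infty)$ spaces enters, through the hypothesis $\nabla v_t\in\L^2(T^{\otimes2}\X)$, the Bakry--\'Emery contraction $|\d(h_\eps g)|\le e^{-K\eps}h_\eps|\d g|$ and Bochner's inequality, and it is the step that consumes the integrability in $t$ of $\||v_t|\|_{\L^\infty}$, $\|\div v_t\|_{\L^\infty}$ and $\|\nabla v_t\|_{\L^2}$ encoded in $\mathbf N(v_\cdot)<\infty$. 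Everything downstream is soft: since $\rho^\eps_t$ is regular the chain rule applies, so for $\beta\in C^1(\R)$ with bounded derivative one renormalizes the mollified equation and lets $\eps\downarrow0$ to obtain, for the original solution,
\begin{equation}
\partial_t\beta(\rho_t)+\div\!\big(v_t\,\beta(\rho_t)\big)=\big(\beta(\rho_t)-\rho_t\beta'(\rho_t)\big)\,\div v_t\qquad\text{in the weak sense.}
\end{equation}

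From the renormalized identity the remaining conclusions are routine Gr\"onwall arguments. Choosing $\beta(r)=r^p$ (after the usual approximation by admissible $\beta$) and integrating in space kills the divergence term, and using $\rho_t\ge0$ gives
\begin{equation}
\frac{\d}{\d t}\int_\X\rho_t^p\,\d\mm=(1-p)\int_\X\rho_t^p\,\div v_t\,\d\mm\le(p-1)\,\|(\div v_t)^-\|_{\L^\infty}\int_\X\rho_t^p\,\d\mm,
\end{equation}
so that $\|\rho_t\|_{\L^p}\le\|\rho_0\|_{\L^p}\exp\!\big(\tfrac{p-1}{p}\int_0^t\|(\div v_r)^-\|_{\L^\infty}\,\d r\big)$, and letting $p\uparrow\infty$ yields the claimed $\L^\infty$ bound. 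For uniqueness I take the difference $\rho_t$ of two solutions with equal initial datum, renormalize with $\beta(r)=r^2$, and conclude from the same Gr\"onwall inequality and $\rho_0=0$ that $\int_\X\rho_t^2\,\d\mm\equiv0$.

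It remains to produce one solution. I would use vanishing viscosity, solving the regularized linear problem $\partial_t\mu^\eps_t+\div(v_t\mu^\eps_t)=\eps\Delta\mu^\eps_t$ by the analytic semigroup generated by $\Delta$ (the Laplacian associated with $\ch$), treating the first-order term $\div(v_t\,\cdot\,)$ as a relatively bounded perturbation since $|v_t|$ and $\div v_t$ are bounded; mass and positivity are preserved, so $\mu^\eps_t\in\pr(\X)$. The $\L^\infty$ estimate above survives the regularization (the extra Laplacian only improves it), giving weak-$*$ precompactness of $(\rho^\eps_t)$ in $\L^\infty$; as the weak formulation of the continuity equation is stable under such limits and $\eps\Delta\mu^\eps_t\to0$ in distribution, any limit point is a weakly continuous solution in $\pr_{ac}(\X)$ with $\mu_0=\mu$, and the density bound passes to the limit by lower semicontinuity. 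Combined with the uniqueness just proved, this gives the full statement.
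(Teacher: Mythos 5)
This theorem is not proved in the paper at all --- it is imported as a known result from \cite{AmTr14} and \cite{Gi23} --- and your outline reconstructs essentially the argument of that cited source: heat-semigroup mollification with the commutator estimate as the technical core, renormalization plus Gr\"onwall for the $\L^\infty$ bound and for uniqueness, and vanishing viscosity for existence. The proposal is therefore correct in structure and matches the reference the paper relies on, with the caveat that the commutator convergence you rightly single out as the main obstacle is precisely the step that carries almost all of the technical weight in \cite{AmTr14} and is here stated rather than proved.
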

	\begin{thm}[\cite{Gi23}, \cite{AmTr14}]\label{teo: esistenza e unicità flussi}
		Let $\X$ be $\RCD(K, \infty)$ and $(v_t)$ be such that $\mathbf N(v_\cdot)<\infty$. Then $(v_t)$ admits a unique Regular Lagrangian Flow $F$. Moreover, given any probability measure $\mu$ such that $\mu\leq C\mm$ for some $C>1$, it holds 
		\begin{equation}
			(F_t)_\# \mu=\mu_t,
		\end{equation}
		where $\mu_t$ is as in Theorem \ref{teo: equazioni continuità}.
	\end{thm}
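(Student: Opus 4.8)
The plan is to deduce both the existence and the uniqueness of the Regular Lagrangian Flow from the well-posedness of the continuity equation granted by Theorem \ref{teo: equazioni continuità}, following the superposition strategy of \cite{AmTr14} (see also the exposition in \cite{Gi23}). The two pillars are a \emph{superposition principle}, lifting any bounded-density solution of the continuity equation to a probability measure on curves concentrated on integral curves of $(v_t)$, and the \emph{uniqueness} of such solutions within the class of measures with uniformly bounded density. The latter is the genuinely hard part and the only point where the $\RCD(K,\infty)$ hypothesis is used essentially.

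\textbf{Existence.} I would first apply Theorem \ref{teo: equazioni continuità} with initial datum $\mm$ itself (admissible since $\frac{\d\mm}{\d\mm}\equiv1\in\L^\infty$), obtaining a solution $t\mapsto\mu_t=\rho_t\mm$ with $\sup_{t}\|\rho_t\|_{\L^\infty}<\infty$ by the quantitative bound in that statement. The superposition principle then yields $\boldsymbol\eta\in\pr(\cu{\X})$ with $(\et)_\#\boldsymbol\eta=\mu_t$ for all $t$, concentrated on absolutely continuous curves $\gamma$ such that $t\mapsto f(\gamma_t)$ lies in $W^{1,1}$ with derivative $\d f(v_t)(\gamma_t)$ for every $f\in\operatorname{TEST}(\X)$. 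Disintegrating $\boldsymbol\eta=\int\boldsymbol\eta_x\,\d\mm(x)$ along the evaluation $\e_0$ at time $0$, the uniqueness result below forces $\boldsymbol\eta_x=\delta_{F_\cdot(x)}$ for $\mm$-a.e.\ $x$, which defines the Borel map $F$. Axiom (i) is continuity of $t\mapsto\gamma_t$, axiom (ii) follows from $(F_t)_\#\mm=\mu_t\leq\|\rho_t\|_{\L^\infty}\mm$, and axiom (iii) is precisely the integral-curve property carried by $\boldsymbol\eta$.

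\textbf{Uniqueness.} This is the crux. Adapting the DiPerna--Lions scheme to the metric setting, I would establish the \emph{renormalization property}: whenever $\rho_t$ solves the continuity equation with bounded density, so does $\beta(\rho_t)$ for every smooth $\beta$ with bounded derivative. The mechanism is a commutator estimate: mollify via the heat semigroup, $\rho_t^\varepsilon:=h_\varepsilon\rho_t$, and show that the commutator between the derivation induced by $v_t$ and the operator $h_\varepsilon$ is negligible as $\varepsilon\downarrow0$. Here the strength of $\RCD(K,\infty)$ is decisive, through the Bakry--\'Emery contraction and the second-order calculus underlying the finiteness of $\|\nabla v_t\|_{\L^2}$ (the third term of $\mathbf N(v_\cdot)$), which is exactly what makes the commutator vanish. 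Combined with the bound on $\|\div(v_t)\|_{\L^\infty}$, renormalization gives through a Gr\"onwall estimate that two bounded-density solutions with equal initial datum coincide. By the abstract equivalence between uniqueness of bounded continuity-equation solutions and uniqueness of the Lagrangian flow \cite{AmTr14}, this both makes $F$ well defined and forces its uniqueness.

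\textbf{The pushforward identity and the obstacle.} Given $\mu\leq C\mm$, set $\mu_t:=(F_t)_\#\mu$; axiom (ii) keeps $\mu_t\leq C'\mm$ uniformly, so $(\mu_t)$ lies in the uniqueness class. Testing against $f\in\operatorname{TEST}(\X)$ and invoking axiom (iii) with Fubini, $t\mapsto\int f\,\d\mu_t=\int f(F_t(x))\,\d\mu(x)$ is absolutely continuous with $\frac{\d}{\d t}\int f\,\d\mu_t=\int\d f(v_t)\,\d\mu_t$, so $(F_t)_\#\mu$ solves the continuity equation with datum $\mu$; by the uniqueness above it coincides with the solution of Theorem \ref{teo: equazioni continuità}. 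The main obstacle is entirely concentrated in the commutator/renormalization estimate of the uniqueness step, where the RCD second-order structure is needed; the remaining steps are soft measure theory, disintegration, and Fubini.
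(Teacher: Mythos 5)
The paper does not actually prove this statement: it is imported verbatim from \cite{AmTr14} and \cite{Gi23}, and your outline --- existence via the superposition principle applied to the bounded-density solution started from $\mm$, uniqueness via a DiPerna--Lions renormalization scheme with heat-semigroup mollification and a commutator estimate exploiting the $\L^2$ bound on $\nabla v_t$ (where the $\RCD(K,\infty)$ structure enters through Bakry--\'Emery and second-order calculus), and the pushforward identity by verifying that $(F_t)_\#\mu$ is a bounded-density solution of the continuity equation --- is exactly the strategy of those references, so your proposal is correct and takes essentially the same approach. The only compressed point worth flagging is that deducing $\boldsymbol\eta_x=\delta_{F_\cdot(x)}$ for $\mm$-a.e.\ $x$ from uniqueness of bounded-density solutions is not immediate but requires Ambrosio's splitting argument (a non-Dirac disintegration on a set of positive measure would allow one to manufacture two distinct bounded solutions with the same initial datum), which is standard and carried out in \cite{AmTr14}.
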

	We are now ready to define the right notion of convergence and prove this final stability result.
	\begin{deff}
		Let $(v^n_t)_t\in \L^2([0, 1]; \L^2_x(T\X_n))$ for $n\geq 1$. We say that $v^n\to v^\infty$ strongly in time, strongly in space if
		\begin{equation}
			\lim_n \iint \varphi_t \langle v^n_t, z_n\rangle\,\d\mm_n \d t=\iint \varphi_t \langle v^\infty_t, z_\infty\rangle\,\d\mm\d t 
		\end{equation}
		for all $\varphi \in C^0_\b([0, 1])$ and $(z_n)_{n\geq 1}$ test sequence of vectors and moreover  
		\begin{equation}
			\iint|v^n_t|^2\,\d\mm_n\d t\to \iint|v^\infty_t|^2\,\d\mm\d t.
		\end{equation}
	\end{deff}
	\begin{rmk}\label{oss: conv norma}
		Notice that whenever we have a sequence $\X_n$ with projections $p_n$ as before, we can produce a sequence of projections $p_n'\coloneqq (p_n, id):\X_n\times [0, 1]\to \X\times [0, 1]$, where $[0, 1]$ is endowed with the Lebesgue measure. This can be useful (and will be used in the following) to study curves of functions.
		By reasoning as in the proof of \ref{it: semic} and \ref{it: conv} in Proposition \ref{prop: proprietà convergenze}, it is easy to see that if $v_n\to v$ strongly in time, strongly in space, then $|v_n|\to |v|$ in $\L^2_{t, x}$, i.e. in $\L^2$ with respect to the spaces $\X_n\times (0, 1)$ projecting on $\X\times (0, 1)$.
	\end{rmk}
	\begin{thm}\label{teo: stab eq cont}
		Let $\mu_n\in\pr (\X_n)$ be such that $\sup_n \|\frac{\d\mu_n}{\d\mm_n}\|_\infty<+\infty$ and $\mu_n\rightharpoonup\mu$. Let $v^n_t\in \L^\infty([0, 1]; \L^2(T\X_n))$ converge strongly in time, strongly in space to $v_t$ with $\sup_n \mathbf N(v_n)<+\infty$.
		Then the solution $\mu_{n, t}$ of the continuity equations $\partial_t\mu_{n, t}+\div(v^n_t\mu_{n, t})=0$ with initial datum $\mu_n$ for every $t\in [0, 1]$ converge weakly to $\mu_t$, the solution to $\partial_t\mu_t+\div(v_t\mu_{t})=0$ starting from $\mu$.
	\end{thm}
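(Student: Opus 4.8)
The plan is to follow the classical compactness-and-identification scheme for continuity equations, adapted to the varying-space setting. First I would record the uniform estimates coming from Theorem~\ref{teo: equazioni continuità}: each $\mu_{n,t}=\rho_{n,t}\mm_n$ exists, is unique, and satisfies $\|\rho_{n,t}\|_{\L^\infty(\mm_n)}\le \|\rho_{n,0}\|_{\L^\infty(\mm_n)}\exp\big(\int_0^1\|\div(v^n_r)^-\|_{\L^\infty}\,\d r\big)\le C$ with $C$ independent of $n,t$, since $\sup_n\|\tfrac{\d\mu_n}{\d\mm_n}\|_\infty<\infty$ and $\sup_n\mathbf N(v_n)<\infty$. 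In particular $\mu_{n,t}\le C\mm_n$ for all $n,t$, so the whole family is equi--integrable uniformly in $t$; moreover $t\mapsto\mu_{n,t}$ is $\W_2$--absolutely continuous with metric speed $|\dot\mu_{n,t}|\le\|v^n_t\|_{\L^2(\mu_{n,t})}\le C^{1/2}\||v^n_t|\|_{\L^\infty}$, whose time integral is at most $C^{1/2}\mathbf N(v_n)$, uniformly bounded. Combining this with the semicontinuity \eqref{eq: semicont W2} and an Ascoli--Arzelà argument exactly as in the proof of Theorem~\ref{teo: conv heat}, I would extract a (non-relabelled) subsequence and a weakly continuous curve $(\tilde\mu_t)$ with $\tilde\mu_t\le C\mm$ such that $\mu_{n,t}\weakto\tilde\mu_t$ for every $t\in[0,1]$, with $\tilde\mu_0=\mu$ because $\mu_{n,0}=\mu_n\weakto\mu$. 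Writing $\tilde\mu_t=\tilde\rho_t\mm$, the uniform $\L^\infty$ bound gives $\rho_{n,t}\weakto\tilde\rho_t$ weakly in $\L^2$ for each $t$, and viewing everything on the space-time spaces $\X_n\times(0,1)$ of Remark~\ref{oss: conv norma}, the densities $\rho_n$ converge weakly-$*$ in $\L^\infty_{t,x}$ to $\tilde\rho$.

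The core step is to identify $(\tilde\mu_t)$ as the solution of $\partial_t\tilde\mu_t+\div(v_t\tilde\mu_t)=0$ with datum $\mu$. Fix $\varphi\in C^1_c((0,1))$ and $f\in\operatorname{TEST}(\X)$, and pick by Remark~\ref{oss: densità} a test sequence $(f_n)$ with $f_n\to f$. Testing the weak formulation for $\mu_{n,t}$ against $\varphi$ yields
\[
-\int_0^1\varphi'(t)\Big(\int f_n\,\d\mu_{n,t}\Big)\,\d t=\int_0^1\varphi(t)\Big(\int\langle\d f_n,v^n_t\rangle\,\d\mu_{n,t}\Big)\,\d t.
\]
For the left-hand side, for each $t$ the equi--integrability and $f_n\to f$ in $\L^0$ give $\int f_n\,\d\mu_{n,t}\to\int f\,\d\tilde\mu_t$ via Proposition~\ref{prop: L0 via misure}; as these are bounded by $\sup_n\|f_n\|_\infty$, dominated convergence passes the limit through the $t$--integral, producing $-\int_0^1\varphi'(t)\int f\,\d\tilde\mu_t\,\d t$.

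The delicate term is the velocity one, and this is where I expect the main obstacle. I would work on $\X_n\times(0,1)$ and write the right-hand side as $\iint\varphi\,\langle\d f_n,v^n\rangle\,\rho_n\,\d\mm_n\,\d t$. Here $\d f_n\to\d f$ strongly with $|\d f_n|\le\Lip(f_n)$ bounded, and $v^n\to v$ strongly in time, strongly in space; by polarization together with item~\eqref{it: conv} of Proposition~\ref{prop: proprietà convergenze} and the inner-product calculus underlying \eqref{eq: claim limite forte} in Lemma~\ref{lemma: convergenza moduli diff}, the pairing $g_n:=\langle\d f_n,v^n\rangle$ converges strongly in $\L^1_{t,x}$ to $\langle\d f,v\rangle$. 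Coupling this strong $\L^1$ convergence (so that $\varphi g_n\to\varphi\langle\d f,v\rangle$ in $\L^1$) with the weak-$*$ $\L^\infty$ convergence of the densities $\rho_n$ gives $\iint\varphi g_n\rho_n\,\d\mm_n\,\d t\to\iint\varphi\langle\d f,v\rangle\tilde\rho\,\d\mm\,\d t=\int_0^1\varphi(t)\int\d f(v_t)\,\d\tilde\mu_t\,\d t$. The uniform bounds from $\mathbf N(v_n)$ are exactly what make this space-time $\L^1$/$\L^\infty$ duality legitimate and sidestep any pointwise-in-$t$ domination issue. The genuinely subtle point is the strong (rather than merely weak $\L^1$) convergence of $g_n$: weak convergence of the pairing alone, coupled with only weakly convergent densities, would not pass to the limit, so one must fully exploit that both $\d f_n$ and $v^n$ converge strongly and that $\d f_n$ is bounded.

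Putting the two sides together shows that $(\tilde\mu_t)$ is a weak solution of $\partial_t\tilde\mu_t+\div(v_t\tilde\mu_t)=0$ with bounded density and initial datum $\mu$; by the uniqueness in Theorem~\ref{teo: equazioni continuità} (see also Theorem~\ref{teo: esistenza e unicità flussi}) it coincides with $\mu_t$. Since every subsequence of $(\mu_{n,t})_n$ then admits a further subsequence converging to this same $\mu_t$, the full sequence converges, i.e.\ $\mu_{n,t}\weakto\mu_t$ for every $t$, which is the desired conclusion.
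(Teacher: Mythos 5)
Your proposal is correct and follows essentially the same route as the paper's proof: uniform density bounds from Theorem \ref{teo: equazioni continuità} and equibounded $\W_2$-speeds give Ascoli--Arzel\`a compactness, the limit curve is identified as a solution by testing against test sequences and exploiting the strong space-time convergence of $\langle\d f_n, v^n\rangle$ (via Remark \ref{oss: conv norma} and polarization) paired with the weakly converging densities, and uniqueness plus the subsequence trick closes the argument. The only cosmetic difference is that you phrase the final duality as strong $\L^1_{t,x}$ against weak-$*$ $\L^\infty$, whereas the paper keeps both exponents finite (weak $\L^p$ convergence of the densities for all $p<\infty$, so Corollary \ref{cor: coupling} applies verbatim); this changes nothing of substance.
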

	\begin{proof}
		Let us start by noticing that by Theorem \ref{teo: equazioni continuità} we have that $\mu_{n, t}\leq C\mm_n$ for some $C>0$ and all $n, t$. Moreover, by the uniform bound on $\||v_n|\|_{\infty}$ and the superposition principle in \cite[Proposition 5.17]{Gi23} the $\W_2$ speeds of $\mu_{n, t}$ are equibounded. By an Ascoli--Arzelà argument similar to Theorem \ref{teo: conv heat}, up to a subsequence we have $\mu_{n, t}\rightharpoonup\nu_t$ for all $t\in[0, 1]$ for some weakly continuous curve $\nu$. Notice that this convergence implies that $\int \mu_{n, t}\,\d t\rightharpoonup \int \nu_t\,\d t$ in the sense of Section \ref{section: conv}. By the equiboundedness then $\frac{\d\mu_{n, \cdot}}{\d\mm_n}\rightharpoonup \frac{\d\nu_\cdot}{\d\mm}$ in $\L^p$ for all $p<\infty$. Let now $f_n$ be any test sequence. Reasoning as in \cite[Theorem 6.12]{Gi23}, it suffices to prove that
		\begin{equation}\label{eq: conv derivate}
			\int \d f_n(v_t^n)\,\d\mu_{n, t}\rightharpoonup\int \d f_\infty(v^n)\,d\nu_t
		\end{equation}
		in $\L^2(0, 1)$.
		By Remark \ref{oss: conv norma} and polarization we see that $\d f_n(v^n_\cdot)\to \d f_\infty(v^n)$ in $\L^p_{t, x}$ for all $p<\infty$. It is now easy to see that \eqref{eq: conv derivate} holds. 
	\end{proof}
	\begin{rmk}
		The hypothesis on the fields $v^n$ could be weakened to a suitable weak in time, strong in space convergence by reasoning as in \cite{Gi23}.
	\end{rmk}
	\begin{thm}
		Let $(v_n)_{n\geq 1}, v$ as in Theorem \ref{teo: stab eq cont}, let $F_n:\X_n\to C^0([0, 1];\X_n)$ be the flow of $v_n$ and $F_\infty:\X\to \ C^0([0, 1]; \X)$ the flow of $v$. Then 
		\begin{equation}
			F_n(\cdot, t)\to F_\infty(\cdot, t)
		\end{equation} 
		in $\L^0$ for all $t\in [0, 1]$.
	\end{thm}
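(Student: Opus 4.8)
The plan is to reduce the statement to the measure-theoretic characterization of $\L^0$ convergence of maps with moving target, namely Proposition \ref{prop: L0 via misure bis}, applied with target spaces $\Y_n=\X_n$, $\Y=\X$ and projections $p^\Y_n=p_n$. Before invoking it one must check the standing hypothesis of Definition \ref{def: L0 mappe}, i.e.\ that $(F_n(\cdot,t))_\#\mm_n$ is equi--integrable: this is immediate since, by Theorem \ref{teo: esistenza e unicità flussi} applied to the datum $\mm_n$ (with, say, $C=2$), this pushforward is the solution at time $t$ of the continuity equation started at $\mm_n$, whose density is uniformly bounded by Theorem \ref{teo: equazioni continuità} (the bound being uniform in $n$ because $\sup_n\mathbf N(v_n)<\infty$). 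It then suffices to verify item (ii) of Proposition \ref{prop: L0 via misure bis}: for every $C>1$ and every sequence $\mu_n\in\pr(\X_n)$ with $\mu_n\leq C\mm_n$ and $\mu_n\weakto\mu$, the pushforwards $(F_n(\cdot,t))_\#\mu_n$ are equi--integrable and converge weakly to $(F_\infty(\cdot,t))_\#\mu$.

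First I would identify these pushforwards with solutions of the continuity equation. Testing $\mu_n\weakto\mu$ against nonnegative $\varphi\in C^0_\b(\X)$ and using $\mu_n\leq C\mm_n$ together with $(p_n)_\#\mm_n\weakto\mm$ yields $\mu\leq C\mm$, so that Theorem \ref{teo: esistenza e unicità flussi} applies both on $\X_n$ and on $\X$ and gives
\begin{equation}
(F_n(\cdot,t))_\#\mu_n=\mu_{n,t},\qquad (F_\infty(\cdot,t))_\#\mu=\mu_t,
\end{equation}
where $\mu_{n,t}$ and $\mu_t$ solve $\partial_t\mu_{n,t}+\div(v^n_t\mu_{n,t})=0$ and $\partial_t\mu_t+\div(v_t\mu_t)=0$ with initial data $\mu_n$ and $\mu$ respectively.

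Second, I would invoke the stability of the continuity equation. The hypotheses of Theorem \ref{teo: stab eq cont} are precisely met: $\sup_n\|\tfrac{\d\mu_n}{\d\mm_n}\|_\infty\leq C$, $\mu_n\weakto\mu$, the fields $v^n$ converge strongly in time, strongly in space to $v$, and $\sup_n\mathbf N(v_n)<\infty$. Hence $\mu_{n,t}\weakto\mu_t$ for every $t\in[0,1]$. Their equi--integrability follows once more from Theorem \ref{teo: equazioni continuità}, since
\begin{equation}
\mu_{n,t}\leq \Big\|\tfrac{\d\mu_n}{\d\mm_n}\Big\|_\infty e^{\int_0^t\|\div(v^n_r)^-\|_{\L^\infty}\,\d r}\,\mm_n\leq C\,e^{\mathbf N(v_n)}\,\mm_n,
\end{equation}
whose right--hand constant is uniformly bounded in $n$. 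This checks item (ii) of Proposition \ref{prop: L0 via misure bis} and therefore gives $F_n(\cdot,t)\xrightarrow{\L^0}F_\infty(\cdot,t)$ for every $t\in[0,1]$.

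I do not expect a genuine obstacle: essentially all the analytic content has been front--loaded into Theorem \ref{teo: stab eq cont}, and the argument is a clean assembly exploiting the duality between $\L^0$ convergence of maps and weak convergence of pushforward measures encoded in Proposition \ref{prop: L0 via misure bis}. The only points deserving care are the verification that the limit datum satisfies $\mu\leq C\mm$ (so that the flow representation of Theorem \ref{teo: esistenza e unicità flussi} is available on the limit space) and the uniform--in--$n$ density bound that upgrades the mere weak convergence of the $\mu_{n,t}$'s to the equi--integrability demanded by the characterization.
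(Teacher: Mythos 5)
Your proposal is correct and follows exactly the route the paper takes: its proof is the one-line citation of Theorem \ref{teo: stab eq cont}, Theorem \ref{teo: esistenza e unicità flussi} and Proposition \ref{prop: L0 via misure bis}, which is precisely the assembly you carry out. Your write-up merely makes explicit the verifications (the bound $\mu\leq C\mm$ on the limit datum, the uniform density bound giving equi--integrability) that the paper leaves implicit.
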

	\begin{proof}
		The result follows immediately from Theorem \ref{teo: stab eq cont}, Theorem \ref{teo: esistenza e unicità flussi} and Proposition \ref{prop: L0 via misure bis}.
	\end{proof}

%	\bibliographystyle{plain} % We choose the "plain" reference style
%	\bibliography{Bibliografia\\bibliografia} % Entries are in the refs.bib file

\end{document}